\numberwithin{equation}{section}
\def\Xint#1{\mathchoice
  {\XXint\displaystyle\textstyle{#1}}%
  {\XXint\textstyle\scriptstyle{#1}}%
  {\XXint\scriptstyle\scriptscriptstyle{#1}}%
  {\XXint\scriptscriptstyle\scriptscriptstyle{#1}}%
  \!\int}
\def\XXint#1#2#3{{\setbox0=\hbox{$#1{#2#3}{\int}$}
  \vcenter{\hbox{$#2#3$}}\kern-.5\wd0}}
\def\-int{\Xint -}
\newcommand{\R}{\mathbb{R}}
\newcommand{\N}{\mathcal{N}}
\newcommand{\M}{\mathcal{M}}
\newcommand{\h}{H^{s}(\R^{N})}
\newcommand{\x}{X^{s}(\R^{N+1}_{+})}
\newcommand{\y}{Y_{\mu}}
\newcommand{\La}{\Lambda}
\newcommand{\2}{2^{*}_{s}}
\newcommand{\ri}{\rightarrow}
\DeclareMathOperator{\dive}{div}
\DeclareMathOperator{\supp}{supp}
\DeclareMathOperator{\e}{\varepsilon}
\newtheorem{prop}{Proposition}[section]
\newtheorem{lem}{Lemma}[section]
\newtheorem{thm}{Theorem}[section]
\newtheorem{defn}{Definition}[section]
\newtheorem{cor}{Corollary}[section]
\newtheorem{remark}{Remark}[section]
\title[The nonlinear fractional relativistic Schr\"odinger equation]{The nonlinear fractional relativistic Schr\"odinger equation: existence, multiplicity, decay and concentration results}
\author[V. Ambrosio]{Vincenzo Ambrosio}
\address{Vincenzo Ambrosio\hfill\break\indent
Dipartimento di Ingegneria Industriale e Scienze Matematiche \hfill\break\indent
Universit\`a Politecnica delle Marche\hfill\break\indent
Via Brecce Bianche, 12\hfill\break\indent
60131 Ancona (Italy)}
\email{v.ambrosio@staff.univpm.it}
\keywords{fractional relativistic Schr\"odinger operator; extension method; variational methods; Ljusternik-Schnirelman theory}
\subjclass[2010]{35R11, 35J10, 35J20, 35J60, 35B09, 58E05}
\date{}
\begin{document}

\begin{abstract}
In this paper we study the following class of fractional relativistic Schr\"odinger equations:
\begin{equation*}
\left\{
\begin{array}{ll}
(-\Delta+m^{2})^{s}u + V(\e x) u= f(u) &\mbox{ in } \R^{N}, \\
u\in H^{s}(\R^{N}), \quad u>0 &\mbox{ in } \R^{N},
\end{array}
\right.
\end{equation*}
where $\e>0$ is a small parameter, $s\in (0, 1)$, $m>0$, $N> 2s$, $(-\Delta+m^{2})^{s}$ is the fractional relativistic Schr\"odinger operator, $V:\R^{N}\ri \R$ is a  continuous potential satisfying a local condition, and $f:\R\ri \R$ is a continuous subcritical nonlinearity. 
By using a variant of the extension method and a penalization technique, we first prove that, for $\e>0$ small enough, the above problem admits a weak solution $u_{\e}$ which concentrates around a local minimum point of $V$ as $\e\ri 0$. We also show that $u_{\e}$ has an exponential decay at infinity by constructing a suitable comparison function and by performing some refined estimates.
Secondly, by combining the generalized Nehari manifold method and Ljusternik-Schnirelman theory, we relate the number of positive solutions with the topology of the set where the potential $V$ attains its minimum value.  
\end{abstract}
\maketitle

\section{Introduction}
In this paper we consider the following class of nonlinear fractional elliptic problems:
\begin{equation}\label{P}
\left\{
\begin{array}{ll}
(-\Delta+m^{2})^{s}u + V(\e x) u= f(u) &\mbox{ in } \R^{N}, \\
u\in H^{s}(\R^{N}), \quad u>0 &\mbox{ in } \R^{N},
\end{array}
\right.
\end{equation}
where $\e>0$ is a small parameter, $s\in (0, 1)$, $N> 2s$, $m>0$, $V:\R^{N}\ri \R$ is a continuous potential and $f:\R\ri \R$ is a continuous nonlinearity.
The nonlocal operator $(-\Delta+m^{2})^{s}$ appearing in \eqref{P} is defined via Fourier transform by
$$
(-\Delta+m^{2})^{s}u(x):=\mathcal{F}^{-1}((|k|^{2}+m^{2})^{s}\mathcal{F}u(k))(x), \quad x\in \R^{N}, 
$$
for any $u:\R^{N}\ri \R$ belonging to the Schwartz space $\mathcal{S}(\R^{N})$ of rapidly decaying functions, 
or equivalently (see \cite{FF, LL}) as
\begin{align}\label{FFdef}
(-\Delta+m^{2})^{s}u(x):=m^{2s}u(x)+C(N,s) m^{\frac{N+2s}{2}} P.V. \int_{\R^{N}} \frac{u(x)-u(y)}{|x-y|^{\frac{N+2s}{2}}}K_{\frac{N+2s}{2}}(m|x-y|)\, dy, \quad x\in \R^{N},
\end{align}
where $P. V.$ stands for the Cauchy principal value, $K_{\nu}$ is the modified Bessel function of the third kind (or Macdonald function) of index $\nu$ (see \cite{ArS, Erd}) which
satisfies the following well-known asymptotic formulas for $\nu\in \R$ and $r>0$:
\begin{align}
&K_{\nu}(r)\sim \frac{\Gamma(\nu)}{2} \left(\frac{r}{2}\right)^{-\nu} \quad \mbox{ as } r\rightarrow 0, \mbox{ for } \nu>0, \label{Watson1}\\
&K_{0}(r)\sim \log\left(\frac{1}{r}\right)   \quad \mbox{ as } r\rightarrow 0, \nonumber \\
&K_{\nu}(r)\sim \sqrt{\frac{\pi}{2}}r^{-\frac{1}{2}} e^{-r}  \quad \mbox{ as } r \rightarrow \infty, \mbox{ for } \nu\in \R  \label{Watson2},
\end{align}
and $C(N, s)$ is a positive constant whose exact value is given by
$$
C(N, s):=2^{-\frac{N+2s}{2}+1} \pi^{-\frac{N}{2}} 2^{2s} \frac{s(1-s)}{\Gamma(2-s)}.
$$
Equations involving  $(-\Delta+m^{2})^{s}$ arise in the study of standing waves $\psi(x, t)$ for Schr\"odinger-Klein-Gordon equations of the form 
\begin{align*}
\imath \frac{\partial \psi}{\partial t}=(-\Delta+m^{2})^{s}\psi-f(x, \psi), \quad \mbox{ in } \R^{N}\times \R, 
\end{align*}
which describe the behavior of bosons, spin-$0$ particles in relativistic fields. In particular, when $s=1/2$, the operator $\sqrt{-\Delta+m^{2}}-m$
plays an important role in relativistic quantum mechanics because it corresponds to the kinetic energy of a relativistic particle with mass $m>0$. If $p$ is the momentum of the particle then its relativistic kinetic energy is given by $E=\sqrt{p^{2}+m^{2}}$. 
In the process of quantization the momentum $p$ is replaced by the differential operator $-\imath \nabla$ and the quantum analog of the relativistic kinetic energy is the free relativistic Hamiltonian $\sqrt{-\Delta+m^{2}}-m$.
Physical models related to this operator have been widely studied over the past 30 years and there exists a huge literature on the spectral properties of relativistic Hamiltonians, most of it has been strongly influenced by the works of Lieb on the stability of relativistic matter; see  \cite{Herbst, LY1, LL, Weder1, Weder2} for more physical background.
On the other hand, there is also a deep connection  between $(-\Delta+m^{2})^{s}$ and the theory of L\'evy processes. Indeed, $m^{2s}-(-\Delta+m^{2})^{s}$ is the infinitesimal generator of a L\'evy process $X^{2s, m}_{t}$ called $2s$-stable relativistic process having the following characteristic function
$$
E^{0} e^{\imath k\cdot X^{2s, m}_{t}}=e^{-t[(|k|^{2}+m^{2})^{s}-m^{2s}]}, \quad k\in \R^{N};
$$
we refer to \cite{BMR, CMS, ryznar} for a more detailed discussion on relativistic stable processes.
When $m=0$, the previous operator boils down to the fractional Laplacian operator $(-\Delta)^{s}$ which has been extensively studied in these last years due to its great applications in several fields of the research; see \cite{BuVa, DPV, MBRS} for an introduction on this topic. In particular, a great interest has been devoted to the existence and multiplicity of solutions for fractional Schr\"odinger equations like
\begin{equation}\label{ANNALISA}
\e^{2s}(-\Delta)^{s}u + V(x) u= f(u) \quad \mbox{ in } \R^{N}, 
\end{equation}
and the asymptotic behavior of its solutions as $\e\ri 0$; see for instance \cite{AM, Aampa, Armi, Ana, DDPW, FS} and the references therein. 
We also mention \cite{DDPDV} in which the authors investigated a nonlocal Schr\"odinger equation with Dirichlet datum.

When $m>0$ and $\e=1$ in \eqref{P}, some interesting existence, multiplicity, and qualitative results can be found in \cite{Ajmp, BMP, CZN1, CZN2, FV, Mugnai, SecchiJDDE}, while in \cite{CS2} the semiclassical  limit $\e\ri 0$ for a pseudo-relativistic Hartree-equation involving $\sqrt{-\e^{2}\Delta+m^{2}}$ is considered.

Motivated by the above papers, in this work we focus our attention on the existence, multiplicity, decay and concentration phenomenon as $\e\ri 0$ of solutions to \eqref{P}. Firstly, we suppose that $V:\R^{N}\ri \R$ is a continuous function which satisfies the following del Pino-Felmer type conditions \cite{DF}:
\begin{compactenum}[$(V_1)$]
\item there exists $V_{1}\in (0, m^{2s})$ such that $-V_{1}:=\inf_{x\in \R^{N}}V(x)$,
\item there exists a bounded open set $\Lambda\subset \R^{N}$ such that
$$
-V_{0}:=\inf_{x\in \Lambda} V(x)<\min_{x\in \partial \Lambda} V(x),
$$
with $V_{0}>0$. 
We also set $M:=\{x\in \Lambda: V(x)=-V_{0}\}$. Without loss of generality, we may assume that $0\in M$.
\end{compactenum}
Concerning the nonlinearity $f$, we assume that $f:\R\ri \R$ is continuous, $f(t)=0$ for $t\leq 0$, and $f$ fulfills the following  hypotheses:
\begin{compactenum}[$(f_1)$]
\item $\lim_{t\ri 0} \frac{f(t)}{t}=0$,
\item $\limsup_{t\ri \infty} \frac{f(t)}{t^{p}}<\infty$ for some $p\in (1, \2-1)$, where $\2:=\frac{2N}{N-2s}$ is the fractional critical exponent,
\item there exists $\theta\in (2, \2)$ such that $0<\theta F(t)\leq t f(t)$ for all $t>0$, where $F(t):=\int_{0}^{t} f(\tau)\, d\tau$,
\item the function $t\mapsto \frac{f(t)}{t}$ is increasing in $(0, \infty)$.
\end{compactenum}
The first main result  of this paper can be stated as follows:
\begin{thm}\label{thm1}
Assume that $(V_1)$-$(V_2)$ and $(f_1)$-$(f_4)$ are satisfied.
Then, for every small $\e>0$, there exists a solution $u_{\e}$ to \eqref{P} such that $u_{\e}$ has a maximum point $x_{\e}$ satisfying
$$
\lim_{\e\ri 0} {\rm dist}(\e x_{\e}, M)=0,
$$
and for which
$$
0<u_{\e}(x)\leq Ce^{-c|x-x_{\e}|} \quad \mbox{ for all } x\in \R^{N},
$$
for suitable constants $C, c>0$. Moreover, for any sequence $(\e_{n})$ with $\e_{n}\ri 0$, there exists a subsequence, still denoted by itself, such that there exist a point $x_{0}\in M$ with $\e_{n}x_{\e_{n}}\ri x_{0}$, and a positive least energy solution $u\in \h$ of the limiting problem  
$$
(-\Delta+m^{2})^{s}u-V_{0} u=f(u) \quad \mbox{ in } \R^{N},
$$
for which we have
$$
u_{\e_{n}}(x)=u(x-x_{\e_{n}})+\mathcal{R}_{n}(x),
$$
where $\lim_{n\ri \infty} \|\mathcal{R}_{n}\|_{\h}=0$.
\end{thm}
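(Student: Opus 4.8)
\emph{Proof strategy.} The plan is to combine a Caffarelli--Silvestre type extension for $(-\Delta+m^{2})^{s}$ with the del Pino--Felmer penalization method, carrying out the argument on a modified, \emph{local} functional over the half-space $\R^{N+1}_{+}$. First I would pass to the extended problem: to $u\in\h$ one associates the energy-minimal extension $U\in\X$ solving $\dive(y^{1-2s}\nabla U)=m^{2}y^{1-2s}U$ in $\R^{N+1}_{+}$ with trace $u$ on $\R^{N}$, so that $(-\Delta+m^{2})^{s}u$ coincides, up to a positive constant $\kappa_{s}$, with the weighted conormal derivative of $U$ at $\{y=0\}$. Following $(V_{1})$--$(V_{2})$, I would fix $k>0$ large, set $\tilde{f}(t):=\min\{f(t),\tfrac{V_{1}}{k}t\}$ for $t>0$ (and $0$ for $t\le0$), and introduce the penalized nonlinearity $g(x,t):=\chi_{\Lambda}(x)f(t)+(1-\chi_{\Lambda}(x))\tilde{f}(t)$ with primitive $G$. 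Since $(|k|^{2}+m^{2})^{s}\ge m^{2s}$ and $V(\e x)+m^{2s}\ge m^{2s}-V_{1}>0$ by $(V_{1})$, the quadratic part of
\[
\J_{\e}(U)=\frac{\kappa_{s}}{2}\iint_{\R^{N+1}_{+}}y^{1-2s}\bigl(|\nabla U|^{2}+m^{2}U^{2}\bigr)\,dx\,dy+\frac12\int_{\R^{N}}V(\e x)U(x,0)^{2}\,dx-\int_{\R^{N}}G(\e x,U(x,0))\,dx
\]
is coercive on $\X$ and $\J_{\e}\in C^{1}$; its critical points are weak solutions of the penalized problem.

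Next I would verify the mountain pass geometry of $\J_{\e}$ --- a strict local minimum at $0$ from $(f_{1})$--$(f_{2})$ and the coercivity above, a point of negative energy from the Ambrosetti--Rabinowitz condition $(f_{3})$ --- and the Palais--Smale condition: this is precisely where the penalization is used, since outside $\Lambda$ the reaction term is dominated by $\tfrac{V_{1}}{k}t$ and absorbed by the coercive quadratic part, so bounded PS sequences cannot lose mass at infinity. This yields a positive critical point $U_{\e}$ of $\J_{\e}$ at the mountain pass level $c_{\e}\ge\rho>0$, with trace $u_{\e}$ solving the penalized equation; $(f_{4})$ then identifies $c_{\e}$ with the generalized Nehari (least-energy) level of $\J_{\e}$. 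Finally I would bound $c_{\e}$ from above by testing $\J_{\e}$ on a least-energy solution $w$ of the autonomous problem $(-\Delta+m^{2})^{s}w-V_{0}w=f(w)$ in $\R^{N}$ --- which exists and has finite minimal energy $c_{V_{0}}$ because $m^{2s}-V_{0}>0$ --- extended, translated so that its concentration point sits at $0\in M$, and cut off; this gives $\limsup_{\e\to0}c_{\e}\le c_{V_{0}}$.

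The heart of the argument is the concentration analysis. Given $\e_{n}\to0$, let $x_{\e_{n}}$ be a maximum point of the continuous function $u_{\e_{n}}$ (which vanishes at infinity by elliptic regularity). Using $\tilde{f}(t)\le\tfrac{V_{1}}{k}t$ in a comparison/barrier argument one shows $u_{\e_{n}}$ is uniformly small outside a fixed neighborhood of $\Lambda/\e_{n}$; since $u_{\e_{n}}(x_{\e_{n}})$ is bounded below (from $c_{\e_{n}}\ge\rho$), this forces $\dist(\e_{n}x_{\e_{n}},\overline{\Lambda})\to0$, so up to a subsequence $\e_{n}x_{\e_{n}}\to x_{*}\in\overline{\Lambda}$. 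A Lions-type vanishing lemma in the extension setting, together with $c_{\e_{n}}\ge\rho$, forces $u_{\e_{n}}(\cdot+x_{\e_{n}})$ to be non-infinitesimal on a fixed ball, whence $u_{\e_{n}}(\cdot+x_{\e_{n}})\rightharpoonup u\not\equiv0$ in $\h$ with $u\ge0$ solving the autonomous equation $(-\Delta+m^{2})^{s}u+V(x_{*})u=f(u)$. Weak lower semicontinuity and $(f_{3})$ then give $c_{V_{0}}\ge\limsup_{n}c_{\e_{n}}\ge\E_{V(x_{*})}(u)\ge\ell(V(x_{*}))$, where $\E_{b}$ and $\ell(b)$ are the energy functional and least-energy level of the $b$-autonomous problem; since $V(x_{*})\ge-V_{0}$ on $\overline{\Lambda}$ and $b\mapsto\ell(b)$ is strictly increasing with $\ell(-V_{0})=c_{V_{0}}$, all inequalities are equalities, i.e.\ $V(x_{*})=-V_{0}$, $c_{\e_{n}}\to c_{V_{0}}$, and $u$ is a \emph{least-energy} solution of the limiting problem. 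As $V(x_{*})=-V_{0}<\min_{\partial\Lambda}V$ forces $x_{*}\in\Lambda$, we conclude $x_{*}\in M$ and $\dist(\e_{n}x_{\e_{n}},M)\to0$; finally $c_{\e_{n}}\to c_{V_{0}}$ upgrades weak to strong convergence $u_{\e_{n}}(\cdot+x_{\e_{n}})\to u$ in $\h$ via a Brezis--Lieb/splitting argument, which is exactly the remainder estimate $\|\mathcal{R}_{n}\|_{\h}\to0$.

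Finally I would return to \eqref{P} and establish the decay. A standard Moser iteration gives an $n$-independent $L^{\infty}$ bound which, together with the strong convergence above, yields $u_{\e_{n}}(x)\to0$ as $|x-x_{\e_{n}}|\to\infty$ uniformly in $n$; hence for $n$ large $u_{\e_{n}}$ lies below the truncation threshold outside $\Lambda/\e_{n}$, so there $g(\e_{n}x,u_{\e_{n}})=f(u_{\e_{n}})$ and $u_{\e_{n}}$ solves \eqref{P}. For the exponential bound, since $u_{\e}\to0$ at infinity and $f(t)=o(t)$ by $(f_{1})$, for $|x|$ large we have $(-\Delta+m^{2})^{s}u_{\e}\le(\delta+V_{1})u_{\e}$ with $\delta$ small and $\delta+V_{1}<m^{2s}$; using the Macdonald-kernel asymptotics \eqref{Watson1}--\eqref{Watson2} in the representation \eqref{FFdef} I would construct a positive supersolution $\Phi$ of $(-\Delta+m^{2})^{s}\Phi-(\delta+V_{1})\Phi\ge0$ outside a large ball with $\Phi(x)\le Ce^{-c|x|}$ for some $0<c<m$, and a comparison principle --- obtained via the extension and the weighted maximum principle in $\R^{N+1}_{+}$ --- then gives $0<u_{\e}(x)\le Ce^{-c|x-x_{\e}|}$. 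I expect the main obstacle to be the concentration step --- ruling out vanishing and pinning $\e_{n}x_{\e_{n}}$ in $\Lambda$ --- because $(-\Delta+m^{2})^{s}$ is \emph{not} scale invariant, so the $\e$-rescaling that makes this routine for $(-\Delta)^{s}$ is unavailable and one must work with the genuinely $\e$-dependent extension throughout; the nonlocal comparison principle behind the exponential decay is the second delicate point, requiring careful control of the principal-value integral in \eqref{FFdef} and of the degenerate weighted maximum principle.
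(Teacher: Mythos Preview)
Your overall architecture---extension to $\R^{N+1}_{+}$, del Pino--Felmer penalization, mountain pass with Palais--Smale, upper bound $\limsup c_{\e}\le d_{-V_{0}}$ by testing against a cut-off ground state, Lions-type concentration, Moser iteration, and comparison for decay---matches the paper's route closely. Two points deserve comment. First, in the concentration step you start from the maximum point $x_{\e_{n}}$ and appeal to regularity and a ``comparison/barrier'' to force $\e_{n}x_{\e_{n}}\in\overline{\Lambda}$; the paper instead first locates a \emph{Lions point} $y_{\e_{n}}$ where $L^{2}$ mass accumulates, proves $\e_{n}y_{\e_{n}}\to x_{0}\in M$ and strong convergence of the translated sequence, and only afterwards identifies the maximum point as $x_{\e_{n}}=y_{\e_{n}}+q_{n}$ with $(q_{n})$ bounded. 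Your order is workable but circular as written: the uniform smallness outside $\Lambda_{\e}$ that you invoke is what actually \emph{follows} from the Moser bound plus strong convergence, not something you can get beforehand from the penalization alone.

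The more substantive difference is the exponential decay. You propose to build directly a supersolution $\Phi$ with $\Phi(x)\le Ce^{-c|x|}$ by exploiting the Macdonald asymptotics in the singular-integral representation. In practice this is hard: verifying that a concrete profile like $e^{-c|x|}$ satisfies $(-\Delta+m^{2})^{s}\Phi-(V_{1}+\delta)\Phi\ge 0$ outside a ball requires controlling a nonlocal principal-value integral against $K_{(N+2s)/2}$, and the paper deliberately avoids this. Instead it constructs the barrier as $\bar w=\mathcal{B}_{2s,m}*\phi$, where $\phi\in C^{\infty}_{c}$ and $\mathcal{B}_{2s,m}=\mathcal{F}^{-1}\bigl[((|k|^{2}+m^{2})^{s}-(V_{1}+\delta))^{-1}\bigr]$, and then proves the exponential decay of the \emph{kernel} $\mathcal{B}_{2s,m}$ by writing it as $\int_{0}^{\infty}e^{-\gamma t}p_{s,m}(x,t)\,dt$ with $p_{s,m}$ the $2s$-stable relativistic transition density, and invoking a two-sided bound of Grzywny--Ryznar that dominates $p_{s,m}(x,t)$ by a Gaussian plus the L\'evy density $\nu^{1}$. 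The Gaussian piece is handled by elementary completion of squares; the $\nu^{1}$ piece decays like $|x|^{-(N+2s+1)/2}e^{-m|x|/\sqrt{2}}$ by the large-argument asymptotics of $K_{(N+2s)/2}$. This kernel route is what makes the barrier construction go through cleanly for all $s\in(0,1)$ and $m>0$; your direct-supersolution plan would need a replacement for this step.
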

The proof of Theorem \ref{thm1} is obtained through suitable variational techniques.  Firstly, we start by observing that $(-\Delta+m^{2})^{s}$  is a nonlocal operator and that does not scale like the fractional Laplacian operator $(-\Delta)^{s}$. More precisely, the first operator is not compatible with the semigroup  $\R_{+}$ acting on functions as $t*u\mapsto u(t^{-1}x)$  for $t>0$. This means that several arguments performed to deal with \eqref{ANNALISA} do not work in our context.
To overcome the nonlocality of $(-\Delta+m^{2})^{s}$, we use a variant of the extension method \cite{CS} (see \cite{CZN1, FF, StingaT}) which permits to study via local variational methods a degenerate elliptic equation in a half-space with a nonlinear Neumann boundary condition. Clearly, some additional difficulties arise in the investigation of this new problem because we have to handle the trace terms of the involved functions and thus a more careful analysis will be needed.
Due to the lack of information on the behavior of  $V$ at infinity, we carry out a penalization argument \cite{DF} which consists in modifying appropriately the nonlinearity $f$ outside $\Lambda$, and thus consider a modified problem whose corresponding energy functional fulfills all the assumptions of the mountain pass theorem \cite{AR}. Then we need to check that, for $\e>0$ small enough, the solutions of the auxiliary problem are indeed solutions of the original one. This goal will be achieved by combing an appropriate Moser iteration argument \cite{Moser} 
with some regularity estimates for degenerate elliptic equations; see Lemmas \ref{moser} and \ref{lem2.6AM}.
To our knowledge, this is the first time that the penalization trick is used to study concentration phenomena for the fractional relativistic Schr\"odinger operator $(-\Delta+m^{2})^{s}$ for all $s\in (0, 1)$ and $m>0$. 
Moreover, we show that  the solutions of \eqref{P} have an exponential decay, contrary to the case $m=0$ for which is well-known that the solutions of \eqref{ANNALISA} satisfy the power-type decay $|x|^{-(N+2s)}$ as $|x|\ri \infty$; see \cite{Armi, FQT}.
To investigate the decay of solutions to \eqref{P}, we construct a suitable comparison function and we carry out some refined estimates which take care of an adequate estimate concerning $2s$-stable relativistic density with parameter $m$ found in \cite{GR}, and that the modified Bessel function $K_{\nu}$ has an exponential decay at infinity. 
We stress that exponential type estimates for equations like \eqref{P}, appear in \cite{CZN1} where $s=\frac{1}{2}$, $V$ is bounded,  $f$ is a Hartree type nonlinearity, and in \cite{FV} where $s\in (0, 1)$, $m=1$, $V\equiv 0$ and $|f(u)|\leq C |u|^{p}$ for some $p\in (1, \2-1)$. Anyway, our approach to obtain the decay estimate is completely different from the above mentioned papers, it is more general and we believe that can be applied in other situations to deal with fractional problems driven by $(-\Delta+m^{2})^{s}$.

In the second part of this paper, we provide a multiplicity result for \eqref{P}. In this case, we assume the following conditions on $V$:
\begin{compactenum}[$(V'_1)$]
\item  there exists $V_{0}\in (0, m^{2s})$ such that $-V_{0}:=\inf_{x\in \R^{N}}V(x)$,
\item there exists a bounded open set $\Lambda\subset \R^{N}$ such that
$$
-V_{0}<\min_{x\in \partial \Lambda} V(x),
$$
and $0\in M:=\{x\in \Lambda: V(x)=-V_{0}\}$.
\end{compactenum}
In order to state precisely  the next theorem, we recall that if $Y$ is a given closed subset of a topological space $X$, we denote by $cat_{X}(Y)$ the Ljusternik-Schnirelman category of $Y$ in $X$, that is the least number of closed and contractible sets in $X$ which cover $Y$ (see \cite{W}).
Our second main result reads as follows:
\begin{thm}\label{thm2}
Assume that $(V'_1)$-$(V'_2)$ and $(f_1)$-$(f_4)$ hold. Then, for any $\delta>0$ such that
$$
M_{\delta}:=\{x\in \R^{N}: {\rm dist}(x, M)\leq \delta\}\subset \Lambda,
$$ 
there exists $\e_{\delta}>0$ such that, for any $\e\in (0, \e_{\delta})$, problem \eqref{P} has at least $cat_{M_{\delta}}(M)$ positive solutions. Moreover, if $u_{\e}$ denotes one of these solutions and $x_{\e}$ is a global maximum point of $u_{\e}$, then we have 
$$
\lim_{\e\rightarrow 0} V(\e x_{\e})=-V_{0}.
$$	
\end{thm}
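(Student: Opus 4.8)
The plan is to adapt the classical variational scheme for semiclassical problems --- comparison with an autonomous limiting problem, construction of two maps between a sublevel set of the (generalized) Nehari manifold and a neighbourhood of $M$, and an application of Ljusternik--Schnirelman theory --- to the extended (local) formulation already used in the proof of Theorem~\ref{thm1}. As there, since nothing is known about $V$ at infinity, I would first penalize $f$ outside $\La$ following del Pino--Felmer \cite{DF}, obtaining a modified functional $\J_\e$ on the extension space $\X$ with least-energy level $c_\e$ and mountain-pass geometry \cite{AR}; because $f$ is merely continuous, the Nehari manifold $\N_\e$ of $\J_\e$ is treated in the generalized sense, via its homeomorphic identification with the unit sphere of a suitable subspace. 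The comparison object is the autonomous equation with constant potential $-V_0$,
\[
(-\Delta+m^{2})^{s}u-V_{0}u=f(u)\quad\text{in }\R^{N};
\]
from $(f_1)$--$(f_4)$ (monotonicity of $t\mapsto f(t)/t$, the Ambrosetti--Rabinowitz condition, and a Lions-type vanishing lemma) one obtains a positive least-energy solution $w$, with exponential decay and with compactness in $\h$, up to translations, of the whole set of ground states; let $c_{V_0}$ denote the associated least energy.

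The core of the proof is to establish $c_\e\ri c_{V_0}$ and to construct the two maps. Fix $\delta>0$ with $M_\delta\subset\La$. For the first map, pick a cut-off $\eta$ supported near the origin and, for $y\in M$, let $\Phi_\e(y)$ be the unique positive multiple lying on $\N_\e$ of a truncation of the ($s$-harmonic) extension of $w(\cdot-y/\e)$; a change of variables together with the exponential decay of $w$ gives $\J_\e(\Phi_\e(y))\ri c_{V_0}$ uniformly for $y\in M$, which in particular yields $c_\e\ri c_{V_0}$. For the second map I would analyse sequences $(u_n)\subset\N_{\e_n}$ with $\J_{\e_n}(u_n)\ri c_{V_0}$ and $\e_n\ri0$: a concentration-compactness analysis carried out on the traces $u_n(\cdot,0)$ shows that, after suitable translations by points $y_n$, these traces converge in $\h$ to a ground state of the autonomous problem, and that $\e_n y_n$ stays in a small neighbourhood of $M$. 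This allows me to define a barycentre map
\[
\beta_\e(u)=\frac{\dis\int_{\R^{N}}\chi(\e x)\,|u(x,0)|^{2}\,dx}{\dis\int_{\R^{N}}|u(x,0)|^{2}\,dx},
\]
with $\chi$ a truncation of the identity, from the sublevel set $\{u\in\N_\e:\J_\e(u)\le c_{V_0}+h_\e\}$ (for a suitable $h_\e\ri0$) into $M_\delta$, and to show that $\beta_\e\circ\Phi_\e$ is homotopic to the inclusion $M\hookrightarrow M_\delta$.

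With these ingredients the conclusion follows from the abstract Ljusternik--Schnirelman machinery (see \cite{W}): $\J_\e$ restricted to $\N_\e$ satisfies the Palais--Smale condition below a level lying strictly above $c_{V_0}$ --- here the penalization is what rules out loss of mass at infinity --- so the existence of $\Phi_\e$ and $\beta_\e$ with $\beta_\e\circ\Phi_\e\simeq\mathrm{id}$ forces the above sublevel set to have category at least $cat_{M_\delta}(M)$, hence $\J_\e$ has at least $cat_{M_\delta}(M)$ critical points on $\N_\e$, i.e. at least that many positive solutions of the penalized problem. For $\e$ small these solve \eqref{P}: this is exactly the Moser iteration together with the $L^{\infty}$ and decay estimates of Lemmas~\ref{moser} and \ref{lem2.6AM}, which also localize, after rescaling, the global maximum point $x_\e$ near $M$ and give $\lim_{\e\ri0}V(\e x_\e)=-V_0$. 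The main obstacle is the concentration analysis behind $\beta_\e$: since $(-\Delta+m^{2})^{s}$ is nonlocal and, unlike $(-\Delta)^{s}$, possesses no scaling invariance, the usual rescaling arguments must be replaced by translations performed on the extended functions, with careful control of the trace terms, and one must prove that finite-energy functions on $\N_\e$ with energy close to $c_{V_0}$ genuinely concentrate near $M$ rather than spreading out or splitting into several bumps.
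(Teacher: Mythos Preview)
Your proposal is correct and follows essentially the same route as the paper: penalization \`a la del Pino--Felmer on the extended problem, the generalized Nehari manifold framework of Szulkin--Weth \cite{SW} (via the homeomorphism with $\mathbb{S}_\e^+$, since $f$ is only continuous), construction of the maps $\Phi_\e$ and $\beta_\e$ with $\beta_\e\circ\Phi_\e$ homotopic to the inclusion, and then the Moser-type estimates of Lemmas~\ref{moser} and \ref{lem2.6AM} to pass from the penalized to the original problem. The only point worth sharpening is that the abstract Ljusternik--Schnirelman result is applied not to $\J_\e|_{\N_\e}$ directly but to the reduced functional $\psi_\e$ on the incomplete manifold $\mathbb{S}_\e^+$, using specifically Theorem~27 of \cite{SW}; the Palais--Smale condition for $\J_\e$ holds at every level (thanks to the penalization), not merely below some threshold.
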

The proof of Theorem \ref{thm2} will be obtained by combining variational methods, a penalization technique and Ljusternik-Schnirelman category theory. 
Since the nonlinearity $f$ is only continuous, we cannot use standard arguments for $C^{1}$-Nehari manifold (see \cite{W}) and we overcome the non differentiability of the Nehari manifold by taking advantage of some variants of critical point theorems from Szulkin and Weth \cite{SW}. We stress that a similar approach for nonlocal problems appeared in \cite{Aampa, FJ}, but with respect to these papers our analysis is rather tough due to the different structure of the energy functional associated to the extended modified problem (this difficulty is again caused by the lack of scaling invariance for $(-\Delta+m^{2})^{s}$).
Therefore, some clever and intriguing estimates will be needed to achieve our purpose.
As far as we known, this is the first time that the penalization scheme and topological arguments are mixed to get multiple solutions for fractional relativistic Schr\"odinger equations.

The paper is organized as follows. In section $2$ we collect some notations and preliminary results which will be used along the paper. In section $3$ we introduce a penalty functional in order to apply suitable variational arguments. The section $4$ is devoted to the proof of Theorem \ref{thm1}. The proof of Theorem \ref{thm2} is given in Section $5$. Finally, we give some interesting results for $(-\Delta+m^{2})^{s}$ in the appendices A and B.

\section{preliminaries}
\subsection{Notations and functional setting}
We denote the upper half-space in $\R^{N+1}$ by
$$
\R^{N+1}_{+}:=\left\{(x, y)\in \R^{N+1}: y>0\right\},
$$
and for $(x, y)\in \R^{N+1}_{+}$ we consider the Euclidean norm $|(x, y)|:=\sqrt{|x|^{2}+y^{2}}$. By $B_{r}(x)$ we mean the (open) ball in $\R^{N}$ centered at $x\in \R^{N}$ with radius $r>0$. When $x=0$, we simply write $B_{r}$.

Let $p\in [1, \infty]$ and $A\subset \R^{N}$ be a measurable set. The notation $A^{c}:=\R^{N}\setminus A$ stands for the complement of $A$ in $\R^{N}$. 
We indicate by $L^{p}(A)$ the set of measurable functions $u: \R^{N}\ri \R$ such that
\begin{equation*}
|u|_{L^{p}(A)}:=\left\{
\begin{array}{ll}
\left(\int_{A} |u|^{p}\, dx\right)^{1/p}<\infty &\mbox{ if } p<\infty, \\
{\rm esssup}_{x\in A} |u(x)|  &\mbox{ if } p=\infty.
\end{array}
\right.
\end{equation*}
If $A=\R^{N}$, we simply write $|u|_{p}$ instead of $|u|_{L^{p}(\R^{N})}$. With $\|w\|_{L^{p}(\R^{N+1}_{+})}$ we denote the norm of $w\in L^{p}(\R^{N+1}_{+})$. For a generic real-valued function $v$, we set $v^{+}:=\max\{v, 0\}$, $v^{-}:=\min\{v, 0\}$, and $v_{-}:=\max\{-v, 0\}$.

Let $D\subset \R^{N+1}$ be a bounded domain, that is a bounded connected open set, with boundary $\partial D$. We denote by $\partial' D$ the interior of $\overline{D}\cap \partial\R^{N+1}_{+}$ in $\R^{N}$, and we set $\partial ''D:=\partial D\setminus \partial'D$.
For $R>0$, we put
\begin{align*}
&B^{+}_{R}:=\{(x,y)\in \mathbb{R}^{N+1}_{+}: |(x,y)|<R\}, \\
&\Gamma_{R}^{0}:=\partial'B_{R}^{+}=\{(x,0)\in \partial \mathbb{R}^{N+1}_{+}: |x|<R \}, \\
&\Gamma_{R}^{+}:=\partial''B_{R}^{+}=\{(x,y)\in \mathbb{R}^{N+1}: y\geq 0, \, |(x, y)|= R \}.
\end{align*}
Now, we introduce the Lebesgue spaces with weight (see \cite{FKS, JLX} for more details).
Let $D\subset \R^{N+1}_{+}$ be an open set and $r\in (1, \infty)$. 
Denote by $L^{r}(D, y^{1-2s})$ the weighted Lebesgue space \index{weighted Lebesgue space} of all measurable functions $v:D\rightarrow \R$ such that 
$$
\|v\|_{L^{r}(D, y^{1-2s})}:=\left(\iint_{D} y^{1-2s} |v|^{r}\, dx dy\right)^{\frac{1}{r}}<\infty.
$$
We say that $v\in H^{1}(D, y^{1-2s})$ if $v\in L^{2}(D, y^{1-2s})$ and its weak derivatives, collectively denoted by $\nabla v$, exist and belong to $L^{2}(D, y^{1-2s})$.
The norm of $v$ in $H^{1}(D, y^{1-2s})$ is given by
$$
\|v\|_{H^{1}(D, y^{1-2s})}:=\left(\iint_{D} y^{1-2s} (|\nabla v|^{2}+ v^{2}) \, dx dy\right)^{\frac{1}{2}}.
$$
It is clear that  $H^{1}(D, y^{1-2s})$ is a Hilbert space with the inner product
$$
\iint_{D} y^{1-2s} (\nabla v \cdot\nabla w+ v w) \, dx dy.
$$

Let $H^{s}(\R^{N})$ be the fractional Sobolev space defined as the completion of $C^{\infty}_{c}(\R^{N})$ with respect to the norm
$$
|u|_{H^{s}(\R^{N})}:=\left( \int_{\R^{N}} (|k|^{2}+m^{2})^{s} |\mathcal{F}u(k)|^{2} dk \right)^{\frac{1}{2}}.
$$
Then, $H^{s}(\R^{N})$ is continuously embedded in $L^{p}(\R^{N})$ for all $p\in [2, \2)$ and compactly in  $L^{p}_{loc}(\R^{N})$ for all $p\in [1, \2)$; see \cite{Adams, ArS, DPV, LL}. 
Next we define $X^{s}(\R^{N+1}_{+}):=H^{1}(\R^{N+1}_{+}, y^{1-2s})$ as the completion of $C^{\infty}_{c}(\overline{\R^{N+1}_{+}})$ with respect to the norm
$$
\|u\|_{X^{s}(\R^{N+1}_{+})}:=  \left(\iint_{\R^{N+1}_{+}} y^{1-2s} (|\nabla u|^{2}+m^{2}u^{2})\, dx dy  \right)^{\frac{1}{2}}.
$$
By Lemma 3.1 in \cite{FF}, we deduce that $\x$ is continuously embedded in $L^{2\gamma}(\R^{N+1}_{+}, y^{1-2s})$, 
that is
\begin{equation}\label{weightedE}
\|u\|_{L^{2\gamma}(\R^{N+1}_{+}, y^{1-2s})}\leq C_{*}\|u\|_{\x} \quad \mbox{ for all } u\in \x,
\end{equation}
where $\gamma:=1+\frac{2}{N-2s}$, and $L^{r}(\R^{N+1}_{+}, y^{1-2s})$ is the weighted Lebesgue space, with $r\in (1, \infty)$, endowed with the norm
$$
\|u\|_{L^{r}(\R^{N+1}_{+}, y^{1-2s})}:=\left(\iint_{\R^{N+1}_{+}} y^{1-2s} |u|^{r}\, dx dy\right)^{\frac{1}{r}}.
$$
Moreover, by Lemma 3.1.2 in \cite{DMV}, we also have that $\x$ is compactly embedded in $L^{2}(B_{R}^{+}, y^{1-2s})$ for all $R>0$. 
From Proposition 5 in \cite{FF}, we know that there exists a (unique) linear trace operator ${\rm Tr}: \x\ri \h$ such that 
\begin{equation}\label{traceineq}
\sqrt{\sigma_{s}} |{\rm Tr}(u)|_{H^{s}(\R^{N})}\leq \|u\|_{\x} \quad \mbox{ for all } u\in \x,
\end{equation}
where $\sigma_{s}:=2^{1-2s}\Gamma(1-s)/\Gamma(s)$. 
We also note the \eqref{traceineq} and the definition of $H^{s}$-norm imply that
\begin{equation}\label{m-ineq}
\sigma_{s}m^{2s} \int_{\R^{N}} |{\rm Tr}(u)|^{2}\, dx\leq \iint_{\R^{N+1}_{+}} y^{1-2s} (|\nabla u|^{2}+m^{2}u^{2})\, dx dy.
\end{equation}
In what follows, in order to simplify the notation, we denote ${\rm Tr}(u)$ by $u(\cdot, 0)$.

Since ${\rm Tr}(X^{s}(\R^{N+1}_{+}))\subset \h$ and the embedding $\h\subset L^{q}(\R^{N})$ is continuous for all $q\in [2, \2)$ and locally compact for all $q\in [1, \2)$, we obtain the following result:
\begin{thm}\label{Sembedding}
${\rm Tr}(X^{s}(\R^{N+1}_{+}))$ is continuously embedded in $L^{q}(\R^{N})$ for all $q\in [2, \2)$ and compactly embedded in $L^{q}_{loc}(\R^{N})$ for all $q\in [1, \2)$.
\end{thm}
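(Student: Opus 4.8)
The plan is to obtain the statement as a direct corollary of the boundedness of the trace operator established in \cite{FF} together with the Sobolev embeddings of $\h$ recalled above. First I would record that, by \eqref{traceineq}, the linear map ${\rm Tr}:\x\ri\h$ is bounded, with $|{\rm Tr}(u)|_{\h}\le\sigma_{s}^{-1/2}\|u\|_{\x}$ for every $u\in\x$; in particular ${\rm Tr}(\x)\subset\h$, so it is natural to topologize ${\rm Tr}(\x)$ as a subspace of $\h$ (equivalently, by the quotient norm induced from $\x$ modulo the kernel of ${\rm Tr}$).

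For the continuous embedding, since $\h$ is continuously embedded in $L^{q}(\R^{N})$ for every $q\in[2,\2)$, composing this with \eqref{traceineq} yields, for each such $q$, a constant $C_{q}>0$ with $|{\rm Tr}(u)|_{q}\le C_{q}\|u\|_{\x}$ for all $u\in\x$, which is precisely the asserted continuous embedding of ${\rm Tr}(\x)$ into $L^{q}(\R^{N})$.

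For the local compactness, I would argue by sequences: given a bounded sequence $(u_{n})\subset\x$, say $\|u_{n}\|_{\x}\le C$, inequality \eqref{traceineq} shows that $v_{n}:={\rm Tr}(u_{n})$ is bounded in $\h$; since $\h$ is compactly embedded in $L^{q}_{loc}(\R^{N})$ for every $q\in[1,\2)$, a diagonal extraction along an exhaustion of $\R^{N}$ by balls $B_{R}$ produces a subsequence of $(v_{n})$ converging in $L^{q}_{loc}(\R^{N})$. This gives the compact embedding of ${\rm Tr}(\x)$ into $L^{q}_{loc}(\R^{N})$.

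There is essentially no serious obstacle: the result is the composition of the bounded trace operator of \cite{FF} with the standard continuous and locally compact Sobolev embeddings of $\h$ stated earlier, and the only point worth making explicit is the identification of the topology on ${\rm Tr}(\x)$, so that boundedness of a family in ${\rm Tr}(\x)$ is read off from boundedness of a preimage family in $\x$ through \eqref{traceineq}.
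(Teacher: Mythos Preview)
Your proposal is correct and matches the paper's approach exactly: the paper states the theorem as an immediate consequence of the inclusion ${\rm Tr}(\x)\subset\h$ via \eqref{traceineq} together with the continuous and locally compact Sobolev embeddings of $\h$, without giving further details. Your write-up simply makes this reasoning explicit.
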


In order to circumvent the nonlocal character of the pseudo-differential operator $(-\Delta+m^{2})^{s}$, we make use of a variant of the extension method \cite{CS} given in \cite{CZN1, FF, StingaT}. 
More precisely, for any $u\in \h$ there exists a unique function $U\in \x$ solving the following problem
\begin{align*}
\left\{
\begin{array}{ll}
-\dive(y^{1-2s} \nabla U)+m^{2}y^{1-2s}U=0 &\mbox{ in } \R^{N+1}_{+}, \\
U(\cdot, 0)=u &\mbox{ on } \partial \R^{N+1}_{+}\cong\R^{N}. 
\end{array}
\right.
\end{align*}
The function $U$ is called the extension of $u$ and possesses the following properties:
\begin{compactenum}[$(i)$]
\item
$$
\frac{\partial U}{\partial \nu^{1-2s}}:=-\lim_{y\ri 0} y^{1-2s} \frac{\partial U}{\partial y}(x,y)=\sigma_{s}(-\Delta+m^{2})^{s}u(x) \quad \mbox{ in distribution sense, }
$$
\item $\sqrt{\sigma_{s}}|u|_{\h}=\|U\|_{\x}\leq \|V\|_{\x}$ for all $V\in \x$ such that $V(\cdot,0)=u$.
\item if $u\in \mathcal{S}(\R^{N})$ then $U\in C^{\infty}(\R^{N+1}_{+})\cap C(\overline{\R^{N+1}_{+}})$ and it can be expressed as
$$
U(x, y)=\int_{\R^{N}} P_{s, m}(x-z,y) u(z)\, dz,
$$
with
$$
P_{s, m}(x,y):=c'_{N,s} y^{2s} m^{\frac{N+2s}{2}} |(x, y)|^{-\frac{N+2s}{2}} K_{\frac{N+2s}{2}}(m|(x, y)|), 
$$
and 
$$
c'_{N, s}:=p_{N, s}\frac{2^{\frac{N+2s}{2}-1}}{\Gamma(\frac{N+2s}{2})},
$$
where $p_{N, s}$ is the constant for the (normalized) Poisson kernel with $m=0$; see \cite{StingaT}.\\
We note that $P_{s, m}$ is the Fourier transform of $k\mapsto \vartheta(\sqrt{|k|^{2}+m^{2}})$ and that
\begin{align}\label{Nkernel}
\int_{\R^{N}} P_{s, m}(x, y)\, dx=\vartheta(m y),
\end{align}
where $\vartheta\in H^{1}(\R_{+}, y^{1-2s})$ solves the following ordinary differential equation
\begin{align*}
\left\{
\begin{array}{ll}
\vartheta''+\frac{1-2s}{y} \vartheta'-\vartheta=0 &\mbox{ in } \R_{+}, \\
\vartheta(0)=1, \quad  \lim_{y\ri \infty} \vartheta(y)=0. 
\end{array}
\right.
\end{align*}
We also recall that $\vartheta$ can be expressed via modified Bessel functions, more precisely, $\vartheta(r)=\frac{2}{\Gamma(s)} (\frac{r}{2})^{s}K_{s}(r)$; see \cite{FF} for more details.
\end{compactenum}

\noindent
Taking into account the previous facts, problem \eqref{P} can be realized in a local manner through the following nonlinear boundary value problem:
\begin{align}\label{EP}
\left\{
\begin{array}{ll}
-\dive(y^{1-2s} \nabla w)+m^{2}y^{1-2s}w=0 &\mbox{ in } \R^{N+1}_{+}, \\
\frac{\partial w}{\partial \nu^{1-2s}}=\sigma_{s} [-V_{\e}(x) w(\cdot, 0)+f(w(\cdot, 0))] &\mbox{ on } \R^{N}, 
\end{array}
\right.
\end{align}
where $V_{\e}(x):=V(\e x)$. For simplicity of notation, we will omit the constant $\sigma_{s}$ from the second equation in \eqref{EP}.
For all $\e>0$, we define
$$
X_{\e}:=\left\{u\in X^{s}(\R^{N+1}_{+}): \int_{\R^{N}} V_{\e}(x) u^{2}(x, 0)\, dx<\infty\right\}
$$
endowed with the norm
$$
\|u\|_{\e}:=\left(\|u\|^{2}_{\x}+ \int_{\R^{N}} V_{\e}(x) u^{2}(x, 0)\, dx \right)^{\frac{1}{2}}.
$$
We note that $\|\cdot\|_{\e}$ is actually a norm. Indeed, 
$$
\|u\|_{\e}^{2}=\left[\|u\|^{2}_{\x}-V_{1}\int_{\R^{N}}u^{2}(x, 0)\, dx\right]+\int_{\R^{N}} (V_{\e}(x)+V_{1})u^{2}(x, 0)\, dx,
$$
and using \eqref{m-ineq} and $(V_1)$ we can see that 
$$
\left(1-\frac{V_{1}}{m^{2s}}\right)\|u\|_{\x}^{2}\leq\left[\|u\|^{2}_{\x}-V_{1}\int_{\R^{N}}u^{2}(x, 0)\, dx\right]\leq \|u\|_{\x}^{2}
$$
that is 
$$
\|u\|^{2}_{\x}-V_{1}\int_{\R^{N}}u^{2}(x, 0)\, dx
$$
is a norm equivalent to $\|\cdot\|^{2}_{\x}$. This observation yields the required claim.
Clearly, $X_{\e}\subset X^{s}(\R^{N+1}_{+})$, and by \eqref{m-ineq} and $(V_1)$, we have
\begin{equation}\label{equivalent}
\|u\|_{X^{s}(\R^{N+1}_{+})}^{2}\leq \left(\frac{m^{2s}}{m^{2s}-V_{1}}\right) \|u\|^{2}_{\e} \quad \mbox{ for all } u\in X_{\e}.
\end{equation}
Moreover, $X_{\e}$ is a Hilbert space endowed with the inner product
\begin{align*}
\langle u, v\rangle_{\e}:=\iint_{\R^{N+1}_{+}} y^{1-2s}(\nabla u\cdot\nabla v+m^{2}uv)\, dx dy+\int_{\R^{N}} V_{\e}(x) u(x, 0)v(x, 0)\, dx
\end{align*}
for all $u, v\in X_{\e}$. With $X_{\e}^{*}$ we will denote the dual space of $X_{\e}$.
\subsection{Elliptic estimates}

For reader's convenience, we list some results about local Schauder estimates for degenerate elliptic equations involving the operator 
$$
-{\rm div}(y^{1-2s} \nabla v)+m^{2}y^{1-2s}v, \quad \mbox{ with }m>0.
$$
Firstly we give the following definition:
\begin{defn}
Let $D\subset \R^{N+1}_{+}$ be a bounded domain with $\partial'D\neq \emptyset$. Let $f\in L^{\frac{2N}{N+2s}}_{loc}(\partial'D)$ and $g\in L^{1}_{loc}(\partial'D)$.
Consider 
\begin{equation}\label{JLXproblem}
\left\{
\begin{array}{ll}
-{\rm div}(y^{1-2s} \nabla v)+m^{2} y^{1-2s}v=0  &\mbox{ in } D, \\
\frac{\partial v}{\partial \nu^{1-2s}}=f(x)v+g(x) &\mbox{ on } \partial'D.
\end{array}
\right.
\end{equation}
We say that $v\in H^{1}(D, y^{1-2s})$ is a weak supersolution
 (resp. subsolution) to \eqref{JLXproblem} in $D$ if for any nonnegative $\varphi \in C^{1}_{c}(D\cup \partial'D)$, 
\begin{equation*}
\iint_{D} y^{1-2s} (\nabla v\cdot \nabla \phi+m^{2} v\varphi) \, dx dy\geq (\leq) \int_{\partial'D} [f(x) v(x, 0)+g(x)] \varphi(x,0) \, dx.
\end{equation*}
We say that $v\in H^{1}(D, y^{1-2s})$ is a weak solution to \eqref{JLXproblem} in $D$ if it is both a weak supersolution and a weak subsolution.
\end{defn}
For $R>0$, we set $Q_{R}:=B_{R}\times (0, R)$.
Then we recall the following version of De Giorgi-Nash-Moser type theorems established in \cite{FF}
(see also \cite{FKS, JLX} for the case $m=0$).
\begin{prop}\cite{FF}\label{PROPFF}
Let $f, g\in L^{q}(B_{1})$ for some $q>\frac{N}{2s}$. 
\begin{compactenum}[$(i)$]
\item Let $v\in H^{1}(Q_{1},y^{1-2s})$ be a weak subsolution to \eqref{JLXproblem} in $Q_{1}$.
Then 
$$
\sup_{Q_{1/2}} v^{+}\leq C(\|v^{+}\|_{L^{2}(Q_{1}, y^{1-2s})}+|g^{+}|_{L^{q}(B_{1})}),
$$
where $C>0$ depends only on $m$, $N$, $s$, $q$, $|f^{+}|_{L^{q}(B_{1})}$.
\item (weak Harnack inequality) Let $v\in H^{1}(Q_{1},y^{1-2s})$ be a nonnegative weak supersolution to \eqref{JLXproblem} in $Q_{1}$. 
Then for some $p_{0}>0$ and any $0<\mu<\tau<1$ we have
$$
\inf_{\bar{Q}_{\mu}} v+|g_{-}|_{L^{q}(B_{1})}\geq C\| v \|_{L^{p_{0}}(Q_{\tau}, y^{1-2s})},
$$
where $C>0$ depends only on $m$, $N$, $s$, $q$, $\mu$, $\tau$, $|f_{-}|_{L^{q}(B_{1})}$.
\item 
Let $v\in H^{1}(Q_{1},y^{1-2s})$ be a nonnegative weak solution to \eqref{JLXproblem} in $Q_{1}$. 
Then $v\in C^{0,\alpha}(\overline{Q_{1/2}})$ and in addition
$$
\|v\|_{C^{0,\alpha}(\overline{Q_{1/2}})}\leq C(\|v\|_{L^{2}(Q_{1})}+|g|_{L^{q}(B_{1})}),
$$
with $C>0$, $\alpha\in (0, 1)$ depending only on $m$, $N$, $s$, $p$, $|f|_{L^{q}(B_{1})}$.
\end{compactenum}
\end{prop}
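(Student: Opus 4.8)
The plan is to reconstruct the proof along classical De Giorgi--Nash--Moser lines, adapted to the degenerate weight and the Neumann-type boundary condition. The structural fact that makes everything work is that $y^{1-2s}$ is a Muckenhoupt $A_2$ weight on $\R^{N+1}$ (since $1-2s\in(-1,1)$ for $s\in(0,1)$), so that on each half-cylinder $Q_R=B_R\times(0,R)$ one has at one's disposal: a weighted Sobolev inequality with gain exponent $\chi:=1+\frac{2}{N-2s}$ (the cylinder analogue of \eqref{weightedE}), a weighted Poincaré inequality, and the weighted trace--Sobolev inequality on $\Gamma^0_R$ (compatible with \eqref{traceineq}). These are exactly the Fabes--Kenig--Serapioni ingredients, now supplemented by the boundary trace, and the whole scheme for $m=0$ is the one of \cite{JLX}. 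The extra zeroth-order term $m^2 y^{1-2s}v$ is a genuine lower-order term on the bounded cylinder (it is bounded by $m^2$ times a quantity of the same type as the weighted $L^2$-mass of $v$ with respect to $y^{1-2s}\,dxdy$), so it only enters the dependence of the constants on $m$.

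For $(i)$ I would run Moser iteration with positive powers. Set $\bar v:=v^++k$, $k:=|g^+|_{L^q(B_1)}$, and test the subsolution inequality with $\varphi=\eta^2\bar v^{\beta}$, $\beta\ge 1$, $\eta\in C^1_c(Q_1\cup\Gamma^0_1)$. After the usual manipulations one obtains a Caccioppoli estimate of the form
$$
\iint_{Q_1} y^{1-2s}\bigl|\nabla\bigl(\eta\,\bar v^{(\beta+1)/2}\bigr)\bigr|^2\,dxdy\le C(\beta+1)\iint_{Q_1} y^{1-2s}\bigl(|\nabla\eta|^2+\eta^2\bigr)\bar v^{\beta+1}\,dxdy+\Big|\!\int_{\partial'Q_1}\!\!(fv+g)\,\eta^2\bar v^{\beta}\,dx\Big|,
$$
where the boundary integral is treated by Hölder with exponent $q>N/2s$ together with the trace--Sobolev inequality, which lets one absorb a small multiple of the left-hand side. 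Feeding this into the weighted Sobolev inequality produces the reverse-Hölder step $\|\bar v\|_{L^{\chi p}(Q_{r'},y^{1-2s})}\le (C\beta)^{1/p}\|\bar v\|_{L^{p}(Q_{r},y^{1-2s})}$ with $p=\beta+1$ and $1/2\le r'<r\le1$; iterating over $p=2\chi^j$ on a shrinking sequence of radii gives $\sup_{Q_{1/2}}\bar v\le C\|\bar v\|_{L^2(Q_1,y^{1-2s})}$, hence the stated bound, with $C$ depending on $m,N,s,q,|f^+|_{L^q(B_1)}$.

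For $(ii)$ the same machinery applied to the supersolution $\bar v:=v+|g_-|_{L^q(B_1)}$ with negative powers $\varphi=\eta^2\bar v^{\beta}$, $\beta<0$, $\beta\ne-1$, yields a reverse-Hölder chain for negative exponents giving $\inf_{\bar Q_\mu}\bar v\ge c\,\bigl(\iint_{Q_\sigma}y^{1-2s}\bar v^{-p}\,dxdy\bigr)^{-1/p}$ for small $p>0$; the choice $\beta=-1$ shows that $\log\bar v$ has bounded oscillation over $Q_\tau$ with respect to $y^{1-2s}\,dxdy$. A John--Nirenberg-type lemma for $A_2$-weighted cylinders (or, cleaner, the abstract Bombieri--Giusti iteration lemma, which bypasses John--Nirenberg entirely) bridges these positive and negative small exponents and gives $\inf_{\bar Q_\mu}\bar v\ge C\|\bar v\|_{L^{p_0}(Q_\tau,y^{1-2s})}$ for a universal $p_0>0$. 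Finally $(iii)$ follows from the classical oscillation-decay argument: on dyadic half-cylinders $Q_{2^{-j}}$ apply $(i)$ and $(ii)$ to the two nonnegative solutions $\sup_{Q_{2^{-j}}}v-v$ and $v-\inf_{Q_{2^{-j}}}v$ to obtain $\operatorname{osc}_{Q_{2^{-j-1}}}v\le\gamma\operatorname{osc}_{Q_{2^{-j}}}v+C2^{-\sigma j}$ with $\gamma\in(0,1)$ (the power loss coming from the $L^q$-norm of $g$, which is available because $q>N/2s$), which iterates to $\operatorname{osc}_{Q_r}v\le Cr^{\alpha}\bigl(\|v\|_{L^2(Q_1,y^{1-2s})}+|g|_{L^q(B_1)}\bigr)$ and hence to the $C^{0,\alpha}(\overline{Q_{1/2}})$ estimate.

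The main obstacle is the uniform control, over the whole range of Moser exponents $\beta$, of the boundary term $\int_{\partial'Q_1}(fv+g)\varphi$: one must interpolate the trace of $\eta\bar v^{(\beta+1)/2}$ between its weighted $L^2$-norm on $\Gamma^0$ and the weighted Dirichlet energy on $Q_1$, absorbing the energy part with a small constant while keeping $|f|_{L^q}$ and $|g|_{L^q}$ as the only data on the coefficients, and arrange that the resulting constants grow only polynomially in $\beta$ so that the Moser product $\prod_j(C\beta_j)^{1/\beta_j}$ converges. By contrast, the lack of scale invariance caused by $m^2 y^{1-2s}v$ is only a bookkeeping matter: on the bounded half-cylinders this term is harmless, and one simply tracks its contribution to the $m$-dependence of the constants.
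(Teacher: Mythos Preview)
The paper does not give its own proof of this proposition: it is quoted verbatim from \cite{FF} (with \cite{FKS, JLX} as the $m=0$ antecedents), so there is no in-paper argument to compare against. Your sketch is the correct one and is precisely the scheme carried out in those references: Moser iteration with positive powers for the local boundedness, the negative-power iteration plus a $\log$-BMO/Bombieri--Giusti bridge for the weak Harnack inequality, and the standard oscillation-decay argument for the H\"older estimate, all run in the $A_2$-weighted setting of \cite{FKS} with the boundary trace handled as in \cite{JLX}; your remark that the $m^2 y^{1-2s} v$ term is a harmless lower-order perturbation on bounded cylinders is exactly the point of \cite{FF} over the $m=0$ case.
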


\section{The penalization argument}  
In order to find solutions to \eqref{EP}, we adapt the penalization approach in \cite{DF} 
which permits to study our problem via variational arguments.
Fix $\kappa>\max\{\frac{V_{1}}{m^{2s}-V_{1}}, \frac{\theta}{\theta-2}\}>1$ and $a>0$ such that $\frac{f(a)}{a}=\frac{V_{1}}{\kappa}$. Define 
\begin{equation*}
\tilde{f}(t):=\left\{
\begin{array}{ll}
f(t) &\mbox{ for } t<a, \\
\frac{V_{1}}{\kappa} t  &\mbox{ for } t\geq a.
\end{array}
\right.
\end{equation*}
Let us consider the following Carath\'eodory function
$$
g(x, t):=\chi_{\Lambda}(x) f(t)+(1-\chi_{\Lambda}(x)) \tilde{f}(t) \quad \mbox{ for } (x,t)\in \R^{N}\times \R,
$$
where $\chi_{\Lambda}$ denotes the characteristic function of $\Lambda$. Set $G(x,t):=\int_{0}^{t} g(x, \tau)\, d\tau$.
By $(f_1)$-$(f_4)$ it follows that
\begin{compactenum}[$(g_1)$]
\item $\lim_{t\ri 0} \frac{g(x, t)}{t}=0$ uniformly in $x\in \R^{N}$,
\item $g(x, t)\leq f(t)$ for all $x\in \R^{N}$, $t>0$,
\item $(i)$ $0< \theta G(x,t)\leq tg(x, t)$ for all $x\in \Lambda$ and $t>0$, \\
$(ii)$ $0\leq 2 G(x,t)\leq tg(x, t)\leq \frac{V_{1}}{\kappa} t^{2}$ for all $x\in \Lambda^{c}$ and $t>0$,
\item for each $x\in \Lambda$ the function $t\mapsto \frac{g(x, t)}{t}$ is increasing in $(0, \infty)$, and  for each $x\in \Lambda^{c}$ the function $t\mapsto \frac{g(x, t)}{t}$ is increasing in $(0, a)$.
\end{compactenum}

\noindent
Consider the following modified problem:
\begin{align}\label{MEP}
\left\{
\begin{array}{ll}
-\dive(y^{1-2s} \nabla u)+m^{2}y^{1-2s}u=0 &\mbox{ in } \R^{N+1}_{+}, \\
\frac{\partial u}{\partial \nu^{1-2s}}=-V_{\e}(x) u(\cdot, 0)+g_{\e}(\cdot, u(\cdot, 0)) &\mbox{ on } \R^{N},
\end{array}
\right.
\end{align}
where we set $g_{\e}(x, t):=g(\e x, t)$.  Obviously, if $u_{\e}$ is a positive solution of \eqref{MEP} satisfying $u_{\e}(x,0)< a$ for all $x\in \Lambda_{\e}^{c}$, where $\Lambda_{\e}:=\{x\in \R^{N}: \e x\in \Lambda\}$, then $u_{\e}$ is indeed a positive solution of \eqref{EP}.
We associate with problem \eqref{MEP} the energy functional $J_{\e}: X_{\e}\ri \R$ defined as
$$
J_{\e}(u):=\frac{1}{2}\|u\|^{2}_{\e}-\int_{\R^{N}} G_{\e}(x,u(x,0))\, dx.
$$
Clearly, $J_{\e}\in C^{1}(X_{\e}, \R)$ and its differential is given by
\begin{align*}
\langle J'_{\e}(u), v\rangle&=\iint_{\R^{N+1}_{+}} y^{1-2s} (\nabla u\cdot\nabla v+m^{2}u v)\, dx dy+\int_{\R^{N}} V_{\e}(x) u(x, 0) v(x, 0)\, dx\\
&\quad -\int_{\R^{N}} g_{\e}(x, u(x, 0)) v(x, 0)\, dx
\end{align*}
for all $u, v\in X_{\e}$. Thus, the critical points of $J_{\e}$ correspond to the weak solutions of \eqref{MEP}. To find these critical
points, we will apply the mountain pass theorem \cite{AR}.
We start by proving that $J_{\e}$ satisfies the geometric features required by this theorem.
\begin{lem}\label{lemma1}
$J_{\e}$ has a mountain pass geometry, that is:
\begin{compactenum}[$(i)$]
\item $J_{\e}(0)=0$,
\item there exist $\alpha, \rho>0$ such that $J_{\e}(u)\geq \alpha$ for all $u\in X_{\e}$ such that $\|u\|_{\e}=\rho$,
\item there exists $v\in X_{\e}$ such that $\|v\|_{\e}>\rho$ and $J_{\e}(v)<0$.
\end{compactenum}
\end{lem}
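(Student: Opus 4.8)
The plan is to verify the three mountain pass conditions using the structural hypotheses on $g$ collected above, together with the embeddings of Theorem \ref{Sembedding} and the norm equivalence \eqref{equivalent}.

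\textbf{Item (i).} This is immediate: since $G_{\e}(x,0)=\int_0^0 g_\e(x,\tau)\,d\tau=0$ and $\|0\|_\e=0$, we get $J_\e(0)=0$.

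\textbf{Item (ii): local behavior near the origin.} First I would derive a pointwise bound on $G_\e$. Combining $(g_1)$ and $(g_2)$ (the latter together with $(f_1)$--$(f_2)$) one obtains, for any $\xi>0$, a constant $C_\xi>0$ with $g(x,t)\le \xi |t| + C_\xi |t|^{p}$ for all $x\in\R^N$ and all $t\in\R$, hence $G_\e(x,t)\le \frac{\xi}{2} t^2 + \frac{C_\xi}{p+1}|t|^{p+1}$. Plugging this into $J_\e$ and using the trace embedding, i.e. $\int_{\R^N} u^2(x,0)\,dx$ controlled by $\|u\|_{\x}^2$ via \eqref{m-ineq} and $\int_{\R^N}|u(x,0)|^{p+1}\,dx\le C\|u(\cdot,0)\|_{\h}^{p+1}\le C'\|u\|_{\x}^{p+1}$ (Theorem \ref{Sembedding}, noting $p+1\in(2,\2)$), together with the equivalence \eqref{equivalent} between $\|\cdot\|_\e$ and $\|\cdot\|_{\x}$, yields
\begin{equation*}
J_\e(u)\ge \frac12\|u\|_\e^2 - \frac{\xi}{2}\,C_1\|u\|_\e^2 - C_2\|u\|_\e^{p+1} = \left(\frac12 - \frac{\xi C_1}{2} - C_2\|u\|_\e^{p-1}\right)\|u\|_\e^2 .
\end{equation*}
Choosing $\xi$ small so that $\frac12-\frac{\xi C_1}{2}\ge\frac14$, and then $\rho>0$ small so that $C_2\rho^{p-1}\le\frac18$, we get $J_\e(u)\ge\frac18\rho^2=:\alpha>0$ whenever $\|u\|_\e=\rho$, since $p-1>0$.

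\textbf{Item (iii): a direction along which $J_\e$ goes to $-\infty$.} Here I would exploit the superquadratic growth of $G$ inside $\Lambda$. Fix any nonnegative $\varphi\in C_c^\infty(\overline{\R^{N+1}_+})$ with $\varphi\not\equiv 0$ on $\partial\R^{N+1}_+$ and whose trace has support inside $\Lambda_\e$ (possible for each fixed $\e$ since $\Lambda_\e$ is a nonempty open set). For $x\in\Lambda$, condition $(g_3)(i)$ gives, by the standard integration argument, $G(x,t)\ge c_0\,t^{\theta}$ for $t\ge 1$ with $c_0=\min_{|x|\le R, t=1}G(x,t)>0$ (and $G\ge 0$ always), hence $G_\e(x,t)\ge c_0 t^\theta - c_1$ on $\Lambda_\e$. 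Therefore, for $t>0$,
\begin{equation*}
J_\e(t\varphi)\le \frac{t^2}{2}\|\varphi\|_\e^2 - c_0\, t^{\theta}\int_{\Lambda_\e}\varphi^\theta(x,0)\,dx + c_1|\supp\varphi(\cdot,0)| ,
\end{equation*}
and since $\theta>2$ and $\int_{\Lambda_\e}\varphi^\theta(x,0)\,dx>0$, the right-hand side tends to $-\infty$ as $t\to+\infty$. Picking $v:=t\varphi$ with $t$ large enough gives both $\|v\|_\e>\rho$ and $J_\e(v)<0$.

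\textbf{Main obstacle.} None of the three steps is deep; the only point requiring care is item (ii), where one must chain together the Neumann trace embedding $\x\hookrightarrow\h\hookrightarrow L^{p+1}(\R^N)$ with the right exponent bookkeeping ($p+1<\2$ thanks to $(f_2)$) and keep track of the fact that $\|\cdot\|_\e$ and $\|\cdot\|_{\x}$ are uniformly equivalent via \eqref{equivalent}, so that the constants $\alpha,\rho$ can in fact be taken independent of $\e$ — a uniformity that will be needed later in the penalization scheme. In item (iii) the mild subtlety is that the test direction $\varphi$ (hence the threshold $t$) may depend on $\e$, but that is harmless for the mountain pass geometry itself.
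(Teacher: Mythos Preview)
Your proof is correct and follows essentially the same route as the paper: the same growth estimate on $G_\e$ combined with the trace embeddings and \eqref{m-ineq}, \eqref{equivalent} for item (ii), and the same test-function argument exploiting $(f_3)$/$(g_3)$-$(i)$ inside $\Lambda_\e$ for item (iii). The only cosmetic difference is that the paper uses the exponent $2^{*}_{s}$ rather than $p+1$ in the upper bound for $G_\e$, and your remark on the $\e$-uniformity of $\alpha,\rho$ is a useful observation that the paper leaves implicit.
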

\begin{proof}
Clearly, $(i)$ is true. By $(f_1)$, $(f_2)$, $(g_1)$, $(g_2)$,  we deduce that for all $\eta>0$ there exists $C_{\eta}>0$ such that
\begin{equation}\label{growthg}
|g_{\e}(x, t)|\leq \eta |t|+C_{\eta}|t|^{\2-1} \quad \mbox{ for all } (x, t)\in \R^{N}\times \R,
\end{equation}
and
\begin{equation}\label{growthG}
|G_{\e}(x, t)|\leq \frac{\eta}{2} |t|^{2}+\frac{C_{\eta}}{\2}|t|^{\2} \quad \mbox{ for all } (x, t)\in \R^{N}\times \R.
\end{equation}
Fix $\eta\in (0, m^{2s}-V_{1})$. Using \eqref{growthG}, \eqref{m-ineq}, \eqref{equivalent} and Theorem \ref{Sembedding}, we obtain
\begin{align*}
J_{\e}(u)&\geq \frac{1}{2} \|u\|^{2}_{\e}-\frac{\eta}{2} |u(\cdot, 0)|_{2}^{2}-\frac{C_{\eta}}{\2}  |u(\cdot, 0)|^{\2}_{\2} \\
&=\frac{1}{2} \|u\|^{2}_{\e}-\frac{\eta}{2m^{2s}} m^{2s} |u(\cdot, 0)|_{2}^{2}-\frac{C_{\eta}}{\2}  |u(\cdot, 0)|^{\2}_{\2} \\
&\geq \frac{1}{2}\|u\|^{2}_{\e}-\frac{\eta}{2m^{2s}} \|u\|^{2}_{\x}-C_{\eta}C\|u\|^{\2}_{\x} \\
&\geq \left(\frac{1}{2}-  \frac{\eta}{2(m^{2s}-V_{1})} \right) \|u\|^{2}_{\e}-C_{\eta}C\|u\|^{\2}_{\e}
\end{align*}
from which we deduce that $(ii)$ holds.

To prove $(iii)$, let $v\in C^{\infty}_{c}(\overline{\R^{N+1}_{+}})$ be such that $v\not\equiv 0$ and $\supp(v(\cdot, 0))\subset \Lambda_{\e}$. Then, by $(f_3)$, we have
\begin{align*}
J_{\e}(tv)&=\frac{t^{2}}{2} \|v\|^{2}_{\e}-\int_{\R^{N}} F(t v(x, 0))\, dx\\
&\leq \frac{t^{2}}{2} \|v\|^{2}_{\e}-Ct^{\theta}\int_{\Lambda_{\e}} |v(x, 0)|^{\theta}\, dx+C'|\Lambda_{\e}|\ri -\infty  \quad \mbox{ as } t\ri \infty.
\end{align*}
This completes the proof of the lemma.
\end{proof}

By Lemma \ref{lemma1} and using a variant of the mountain pass theorem without the Palais-Smale condition (see Theorem $2.9$ in \cite{W}),
we can find a sequence $(u_{n})\subset X_{\e}$ such that
\begin{align*}
J_{\e}(u_{n})\ri c_{\e} \quad \mbox{ and } \quad J'_{\e}(u_{n})\ri 0 \mbox{ in } X^{*}_{\e},
\end{align*}
as $n\ri \infty$, where
$$
c_{\e}:=\inf_{\gamma\in \Gamma_{\e}}\max_{t\in [0, 1]}J_{\e}(\gamma(t))
$$
and
$$
\Gamma_{\e}:=\{\gamma\in C([0, 1], X_{\e}): \gamma(0)=0, \, J_{\e}(\gamma(1))< 0\}.
$$
In view of $(f_4)$, it is standard to check (see \cite{W}) that 
$$
c_{\e}=\inf_{u\in \mathcal{N}_{\e}} J_{\e}(u)=\inf_{u\in X_{\e}\setminus\{0\}} \max_{t\geq 0}J_{\e}(t u), 
$$ 
where
$$
\mathcal{N}_{\e}:=\{u\in X_{\e}: \langle J'_{\e}(u), u\rangle=0\}
$$
is the Nehari manifold associated with $J_{\e}$. 
From the growth conditions of $g$, we see that 
for any fixed $u\in \mathcal{N}_{\e}$ and $\delta>0$ small enough
\begin{align*}
0=\langle J'_{\e}(u), u\rangle&=\|u\|_{\e}^{2}-\int_{\R^{N}} g_{\e} (x, u)u\,dx\\
&\geq \|u\|_{\e}^{2}-\delta C_{1} \|u\|_{\e}^{2}-C_{\delta}\|u\|_{\e}^{\2} \\
&\geq C_{2}\|u\|^{2}_{\e}-C_{\delta}\|u\|_{\e}^{\2},
\end{align*}
so there exists $r>0$, independent of  $u$, such that 
\begin{equation}\label{uNr}
\|u\|_{\e}\geq r  \quad \mbox{ for all } u\in \mathcal{N}_{\e}.
\end{equation}
Now we prove the following fundamental compactness result:
\begin{lem}\label{lemma2}
$J_{\e}$ satisfies the Palais-Smale condition at any level $c\in \R$. 
\end{lem}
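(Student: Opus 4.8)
The plan is to follow the by-now classical scheme for penalized functionals. Let $(u_{n})\subset X_{\e}$ be a Palais--Smale sequence at level $c$, i.e. $J_{\e}(u_{n})\to c$ and $J'_{\e}(u_{n})\to 0$ in $X_{\e}^{*}$. First I would show that $(u_{n})$ is bounded. Computing $J_{\e}(u_{n})-\frac{1}{\theta}\langle J'_{\e}(u_{n}),u_{n}\rangle$ one gets
$$
c+o_{n}(1)+o_{n}(1)\|u_{n}\|_{\e}\ \geq\ \Big(\frac12-\frac1\theta\Big)\|u_{n}\|_{\e}^{2}+\int_{\R^{N}}\Big[\tfrac1\theta\,g_{\e}(x,u_{n}(x,0))u_{n}(x,0)-G_{\e}(x,u_{n}(x,0))\Big]dx,
$$
and one splits the integral over $\Lambda_{\e}$ and $\Lambda_{\e}^{c}$: on $\Lambda_{\e}$ the bracket is non-negative by $(g_{3})$, while on $\Lambda_{\e}^{c}$, since there $G_{\e}(x,t)\leq\frac12 g_{\e}(x,t)t\leq\frac{V_{1}}{2\kappa}t^{2}$, the bracket is $\geq-(\frac12-\frac1\theta)\frac{V_{1}}{\kappa}u_{n}^{2}(x,0)$. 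Using \eqref{m-ineq}, \eqref{equivalent}, $\theta>2$ and the choice $\kappa>\frac{V_{1}}{m^{2s}-V_{1}}$ made in Section~3 (so that $(\frac12-\frac1\theta)(1-\frac{V_{1}}{\kappa(m^{2s}-V_{1})})>0$), this yields $\|u_{n}\|_{\e}^{2}\leq C(1+\|u_{n}\|_{\e})$, hence the boundedness of $(u_{n})$.

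Up to a subsequence, $u_{n}\rightharpoonup u$ in $X_{\e}$; by Theorem~\ref{Sembedding} we have $u_{n}(\cdot,0)\to u(\cdot,0)$ in $L^{q}_{loc}(\R^{N})$ for all $q\in[1,\2)$ and a.e.\ in $\R^{N}$, and by the compact embedding $\x\hookrightarrow L^{2}(B_{R}^{+},y^{1-2s})$ (Lemma~3.1.2 in \cite{DMV}) we get $u_{n}\to u$ in $L^{2}(B_{R}^{+},y^{1-2s})$ for every $R>0$; passing to the limit in $\langle J'_{\e}(u_{n}),\varphi\rangle\to 0$ shows that $u$ is a critical point of $J_{\e}$. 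The core of the argument is a uniform decay estimate at infinity. Since $\Lambda$ is bounded and $\e>0$ is fixed, $\Lambda_{\e}$ is bounded; fix $R_{0}$ with $\Lambda_{\e}\subset B_{R_{0}}$. For $R\geq R_{0}$ take $\eta_{R}\in C^{\infty}(\R^{N})$ with $0\leq\eta_{R}\leq1$, $\eta_{R}\equiv0$ on $B_{R}$, $\eta_{R}\equiv1$ on $\R^{N}\setminus B_{2R}$, $|\nabla\eta_{R}|\leq C/R$, regarded as a function of $x$ only on $\R^{N+1}_{+}$. Testing $J'_{\e}(u_{n})$ with $\eta_{R}^{2}u_{n}$ and using $\supp\eta_{R}\subset\Lambda_{\e}^{c}$ (so that $g_{\e}(x,u_{n})u_{n}(x,0)\leq\frac{V_{1}}{\kappa}u_{n}^{2}(x,0)$), $V_{\e}\geq-V_{1}$, and the fact that all cutoff cross terms are $O(1/R)$ uniformly in $n$ by the boundedness above, one obtains
$$
\iint_{\R^{N+1}_{+}}y^{1-2s}\eta_{R}^{2}\big(|\nabla u_{n}|^{2}+m^{2}u_{n}^{2}\big)\,dx\,dy\ \leq\ V_{1}\Big(1+\frac1\kappa\Big)\int_{\R^{N}}\eta_{R}^{2}u_{n}^{2}(x,0)\,dx+\frac{C}{R}+o_{n}(1);
$$
applying \eqref{m-ineq} to $\eta_{R}u_{n}$ gives $m^{2s}\int_{\R^{N}}\eta_{R}^{2}u_{n}^{2}(x,0)\,dx\leq\iint_{\R^{N+1}_{+}}y^{1-2s}\eta_{R}^{2}(|\nabla u_{n}|^{2}+m^{2}u_{n}^{2})\,dx\,dy+C/R$, and substituting, the condition $\kappa>\frac{V_{1}}{m^{2s}-V_{1}}$ again (now in the form $1-\frac{V_{1}(1+1/\kappa)}{m^{2s}}>0$) forces
$$
\limsup_{n\to\infty}\Big[\iint_{\R^{N+1}_{+}\setminus B_{2R}^{+}}y^{1-2s}\big(|\nabla u_{n}|^{2}+m^{2}u_{n}^{2}\big)\,dx\,dy+\int_{\R^{N}\setminus B_{2R}}u_{n}^{2}(x,0)\,dx\Big]\ \longrightarrow\ 0\quad(R\to\infty).
$$

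Finally, from
$$
\|u_{n}-u\|_{\e}^{2}=\langle J'_{\e}(u_{n})-J'_{\e}(u),u_{n}-u\rangle+\int_{\R^{N}}\big[g_{\e}(x,u_{n})-g_{\e}(x,u)\big](u_{n}-u)(x,0)\,dx,
$$
the first term tends to $0$ since $J'_{\e}(u_{n})\to0$, $(u_{n})$ is bounded and $u_{n}-u\rightharpoonup0$. For the second, I split $\R^{N}=B_{2R}\cup(\R^{N}\setminus B_{2R})$: on $B_{2R}$ the subcritical growth \eqref{growthg} (sharpened to exponent $p$ by $(f_{2})$) together with the strong $L^{p+1}(B_{2R})$-convergence of $u_{n}(\cdot,0)$ make the corresponding integral vanish as $n\to\infty$ for each fixed $R$; on $\R^{N}\setminus B_{2R}\subset\Lambda_{\e}^{c}$ one has $|g_{\e}(x,t)|\leq\frac{V_{1}}{\kappa}|t|$, so that integral is bounded by $\frac{2V_{1}}{\kappa}\big(|u_{n}(\cdot,0)|_{L^{2}(\R^{N}\setminus B_{2R})}^{2}+|u(\cdot,0)|_{L^{2}(\R^{N}\setminus B_{2R})}^{2}\big)$, which by the decay estimate above is arbitrarily small for $R$ large, uniformly in $n$. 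Letting first $n\to\infty$ and then $R\to\infty$ gives $\|u_{n}-u\|_{\e}\to0$, which is the assertion. I expect the main obstacle to be precisely this tail estimate: because $V_{\e}$ is only controlled from below by $-V_{1}$ and need not be non-negative near infinity, the mass terms cannot be absorbed naively, and one must exploit the trace inequality \eqref{m-ineq} together with the sharp threshold $\kappa>\frac{V_{1}}{m^{2s}-V_{1}}$; keeping every cutoff error uniformly $O(1/R)$ also relies on the boundedness obtained in the first step.
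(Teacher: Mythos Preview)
Your proof is correct and complete; the boundedness step coincides with the paper's, but for the strong convergence you take a genuinely different route. The paper does \emph{not} use cutoff functions or a tail estimate. Instead, after showing that the weak limit $u$ is a critical point, it compares the two identities $\langle J'_{\e}(u),u\rangle=0$ and $\langle J'_{\e}(u_{n}),u_{n}\rangle=o_{n}(1)$, isolates on $\Lambda_{\e}^{c}$ the quantity $\mathcal{C}_{\e}(x,t):=(V_{\e}(x)+V_{1})t^{2}-g_{\e}(x,t)t\geq 0$, and uses local compactness on the bounded set $\Lambda_{\e}$ together with Fatou's lemma and weak lower semicontinuity to sandwich $\limsup$ and $\liminf$ of $\|u_{n}\|_{\x}^{2}-V_{1}|u_{n}(\cdot,0)|_{2}^{2}$ and of $\int_{\R^{N}}(V_{\e}+V_{1})u_{n}^{2}(\cdot,0)$; this yields $\|u_{n}\|_{\e}\to\|u\|_{\e}$ and hence strong convergence in the Hilbert space. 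Your approach is closer to the original del Pino--Felmer scheme: you localize with $\eta_{R}$, exploit that outside $\Lambda_{\e}$ the penalized term is linearly bounded, and then conclude via the difference identity for $\|u_{n}-u\|_{\e}^{2}$. Each method has its advantage: the paper's avoids any cutoff computation and delivers norm convergence directly, while yours gives an explicit uniform tail decay (useful in its own right) and is more robust if one later weakens the Hilbert structure. One small point: your displayed conclusion about $\iint_{\R^{N+1}_{+}\setminus B_{2R}^{+}}$ is slightly misstated, since $\eta_{R}$ cuts off only in $x$; what you actually prove (and all you need) is the decay over $\{|x|>2R\}\times(0,\infty)$ and hence of $\int_{|x|>2R}u_{n}^{2}(x,0)\,dx$.
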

\begin{proof}
Let $c\in \R$ and $(u_{n})\subset X_{\e}$ be a Palais-Smale sequence at the level $c$, namely
\begin{align*}
J_{\e}(u_{n})\ri c \quad \mbox{ and } \quad J'_{\e}(u_{n})\ri 0 \mbox{ in } X^{*}_{\e},
\end{align*}
as  $n\ri \infty$. By this fact, $(g_3)$, \eqref{m-ineq} and \eqref{equivalent}, for $n$ big enough, we have
\begin{align*}
c+1+\|u_{n}\|_{\e}&\geq J_{\e}(u_{n})-\frac{1}{\theta}\langle J'_{\e}(u_{n}), u_{n}\rangle\\
&=\left(\frac{1}{2}-\frac{1}{\theta}\right) \|u_{n}\|^{2}_{\e}+\frac{1}{\theta} \int_{\R^{N}} g_{\e}(x, u_{n}(x, 0))u_{n}(x, 0)-\theta G_{\e}(x, u_{n}(x, 0))\, dx\\
&\geq \left(\frac{1}{2}-\frac{1}{\theta}\right) \|u_{n}\|^{2}_{\e}-\left(\frac{1}{2}-\frac{1}{\theta}\right) \frac{V_{1}}{\kappa} \int_{\R^{N}}  u_{n}^{2}(x, 0)\, dx \\
&= \left(\frac{1}{2}-\frac{1}{\theta}\right) \|u_{n}\|^{2}_{\e}-\left(\frac{1}{2}-\frac{1}{\theta}\right) \frac{V_{1}}{\kappa m^{2s}} m^{2s}\int_{\R^{N}}  u_{n}^{2}(x, 0)\, dx \\
&\geq \left(\frac{1}{2}-\frac{1}{\theta}\right) \|u_{n}\|^{2}_{\e}-\left(\frac{1}{2}-\frac{1}{\theta}\right) \frac{V_{1}}{\kappa m^{2s}}\|u_{n}\|^{2}_{\x} \\
&\geq \left(\frac{1}{2}-\frac{1}{\theta}\right) \left(1- \frac{V_{1}}{\kappa(m^{2s}-V_{1})}\right) \|u_{n}\|^{2}_{\e}.
\end{align*}
Since $\theta>2$ and $\kappa>\frac{V_{1}}{m^{2s}-V_{1}}$, we deduce that $(u_{n})$ is bounded in $X_{\e}$.
Hence, up to a subsequence, we may assume that $u_{n}\rightharpoonup u$ in $X_{\e}$. Now we prove that this convergence is indeed strong. 
Using $(g_{1})$, $(g_{2})$, $(f_{2})$, the density of $C^{\infty}_{c}(\overline{\R^{N+1}_{+}})$ in $X_{\e}$, and applying Theorem \ref{Sembedding}, it is easy to check that $\langle J'_{\e}(u), \varphi\rangle=0$ for all $\varphi\in X_{\e}$. In particular,
\begin{align}\label{AM1}
&\|u\|^{2}_{\x}-V_{1}|u(\cdot, 0)|^{2}_{2}+\int_{\Lambda_{\e}} (V_{\e}(x)+V_{1}) u^{2}(x,0)\, dx
+\int_{\Lambda^{c}_{\e}} \mathcal{C}_{\e}(x, u(x, 0))\, dx \nonumber\\
&=\int_{\Lambda_{\e}} f(u(x, 0)) u(x, 0)\, dx.
\end{align}
where $\mathcal{C}_{\e}(x, t):=(V_{\e}(x)+V_{1})t^{2}-g_{\e}(x, t)t$. Note that, by $(g_3)$-$(ii)$, it holds
\begin{align}\label{OCCHIALI}
(V_{\e}(x)+V_{1})t^{2}\geq \mathcal{C}_{\e}(x, t)\geq \left(1-\frac{1}{\kappa} \right)(V_{\e}(x)+V_{1})t^{2}\geq 0 \quad \mbox{ for all } (x, t)\in \Lambda^{c}_{\e}\times [0, \infty).
\end{align}
On the other hand, by $\langle J'_{\e}(u_{n}), u_{n}\rangle=o_{n}(1)$, we get
\begin{align}\label{AM2}
&\|u_{n}\|^{2}_{\x}-V_{1}|u_{n}(\cdot, 0)|^{2}_{2}+\int_{\Lambda_{\e}} (V_{\e}(x)+V_{1}) u^{2}_{n}(x,0)\, dx+\int_{\Lambda^{c}_{\e}} \mathcal{C}_{\e}(x, u_{n}(x, 0))\, dx \nonumber\\
&=\int_{\Lambda_{\e}} f(u_{n}(x, 0)) u_{n}(x, 0)\, dx+o_{n}(1).
\end{align}
Since $\Lambda_{\e}$ is bounded, by the compactness of Sobolev embeddings in Theorem \ref{Sembedding} we have
\begin{align}\label{AM3}
\lim_{n\ri \infty} \int_{\Lambda_{\e}} f(u_{n}(x, 0)) u_{n}(x, 0)\, dx=\int_{\Lambda_{\e}} f(u(x, 0)) u(x, 0)\, dx,
\end{align}
\begin{align}\label{AM4}
\lim_{n\ri \infty} \int_{\Lambda_{\e}} (V_{\e}(x)+V_{1}) u_{n}^{2}(x,0)\, dx=\int_{\Lambda_{\e}} (V_{\e}(x)+V_1) u^{2}(x,0)\, dx.
\end{align}
In view of \eqref{AM1}, \eqref{AM2}, \eqref{AM3}, \eqref{AM4}, we obtain
\begin{align*}
&\limsup_{n\ri \infty} \left(\|u_{n}\|^{2}_{\x}-V_{1} |u_{n}(\cdot, 0)|^{2}_{2}+\int_{\Lambda^{c}_{\e}} \mathcal{C}_{\e}(x, u_{n}(x, 0))  \, dx  \right)\\
&=\|u\|^{2}_{\x}-V_{1} |u(\cdot, 0)|^{2}_{2}+\int_{\Lambda^{c}_{\e}} \mathcal{C}_{\e}(x, u(x, 0))\, dx.
\end{align*}
On the other hand, by \eqref{OCCHIALI} and Fatou's lemma, we get
\begin{align*}
&\liminf_{n\ri \infty} \left( \|u_{n}\|^{2}_{\x}-V_{1} |u_{n}(\cdot, 0)|^{2}_{2}+\int_{\Lambda^{c}_{\e}}   \mathcal{C}_{\e}(x, u_{n}(x, 0))  \, dx  \right)\\
&\geq \|u\|^{2}_{\x}-V_{1} |u(\cdot, 0)|^{2}_{2} +\int_{\Lambda^{c}_{\e}}   \mathcal{C}_{\e}(x, u(x, 0))  \, dx.
\end{align*}
Hence,
\begin{align}\label{AM5}
\lim_{n\ri \infty} \left[\|u_{n}\|^{2}_{\x}-V_{1} |u_{n}(\cdot, 0)|^{2}_{2}\right]=\|u\|^{2}_{\x}-V_{1} |u(\cdot, 0)|^{2}_{2}
\end{align}
and
\begin{align*}
\lim_{n\ri \infty} \int_{\Lambda^{c}_{\e}} \mathcal{C}_{\e}(x, u_{n}(x, 0))\, dx=\int_{\Lambda^{c}_{\e}} \mathcal{C}_{\e}(x, u(x, 0))\, dx.
\end{align*}
The last limit combined with \eqref{OCCHIALI} yields
\begin{align*}
\int_{\Lambda^{c}_{\e}} (V_{\e}(x)+V_{1}) u^{2}(x,0)\, dx&\leq \liminf_{n\ri \infty} \int_{\Lambda^{c}_{\e}} (V_{\e}(x)+V_{1}) u_{n}^{2}(x,0)\, dx\\
&\leq \limsup_{n\ri \infty} \int_{\Lambda^{c}_{\e}} (V_{\e}(x)+V_{1}) u_{n}^{2}(x,0)\, dx\\
&\leq \left(\frac{\kappa}{\kappa-1}\right) \limsup_{n\ri \infty} \int_{\Lambda^{c}_{\e}} \mathcal{C}_{\e}(x, u_{n}(x, 0))\, dx\\
&= \left(\frac{\kappa}{\kappa-1}\right) \int_{\Lambda^{c}_{\e}} \mathcal{C}_{\e}(x, u(x, 0))\, dx\\
&\leq  \left(\frac{\kappa}{\kappa-1}\right) \int_{\Lambda^{c}_{\e}} (V_{\e}(x)+V_{1}) u^{2}(x,0)\, dx.
\end{align*}
By sending $\kappa\ri \infty$, we find
\begin{align*}
\lim_{n\ri \infty} \int_{\Lambda^{c}_{\e}} (V_{\e}(x)+V_{1}) u_{n}^{2}(x,0)\, dx=\int_{\Lambda^{c}_{\e}} (V_{\e}(x)+V_{1}) u^{2}(x,0)\, dx,
\end{align*}
which together with \eqref{AM4} implies that
\begin{align}\label{AM6}
\lim_{n\ri \infty} \int_{\R^{N}} (V_{\e}(x)+V_{1}) u_{n}^{2}(x,0)\, dx=\int_{\R^{N}} (V_{\e}(x)+V_{1}) u^{2}(x,0)\, dx.
\end{align}
From \eqref{AM5} and \eqref{AM6}, we have
$$
\lim_{n\ri \infty}  \|u_{n}\|^{2}_{\e}=\|u\|^{2}_{\e},
$$
and since $X_{\e}$ turns out to be a Hilbert space, we deduce that $u_{n}\ri u$ in $X_{\e}$ as $n\ri \infty$.
\end{proof}

In light of Lemmas \ref{lemma1} and  \ref{lemma2}, we can apply the mountain pass theorem \cite{AR} to see that, for all $\e>0$, there exists $u_{\e}\in X_{\e}\setminus\{0\}$ such that
\begin{align}\label{AM2.8}
J_{\e}(u_{\e})=c_{\e} \quad \mbox{ and } \quad J'_{\e}(u_{\e})=0.
\end{align} 
Since $g(\cdot, t)=0$ for $t\leq 0$, it follows from $\langle J'_{\e}(u_{\e}), u_{\e}^{-}\rangle=0$ that $u_{\e}\geq 0$ in $\R^{N+1}_{+}$.

Next, for $\mu>-m^{2s}$, we consider the following autonomous problem related to \eqref{EP}:
\begin{align}\label{AEP}
\left\{
\begin{array}{ll}
-\dive(y^{1-2s} \nabla w)+m^{2}y^{1-2s}w=0 &\mbox{ in } \R^{N+1}_{+}, \\
\frac{\partial w}{\partial \nu^{1-2s}}= -\mu w(\cdot, 0)+f(w(\cdot, 0)) &\mbox{ on } \R^{N},
\end{array}
\right.
\end{align}
and the corresponding energy functional
\begin{align*}
L_{\mu}(u):=\frac{1}{2} \|u\|^{2}_{Y_{\mu}}-\int_{\R^{N}} F(u(x, 0))\, dx,
\end{align*}
which is well-defined on $Y_{\mu}:=\x$ endowed with the norm 
\begin{align*}
\|u\|_{Y_{\mu}}:=\left( \|u\|^{2}_{\x}+\mu |u(\cdot, 0)|^{2}_{2}  \right)^{\frac{1}{2}}.
\end{align*}
Note that $\|\cdot\|_{Y_{\mu}}$ is a norm equivalent to $\|\cdot\|_{\x}$. In fact, using \eqref{m-ineq}, for $u\in \x$,
$$
A_{m,s}\|u\|^{2}_{\x}\leq \|u\|^{2}_{Y_{\mu}}\leq B_{m, s}\|u\|^{2}_{\x},
$$
where
\begin{align*}
A_{m,s}:=\frac{1}{m^{2s}}\min\{\mu+m^{2s}, m^{2s} \}>0 \quad \mbox{ and } \quad B_{m,s}:=\frac{1}{m^{2s}}\max\{\mu+m^{2s}, m^{2s} \}>0.
\end{align*}
Clearly, $Y_{\mu}$ is a Hilbert space with the inner product
$$
\langle u, v\rangle_{Y_{\mu}}:=\iint_{\R^{N+1}_{+}} y^{1-2s} (\nabla u\cdot\nabla v+m^{2}uv)\, dx dy+\mu\int_{\R^{N}} u(x, 0) v(x, 0)\, dx,
$$
for all $u, v\in Y_{\mu}$.
Denote by $\mathcal{M}_{\mu}$ the Nehari manifold associated with $L_{\mu}$, that is 
$$
\mathcal{M}_{\mu}:=\{u\in Y_{\mu}: \langle L'_{\mu}(u), u\rangle=0\}.
$$

It is easy to check that $L_{\mu}$ has a mountain pass geometry \cite{AR}. 
Then, using a variant of the mountain pass theorem without the Palais-Smale condition (see Theorem $2.9$ in \cite{W}),
we can find a Palais-Smale sequence $(u_{n})\subset \y$ at the mountain pass level $d_{\mu}$ of $L_{\mu}$. 
Note that $(u_{n})$ is bounded in $\y$. Indeed, by $(f_3)$, we get
\begin{align*}
C(1+\|u_{n}\|_{\y})&\geq L_{\mu}(u_{n})-\frac{1}{\theta}\langle L'_{\mu}(u_{n}), u_{n}\rangle\\
&=\left(\frac{1}{2}-\frac{1}{\theta}\right) \|u_{n}\|^{2}_{\y}+\frac{1}{\theta} \int_{\R^{N}} f(u_{n}(x, 0))u_{n}(x, 0)-\theta F(u_{n}(x, 0))\, dx\\
&\geq \left(\frac{1}{2}-\frac{1}{\theta}\right) \|u_{n}\|^{2}_{\y} 
\end{align*}
which implies the boundedness of $(u_{n})$ in $\y$. 
By $(f_4)$, we also have that
$$
d_{\mu}=\inf_{u\in \mathcal{M}_{\mu}} L_{\mu}(u)=\inf_{u\in \y\setminus\{0\}} \max_{t\geq 0}L_{\mu}(t u).
$$ 
Next we show the existence of a positive ground state solution to \eqref{AEP}. We first prove some useful technical lemmas.
The first one is a vanishing Lions type result (see \cite{Lions}).
\begin{lem}\label{Lions}
Let $t\in [2, \2)$ and $R>0$. If $(u_{n})\subset \x$ is a bounded sequence such that
$$
\lim_{n\ri \infty} \sup_{z\in \R^{N}} \int_{B_{R}(z)} |u_{n}(x, 0)|^{t}\, dx=0,
$$
then $u_{n}(\cdot, 0)\ri 0$ in $L^{r}(\R^{N})$ for all $r\in (2, \2)$.
\end{lem}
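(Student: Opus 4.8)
The plan is to follow the classical Lions concentration–compactness strategy, adapting the standard $\R^N$ argument to the trace terms living on $\partial\R^{N+1}_+\cong\R^N$. First I would observe that the hypotheses only involve $u_n(\cdot,0)$, so the problem is really a statement about the sequence of traces $(v_n):=(u_n(\cdot,0))\subset H^s(\R^N)$. By the trace inequality \eqref{traceineq} and the continuous embedding ${\rm Tr}(X^s(\R^{N+1}_+))\hookrightarrow L^q(\R^N)$ for $q\in[2,\2)$ from Theorem \ref{Sembedding}, the boundedness of $(u_n)$ in $\x$ gives boundedness of $(v_n)$ in $H^s(\R^N)$ and hence in every $L^q(\R^N)$, $q\in[2,\2)$. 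The vanishing hypothesis says $\sup_{z}\int_{B_R(z)}|v_n|^t\,dx\to0$.

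The key step is an interpolation-plus-covering estimate. Fix $r\in(2,\2)$. I would pick an exponent $q\in(r,\2)$ (for instance $q=\2$ itself is not allowed since the embedding into $L^{\2}$ need not be available here, so take $q\in(r,\2)$ strictly, using that $H^s\hookrightarrow L^q$ continuously) together with $t$, and write, on each ball $B_R(z)$, a Gagliardo–Nirenberg / Hölder interpolation of the $L^r$-norm between $L^t$ and $L^q$:
\begin{equation*}
\|v_n\|_{L^r(B_R(z))}\le \|v_n\|_{L^t(B_R(z))}^{1-\lambda}\,\|v_n\|_{L^q(B_R(z))}^{\lambda},
\end{equation*}
where $\lambda\in(0,1)$ is determined by $\frac1r=\frac{1-\lambda}{t}+\frac{\lambda}{q}$ (this requires $t\le r\le q$, which I can always arrange by choosing $q$ large enough and noting $t\ge2$; if $t>r$ one instead interpolates $L^r$ between $L^2$ and $L^t$, so a clean way is to just take $t=2$ or note the statement allows any $t\in[2,\2)$ and the conclusion is for $r\in(2,\2)$, so pick $t\le r$). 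Raising to the $r$-th power, summing over a cover of $\R^N$ by balls $B_R(z_i)$ of finite overlap, and using the elementary inequality $\sum a_i^{\beta}\le(\sum a_i)^{\beta}$ for $\beta\ge1$ (applied with $\beta=\lambda r/q\cdot\frac{q}{\lambda r}$... more precisely $\sum_i \|v_n\|_{L^q(B_R(z_i))}^{\lambda r}\le\big(\sum_i\|v_n\|_{L^q(B_R(z_i))}^{q}\big)^{\lambda r/q}$ when $\lambda r\le q$), I obtain
\begin{equation*}
\|v_n\|_{L^r(\R^N)}^r\le C\Big(\sup_z\|v_n\|_{L^t(B_R(z))}\Big)^{(1-\lambda)r}\,\|v_n\|_{L^q(\R^N)}^{\lambda r}.
\end{equation*}
Since the first factor tends to $0$ and the second stays bounded, $\|v_n\|_{L^r(\R^N)}\to0$.

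I expect the main obstacle to be the bookkeeping of exponents: one must verify that the interpolation exponent $\lambda$ can be chosen in $(0,1)$ with $\lambda r\le q<\2$ simultaneously, which forces $q$ close enough to $\2$ and uses $t\ge2$, $r<\2$; and one must justify the summation step, i.e. that $H^s(\R^N)\hookrightarrow L^q(\R^N)$ with control by $\|u_n\|_{\x}$ via \eqref{traceineq} and Theorem \ref{Sembedding} so that $\sum_i\|v_n\|_{L^q(B_R(z_i))}^q\le C\|v_n\|_{L^q(\R^N)}^q$ with $C$ independent of $n$ (finite overlap of the cover). Once these exponent constraints are checked the rest is routine. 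A minor point to handle at the start: reduce to $t=2$ by noting that if the vanishing holds for some $t\in[2,\2)$ then it holds for $t=2$ as well — this follows from the same interpolation applied with the bounded $L^q$-norm on each ball — so without loss of generality one may assume $t=2$, which makes the interpolation $\frac1r=\frac{1-\lambda}{2}+\frac{\lambda}{q}$ cleanest and the constraint $\lambda r\le q$ easiest to verify for $r\in(2,\2)$ and $q$ chosen in $(r,\2)$.
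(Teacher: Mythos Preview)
Your strategy---H\"older interpolation on balls, then a finite-overlap covering---is exactly the paper's. Two differences and one error.

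The paper does \emph{not} avoid $L^{\2}$: it fixes $q\in(t,\2)$ and interpolates $|u_{n}(\cdot,0)|_{L^{q}(B_{R}(z))}$ between $L^{t}(B_{R}(z))$ and $L^{\2}(B_{R}(z))$ via H\"older with $\tfrac{1-\lambda}{t}+\tfrac{\lambda}{\2}=\tfrac{1}{q}$, and then controls the global quantity $|u_{n}(\cdot,0)|_{\2}$ by $C\|u_{n}\|_{\x}$. Your claim that ``the embedding into $L^{\2}$ need not be available'' is incorrect: combining \eqref{traceineq} with the classical critical embedding $H^{s}(\R^{N})\hookrightarrow L^{\2}(\R^{N})$ gives exactly this bound (Theorem~\ref{Sembedding} as stated stops before $\2$, but the endpoint case is standard and is what the paper invokes). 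Replacing $\2$ by a strictly smaller upper exponent only makes the bookkeeping worse and is not what the paper does.

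The genuine gap is your summation inequality
\[
\sum_{i}\|v_{n}\|_{L^{q}(B_{R}(z_{i}))}^{\lambda r}\leq\Big(\sum_{i}\|v_{n}\|_{L^{q}(B_{R}(z_{i}))}^{q}\Big)^{\lambda r/q}\qquad(\lambda r\leq q),
\]
which is the \emph{reversed} direction of the $\ell^{p}$-inclusions: for nonnegative sequences one has $\sum_{i} b_{i}^{\alpha}\geq(\sum_{i} b_{i}^{\beta})^{\alpha/\beta}$ when $0<\alpha\leq\beta$, not $\leq$. So this step fails as written. The paper instead pulls the $L^{\2}$-factor out as a single global constant before summing over the cover, obtains the estimate for one $q\in(t,\2)$, and then reaches all $r\in(2,\2)$ by a final interpolation using the boundedness in $L^{2}$ and $L^{\2}$. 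Your reduction to $t=2$ (via H\"older on each ball) is correct but plays no role in the paper's argument.
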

\begin{proof}
Take $q\in (t, \2)$. Given $R>0$ and $z\in \R^{N}$, by using the H\"older inequality, we get
\begin{align*}
|u_{n}(\cdot, 0)|_{L^{q}(B_{R}(z))}&\leq |u_{n}(\cdot, 0)|^{1-\lambda}_{L^{t}(B_{R}(z))} |u_{n}(\cdot, 0)|^{\lambda}_{L^{\2}(B_{R}(z))}  \quad \mbox{ for all } n\in \mathbb{N},
\end{align*}
where 
$$
\frac{1-\lambda}{t}+\frac{\lambda}{\2}=\frac{1}{q}.
$$
Now, covering $\R^{N}$ by balls of radius $R$, in such a way that each point of $\R^{N}$ is contained in at most $N+1$ balls, we find
\begin{align*}
|u_{n}(\cdot, 0)|^{q}_{q}\leq (N+1) |u_{n}(\cdot, 0)|^{(1-\lambda)q}_{L^{t}(B_{R}(z))}  |u_{n}(\cdot, 0)|^{\lambda q}_{\2},
\end{align*}
which combined with Theorem \ref{Sembedding} and the assumptions yields
\begin{align*}
|u_{n}(\cdot, 0)|^{q}_{q}&\leq C(N+1) |u_{n}(\cdot, 0)|^{(1-\lambda)q}_{L^{t}(B_{R}(z))} \|u_{n}\|^{\lambda q}_{\x} \\
&\leq C(N+1) \sup_{z\in \R^{N}}|u_{n}(\cdot, 0)|^{(1-\lambda)q}_{L^{t}(B_{R}(z))}\ri 0 \quad\mbox{ as } n\ri \infty.
\end{align*}
A standard interpolation argument leads to $u_{n}(\cdot, 0)\ri 0$ in $L^{r}(\R^{N})$ for all $r\in (2, \2)$.
\end{proof}
\begin{lem}\label{Lions2}
Let $(u_{n})\subset \y$ be a Palais-Smale sequence for $L_{\mu}$ at the level $c\in \R$ and such that $u_{n}\rightharpoonup 0$ in $\y$. Then we have either
\begin{compactenum}[$(a)$]
\item $u_{n}\ri 0$ in $\y$, or
\item there exist a sequence $(z_{n})\subset \R^{N}$ and constants $R, \beta>0$ such that 
$$
\liminf_{n\ri \infty} \int_{B_{R}(z_{n})} u_{n}^{2}(x, 0)\, dx\geq \beta.
$$
\end{compactenum}
\end{lem}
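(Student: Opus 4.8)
The plan is to prove the dichotomy by showing that the failure of alternative $(b)$ forces alternative $(a)$. So I would assume that $(b)$ does not hold, i.e.\ that for every $R>0$
$$
\lim_{n\ri\infty}\sup_{z\in\R^{N}}\int_{B_{R}(z)}u_{n}^{2}(x,0)\,dx=0 .
$$
Since $u_{n}\rightharpoonup 0$ in $\y$, the sequence $(u_{n})$ is bounded in $\y$, hence bounded in $\x$ because $\|\cdot\|_{\y}$ is a norm equivalent to $\|\cdot\|_{\x}$. Therefore Lemma \ref{Lions} applies with $t=2$ and yields $u_{n}(\cdot,0)\ri 0$ in $L^{r}(\R^{N})$ for every $r\in(2,\2)$.

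Next I would use the subcritical growth of $f$ to annihilate the nonlinear terms along the sequence. By $(f_1)$ and $(f_2)$, for every $\eta>0$ there is $C_{\eta}>0$ with $|f(t)\,t|\leq \eta\, t^{2}+C_{\eta}|t|^{p+1}$ for all $t\in\R$, where $p+1\in(2,\2)$. Integrating over $\R^{N}$, using that $|u_{n}(\cdot,0)|_{2}$ stays bounded (by \eqref{m-ineq} together with the bound in $\x$) and that $|u_{n}(\cdot,0)|_{p+1}\ri 0$, one obtains
$$
\limsup_{n\ri\infty}\int_{\R^{N}}f(u_{n}(x,0))u_{n}(x,0)\,dx\leq C\,\eta
$$
for a constant $C$ independent of $\eta$; letting $\eta\ri 0$ gives $\int_{\R^{N}}f(u_{n}(x,0))u_{n}(x,0)\,dx\ri 0$ (and, in the same way, $\int_{\R^{N}}F(u_{n}(x,0))\,dx\ri 0$).

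Finally, since $L'_{\mu}(u_{n})\ri 0$ in $\y^{*}$ and $(u_{n})$ is bounded, one has $\langle L'_{\mu}(u_{n}),u_{n}\rangle\ri 0$, so from the identity
$$
\|u_{n}\|_{\y}^{2}=\langle L'_{\mu}(u_{n}),u_{n}\rangle+\int_{\R^{N}}f(u_{n}(x,0))u_{n}(x,0)\,dx
$$
we conclude $\|u_{n}\|_{\y}^{2}\ri 0$, that is $u_{n}\ri 0$ strongly in $\y$, which is alternative $(a)$ (and then necessarily $c=L_{\mu}(0)=0$). The only step requiring a little care is the mass term $\eta\, t^{2}$ in the growth bound: vanishing of the $L^{r}$-norms for $r>2$ does not by itself control $|u_{n}(\cdot,0)|_{2}$, so one must exploit the arbitrariness of $\eta$ in combination with the uniform $L^{2}$-bound on the traces; the remaining manipulations are entirely routine.
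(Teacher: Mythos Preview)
Your proof is correct and follows essentially the same approach as the paper's: assume $(b)$ fails, invoke Lemma~\ref{Lions} to kill the $L^{r}$-norms of the traces for $r\in(2,\2)$, use $(f_1)$--$(f_2)$ to deduce $\int_{\R^{N}} f(u_{n})u_{n}\,dx\ri 0$, and conclude via $\langle L'_{\mu}(u_{n}),u_{n}\rangle=o_{n}(1)$. Your treatment of the $\eta\,t^{2}$ term (bounded $L^{2}$ trace plus $\eta\ri 0$) is exactly the standard way to make rigorous the step the paper compresses into ``$(f_1)$--$(f_2)$ imply''.
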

\begin{proof}
Assume that $(b)$ does not occur. Then, for all $R>0$, we have
$$
\lim_{n\ri \infty} \sup_{z\in \R^{N}} \int_{B_{R}(z)} u_{n}^{2}(x, 0)\, dx=0.
$$
Using Lemma \ref{Lions}, we can see that $u_{n}(\cdot, 0)\ri 0$ in $L^{q}(\R^{N})$ for all $q\in (2, \2)$. 
This fact and $(f_1)$-$(f_2)$ imply that 
$$
\int_{\R^{N}}f(u_{n}(x, 0))u_{n}(x, 0)\, dx\ri 0 \quad \mbox{ as } n\ri \infty.
$$ 
Hence, using $\langle L'_{\mu}(u_{n}), u_{n}\rangle=o_{n}(1)$, we get $\|u_{n}\|^{2}_{\y}= o_{n}(1)$,
that is $u_{n}\ri 0$ in $\y$ as $n\ri \infty$.
\end{proof}
Now we prove the following existence result for \eqref{AEP}.
\begin{thm}\label{EGS}
Let $\mu>-m^{2s}$. Then \eqref{AEP} has a positive ground state solution. 
\end{thm}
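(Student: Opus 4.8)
The plan is to produce a nonnegative, nontrivial critical point of $L_\mu$ at the mountain pass level $d_\mu$, and then upgrade it to a positive ground state. First I would exploit the bounded Palais-Smale sequence $(u_n)\subset \y$ at level $d_\mu$ already constructed above. Up to a subsequence, $u_n\rightharpoonup u$ in $\y$. I claim $u\neq 0$. If $u=0$, then $u_n\rightharpoonup 0$ and Lemma \ref{Lions2} applies: alternative $(a)$ would force $u_n\ri 0$ in $\y$, hence $d_\mu = L_\mu(0)=0$, contradicting $d_\mu>0$ (which follows from the mountain pass geometry together with \eqref{uNr}-type bounds, i.e. $\|u\|_{\y}\geq r$ on $\mathcal M_\mu$). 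So alternative $(b)$ holds: there are $(z_n)\subset\R^N$ and $R,\beta>0$ with $\liminf_n \int_{B_R(z_n)} u_n^2(x,0)\,dx\geq\beta$. Since $|z_n|\ri\infty$ (otherwise $u\neq 0$ already by local compactness in Theorem \ref{Sembedding}), I set $\tilde u_n(x,y):=u_n(x+z_n,y)$. By translation invariance of the autonomous functional, $(\tilde u_n)$ is still a bounded Palais-Smale sequence for $L_\mu$ at level $d_\mu$, and $\tilde u_n\rightharpoonup \tilde u$ with $\tilde u\neq 0$ because $\int_{B_R} \tilde u^2(x,0)\,dx\geq\beta$ using the compact embedding $\x\hookrightarrow L^2(B_R^+,y^{1-2s})$ together with the trace compactness of Theorem \ref{Sembedding}.

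Next I would show $\tilde u$ is a weak solution, i.e. $L'_\mu(\tilde u)=0$. This is the routine passage to the limit in $\langle L'_\mu(\tilde u_n),\varphi\rangle\ri 0$ for $\varphi\in C^\infty_c(\overline{\R^{N+1}_+})$: the quadratic form passes to the limit by weak convergence, and the nonlinear term $\int_{\R^N} f(\tilde u_n(x,0))\varphi(x,0)\,dx\ri \int_{\R^N} f(\tilde u(x,0))\varphi(x,0)\,dx$ by the growth bounds $(f_1)$-$(f_2)$ and the local compactness of the trace embedding on the bounded set $\supp\varphi$. Then density of $C^\infty_c(\overline{\R^{N+1}_+})$ in $\y$ gives $L'_\mu(\tilde u)=0$, so $\tilde u\in\mathcal M_\mu$ and hence $L_\mu(\tilde u)\geq d_\mu$. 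For the reverse inequality, I would use $(f_3)$ to write, with $\hat u_n:=\tilde u_n-\tilde u\rightharpoonup 0$,
\begin{align*}
d_\mu + o_n(1) = L_\mu(\tilde u_n) - \tfrac{1}{\theta}\langle L'_\mu(\tilde u_n),\tilde u_n\rangle
\geq \Bigl(\tfrac12-\tfrac1\theta\Bigr)\|\tilde u_n\|_{\y}^2 + o_n(1),
\end{align*}
combined with $L_\mu(\tilde u)-\tfrac1\theta\langle L'_\mu(\tilde u),\tilde u\rangle = (\tfrac12-\tfrac1\theta)\|\tilde u\|_{\y}^2$ and the Brezis-Lieb-type splitting $\|\tilde u_n\|_{\y}^2 = \|\tilde u\|_{\y}^2 + \|\hat u_n\|_{\y}^2 + o_n(1)$ together with $\int F(\tilde u_n(\cdot,0)) = \int F(\tilde u(\cdot,0)) + \int F(\hat u_n(\cdot,0)) + o_n(1)$; since $\int (\tfrac12 f(\hat u_n(\cdot,0))\hat u_n(\cdot,0) - F(\hat u_n(\cdot,0)))\geq 0$ by $(f_3)$, one gets $L_\mu(\tilde u)\leq \liminf_n L_\mu(\tilde u_n)=d_\mu$. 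Hence $L_\mu(\tilde u)=d_\mu$ and $\tilde u$ is a ground state.

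Finally, positivity. Testing $L'_\mu(\tilde u)=0$ with $\tilde u_-:=\max\{-\tilde u,0\}$ and using $f(t)=0$ for $t\leq 0$ gives $\|\tilde u_-\|_{\y}^2\leq 0$, so $\tilde u\geq 0$ in $\R^{N+1}_+$. Its trace $v:=\tilde u(\cdot,0)\in\h$ is a nonnegative, nontrivial weak solution of $(-\Delta+m^2)^s v = -\mu v + f(v)$ in $\R^N$; equivalently $\tilde u$ is a nonnegative nontrivial weak solution of \eqref{AEP}. To conclude $\tilde u>0$ I would invoke the weak Harnack inequality of Proposition \ref{PROPFF}$(ii)$ applied on cylinders $Q_R(x_0)$ centered at arbitrary boundary points: writing the Neumann condition as $\partial\tilde u/\partial\nu^{1-2s} = \tilde f_\mu(x)\tilde u(\cdot,0)$ with $\tilde f_\mu(x):=-\mu + f(v(x))/v(x)$ bounded locally in $L^q$, $q>N/2s$, by $(f_1)$-$(f_2)$ and local boundedness of $v$ (from Proposition \ref{PROPFF}$(i)$), nonnegativity plus $\tilde u\not\equiv 0$ forces $\inf_{\bar Q_{R/2}(x_0)}\tilde u>0$ for every $x_0$, hence $\tilde u>0$ on $\overline{\R^{N+1}_+}$, and in particular $v>0$ in $\R^N$.

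The main obstacle is the loss of compactness at infinity, handled by the concentration-compactness dichotomy of Lemma \ref{Lions2} and the translation trick; the only mildly delicate point beyond that is making the energy inequality $L_\mu(\tilde u)\leq d_\mu$ rigorous, for which the Brezis-Lieb splitting of both the norm and the term $\int F$ together with $(f_3)$ is the key ingredient.
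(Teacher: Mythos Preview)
Your proposal is correct and follows essentially the same route as the paper: bounded Palais--Smale sequence, Lemma~\ref{Lions2} to defeat vanishing, translation to recover a nontrivial weak limit $\tilde u$, passage to the limit to get $L'_\mu(\tilde u)=0$, energy comparison to reach $L_\mu(\tilde u)=d_\mu$, and then sign/positivity. Two points of comparison are worth noting.

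\textbf{Energy inequality.} For $L_\mu(\tilde u)\leq d_\mu$ the paper takes a shorter path than your Brezis--Lieb splitting. Using $(f_4)$ one has $\tfrac12 f(t)t-F(t)\geq 0$, so that for any critical point (or almost-critical sequence)
\[
L_\mu(v)-\tfrac12\langle L'_\mu(v),v\rangle=\int_{\R^N}\Bigl(\tfrac12 f(v(x,0))v(x,0)-F(v(x,0))\Bigr)\,dx\geq 0,
\]
and then a single application of Fatou's lemma to this nonnegative integrand along $(\tilde u_n)$ gives $L_\mu(\tilde u)\leq d_\mu$ directly. Your Brezis--Lieb argument works too, but as written it needs one more ingredient you did not state: the splitting $\int f(\tilde u_n)\tilde u_n=\int f(\tilde u)\tilde u+\int f(\hat u_n)\hat u_n+o_n(1)$ (to conclude $\langle L'_\mu(\hat u_n),\hat u_n\rangle=o_n(1)$ and hence $L_\mu(\hat u_n)\geq o_n(1)$). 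Also, the identity you wrote, $L_\mu(\tilde u)-\tfrac1\theta\langle L'_\mu(\tilde u),\tilde u\rangle=(\tfrac12-\tfrac1\theta)\|\tilde u\|_{\y}^2$, is missing the nonnegative remainder $\tfrac1\theta\int(f(\tilde u)\tilde u-\theta F(\tilde u))$; this does not break your argument but should be corrected.

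\textbf{Positivity.} The paper first runs a Moser iteration (as in Lemma~\ref{moser}) to get $w(\cdot,0)\in L^\infty(\R^N)$ and $w\in L^\infty(\R^{N+1}_+)$, then obtains H\"older continuity via Proposition~\ref{PROPFF}-$(iii)$ before applying the weak Harnack inequality Proposition~\ref{PROPFF}-$(ii)$. Your shortcut through Proposition~\ref{PROPFF}-$(i)$ is fine, but note a slight circularity in your justification: you invoke local boundedness of $v$ to put $\tilde f_\mu=-\mu+f(v)/v$ in $L^q$, $q>N/2s$, yet Proposition~\ref{PROPFF}-$(i)$ itself requires the coefficient in $L^q$. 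The fix is immediate from the growth assumptions alone: since $|f(t)/t|\leq C(1+|t|^{p-1})$ and $v\in L^{2^*_s}$, one has $|v|^{p-1}\in L^{2^*_s/(p-1)}$ with $2^*_s/(p-1)>N/2s$ precisely because $p<2^*_s-1$ in $(f_2)$.
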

\begin{proof}
Since $L_{\mu}$ has a mountain pass geometry \cite{AR}, we can find a Palais-Smale sequence $(u_{n})\subset \y$ at the level $d_{\mu}$. Thus $(u_{n})$ is bounded in $\y$ and there exists $u\in \y$ such that $u_{n}\rightharpoonup u$ in $\y$. Using the growth assumptions on $f$ and the density of $C^{\infty}_{c}(\overline{\R^{N+1}_{+}})$ in $\x$, it is easy to check that $\langle L'_{\mu}(u), \varphi\rangle =0$ for all $\varphi\in \y$. If $u=0$, then $u_{n}\not\rightarrow 0$ in $\y$ because $d_{\mu}>0$. Hence we can use Lemma \ref{Lions2} to deduce that for some sequence $(z_{n})\subset \R^{N}$, $v_{n}(x, y):=u_{n}(x+z_{n}, y)$ is a bounded Palais-Smale sequence at the level $d_{\mu}$ and having a nontrivial weak limit $v$. Hence, $v\in \mathcal{M}_{\mu}$.
Moreover, using the weak lower semicontinuity of $\|\cdot\|_{\y}$, $(f_4)$ and Fatou's lemma, it is easy to see that $L_{\mu}(v)=d_{\mu}$. When $u\neq 0$, as before, we can deduce that $u$ is a ground state solution to \eqref{AEP}.
In conclusion, for $\mu>-m^{2s}$, there exists a ground state solution $w=w_{\mu}\in \y\setminus\{0\}$ such that
\begin{align*}
L_{\mu}(w)=d_{\mu} \quad \mbox{ and } \quad L'_{\mu}(w)=0.
\end{align*} 
Since $f(t)=0$ for $t\leq 0$, it follows from $\langle L'_{\mu}(w), w^{-} \rangle=0$  that $w\geq 0$ in $\R^{N+1}_{+}$ and $w\not\equiv 0$. 
A Moser iteration argument (see Lemma \ref{moser} below) shows that $w(\cdot, 0)\in L^{q}(\R^{N})$ for all $q\in [2, \infty]$, and that $w\in L^{\infty}(\R^{N+1}_{+})$. Using Proposition \ref{PROPFF}-$(iii)$, we obtain that $w\in C^{0, \alpha}(\overline{\R^{N+1}_{+}})$ for some $\alpha\in (0, 1)$. By Proposition \ref{PROPFF}-$(ii)$, we conclude that $w$ is positive.
\end{proof}

In the next lemma we establish an important connection between $c_{\e}$ and $d_{V(0)}=d_{-V_{0}}$ (we remark that $V(0)=-V_{0}>-m^{2s}$):
\begin{lem}\label{lem2.3AM}
The numbers $c_{\e}$ and $d_{V(0)}$ verify the following inequality:
$$
\limsup_{\e\ri 0} c_{\e}\leq d_{V(0)}.
$$
\end{lem}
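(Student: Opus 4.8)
The plan is to compare the critical level $c_\e$ of the penalized functional $J_\e$ with the mountain-pass level $d_{V(0)}$ of the autonomous functional $L_{V(0)}$, by using a ground state $w$ of the limiting problem \eqref{AEP} with $\mu=V(0)=-V_0$, suitably truncated and rescaled, as a test path. Since $0\in M\subset\Lambda$ by $(V_2)$, we may pick $\delta>0$ with $B_{2\delta}\subset\Lambda$, and a cut-off $\eta\in C^\infty_c(\R^N)$ with $\eta\equiv 1$ on $B_\delta$, $\supp\eta\subset B_{2\delta}$. Let $w=w_{-V_0}\in\y$ be the positive ground state from Theorem \ref{EGS}; recall $w\in C^{0,\alpha}(\overline{\R^{N+1}_+})\cap L^\infty$ and $L_{V(0)}(w)=d_{V(0)}$, $L'_{V(0)}(w)=0$. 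For $\e>0$ define $w_\e(x,y):=\eta(\e x)\,w(x,y)$. Then $\supp(w_\e(\cdot,0))\subset\{x:\e x\in B_{2\delta}\}\subset\Lambda_\e$, so on the support of the trace we have $g_\e(x,t)=f(t)$ and $V_\e(x)=V(\e x)$; thus $J_\e(t w_\e)=\tfrac{t^2}{2}\|w_\e\|_\e^2-\int_{\R^N}F(t\, w_\e(x,0))\,dx$ for $t\ge 0$.

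Next I would show $w_\e\to w$ in the relevant norms as $\e\to 0$. Concretely, by dominated convergence (using $w\in\x$ and $|\eta|\le 1$, $\eta(\e\cdot)\to 1$ pointwise) one gets $\|w_\e\|_{\x}\to\|w\|_{\x}$ and $w_\e(\cdot,0)\to w(\cdot,0)$ in $H^s(\R^N)$, hence in $L^2(\R^N)$ and in $L^q(\R^N)$ for $q\in[2,\2)$. For the potential term, continuity of $V$ and $V(0)=-V_0$ give $\int_{\R^N}V_\e(x)w_\e^2(x,0)\,dx\to\int_{\R^N}V(0)\,w^2(x,0)\,dx=-V_0|w(\cdot,0)|_2^2$, because the integrand is supported in a fixed bounded set (after translating, $\{|x|\le 2\delta/\e\}$ carries mass only where $\eta(\e x)\ne 0$, and $V(\e x)\to V(0)$ uniformly there). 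Combining, $\|w_\e\|_\e^2\to\|w\|_{\x}^2+V(0)|w(\cdot,0)|_2^2=\|w\|_{Y_{V(0)}}^2$, and since $w\in\mathcal M_{V(0)}$ the latter equals $\int_{\R^N}f(w(x,0))w(x,0)\,dx>0$. Also $\int_{\R^N}F(t w_\e(x,0))\,dx\to\int_{\R^N}F(t w(x,0))\,dx$ uniformly for $t$ in bounded sets, again using the $L^2$–$L^{\2}$ growth of $F$ from \eqref{growthG} and the convergence of traces.

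Now I would analyze the fibering maps. For each $\e$, since $w_\e\not\equiv 0$ has trace supported in $\Lambda_\e$, Lemma \ref{lemma1}-type reasoning (more precisely $(f_3)$) shows $t\mapsto J_\e(t w_\e)$ attains its maximum at some $t_\e>0$, and $c_\e\le\max_{t\ge0}J_\e(t w_\e)=J_\e(t_\e w_\e)$ since the path $t\mapsto t w_\e$ (extended past the point where $J_\e<0$) belongs to $\Gamma_\e$. One checks $t_\e$ stays in a compact subset of $(0,\infty)$: using $(f_4)$ the map $t\mapsto J_\e(tw_\e)$ has a unique critical point, determined by $\|w_\e\|_\e^2=\int f(tw_\e(x,0))w_\e(x,0)/t\,dx$, and the uniform convergences above force $t_\e\to t_0$ where $t_0$ is the corresponding maximizer for $w$, namely $t_0=1$ because $w\in\mathcal M_{V(0)}$ and $\max_{t\ge0}L_{V(0)}(tw)=L_{V(0)}(w)=d_{V(0)}$. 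Passing to the limit,
\begin{align*}
\limsup_{\e\to 0}c_\e\le\limsup_{\e\to 0}J_\e(t_\e w_\e)=\tfrac12\|w\|_{Y_{V(0)}}^2-\int_{\R^N}F(w(x,0))\,dx=L_{V(0)}(w)=d_{V(0)},
\end{align*}
which is the claim.

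The main obstacle I expect is the careful handling of the potential term $\int_{\R^N}V_\e(x)w_\e^2(x,0)\,dx$ and the rescaling: one must make sure the "mass" of $w(\cdot,0)$ concentrated near the origin is genuinely seen by $V$ near $0$ (where $V\to -V_0$) and not spread out where $V$ could be larger. Since $w$ is fixed and decays (indeed exponentially, though $L^2$-decay suffices here), the tails of $w^2(\cdot,0)$ outside $B_{\delta/\e}$ are negligible as $\e\to 0$, so after multiplying by $\eta(\e\cdot)$ the effective region is $|x|\le 2\delta/\e$; there $V(\e x)$ ranges over $V(B_{2\delta})$, and uniform continuity of $V$ at $0$ together with $\e\to 0$ does \emph{not} immediately give $V(\e x)\to V(0)$ uniformly on that growing ball. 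The fix is to split the integral: on $\{|x|\le \delta/\e\}$ we have $\e x\in B_\delta$ and $V(\e x)\to V(0)$ uniformly, while on $\{\delta/\e\le|x|\le 2\delta/\e\}$ the integral of $w^2(\cdot,0)$ tends to $0$ and $V$ is bounded on $B_{2\delta}$, so that piece vanishes. This is the one estimate requiring genuine care; the rest is routine bookkeeping with the convergences established above.
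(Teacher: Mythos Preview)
Your approach is essentially the paper's: truncate the autonomous ground state $w$, use it as a test function in the minimax characterization of $c_\e$, show $t_\e\to 1$, and pass to the limit. The paper truncates in the full variable $(x,y)$ via $\eta(\e|(x,y)|)$ rather than in $x$ alone, but this is cosmetic.

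One point needs correction. Your claim that ``$V(\e x)\to V(0)$ uniformly'' on $\{|x|\le\delta/\e\}$ is false: for $x$ in that set, $\e x$ sweeps all of $B_\delta$, so $V(\e x)$ ranges over $V(B_\delta)$ independently of $\e$. But there is no obstacle here and no splitting is needed---dominated convergence applies directly, since for each fixed $x$ one has $V(\e x)\,\eta^2(\e x)\,w^2(x,0)\to V(0)\,w^2(x,0)$ pointwise, with the integrable majorant $\bigl(\sup_{\overline{B_{2\delta}}}|V|\bigr)\,w^2(\cdot,0)\in L^1(\R^N)$. This is precisely what the paper does (it writes $J_\e(t_\e w_\e)=L_{V(0)}(t_\e w_\e)+\tfrac{t_\e^2}{2}\int(V_\e(x)-V(0))w_\e^2(x,0)\,dx$ and invokes dominated convergence on the last term), and your self-identified ``obstacle'' is not one.
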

\begin{proof}
By Theorem \ref{EGS}, we know that there exists a positive ground state solution $w$ to \eqref{AEP} with $\mu=V(0)$. Let $\eta\in C^{\infty}_{c}(\R)$ be such that $0\leq \eta\leq 1$, $\eta=1$ in $[-1, 1]$ and $\eta=0$ in $\R\setminus (-2, 2)$. Suppose that $B_{2}\subset \Lambda$. Define $w_{\e}(x, y):=\eta(\e |(x, y)|) w(x, y)$ and note that $\supp(w_{\e}(\cdot, 0))\subset \Lambda_{\e}$. 
It is easy to prove that $w_{\e}\ri w$ in $\x$ and that $L_{V(0)}(w_{\e})\ri L_{V(0)}(w)$ as $\e\ri 0$.
On the other hand, by definition of $c_{\e}$, we have
\begin{align}\label{15ADOM}
c_{\e}\leq \max_{t\geq 0} J_{\e}(t w_{\e})=J_{\e}(t_{\e} w_{\e})=\frac{t^{2}_{\e}}{2} \|w_{\e}\|^{2}_{\e}-\int_{\R^{N}} F(t_{\e} w_{\e}(x, 0))\, dx
\end{align}
for some $t_{\e}>0$. Recalling that $w\in \mathcal{M}_{V(0)}$ and using $(f_4)$, it is readily seen that $t_{\e}\ri 1$ as $\e\ri 0$.
Note that
\begin{align}\label{16ADOM}
J_{\e}(t_{\e}w_{\e})=L_{V(0)}(t_{\e}w_{\e})+\frac{t^{2}_{\e}}{2}\int_{\R^{N}} (V_{\e}(x)-V(0)) w_{\e}^{2}(x, 0)\, dx.
\end{align}
Since $V_{\e}(x)$ is bounded on the support of $w_{\e}(\cdot, 0)$, and $V_{\e}(x)\ri V(0)$ as $\e\ri 0$, we can apply the dominated convergence theorem and use \eqref{15ADOM} and \eqref{16ADOM} to conclude the proof.
\end{proof}

\begin{remark}\label{REMARKHEZOU}
Under assumptions $(V'_{1})$-$(V'_{2})$, we have that $\lim_{\e\ri 0}c_{\e}=d_{V(0)}$.
\end{remark}

Now we come back to study \eqref{MEP} and consider the mountain pass solutions $u_{\e}$ satisfying \eqref{AM2.8}.
\begin{lem}\label{lem2.4AM}
There exist $r, \beta, \e^{*}>0$ and $(y_{\e})\subset \R^{N}$ such that 
$$
\int_{B_{r}(y_{\e})} u_{\e}^{2}(x, 0)\, dx\geq \beta, \quad \mbox{ for all } \e\in (0, \e^{*}).
$$
\end{lem}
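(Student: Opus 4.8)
The statement is the usual non-vanishing property for the mountain–pass solutions $u_{\e}$ of the penalized problem \eqref{MEP}, and the plan is to argue by contradiction with the help of the Lions-type Lemma \ref{Lions}. First I would record uniform bounds on $(u_{\e})$ for small $\e$: since $J_{\e}(u_{\e})=c_{\e}$ and $J'_{\e}(u_{\e})=0$, repeating the computation in the proof of Lemma \ref{lemma2} with $\langle J'_{\e}(u_{\e}),u_{\e}\rangle=0$ gives $\|u_{\e}\|_{\e}^{2}\le C\,c_{\e}$ with $C$ depending only on $m,s,\theta,\kappa,V_{1}$; then Lemma \ref{lem2.3AM} yields $c_{\e}\le d_{V(0)}+1$ for $\e$ small, so $(u_{\e})$ is bounded in $X_{\e}$, hence in $\x$ by \eqref{equivalent}, uniformly in $\e$. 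Moreover $u_{\e}\in\mathcal N_{\e}$, so \eqref{uNr} provides $r>0$, independent of $\e$, with $\|u_{\e}\|_{\e}\ge r$.

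Suppose now the conclusion is false. Applying the negated statement with the radius $r=1$ and choosing, for each $\e$, a point $y_{\e}$ with $\int_{B_{1}(y_{\e})}u_{\e}^{2}(x,0)\,dx\ge\frac12\sup_{z\in\R^{N}}\int_{B_{1}(z)}u_{\e}^{2}(x,0)\,dx$, one extracts a sequence $\e_{n}\to0$ such that $\sup_{z\in\R^{N}}\int_{B_{1}(z)}u_{\e_{n}}^{2}(x,0)\,dx\to0$. Since $(u_{\e_{n}})$ is bounded in $\x$, Lemma \ref{Lions} (with $t=2$, $R=1$) gives $u_{\e_{n}}(\cdot,0)\to0$ in $L^{q}(\R^{N})$ for every $q\in(2,\2)$.

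The decisive step is to insert this into the identity $\langle J'_{\e_{n}}(u_{\e_{n}}),u_{\e_{n}}\rangle=0$, i.e.
\[
\|u_{\e_{n}}\|_{\e_{n}}^{2}=\int_{\Lambda_{\e_{n}}}f(u_{\e_{n}}(x,0))u_{\e_{n}}(x,0)\,dx+\int_{\Lambda_{\e_{n}}^{c}}g_{\e_{n}}(x,u_{\e_{n}}(x,0))u_{\e_{n}}(x,0)\,dx .
\]
On $\Lambda_{\e_{n}}^{c}$ I would use $(g_{3})$-$(ii)$, namely $tg_{\e_{n}}(x,t)\le\frac{V_{1}}{\kappa}t^{2}$, while on $\Lambda_{\e_{n}}$ I would use the subcritical bound $f(t)t\le\delta t^{2}+C_{\delta}t^{p+1}$ for $t\ge0$, which follows from $(f_{1})$-$(f_{2})$ with $p+1\in(2,\2)$; recalling $u_{\e_{n}}\ge0$ this yields
\[
\|u_{\e_{n}}\|_{\e_{n}}^{2}\le\Big(\frac{V_{1}}{\kappa}+\delta\Big)|u_{\e_{n}}(\cdot,0)|_{2}^{2}+C_{\delta}|u_{\e_{n}}(\cdot,0)|_{p+1}^{p+1},
\]
and the last term is $o_{n}(1)$ by the previous step. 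On the other hand, exactly as in the verification that $\|\cdot\|_{\e}$ is a norm (using $(V_{1})$, so $V_{\e_{n}}\ge-V_{1}$, and \eqref{m-ineq}), one has $\|u_{\e_{n}}\|_{\e_{n}}^{2}\ge(m^{2s}-V_{1})|u_{\e_{n}}(\cdot,0)|_{2}^{2}$. Since $\kappa$ was fixed with $\kappa>\frac{V_{1}}{m^{2s}-V_{1}}$, one may choose $\delta>0$ so small that $\frac{V_{1}}{\kappa}+\delta<m^{2s}-V_{1}$; combining the two inequalities first forces $|u_{\e_{n}}(\cdot,0)|_{2}\to0$ and then $\|u_{\e_{n}}\|_{\e_{n}}\to0$, which contradicts $\|u_{\e_{n}}\|_{\e_{n}}\ge r>0$. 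This proves the lemma.

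The main obstacle is precisely this last step: one cannot simply pass to a weak limit and test, because $\R^{N}$ is unbounded and there is no a priori concentration, so the contradiction must be produced from the interplay between the Lions vanishing alternative and the \emph{quantitative} gap built into the penalization — the coefficient $\frac{V_{1}}{\kappa}$ generated by the modified nonlinearity outside $\Lambda$ is strictly below the coercivity constant $m^{2s}-V_{1}$ of $\|\cdot\|_{\e}$ relative to the $L^{2}(\R^{N})$-trace norm. Keeping precise track of which pieces of the nonlinear term lie in $L^{2}(\R^{N})$ (to be absorbed) and which in $L^{q}(\R^{N})$, $q\in(2,\2)$ (to be killed by Lemma \ref{Lions}), is where the care is needed; the rest is bookkeeping with the embeddings of Theorem \ref{Sembedding} and the norm equivalences \eqref{equivalent}, \eqref{m-ineq}.
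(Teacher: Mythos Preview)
Your proof is correct and follows essentially the same route as the paper: a uniform lower bound $\|u_{\e}\|_{\e}\ge r$ from \eqref{uNr}, contradiction via Lions' vanishing Lemma~\ref{Lions}, and then the growth of $g$ together with \eqref{m-ineq}, \eqref{equivalent} to force $\|u_{\e_{n}}\|_{\e_{n}}\to 0$. The only difference is that the paper does not split $\R^{N}$ into $\Lambda_{\e_{n}}$ and $\Lambda_{\e_{n}}^{c}$: since $g_{\e}(x,t)t\le \eta t^{2}+C_{\eta}t^{p+1}$ holds \emph{globally} (by $(g_{1})$, $(g_{2})$ and $(f_{1})$--$(f_{2})$), one can directly absorb the $L^{2}$ part into $\|\cdot\|_{\e}^{2}$ via $|u(\cdot,0)|_{2}^{2}\le (m^{2s}-V_{1})^{-1}\|u\|_{\e}^{2}$ without ever invoking the constant $V_{1}/\kappa$; your domain decomposition is thus an unnecessary (though harmless) refinement.
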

\begin{proof}
Since $u_{\e}$ verifies \eqref{AM2.8}, it follows from the growth assumptions on $g$ that there exist $\alpha>0$, independent of $\e>0$, such that 
\begin{equation}\label{contradiction}
\|u_{\e}\|_{\e}^{2}\geq \alpha \quad \mbox{  for all } \e>0.
\end{equation}
Let $(\e_{n})\subset (0, \infty)$ be such that $\e_{n}\ri 0$.
If by contradiction there exists $r>0$ such that
$$
\lim_{n\ri \infty}\sup_{y\in \R^{N}} \int_{B_{r}(y)} u_{\e_{n}}^{2}(x, 0)\, dx=0,
$$
thus we can use Lemma \ref{Lions} to deduce that $u_{\e_{n}}(\cdot, 0)\ri 0$ in $L^{q}(\R^{N})$ for all $q\in (2, \2)$. Then, \eqref{AM2.8} and the growth assumptions on $g$ imply that $\|u_{\e_{n}}\|_{\e_{n}}\ri 0$, as $n\ri \infty$, and this contradicts \eqref{contradiction}. 
\end{proof}

\begin{lem}\label{lem2.5AM}
For any $\e_{n}\ri 0$, consider the sequence $(y_{\e_{n}})\subset \R^{N}$ given in Lemma \ref{lem2.4AM} and $w_{n}(x, y):=u_{\e_{n}}(x+y_{\e_{n}}, y)$. Then there exists a subsequence of $(w_{n})$, still denoted by itself, and $w\in \x\setminus \{0\}$ such that
\begin{align*}
w_{n}\ri w  \mbox{ in } \x.
\end{align*}
Moreover, there exists $x_{0}\in \Lambda$ such that 
\begin{align*}
\e_{n}y_{\e_{n}}\ri x_{0} \quad \mbox{ and } \quad V(x_{0})=-V_{0}.
\end{align*}
\end{lem}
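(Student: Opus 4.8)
The plan is to carry out a concentration--compactness analysis for the translated extensions $w_{n}(x,y):=u_{\e_{n}}(x+y_{\e_{n}},y)$, using the penalized nonlinearity to locate the concentration point. First, from \eqref{AM2.8}, $(g_{3})$, \eqref{m-ineq} and \eqref{equivalent}, arguing as in the proof of Lemma \ref{lemma2}, one obtains a constant $c_{0}>0$ independent of $n$ with $c_{\e_{n}}=J_{\e_{n}}(u_{\e_{n}})-\frac{1}{\theta}\langle J'_{\e_{n}}(u_{\e_{n}}),u_{\e_{n}}\rangle\geq c_{0}\|u_{\e_{n}}\|_{\e_{n}}^{2}$; since $\limsup_{n}c_{\e_{n}}\leq d_{V(0)}$ by Lemma \ref{lem2.3AM}, the sequences $(u_{\e_{n}})$ and $(w_{n})$ are bounded in $\x$ (note $\|w_{n}\|_{\x}=\|u_{\e_{n}}\|_{\x}$, since the weight $y^{1-2s}$ does not depend on $x$). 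Up to a subsequence, $w_{n}\rightharpoonup w$ in $\x$, and by Theorem \ref{Sembedding}, $w_{n}(\cdot,0)\ri w(\cdot,0)$ in $L^{q}_{loc}(\R^{N})$ for $q\in[1,\2)$ and a.e.\ in $\R^{N}$; translating Lemma \ref{lem2.4AM} gives $\int_{B_{r}}w^{2}(x,0)\,dx\geq\beta$, so $w\neq0$, and $w\geq0$ as a weak limit of $w_{n}\geq0$.

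Next I would show that $(\e_{n}y_{\e_{n}})$ is bounded. Suppose not, so $|\e_{n}y_{\e_{n}}|\ri\infty$ along a subsequence. Since $\Lambda$ is bounded and $\e_{n}\ri0$, for each fixed $R>0$ one has $g_{\e_{n}}(x+y_{\e_{n}},\cdot)=\tilde f$ on $B_{R}$ for $n$ large; testing \eqref{AM2.8} against $\varphi(\cdot-y_{\e_{n}},\cdot)$ for $0\leq\varphi\in C^{\infty}_{c}(\overline{\R^{N+1}_{+}})$, changing variables, using $V_{\e_{n}}(x+y_{\e_{n}})\geq-V_{1}$ and $w_{n},\varphi\geq0$, and letting $n\ri\infty$, one finds that $w$ is a nonnegative weak subsolution of $-\dive(y^{1-2s}\nabla w)+m^{2}y^{1-2s}w=0$ in $\R^{N+1}_{+}$ with $\frac{\partial w}{\partial\nu^{1-2s}}=V_{1}w+\tilde f(w)$ on $\R^{N}$. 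Testing this inequality with $w$ (by density of $C^{\infty}_{c}(\overline{\R^{N+1}_{+}})$ in $\x$) and using $\tilde f(t)t\leq\frac{V_{1}}{\kappa}t^{2}$ together with \eqref{m-ineq} gives $\|w\|_{\x}^{2}\leq\frac{V_{1}}{m^{2s}}(1+\frac{1}{\kappa})\|w\|_{\x}^{2}$; the choice $\kappa>\frac{V_{1}}{m^{2s}-V_{1}}$ forces $V_{1}(1+\frac{1}{\kappa})<m^{2s}$, hence $w=0$, a contradiction. Thus $(\e_{n}y_{\e_{n}})$ is bounded and, up to a subsequence, $\e_{n}y_{\e_{n}}\ri x_{0}$. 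The same estimate with $V_{\e_{n}}(x+y_{\e_{n}})\ri V(x_{0})\geq-V_{1}$ uniformly on compacta rules out $x_{0}\in\R^{N}\setminus\overline{\Lambda}$ (where again $g_{\e_{n}}(x+y_{\e_{n}},\cdot)=\tilde f$ near the concentration), so $x_{0}\in\overline{\Lambda}$ and $V(x_{0})\geq\min_{\overline{\Lambda}}V=-V_{0}$ by $(V_{2})$.

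Then I would pin down $x_{0}$ and identify the limit. By $(f_{3})$--$(f_{4})$ and $(g_{3})$, the density $\frac{1}{2}g_{\e}(x,t)t-G_{\e}(x,t)$ is nonnegative, and equals $\frac{1}{2}f(t)t-F(t)\geq0$ on $\Lambda_{\e}$. Writing $c_{\e_{n}}=J_{\e_{n}}(u_{\e_{n}})-\frac{1}{2}\langle J'_{\e_{n}}(u_{\e_{n}}),u_{\e_{n}}\rangle$ and translating, one passes to the limit in the translated Euler--Lagrange equation ($V_{\e_{n}}(\cdot+y_{\e_{n}})\ri V(x_{0})$ uniformly on compacta, $g_{\e_{n}}(\cdot+y_{\e_{n}},t)\leq f(t)$) to see that $w$ is a nonnegative nontrivial weak solution of a limiting problem whose energy functional $\I$ satisfies $\I\geq L_{V(x_{0})}$ pointwise (as $G\leq F$); being a nontrivial critical point, $w$ lies on the Nehari set of $\I$, so $\I(w)$ is bounded below by the associated ground-state level, which dominates $d_{V(x_{0})}$. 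Combining this with a Fatou estimate, Lemma \ref{lem2.3AM} and the strict monotonicity of $\mu\mapsto d_{\mu}$ yields $d_{V(x_{0})}\leq d_{-V_{0}}$, hence $V(x_{0})=-V_{0}$; since $(V_{2})$ gives $V>-V_{0}$ on $\partial\Lambda$, necessarily $x_{0}\in M\subset\Lambda$. Consequently $g_{\e_{n}}(\cdot+y_{\e_{n}},\cdot)=f$ near the concentration for $n$ large, so $w$ solves \eqref{AEP} with $\mu=-V_{0}$, and the preceding chain collapses to $\lim_{n}c_{\e_{n}}=d_{-V_{0}}=L_{-V_{0}}(w)$. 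Splitting $\R^{N}$ into $\Lambda_{\e_{n}}-y_{\e_{n}}$ (where Theorem \ref{Sembedding} yields compactness) and its complement (where the penalized term is controlled as in the proof of Lemma \ref{lemma2}), this identity upgrades to $\|w_{n}\|_{\x}\ri\|w\|_{\x}$, and since $\x$ is a Hilbert space with $w_{n}\rightharpoonup w$, we get $w_{n}\ri w$ in $\x$.

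The main obstacle is twofold. First, ruling out that the concentration escapes to infinity or leaves $\overline{\Lambda}$: here the degenerate branch $\tilde f$ of the penalized nonlinearity, together with the sharp requirement $\kappa>\frac{V_{1}}{m^{2s}-V_{1}}$, is precisely what makes the test of the limit subsolution against $w$ close. Second, the energy comparison that confines $x_{0}$ to $M$ is delicate because a priori $x_{0}$ may sit on $\partial\Lambda$, where the limiting nonlinearity is not purely $f$; one must argue (via the modified functional $\I\geq L_{V(x_0)}$ above) that even then the ground-state inequality persists. Finally, since $(-\Delta+m^{2})^{s}$ lacks scaling invariance, everything is done on the extended degenerate problem in $\R^{N+1}_{+}$, requiring careful bookkeeping of the trace terms.
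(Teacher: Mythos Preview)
Your outline parallels the paper's structure (boundedness $\to$ weak limit $w\neq0$ $\to$ $x_0\in\overline{\Lambda}$ $\to$ $V(x_0)=-V_0$ $\to$ strong convergence), and the first three steps are essentially the same as the paper's. Two points, however, differ and contain genuine gaps.

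\textbf{Locating $x_0$ in $M$.} Your device of a ``limiting functional $\I$'' is not well defined: if $x_0\in\partial\Lambda$, the indicator $\chi_\Lambda(\e_n x+\e_n y_{\e_n})$ has no pointwise limit, so there is no limit nonlinearity and no $\I$ to speak of; the chain $d_{V(x_0)}\leq d_{\I}\leq \I(w)$ is therefore formal. The paper avoids this altogether by using only the uniform inequality $g\leq f$ (valid wherever $\e_n x+\e_n y_{\e_n}$ lies). Passing to the limit in the tested equation with $g\leq f$ gives directly $\langle L'_{V(x_0)}(w),w\rangle\leq 0$, whence some $t_1\in(0,1]$ puts $t_1w\in\mathcal{M}_{V(x_0)}$; then $d_{V(x_0)}\leq L_{V(x_0)}(t_1w)\leq\liminf_n c_{\e_n}\leq d_{V(0)}$ yields $V(x_0)\leq V(0)=-V_0$, and $(V_2)$ forces $x_0\in\Lambda$, $V(x_0)=-V_0$. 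No limit problem is ever identified at this stage.

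\textbf{Strong convergence.} Your appeal to compactness on $\Lambda_{\e_n}-y_{\e_n}$ via Theorem~\ref{Sembedding} fails: this set has diameter comparable to ${\rm diam}(\Lambda)/\e_n\to\infty$, so only \emph{local} compactness is available there, and the argument of Lemma~\ref{lemma2} (which relies on $\Lambda_\e$ being bounded for fixed $\e$) does not transfer. Moreover, the identity $c_{\e_n}=J_{\e_n}(u_n)-\frac{1}{2}\langle J'_{\e_n}(u_n),u_n\rangle$ you invoke contains no norm term, so it cannot by itself upgrade to $\|w_n\|_{\x}\to\|w\|_{\x}$. The paper instead uses $c_{\e_n}=J_{\e_n}(u_n)-\frac{1}{\theta}\langle J'_{\e_n}(u_n),u_n\rangle$, which produces the coefficient $(\frac{1}{2}-\frac{1}{\theta})$ in front of the equivalent norm $\|w_n\|_{\x}^2-V_1|w_n(\cdot,0)|_2^2$, and splits the remaining integral into three nonnegative densities $h_n^1,h_n^2,h_n^3$ (nonnegativity of $h_n^2$ is exactly where the hypothesis $\kappa>\theta/(\theta-2)$ enters). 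Since by now $x_0\in\Lambda$ is known, $\tilde\chi_n^1\to1$ a.e.; Fatou together with the sandwich $d_{V(0)}\geq\limsup c_{\e_n}\geq\liminf c_{\e_n}\geq d_{V(0)}$ then forces each piece to converge, from which one reads off both $\|w_n\|_{\x}^2-V_1|w_n(\cdot,0)|_2^2\to\|w\|_{\x}^2-V_1|w(\cdot,0)|_2^2$ and $|w_n(\cdot,0)|_2\to|w(\cdot,0)|_2$, hence $\|w_n\|_{\x}\to\|w\|_{\x}$.
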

\begin{proof}
In what follows, we denote by $(y_{n})$ and $(u_{n})$, the sequences $(y_{\e_n})$ and $(u_{\e_n})$, respectively. Since each $u_{n}$ satisfies \eqref{AM2.8}, we can argue as in the proof of Lemma \ref{lemma2} and use Lemma \ref{lem2.3AM} and \eqref{equivalent} to deduce that $(u_{n})$ is bounded in $\x$. Thus $(w_{n})$ is bounded in $\x$ and
there exists $w\in \x\setminus \{0\}$ such that
\begin{align}\label{2.15AM}
w_{n}\rightharpoonup w \quad \mbox{ in } \x  \quad\mbox{ as } n\ri \infty,
\end{align}
and, by Lemma \ref{lem2.4AM},
\begin{align}\label{2.16AM}
\int_{B_{r}} w^{2}(x, 0)\, dx\geq \beta>0.
\end{align}
Next we show that $(\e_{n}y_{n})$ is bounded in $\R^N$. First of all, we prove that 
\begin{align}\label{Claim1}
{\rm dist}(\e_{n}y_{n}, \overline{\Lambda})\ri 0 \quad \mbox{ as } n\ri \infty.
\end{align}
If \eqref{Claim1} does not hold, there exists $\delta>0$ and a subsequence of $(\e_{n}y_{n})$, still denoted by itself, such that 
$$
{\rm dist}(\e_{n}y_{n}, \overline{\Lambda})\geq \delta \quad  \mbox{ for all } n\in \mathbb{N}.
$$
Then there is $R>0$ such that $B_{R}(\e_{n}y_{n})\subset \Lambda^{c}$ for all $n\in \mathbb{N}$. By the definition of $\x$ and using the fact that $w\geq 0$, we know that there exists $(\psi_{j})\subset \x$ such that $\psi_{j}\geq 0$, $\psi_{j}$ has compact support in $\overline{\mathbb{R}^{N+1}_{+}}$ and $\psi_{j}\ri w$ in $\x$ as $j\ri \infty$. Fix $j\in \mathbb{N}$. 
Taking $\psi_{j}$ as test function in $\langle J'_{\e}(u_{n}), \phi\rangle=0$ for $\phi\in X_{\e}$, we get
\begin{align}\label{2.17AM}
&\iint_{\R^{N+1}_{+}} y^{1-2s} (\nabla w_{n}\cdot\nabla \psi_{j}+m^{2}w_{n}\psi_{j})\, dx dy+\int_{\R^{N}} V(\e_{n}x+\e_{n}y_{n}) w_{n}(x, 0) \psi_{j}(x, 0)\, dx  \nonumber\\
&=\int_{\R^{N}} g(\e_{n}x+\e_{n}y_{n}, w_{n}(x, 0))\psi_{j}(x, 0)\, dx.
\end{align}
On the other hand, by the definition of $g_{\e}$ and $(g_{3})$, there holds
\begin{align*}
\int_{\R^{N}} g(\e_{n}x+\e_{n}y_{n}, w_{n}(x, 0))\psi_{j}(x, 0)\, dx&= \int_{B_{\frac{R}{\e_{n}}}} g(\e_{n}x+\e_{n}y_{n}, w_{n}(x, 0))\psi_{j}(x, 0)\, dx  \\
&+\int_{B^{c}_{\frac{R}{\e_{n}}}} g(\e_{n}x+\e_{n}y_{n}, w_{n}(x, 0))\psi_{j}(x, 0)\, dx \\
&\leq \frac{V_{1}}{\kappa} \int_{B_{\frac{R}{\e_{n}}}} w_{n}(x, 0) \psi_{j}(x, 0)\, dx+\int_{B^{c}_{\frac{R}{\e_{n}}}} f(w_{n}(x, 0))\psi_{j}(x, 0)\, dx.
\end{align*}
Then, using $(V_1)$ and \eqref{2.17AM}, we can see that
\begin{align*}
&\iint_{\R^{N+1}_{+}} y^{1-2s} (\nabla w_{n}\cdot\nabla \psi_{j}+m^{2}w_{n}\psi_{j})\, dx dy-V_{1}\left(1+\frac{1}{\kappa}\right) \int_{\R^{N}} w_{n}(x, 0) \psi_{j}(x, 0)\, dx  \\
&\leq \int_{B_{\frac{R}{\e_{n}}}^{c}} f(w_{n}(x, 0))\psi_{j}(x, 0)\, dx.
\end{align*}
Taking into account that $\psi_{j}$ has compact support, $\e_{n}\ri 0$, the growth assumptions on $f$, and \eqref{2.15AM}, we deduce that, as $n\ri \infty$,
\begin{align*}
\int_{B_{\frac{R}{\e_{n}}}^{c}} f(w_{n}(x, 0))\psi_{j}(x, 0)\, dx\ri 0,
\end{align*} 
and
\begin{align*}
&\iint_{\R^{N+1}_{+}} y^{1-2s} (\nabla w_{n}\cdot\nabla \psi_{j}+m^{2}w_{n}\psi_{j})\, dx dy-V_{1}\left(1+\frac{1}{\kappa}\right) \int_{\R^{N}} w_{n}(x, 0) \psi_{j}(x, 0)\, dx   \\
&\ri \iint_{\R^{N+1}_{+}} y^{1-2s} (\nabla w\cdot\nabla \psi_{j}+m^{2}w\psi_{j})\, dx dy-V_{1}\left(1+\frac{1}{\kappa}\right) \int_{\R^{N}} w(x, 0) \psi_{j}(x, 0)\, dx.
\end{align*}
The previous relations of limits give
\begin{align*}
\iint_{\R^{N+1}_{+}} y^{1-2s} (\nabla w\cdot\nabla \psi_{j}+m^{2}w\psi_{j})\, dx dy-V_{1}\left(1+\frac{1}{\kappa}\right) \int_{\R^{N}} w(x, 0) \psi_{j}(x, 0)\, dx\leq 0,
\end{align*}
and passing to the limit as $j\ri \infty$ we find
\begin{align*}
\|w\|^{2}_{\x}-V_{1}\left(1+\frac{1}{\kappa}\right) |w(\cdot, 0)|^{2}_{2}\leq 0.
\end{align*}
Thus \eqref{m-ineq} and $\kappa>\frac{V_{1}}{m^{2s}-V_{1}}$ yield 
$$
0\leq \left(1-\frac{V_{1}}{m^{2s}} \left(1+\frac{1}{\kappa}\right) \right)\|w\|^{2}_{\x}\leq 0,
$$
which implies $w\equiv 0$ in $\R^{N}$ and this is in contrast with \eqref{2.16AM}. Consequently, there exist a subsequence of $(\e_{n}y_{n})$, still denoted by itself, and $x_{0}\in \overline{\Lambda}$ such that $\e_{n}y_{n}\ri x_{0}$ as $n\ri \infty$. Next we prove that $x_{0}\in \Lambda$.

Using $(g_2)$ and \eqref{2.17AM}, we know that
\begin{align*}
\iint_{\R^{N+1}_{+}} y^{1-2s} (\nabla w_{n}\cdot\nabla \psi_{j}+m^{2}w_{n}\psi_{j})\, dx dy+\int_{\R^{N}} V(\e_{n}x+\e_{n}y_{n}) w_{n}(x, 0) \psi_{j}(x, 0)\, dx \leq \int_{\R^{N}} f(w_{n}(x, 0))\psi_{j}(x, 0)\, dx.
\end{align*}
Letting $n\ri \infty$ and using \eqref{2.15AM} and the continuity of $V$, we find
\begin{align*}
\iint_{\R^{N+1}_{+}} y^{1-2s} (\nabla w\cdot\nabla \psi_{j}+m^{2}w\psi_{j})\, dx dy+\int_{\R^{N}} V(x_{0}) w(x, 0) \psi_{j}(x, 0)\, dx \leq \int_{\R^{N}} f(w(x, 0)) \psi_{j}(x, 0)\, dx.
\end{align*}
By passing to the limit as $j\ri \infty$, we obtain 
\begin{align*}
\iint_{\R^{N+1}_{+}} y^{1-2s} (|\nabla w|^{2}+m^{2}w^{2})\, dx dy+\int_{\R^{N}} V(x_{0}) w^{2}(x, 0)\, dx \leq \int_{\R^{N}} f(w(x, 0)) w(x, 0)\, dx.
\end{align*}
Hence there exists $t_{1}\in (0, 1)$ such that $t_{1}w\in \mathcal{M}_{V(x_{0})}$. In view of Lemma \ref{lem2.3AM}, we have
\begin{align*}
d_{V(x_{0})}\leq L_{V(x_{0})}(t_{1}w)\leq \liminf_{n\ri \infty} J_{\e_{n}}(u_{n})=\liminf_{n\ri \infty} c_{\e_{n}}\leq d_{V(0)}
\end{align*}
from which $d_{V(x_{0})}\leq d_{V(0)}$ and thus $V(x_{0})\leq V(0)=-V_{0}$.
Since $-V_{0}=\inf_{x\in \overline{\Lambda}} V(x)$, we achieve $V(x_{0})=-V_{0}$. Using $(V_2)$, we conclude that $x_{0}\in \Lambda$.

Finally, we show that $w_{n}\ri w$ in $\x$ as $n\ri \infty$. For all $n\in \mathbb{N}$ and $x\in \R^{N}$, 
define
$$
\tilde{\Lambda}_{n} := \frac{\Lambda - \e_{n}\tilde{y}_{n}}{\e_{n}},
$$ 
and
\begin{align*}
&\tilde{\chi}_{n}^{1}(x):= \left\{
\begin{array}{ll}
1 \, &\mbox{ if } x\in \tilde{\Lambda}_{n},\\
0 \, &\mbox{ if } x\in \tilde{\Lambda}^{c}_{n}, 
\end{array}
\right.\\
&\tilde{\chi}_{n}^{2}(x):= 1- \tilde{\chi}_{n}^{1}(x).
\end{align*}
Let us also consider the following functions for $x\in \R^{N}$ and $n\in \mathbb{N}$:
\begin{align*}
&h_{n}^{1}(x):= \left(\frac{1}{2}-\frac{1}{\theta}\right) (V(\e_{n}x+ \e_{n}y_{n})+V_{1}) w^{2}_{n}(x, 0) \tilde{\chi}_{n}^{1}(x),\\
&h^{1}(x):=\left(\frac{1}{2}-\frac{1}{\theta}\right) (V(x_{0})+V_{1}) w^{2}(x, 0), \\
&h_{n}^{2}(x)\!\!:=\!\!\left[ \left(\frac{1}{2}-\frac{1}{\theta}\right) (V(\e_{n}x+ \e_{n}y_{n})+V_{1}) w^{2}_{n}(x, 0) + \frac{1}{\theta} g(\e_{n}x+ \e_{n}y_{n}, w_{n}(x, 0)) w_{n}(x, 0) - G(\e_{n}x+ \e_{n}y_{n}, w_{n}(x, 0))\right] \tilde{\chi}_{n}^{2}(x) \\
&\quad \quad \, \, \, \geq \left(\left(\frac{1}{2}-\frac{1}{\theta}\right) -\frac{1}{2\kappa}\right)(V(\e_{n}x+ \e_{n}\tilde{y}_{n})+V_{1}) w_{n}^{2}(x, 0) \tilde{\chi}_{n}^{2}(x), \\
&h_{n}^{3}(x):= \left(\frac{1}{\theta} g(\e_{n}x+ \e_{n}y_{n}, w_{n}(x, 0)) w_{n}(x, 0) - G(\e_{n}x+ \e_{n}y_{n}, w_{n}(x, 0))\right) \tilde{\chi}_{n}^{1}(x) \\
&\quad \quad \, \, \, =\left[\frac{1}{\theta} \left(f(w_{n}(x, 0))w_{n}(x, 0)- F(w_{n}(x, 0))\right) \right] \tilde{\chi}_{n}^{1}(x),  \\
&h^{3}(x):= \frac{1}{\theta} \left(f(w(x, 0))w(x, 0)- F(w(x, 0))\right). 
\end{align*}
From $(f_3)$, $(g_3)$, $(V_1)$ and our choice of $\kappa$, we see that the above functions are nonnegative in $\R^{N}$.
Since
\begin{align*}
&w_{n}(x, 0) \ri w(x, 0)\quad \mbox{ a.e. } x\in \R^{N}, \\
&\e_{n}y_{n}\ri x_{0}\in \Lambda,
\end{align*}
as $n\ri \infty$, we get
\begin{align*}
&\tilde{\chi}_{n}^{1}(x)\ri 1, \, h_{n}^{1}(x)\ri h^{1}(x), \, h_{n}^{2}(x)\ri 0 \, \mbox{ and } \, h_{n}^{3}(x)\ri h^{3}(x) \, \mbox{ a.e. } x\in \R^{N}. 
\end{align*}
Hence, observing that $\|\cdot\|^{2}_{\x}-V_{1} |\cdot|_{2}^{2}$ is weakly lower semicontinuous,  and using Fatou's lemma and the invariance of $\R^{N}$ by translation, we have
\begin{align*}
d_{V(0)} &\geq \limsup_{n\ri \infty} c_{\e_{n}} = \limsup_{n\ri \infty} \left( J_{\e_{n}}(u_{n}) - \frac{1}{\theta} \langle J'_{\e_{n}}(u_{n}), u_{n}\rangle \right)\\
&\geq \limsup_{n\ri \infty} \left\{\left(\frac{1}{2}-\frac{1}{\theta} \right)\left[\|w_{n}\|^{2}_{\x}-V_{1}|w_{n}(\cdot, 0)|^{2}_{2}\right]+ \int_{\R^{N}} (h_{n}^{1}+ h_{n}^{2}+ h_{n}^{3}) \, dx\right\}\\
&\geq \liminf_{n\ri \infty} \left\{\left(\frac{1}{2}-\frac{1}{\theta} \right)\left[\|w_{n}\|^{2}_{\x}-V_{1}|w_{n}(\cdot, 0)|^{2}_{2}\right]+ \int_{\R^{N}} (h_{n}^{1}+ h_{n}^{2}+ h_{n}^{3}) \, dx\right\} \\
&\geq \left(\frac{1}{2}-\frac{1}{\theta} \right)\left[\|w\|^{2}_{\x}-V_{1}|w(\cdot, 0)|^{2}_{2}\right]+ \int_{\R^{N}} (h^{1}+ h^{3}) \, dx
= d_{V(0)}.
\end{align*}
Accordingly,
\begin{align}\label{2.19AM}
\lim_{n\ri \infty}\|w_{n}\|^{2}_{\x}-V_{1}|w_{n}(\cdot, 0)|^{2}_{2}=\|w\|^{2}_{\x}-V_{1}|w(\cdot, 0)|^{2}_{2},
\end{align}
and 
\begin{align*}
h_{n}^{1}\ri h^{1}, \, h_{n}^{2}\ri 0 \, \mbox{ and }\, h_{n}^{3}\ri h^{3} \, \mbox{ in } \, L^{1}(\R^{N}). 
\end{align*}
Then,
\begin{align*}
\lim_{n\ri \infty} \int_{\R^{N}} (V(\e_{n} x+ \e_{n}y_{n})+V_{1}) w^{2}_{n}(x, 0) \, dx = \int_{\R^{N}} (V(x_{0})+V_{1}) w^{2}(x, 0) \, dx, 
\end{align*}
which implies that
\begin{align}\label{2.20AM}
\lim_{n\ri \infty} |w_{n}(\cdot, 0)|_{2}^{2}= |w(\cdot, 0)|_{2}^{2}. 
\end{align}
Putting together  \eqref{2.19AM} and \eqref{2.20AM}, and using the fact that $\x$ is a Hilbert space, we attain
\begin{align*}
\|w_{n}-w\|_{\x}\ri 0 \quad \mbox{ as } n\ri \infty.
\end{align*}
This ends the proof of the lemma.
\end{proof}

\section{The proof of Theorem \ref{thm1}}
In this last section we give the proof of Theorem \ref{thm1}. We start by proving the  following lemma which will be crucial to study the behavior of maximum points of the solutions. The proof is based on a variant of the Moser iteration argument \cite{Moser}. 
\begin{lem}\label{moser}
Let $(w_{n})$ be the sequence defined as in Lemma \ref{lem2.5AM}. Then, $w_{n}(\cdot, 0)\in L^{\infty}(\R^{N})$  and there exists $C>0$ such that
\begin{align*}
|w_{n}(\cdot, 0)|_{\infty}\leq C \quad \mbox{ for all } n\in \mathbb{N}.
\end{align*}
Moreover, $w_{n}\in L^{\infty}(\R^{N+1}_{+})$ and there exists $K>0$ such that
$$
\|w_{n}\|_{L^{\infty}(\R^{N+1}_{+})}\leq K \quad \mbox{ for all } n\in \mathbb{N}.
$$ 
\end{lem}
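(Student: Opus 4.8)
The plan is to run a Moser-type iteration on the boundary traces $u_{n}:=w_{n}(\cdot,0)$, using two uniform inputs: $\sup_{n}\|w_{n}\|_{\x}<\infty$ (since $w_{n}\ri w$ in $\x$, by Lemma \ref{lem2.5AM}), and — because the trace is continuous into $H^{s}(\R^{N})\hookrightarrow L^{q}(\R^{N})$ for $q\in[2,\2]$ — the strong convergence $u_{n}\ri w(\cdot,0)$ in each such $L^{q}$, so that $\sup_{n}|u_{n}|_{q}<\infty$ for $q\in[2,\2]$. Fix $\beta\ge1$, $L>0$, set $w_{L,n}:=\min\{w_{n},L\}$, and take the admissible test function $\varphi:=w_{n}w_{L,n}^{2(\beta-1)}$ (admissible since $w_{L,n}^{2(\beta-1)}\le L^{2(\beta-1)}$) in the weak formulation of \eqref{MEP} satisfied by $w_{n}$. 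Writing $z=z_{n,L}:=w_{n}w_{L,n}^{\beta-1}$ and using the elementary pointwise bounds for $\nabla w_{n}\cdot\nabla\varphi$ (recalling $\nabla w_{L,n}$ vanishes where $w_{n}\ge L$) together with $m^{2}w_{n}\varphi=m^{2}z^{2}$, one gets
$$
\frac{1}{\beta^{2}}\,\|z\|_{\x}^{2}\ \le\ \iint_{\R^{N+1}_{+}}y^{1-2s}\bigl(\nabla w_{n}\cdot\nabla\varphi+m^{2}w_{n}\varphi\bigr)\,dx\,dy\ =\ \int_{\R^{N}}\bigl[g(\e_{n}x+\e_{n}y_{\e_{n}},u_{n})-V(\e_{n}x+\e_{n}y_{\e_{n}})u_{n}\bigr]\varphi(x,0)\,dx .
$$

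On the right, $-V(\e_{n}x+\e_{n}y_{\e_{n}})\le V_{1}$ by $(V_{1})$, while $(g_{2})$, $(f_{1})$–$(f_{2})$ give $0\le g(\cdot,t)\le f(t)\le\eta t+C_{\eta}t^{p}$ for $t\ge0$ with $p\in(1,\2-1)$; since $u_{n}\varphi(\cdot,0)=z(\cdot,0)^{2}$ and $u_{n}^{p}\varphi(\cdot,0)=u_{n}^{p-1}z(\cdot,0)^{2}$, the right-hand side is at most $C_{0}\int_{\R^{N}}(1+u_{n}^{p-1})z(\cdot,0)^{2}\,dx$. The crucial point is that $p$ is \emph{strictly} subcritical: with $\nu:=\frac{p-1}{\2-2}\in(0,1)$, Hölder's inequality (exponents $\frac{\2}{p-1}$ and $\frac{\2}{\2-(p-1)}$), interpolation between $L^{2}$ and $L^{\2}$, the trace embedding (Theorem \ref{Sembedding}), and $\sup_{n}|u_{n}|_{\2}<\infty$ yield $\int_{\R^{N}}u_{n}^{p-1}z(\cdot,0)^{2}\,dx\le C\,|z(\cdot,0)|_{2}^{2(1-\nu)}\|z\|_{\x}^{2\nu}$, which by Young's inequality (with $\delta\sim\beta^{-2}$) is $\le\frac{1}{2\beta^{2}C_{0}}\|z\|_{\x}^{2}+C\beta^{2\nu/(1-\nu)}|z(\cdot,0)|_{2}^{2}$. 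Absorbing the $\|z\|_{\x}^{2}$ term and using $z(\cdot,0)^{2}=u_{n}^{2}w_{L,n}(\cdot,0)^{2\beta-2}\le u_{n}^{2\beta}$, we obtain, whenever $u_{n}\in L^{2\beta}(\R^{N})$,
$$
\|z_{n,L}\|_{\x}^{2}\ \le\ C\,\beta^{2/(1-\nu)}\,|u_{n}|_{2\beta}^{2\beta},\qquad C\text{ independent of }n,\ L,\ \beta .
$$

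Letting $L\ri\infty$ (monotone convergence $z_{n,L}\uparrow w_{n}^{\beta}$, weak lower semicontinuity of $\|\cdot\|_{\x}$, continuity of the trace and \eqref{traceineq}) gives $|u_{n}|_{\2\beta}^{2\beta}=\bigl|w_{n}^{\beta}(\cdot,0)\bigr|_{\2}^{2}\le C\|w_{n}^{\beta}\|_{\x}^{2}\le C\beta^{2/(1-\nu)}|u_{n}|_{2\beta}^{2\beta}$, uniformly in $n$. Iterating from $\beta_{0}=1$ (where $\sup_{n}|u_{n}|_{2}<\infty$) with $\beta_{k+1}=\chi\beta_{k}$, $\chi:=\2/2>1$, the constants $(C\beta_{k}^{2/(1-\nu)})^{1/2\beta_{k}}$ have a convergent infinite product (since $\beta_{k}=\chi^{k}$), so $\sup_{n}|u_{n}|_{2\beta_{k}}\le C$ for all $k$, and $k\ri\infty$ yields $|w_{n}(\cdot,0)|_{\infty}\le C$ for all $n$. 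For the interior bound, note that $w_{n}\in\x$ solves the \emph{linear} degenerate equation $-\dive(y^{1-2s}\nabla w_{n})+m^{2}y^{1-2s}w_{n}=0$ in $\R^{N+1}_{+}$ with trace $u_{n}$, hence coincides with the $(-\Delta+m^{2})^{s}$-harmonic extension of $u_{n}$; by the Poisson-kernel representation (property $(iii)$, valid for $u_{n}\in H^{s}(\R^{N})$) and \eqref{Nkernel},
$$
0\ \le\ w_{n}(x,y)\ =\ \int_{\R^{N}}P_{s,m}(x-\zeta,y)\,u_{n}(\zeta)\,d\zeta\ \le\ |u_{n}|_{\infty}\int_{\R^{N}}P_{s,m}(x-\zeta,y)\,d\zeta\ =\ |u_{n}|_{\infty}\,\vartheta(my)\ \le\ |u_{n}|_{\infty},
$$
where $0<\vartheta\le1$; hence $\|w_{n}\|_{L^{\infty}(\R^{N+1}_{+})}\le K$ for all $n$.

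The main obstacle is to keep \emph{every} constant in the iteration independent of $n$ (handled by the uniform $\x$-bound and the strong $L^{q}$-convergence of the traces) and, above all, to control the growth of the iteration constant $C(\beta_{k})$ so that the infinite product converges; this is precisely where the \emph{strict} subcriticality $p<\2-1$ (equivalently $\nu<1$) is essential, as it allows a Young-inequality absorption yielding a polynomial (in $\beta$) constant, rather than a more delicate truncation-in-height argument that would otherwise be forced because $|u_{n}|_{\2}$ is bounded but not small. (Alternatively, once $|w_{n}(\cdot,0)|_{\infty}\le C$ is known, the interior bound also follows from a bulk Moser iteration based on the weighted Sobolev inequality \eqref{weightedE}, the Robin data being now bounded, or from Proposition \ref{PROPFF}-$(i)$ on unit half-cylinders $Q_{1}$ centred at boundary points together with $\sup_{n}\|w_{n}\|_{L^{2}(\R^{N+1}_{+},y^{1-2s})}<\infty$.)
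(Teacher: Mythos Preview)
Your proof is correct and follows a genuinely different route from the paper's in both halves.

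\textbf{Trace bound.} Both arguments are Moser iterations, but your absorption step is more direct: you exploit the strict subcriticality $p<\2-1$ (i.e.\ $\nu<1$) through a single H\"older--interpolation--Young step to absorb the $\|z\|_{\x}^{2}$ term, yielding a polynomial-in-$\beta$ constant and a convergent infinite product in one pass. The paper instead writes the reaction as $(c+h_{n})z(\cdot,0)^{2}$ with $h_{n}=\chi_{\{u_{n}>1\}}u_{n}^{\,p-1}\in L^{N/2s}(\R^{N})$ uniformly (via the strong $H^{s}$-convergence of traces), splits $\{h_{n}\le M\}\cup\{h_{n}>M\}$, and uses uniform equi-integrability of $(h_{n})$ in $L^{N/2s}$ to absorb the tail; this first round only gives $u_{n}\in L^{q}$ for all finite $q$, and a \emph{second} iteration (now using $h_{n}\in L^{N/s}$ and a three-term H\"older inequality) is needed to reach $L^{\infty}$. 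Your argument is shorter and never appeals to the strong convergence $w_{n}\to w$; the paper's two-step scheme is the standard template that would also survive at the critical exponent, given uniform smallness of tails.

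\textbf{Interior bound.} You identify $w_{n}$ with the $(-\Delta+m^{2})^{s}$-harmonic extension of its trace (it solves the homogeneous interior equation in $\x$, and the extension is unique among $\x$-functions with prescribed trace), and then use $\int_{\R^{N}}P_{s,m}(\cdot,y)\,dx=\vartheta(my)\le 1$. This is a one-line estimate. The paper instead runs a \emph{bulk} Moser step via the weighted embedding \eqref{weightedE} into $L^{2\gamma}(\R^{N+1}_{+},y^{1-2s})$ and finishes with a measure-theoretic argument. Two small points worth one sentence each in your write-up: property~$(iii)$ is stated for Schwartz traces, so justify the Poisson representation for $u_{n}\in H^{s}\cap L^{\infty}$ (density plus continuity of the extension map $H^{s}\to\x$, or equivalently uniqueness of the $\x$-solution with given trace); and the bound $\vartheta\le 1$ follows from the ODE for $\vartheta$ (any interior critical point would be a strict local minimum, forcing monotonicity from $\vartheta(0)=1$ to $\vartheta(\infty)=0$).
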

\begin{proof}
We note that $w_{n}$ is a weak solution to
\begin{align}\label{traslato}
\left\{
\begin{array}{ll}
-\dive(y^{1-2s} \nabla w_{n})+m^{2}y^{1-2s}w_{n}=0 &\mbox{ in } \R^{N+1}_{+}, \\
\frac{\partial w_{n}}{\partial \nu^{1-2s}}=-V(\e_{n}x+\e_{n}y_{n})w_{n}(\cdot, 0)+g(\e_{n}x+\e_{n}y_{n}, w_{n}(\cdot, 0)) &\mbox{ on } \R^{N}.
\end{array}
\right.
\end{align}
For each $n\in \mathbb{N}$ and $L>0$, let $w_{n,L}:=\min\{w_{n},L\}$ and $z_{n, L}:=w_{n}w_{n, L}^{2\beta}$ , where $\beta>0$ will be chosen later.
Taking $z_{L, n}$ as test function in the weak formulation of \eqref{traslato}, we deduce that 
\begin{align}\label{conto1JMP}
\iint_{\mathbb{R}^{N+1}_{+}}  &y^{1-2s}w^{2\beta}_{n,L}(|\nabla w_{n}|^{2}+m^{2}w^{2}_{n}) \, dxdy+\iint_{D_{n, L}} 2\beta y^{1-2s}w^{2\beta}_{n, L} |\nabla w_{n}|^{2} \, dx dy  \nonumber \\
&= -\int_{\mathbb{R}^{N}} V(\e_{n}x+\e_{n}y_{n}) w^{2}_{n}(x,0) w^{2\beta}_{n, L}(x,0) \,dx+ \int_{\mathbb{R}^{N}} g(\e_{n}x+\e_{n}y_{n}, w_{n}(x, 0)) w_{n}(x,0)w^{2\beta}_{n, L}(x,0) \,dx, 
\end{align}
where $D_{n, L}:=\{(x,y)\in \mathbb{R}^{N+1}_{+}: w_{n}(x, y)\leq L\}$. 
It is easy to check that
\begin{align}\label{conto2JMP}
\iint_{\mathbb{R}^{N+1}_{+}} &y^{1-2s}|\nabla (w_{n}w_{n, L}^{\beta})|^{2} \,dxdy =\iint_{\mathbb{R}^{N+1}_{+}} y^{1-2s}w_{n, L}^{2\beta} |\nabla w_{n}|^{2} \,dxdy \nonumber \\
&\quad+\iint_{D_{n, L}} (2\beta+\beta^{2}) y^{1-2s}w_{n, L}^{2\beta} |\nabla w_{n}|^{2} \,dxdy.
\end{align}
Then, putting together (\ref{conto1JMP}), (\ref{conto2JMP}), $(V_{1})$, $(f_1)$-$(f_2)$, $(g_1)$-$(g_2)$, we get 
\begin{align}\label{S1JMP}
&\|w_{n}w_{n, L}^{\beta}\|_{\x}^{2}=\iint_{\mathbb{R}^{N+1}_{+}} y^{1-2s}(|\nabla (w_{n}w_{n, L}^{\beta})|^{2}+m^{2}w^{2}_{n}w^{2\beta}_{n, L}) \,dxdy \nonumber \\
&=\iint_{\mathbb{R}^{N+1}_{+}} y^{1-2s}w_{n, L}^{2\beta} (|\nabla w_{n}|^{2}+m^{2}w^{2}_{n}) \,dxdy+\iint_{D_{n, L}} 2\beta \left(1+\frac{\beta}{2}\right) y^{1-2s}w_{n, L}^{2\beta} |\nabla w_{n}|^{2} \,dxdy \nonumber \\
&\leq c_{\beta} \left[\iint_{\mathbb{R}^{N+1}_{+}} y^{1-2s}w_{n, L}^{2\beta} (|\nabla w_{n}|^{2}+m^{2}w^{2}_{n}) \,dxdy+\iint_{D_{n, L}} 2\beta y^{1-2s}w_{n, L}^{2\beta} |\nabla w_{n}|^{2} \,dxdy\right] \nonumber \\
&=c_{\beta} \left[-\int_{\mathbb{R}^{N}} V(\e_{n}x+\e_{n}y_{n}) w^{2}_{n}(x,0) w^{2\beta}_{n, L}(x,0) \,dx+ \int_{\mathbb{R}^{N}} g(\e_{n}x+\e_{n}y_{n}, w_{n}(x, 0)) w_{n}(x,0)w^{2\beta}_{n, L}(x,0) \,dx\right] \nonumber \\
&\leq c_{\beta} \left[\int_{\R^{N}} (V_{1}+1)w^{2}_{n}(x,0) w^{2\beta}_{n, L}(x,0)+ C_{1} w^{p+1}_{n}(x,0)w^{2\beta}_{n, L}(x,0) \,dx   \right]
\end{align}
where 
\begin{align*}
c_{\beta}:=1+\frac{\beta}{2}>0.
\end{align*}
Now, we prove that there exist a constant $c>0$ independent of $n$, $L$, $\beta$, and $h_{n}\in L^{N/2s}(\mathbb{R}^{N})$, $h_{n}\geq 0$ and independent of  $L$ and $\beta$, such that
\begin{align}\label{S2JMP}
(V_{1}+1)w^{2}_{n}(\cdot,0) w^{2\beta}_{n, L}(\cdot,0)+ C_{1}w^{p+1}_{n}(\cdot,0)w^{2\beta}_{n, L}(\cdot,0)\leq (c+h_{n})w^{2}_{n}(\cdot,0) w_{n, L}^{2\beta}(\cdot,0) \quad \mbox{ on } \mathbb{R}^{N}.
\end{align}
Firstly, we notice that
\begin{align*}
&(V_{1}+1)w^{2}_{n}(\cdot,0) w^{2\beta}_{n, L}(\cdot,0)+ C_{1} w^{p+1}_{n}(\cdot,0)w^{2\beta}_{n, L}(\cdot,0) \\
&\quad \leq (V_{1}+1)w^{2}_{n}(\cdot,0) w^{2\beta}_{n, L}(\cdot,0)+C_{1} w_{n}^{p-1}(\cdot,0)w^{2}_{n}(\cdot,0) w_{n, L}^{2\beta}(\cdot,0) \quad \mbox{ on } \mathbb{R}^{N}.
\end{align*}
Moreover,
\begin{align}\label{IPHONE12}
w_{n}^{p-1}(\cdot,0)\leq 1+h_{n} \quad \mbox{ on } \mathbb{R}^{N},
\end{align}
where $h_{n}:=\chi_{\{w_{n}(\cdot,0)>1\}}w_{n}^{p-1}(\cdot,0)\in L^{N/2s}(\mathbb{R}^{N})$.
In fact, we can observe that
$$
w_{n}^{p-1}(\cdot,0)=\chi_{\{w_{n}(\cdot,0)\leq 1\}}w_{n}^{p-1}(\cdot,0)+\chi_{\{w_{n}(\cdot,0)>1\}}w_{n}^{p-1}(\cdot,0)\leq 1+\chi_{\{w_{n}(\cdot,0)>1\}}w_{n}^{p-1}(\cdot,0) \quad \mbox{ on } \mathbb{R}^{N}.
$$
If $(p-1)\frac{N}{2s}<2$ then, recalling that $(w_{n}(\cdot, 0))$ is bounded in $\h$, we have
$$
\int_{\mathbb{R}^{N}} \chi_{\{w_{n}(\cdot,0)>1\}}(w_{n}(x,0))^{\frac{N}{2s}(p-1)} \,dx \leq \int_{\mathbb{R}^{N}} \chi_{\{w_{n}(\cdot,0)>1\}} w^{2}_{n}(x,0) \, dx\leq C, \quad \mbox{ for all } n\in \mathbb{N}.
$$
If $2\leq (p-1)\frac{N}{2s}$, we deduce that $(p-1)\frac{N}{2s}\in [2,2^{*}_{s}]$, and by Theorem \ref{Sembedding} and the boundedness of $(w_{n})$ in $\x$, we find
$$
\int_{\R^{N}} \chi_{\{w_{n}(\cdot,0)>1\}} (w_{n}(x,0))^{\frac{N}{2s}(p-1)} \,dx\leq C\|w_{n}\|_{\x}^{\frac{N}{2s}(p-1)}\leq C,
$$
for some $C>0$ depending only on $N$, $s$ and $p$. Hence, \eqref{IPHONE12} holds.
Taking into account (\ref{S1JMP}), (\ref{S2JMP}), and \eqref{IPHONE12}, we obtain that
\begin{equation*}
\|w_{n}w_{n, L}^{\beta}\|_{\x}^{2}\leq c_{\beta} \int_{\mathbb{R}^{N}} (c+h_{n}(x))w^{2}_{n}(x,0)w^{2\beta}_{n, L}(x,0) \,dx,
\end{equation*}
and, by applying Fatou's lemma and the monotone convergence theorem, we can pass to the  limit as $L\ri \infty$ to get
\begin{equation}\label{i1JMP}
\|w_{n}^{\beta+1}\|_{\x}^{2}\leq cc_{\beta} \int_{\mathbb{R}^{N}} w_{n}^{2(\beta +1)}(x,0) \,dx + c_{\beta}\int_{\mathbb{R}^{N}}  h_{n}(x) w_{n}^{2(\beta +1)}(x,0)\,dx.
\end{equation}
Fix $M>1$ and let $\Omega_{1,n}:=\{h_{n}\leq M\}$ and $\Omega_{2,n}:=\{h_{n}>M\}$.
Then, by using the H\"older inequality,
\begin{align}\label{i2JMP}
\int_{\mathbb{R}^{N}}  h_{n}(x) w_{n}^{2(\beta +1)}(x,0) \,dx&=\int_{\Omega_{1, n}}  h_{n}(x) w_{n}^{2(\beta +1)}(x,0) \, dx+\int_{\Omega_{2, n}}  h_{n}(x) w_{n}^{2(\beta +1)}(x,0) \, dx  \nonumber \\
&\leq M |w_{n}^{\beta+1}(\cdot, 0)|_{2}^{2}+\epsilon(M)|w_{n}^{\beta+1}(\cdot, 0)|_{\2}^{2},
\end{align}
where 
\begin{align*}
\epsilon(M):=\sup_{n\in \mathbb{N}}\left(\int_{\Omega_{2,n}} h^{N/2s}_{n} dx \right)^{\frac{2s}{N}}\rightarrow 0 \quad \mbox{ as } M\rightarrow \infty,
\end{align*}
due to the fact that $w_{n}(\cdot, 0)\ri w(\cdot, 0)$ in $\h$.
In view of (\ref{i1JMP}) and (\ref{i2JMP}), we have
\begin{equation}\label{regvJMP}
\|w_{n}^{\beta+1}\|_{\x}^{2}\leq c_{\beta}(c+M)|w_{n}^{\beta+1}(\cdot, 0)|_{2}^{2}+c_{\beta}\epsilon(M)|w_{n}^{\beta+1}(\cdot, 0)|_{\2}^{2}.
\end{equation}
We note that Theorem \ref{Sembedding} yields
\begin{equation}\label{S3JMP}
|w_{n}^{\beta+1}(\cdot, 0)|_{\2}^{2}\leq C^{2}_{2^{*}_{s}}\|w_{n}^{\beta+1}\|_{\x}^{2}.
\end{equation}
Then, choosing $M$ large so that 
$$
\epsilon(M) c_{\beta} C^{2}_{2^{*}_{s}}<\frac{1}{2},
$$
and using $(\ref{regvJMP})$ and $(\ref{S3JMP})$, we obtain that
\begin{equation}\label{iterJMP}
|w_{n}^{\beta+1}(\cdot,0)|_{\2}^{2}\leq 2 C^{2}_{2^{*}_{s}} c_{\beta}(c+M)|w_{n}^{\beta+1}(\cdot,0)|^{2}_{2}.
\end{equation}
Then we can start a bootstrap argument: since $w_{n}(\cdot,0)\in L^{2^{*}_{s}}(\mathbb{R}^{N})$ and $|w_{n}(\cdot, 0)|_{\2}\leq C$ for all $n\in \mathbb{N}$, we can apply (\ref{iterJMP}) with $\beta_{1}+1=\frac{N}{N-2s}$ to deduce that $w_{n}(\cdot,0)\in L^{(\beta_{1}+1)\2}(\mathbb{R}^{N})=L^{\frac{2N^{2}}{(N-2s)^{2}}}(\mathbb{R}^{N})$. Applying again (\ref{iterJMP}), after $k$ iterations, we find $w_{n}(\cdot,0)\in L^{\frac{2N^{k}}{(N-2s)^{k}}}(\mathbb{R}^{N})$, and so $w_{n}(\cdot,0)\in L^{q}(\mathbb{R}^{N})$ for all $q\in[2,\infty)$ and $|w_{n}(\cdot, 0)|_{q}\leq C$ for all $n\in \mathbb{N}$.

Now we prove that actually $w_{n}(\cdot,0)\in L^{\infty}(\mathbb{R}^{N})$.
Since $w_{n}(\cdot,0)\in L^{q}(\mathbb{R}^{N})$ for all $q\in[2,\infty)$, we have that $h_{n}\in L^{\frac{N}{s}}(\mathbb{R}^{N})$ and $|h_{n}|_{\frac{N}{s}}\leq D$ for all $n\in \mathbb{N}$.
Then, by the generalized H\"older inequality and Young's inequality with $\lambda>0$, we can see that for all $\lambda>0$
\begin{align*}
\int_{\mathbb{R}^{N}} h_{n}(x) w_{n}^{2(\beta+1)}(x,0) \,dx &\leq |h_{n}|_{\frac{N}{s}} |w_{n}^{\beta+1}(\cdot,0)|_{2}  |w_{n}^{\beta+1}(\cdot, 0)|_{\2}  \nonumber \\
&\leq D\left(\lambda |w_{n}^{\beta+1}(\cdot,0)|_{2}^{2}+\frac{1}{\lambda}  |w_{n}^{\beta+1}(\cdot,0)|_{\2}^{2}\right).
\end{align*}
Consequently, using (\ref{i1JMP}) and (\ref{S3JMP}), we deduce that
\begin{align}\label{i3JMP}
&|w_{n}^{\beta+1}(\cdot, 0)|_{\2}^{2}\leq C^{2}_{2^{*}_{s}}\|w_{n}^{\beta+1}\|_{\x}^{2} \nonumber\\
&\leq  c_{\beta} C^{2}_{2^{*}_{s}} (c+ D \lambda) |w_{n}^{\beta+1}(\cdot,0)|_{2}^{2}+C^{2}_{2^{*}_{s}}\frac{c_{\beta}D}{\lambda}  |w_{n}^{\beta+1}(\cdot,0)|_{\2}^{2}.
\end{align}
Taking $\lambda>0$ such that
$$
\frac{c_{\beta}DC^{2}_{2^{*}_{s}}}{\lambda}=\frac{1}{2},
$$
we obtain that
\begin{align*}
|w_{n}^{\beta+1}(\cdot, 0)|_{\2}^{2} \leq  2c_{\beta}(c+D\lambda)C^{2}_{2^{*}_{s}}  |w_{n}^{\beta+1}(\cdot, 0)|_{2}^{2}= M_{\beta} |w_{n}^{\beta+1}(\cdot, 0)|_{2}^{2},
\end{align*}
where
\begin{align*}
M_{\beta}:=2c_{\beta}(c+D\lambda)C^{2}_{2^{*}_{s}}.
\end{align*}
Now we can control the dependence on $\beta$ of $M_{\beta}$ as follows:
$$
M_{\beta}\leq Cc^{2}_{\beta}\leq C(1+\beta)^{2}\leq M_{0}^{2}e^{2\sqrt{\beta+1}},
$$
for some $M_{0}>0$ independent of $\beta$. Consequently,
$$
|w_{n}(\cdot,0)|_{2^{*}_{s}(\beta+1)} \leq M_{0}^{\frac{1}{\beta+1}} e^{\frac{1}{\sqrt{\beta+1}}}|w_{n}(\cdot,0)|_{2(\beta+1)}.
$$
As before, iterating this last relation and choosing $\beta_{0}=0$ and $2(\beta_{j+1}+1)=2^{*}_{s}(\beta_{j}+1)$
we have that
$$
|w_{n}(\cdot,0)|_{2^{*}_{s}(\beta_{j}+1)} \leq M_{0}^{\sum_{i=0}^{j} \frac{1}{\beta_{i}+1}} e^{\sum_{i=0}^{j} \frac{1}{\sqrt{\beta_{i}+1}}}|w_{n}(\cdot,0)|_{2(\beta_{0}+1)}.
$$
We note that 
\begin{equation}\label{BETAj}
\beta_{j}=\left(\frac{N}{N-2s}\right)^{j}-1,
\end{equation}
so the series
$$
\sum_{i=0}^{\infty} \frac{1}{\beta_{i}+1} \quad \mbox{ and } \quad \sum_{i=0}^{\infty} \frac{1}{\sqrt{\beta_{i}+1}}
$$
are convergent. Recalling that $|w_{n}(\cdot, 0)|_{q}\leq C$ for all $n\in \mathbb{N}$ and $q\in [2, \infty)$, we get
$$
|w_{n}(\cdot,0)|_{\infty}=\lim_{j\rightarrow \infty} |w_{n}(\cdot,0)|_{2^{*}_{s}(\beta_{j}+1)}\leq M \quad \mbox{ for all } n\in \mathbb{N}.
$$
This proves the $L^{\infty}$-desired estimate for the trace.
At this point, we prove that there exists $R>0$ such that 
\begin{align}\label{COLACOLA1}
\|w_{n}\|_{L^{\infty}(\R^{N+1}_{+})}\leq R \quad \mbox{ for all } n\in \mathbb{N}.
\end{align} 
Using \eqref{i3JMP} with $\lambda=1$ and that $|w_{n}(\cdot, 0)|_{q}\leq C$ for all $q\in [2, \infty]$, we deduce that
\begin{align*}
\|w_{n}^{\beta+1}\|^{2}_{\x}\leq  \tilde{c}c_{\beta}C^{2(\beta+1)} \quad \mbox{ for all } n\in \mathbb{N},
\end{align*}
for some $\tilde{c}, C>0$ independent on $\beta$ and $n$. On the other hand, from \eqref{weightedE}, we obtain that
\begin{align*}
\left(\iint_{\R^{N+1}_{+}} y^{1-2s} w_{n}^{2\gamma(\beta+1)}\, dx dy\right)^{\frac{\beta+1}{2\gamma(\beta+1)}}=\|w_{n}^{\beta+1}\|_{L^{2\gamma}(\R^{N+1}_{+}, y^{1-2s})}\leq C_{*}\|w_{n}^{\beta+1}\|_{\x}
\end{align*}
which combined with the previous inequality yields
\begin{align*}
\|w_{n}\|_{L^{2\gamma(\beta+1)}(\R^{N+1}_{+}, y^{1-2s})}\leq  C'(\tilde{C}_{*}c_{\beta})^{\frac{1}{2(\beta+1)}} \quad \mbox{ for all } n\in \mathbb{N}.
\end{align*}
Since 
$$
\frac{1}{\beta+1}\log\left(1+\frac{\beta}{2}\right)\leq \frac{\beta}{2(\beta+1)}\leq \frac{1}{2} \quad \mbox{ for all } \beta>0,
$$
we can see that there exists $\bar{C}>0$ such that $C'(\tilde{C}_{*}c_{\beta})^{\frac{1}{2(\beta+1)}}\leq \bar{C}$ for all $\beta>0$, and so 
$$
\|w_{n}\|_{L^{2\gamma(\beta+1)}(\R^{N+1}_{+}, y^{1-2s})}\leq \bar{C} \quad \mbox{ for all } n\in \mathbb{N}, \beta>0.
$$
Now, fix $R>\bar{C}$ and define $\Sigma_{n}:=\{(x, y)\in \R^{N+1}_{+}: w_{n}(x, y)>R\}$. Hence, for all $n, j\in \mathbb{N}$, we get
\begin{align*}
\bar{C}\geq \left(\iint_{\R^{N+1}_{+}} y^{1-2s} w_{n}^{2\gamma(\beta_{j}+1)}\, dx dy\right)^{\frac{1}{2\gamma(\beta_{j}+1)}}\geq R^{\frac{\gamma(\beta_{j}+1)-1}{\gamma(\beta_{j}+1)}} \left(\iint_{\Sigma_{n}} y^{1-2s} w_{n}^{2}\, dx dy\right)^{\frac{1}{2\gamma(\beta_{j}+1)}},
\end{align*}
which yields
$$
\left( \frac{\bar{C}}{R} \right)^{\beta_{j}+1-\frac{1}{\gamma}}\geq \frac{1}{\bar{C}^{\frac{1}{\gamma}}} \left(\iint_{\Sigma_{n}} y^{1-2s} w_{n}^{2}\, dx dy\right)^{\frac{1}{2\gamma}},
$$
where $\beta_{j}$ is given in \eqref{BETAj}.
Letting $j\rightarrow \infty$, we have that $\beta_{j}\ri \infty$ and then
$$
\iint_{\Sigma_{n}} y^{1-2s} w_{n}^{2}\, dx dy=0  \quad\mbox{ for all } n\in \mathbb{N},
$$
which implies that $|\Sigma_{n}|=0$ for all $n\in \mathbb{N}$. Consequently, \eqref{COLACOLA1} holds true.
\end{proof}

\begin{lem}\label{lem2.6AM}
The sequence $(w_{n})$ satisfies $w_{n}(\cdot, 0)\ri 0$ as $|x|\ri \infty$ uniformly in $n\in \mathbb{N}$.
\end{lem}
\begin{proof}
By Lemma \ref{moser} and Proposition \ref{PROPFF}-$(iii)$, we obtain that each $w_{n}$ is continuous in $\overline{\R^{N+1}_{+}}$. Note that, by Lemma \ref{moser}, $w_{n}(\cdot, 0)\rightarrow w(\cdot, 0)$ in $L^{q}(\R^{N})$ for all $q\in [2, \infty)$. On the other hand, from \eqref{weightedE} and $w_{n}\ri w$ in $\x$, we have that $w_{n}\ri w$ in $L^{2\gamma}(\R^{N+1}_{+}, y^{1-2s})$. 
Fix $\bar{x}\in \R^{N}$. Using $(V_1)$ and \eqref{growthg}, we see that $w_{n}$ is a weak subsolution to 
\begin{equation*}
\left\{
\begin{array}{ll}
-{\rm div}(y^{1-2s} \nabla w_{n})+m^{2}y^{1-2s}w_{n}= 0  &\mbox{ in } Q_{1}(\bar{x},0):=B_{1}(\bar{x})\times (0,1), \\
\frac{\partial w_{n}}{\partial \nu^{1-2s}}= (V_{1}+\eta)w_{n}(\cdot, 0)+C_{\eta}w_{n}^{\2-1}(\cdot, 0) &\mbox{ on } B_{1}(\bar{x}),
\end{array}
\right.
\end{equation*}
where $\eta\in (0, m^{2s}-V_{1})$ is fixed. Applying Proposition \ref{PROPFF}-$(i)$ and observing that $L^{2\gamma}(A, y^{1-2s})\subset L^{2}(A, y^{1-2s})$ for any bounded set $A\subset \R^{N}$, we get
$$
0\leq \sup_{Q_{\frac{1}{2}}(\bar{x},0)}w_{n}\leq C(\|w_{n}\|_{L^{2\gamma}(Q_{1}(\bar{x},0), y^{1-2s})}+|w_{n}^{2^{*}_{s}-1}(\cdot, 0)|_{L^{q}(B_{1}(\bar{x}))}) \quad \mbox{ for all } n\in \mathbb{N},
$$
where $q>\frac{N}{2s}$ is fixed and $C>0$ is a constant depending only on $N, m, s, q, \gamma$ and independent of $n\in \mathbb{N}$ and $\bar{x}$. Note that $q(2^{*}_{s}-1)\in (2, \infty)$ because $N>2s$ and $q>\frac{N}{2s}$. Using the strong convergence of $(w_{n})$ in $L^{2\gamma}(\R^{N+1}_{+}, y^{1-2s})$ and $(w_{n}(\cdot, 0))$ in $L^{q}(\R^{N})$, respectively, 
we infer that $w_{n}(\bar{x}, 0)\rightarrow 0$ as $|\bar{x}|\rightarrow \infty$ uniformly in $n\in \mathbb{N}$.
\end{proof}

\begin{remark}
The proofs of Lemmas \ref{moser} and \ref{lem2.6AM} can be also adapted to the case $m=0$. In this way, we can give an alternative proof of Lemma $2.6$ in \cite{AM} which, on the contrary, is based on some properties of the kernel $\mathcal{K}_{s}(x)=\mathcal{F}^{-1}((|k|^{2s}+1)^{-1})(x)$ proved in \cite{FQT}.
\end{remark}

The next comparison principle for $(-\Delta+m^{2})^{s}$ in general domains will be very useful later.
\begin{thm}(Comparison principle)\label{Comparison}
Let $\Omega\subset \R^{N}$ be an open set, $\gamma<m^{2s}$, $u_{1}, u_{2}\in H^{s}(\R^{N})$ be such that $u_{1}\leq u_{2}$ in $\R^{N}\setminus \Omega$ and $(-\Delta+m^{2})^{s}u_{1}-\gamma u_{1}\leq (-\Delta+m^{2})^{s}u_{2}-\gamma u_{2}$ in $\Omega$, that is
\begin{align}\label{CANNUCCIA}
&(m^{2s}-\gamma)\int_{\R^{N}} u_{1}(x)v(x)\, dx+C(N, s) m^{\frac{N+2s}{2}} \iint_{\R^{N}} \frac{(u_{1}(x)-u_{1}(y))(v(x)-v(y))}{|x-y|^{\frac{N+2s}{2}}}K_{\frac{N+2s}{2}}(m|x-y|)\, dxdy \nonumber\\
&\leq (m^{2s}-\gamma)\int_{\R^{N}} u_{2}(x)v(x)\, dx+C(N, s) m^{\frac{N+2s}{2}} \iint_{\R^{N}} \frac{(u_{2}(x)-u_{2}(y))(v(x)-v(y))}{|x-y|^{\frac{N+2s}{2}}}K_{\frac{N+2s}{2}}(m|x-y|)\, dxdy
\end{align}
for all $v\in H^{s}(\R^{N})$ such that $v\geq 0$ in $\R^{N}$ and $v=0$ in $\R^{N}\setminus \Omega$.
Then, $u_{1}\leq u_{2}$ in $\R^{N}$.
\end{thm}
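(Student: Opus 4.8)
The plan is to use the positive part of $u_{1}-u_{2}$ as a test function in \eqref{CANNUCCIA} and then to exploit the strict gap $\gamma<m^{2s}$ together with the positivity of the Macdonald kernel. First I would put $w:=u_{1}-u_{2}\in\h$ and $v:=w^{+}$. Since $0<s<1$, the truncation $w^{+}$ still lies in $\h$: from the pointwise bounds $|w^{+}(x)-w^{+}(y)|\le|w(x)-w(y)|$ and $0\le w^{+}\le|w|$, together with the identity for $|\cdot|_{\h}^{2}$ coming from \eqref{FFdef} (a positive multiple of $|w|_{2}^{2}$ plus a positive multiple of the Dirichlet integral $\iint_{\R^{N}}|x-y|^{-\frac{N+2s}{2}}(w(x)-w(y))^{2}K_{\frac{N+2s}{2}}(m|x-y|)\,dx\,dy$), both ingredients of $|w^{+}|_{\h}^{2}$ are dominated by the corresponding ones for $w$, which are finite. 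Moreover the assumption $u_{1}\le u_{2}$ on $\R^{N}\setminus\Omega$ forces $w\le0$, hence $w^{+}=0$, on $\R^{N}\setminus\Omega$; so $v=w^{+}$ is an admissible test function in \eqref{CANNUCCIA}.

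Next I would insert $v=w^{+}$ into \eqref{CANNUCCIA} and rearrange, which yields
\begin{align*}
&(m^{2s}-\gamma)\int_{\R^{N}} w(x)\,w^{+}(x)\,dx\\
&\quad+C(N,s)\,m^{\frac{N+2s}{2}}\iint_{\R^{N}}\frac{(w(x)-w(y))(w^{+}(x)-w^{+}(y))}{|x-y|^{\frac{N+2s}{2}}}K_{\frac{N+2s}{2}}(m|x-y|)\,dx\,dy\le0.
\end{align*}
In the first term, $\int_{\R^{N}}w\,w^{+}\,dx=\int_{\R^{N}}(w^{+})^{2}\,dx=|w^{+}|_{2}^{2}\ge0$. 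For the double integral I would use the elementary pointwise inequality $(a-b)(a^{+}-b^{+})\ge(a^{+}-b^{+})^{2}\ge0$, valid for all $a,b\in\R$ (verified by checking the four sign combinations of $a,b$), and the strict positivity of $K_{\frac{N+2s}{2}}$ on $(0,\infty)$ from \eqref{Watson1}--\eqref{Watson2}; this shows the double integral is $\ge C(N,s)m^{\frac{N+2s}{2}}\iint_{\R^{N}}|x-y|^{-\frac{N+2s}{2}}(w^{+}(x)-w^{+}(y))^{2}K_{\frac{N+2s}{2}}(m|x-y|)\,dx\,dy\ge0$.

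Finally, since $\gamma<m^{2s}$ we have $m^{2s}-\gamma>0$, and the displayed inequality now expresses that a sum of two nonnegative quantities is $\le0$; hence each vanishes, in particular $(m^{2s}-\gamma)|w^{+}|_{2}^{2}=0$, so $|w^{+}|_{2}=0$, i.e.\ $w^{+}\equiv0$ and $u_{1}\le u_{2}$ a.e.\ in $\R^{N}$. I expect no serious obstacle here: the only points needing a touch of care are the admissibility of $w^{+}$ (membership in $\h$ and vanishing outside $\Omega$) and the finiteness of the bilinear form on $\h$, the latter being controlled by the near-origin behaviour $|x-y|^{-\frac{N+2s}{2}}K_{\frac{N+2s}{2}}(m|x-y|)\sim c\,|x-y|^{-(N+2s)}$ and the exponential decay of $K_{\frac{N+2s}{2}}$ at infinity recalled in \eqref{Watson1}--\eqref{Watson2}.
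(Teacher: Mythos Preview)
Your proof is correct and follows essentially the same approach as the paper: set $w=u_{1}-u_{2}$, test \eqref{CANNUCCIA} with $v=w^{+}$, use the elementary inequality $(a-b)(a^{+}-b^{+})\ge(a^{+}-b^{+})^{2}$ together with the positivity of the Macdonald kernel and $m^{2s}-\gamma>0$ to force $w^{+}\equiv0$. You are in fact slightly more careful than the paper in spelling out why $w^{+}\in\h$ and why it is an admissible test function.
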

\begin{proof}
Set $w:=u_{1}-u_{2}$ and take $v=w^{+}$ as test function in \eqref{CANNUCCIA}. Then, observing that 
$$
|x^{+}-y^{+}|^{2}\leq (x-y)(x^{+}-y^{+}) \quad\mbox{ for all } x, y\in\R,
$$ 
we have
\begin{align*}
&(m^{2s}-\gamma)\int_{\R^{N}} v^{2}(x)\, dx+C(N,s) m^{\frac{N+2s}{2}} \iint_{\R^{N}} \frac{|v(x)-v(y)|^{2}}{|x-y|^{\frac{N+2s}{2}}}K_{\frac{N+2s}{2}}(m|x-y|)\, dxdy\\
&\leq(m^{2s}-\gamma)\int_{\R^{N}} w(x)v(x)\, dx+C(N,s) m^{\frac{N+2s}{2}} \iint_{\R^{N}} \frac{(w(x)-w(y))(v(x)-v(y))}{|x-y|^{\frac{N+2s}{2}}}K_{\frac{N+2s}{2}}(m|x-y|)\, dxdy\\
&\leq 0
\end{align*}
from which $v=0$ in $\R^{N}$ and thus we get the thesis.
\end{proof}

Now we have all tools to give the proof of our first main result of this work.
\begin{proof}[Proof of Theorem \ref{thm1}]
We begin by proving that there exists $\e_{0}>0$ such that, for any $\e \in (0, \e_{0})$ and any mountain pass solution $u_{\e} \in X_{\e}$ of \eqref{MEP}, it holds 
\begin{equation}\label{inftyAMPA}
|u_{\e}(\cdot, 0)|_{L^{\infty}(\Lambda^{c}_{\e})}<a. 
\end{equation}
Assume by contradiction that for some subsequence $(\e_{n})$ such that $\e_{n}\rightarrow 0$, we can find $u_{n}:=u_{\e_{n}}\in X_{\e_{n}}$ such that $J_{\e_{n}}(u_{n})=c_{\e_{n}}$, $J'_{\e_{n}}(u_{n})=0$ and 
\begin{equation}\label{eeeAMPA}
|u_{n}(\cdot, 0)|_{L^{\infty}(\Lambda^{c}_{\e_{n}})}\geq a.
\end{equation} 
In view of Lemma \ref{lem2.5AM}, we can find $(y_{n})\subset \mathbb{R}^{N}$ such that $w_{n}(x, y):=u_{n}(x+y_{n}, y)\rightarrow w$ in $\x$ and $\e_{n}y_{n}\rightarrow x_{0}$ for some $x_{0}\in \Lambda$ such that $V(x_{0})=-V_{0}$. 

Now, if we choose $r>0$ such that $B_{r}(x_{0})\subset B_{2r}(x_{0})\subset \Lambda$, we can see that $B_{\frac{r}{\e_{n}}}(\frac{x_{0}}{\e_{n}})\subset \Lambda_{\e_{n}}$. Then, for any $x\in B_{\frac{r}{\e_{n}}}(y_{n})$ it holds
\begin{align*}
\left|x - \frac{x_{0}}{\e_{n}}\right| \leq |x- y_{n}|+ \left|y_{n} - \frac{x_{0}}{\e_{n}}\right|<\frac{1}{\e_{n}}(r+o_{n}(1))<\frac{2r}{\e_{n}}\quad \mbox{ for } n \mbox{ sufficiently large. }
\end{align*}
Therefore, 
\begin{equation}\label{ernAMPA}
\Lambda^{c}_{\e_{n}}\subset B^{c}_{\frac{r}{\e_{n}}}(y_{n})
\end{equation}
for any $n$ big enough.
Using Lemma \ref{lem2.6AM}, we see that 
\begin{equation}\label{freddiAMPA}
w_{n}(x, 0)\rightarrow 0 \quad \mbox{ as } |x|\rightarrow \infty \quad \mbox{ uniformly in } n\in \mathbb{N}.
\end{equation}
Therefore, there exists $R>0$ such that 
$$
w_{n}(x, 0)<a \quad \mbox{ for any } |x|\geq R, \, n\in \mathbb{N}.
$$ 
Hence, $u_{n}(x, 0)=w_{n}(x-y_{n}, 0)<a$ for any $x\in B^{c}_{R}(y_{n})$ and $n\in \mathbb{N}$. On the other hand, by \eqref{ernAMPA}, there exists $n_{0} \in \mathbb{N}$ such that for any $n\geq n_{0}$ we have
$$
\Lambda^{c}_{\e_{n}}\subset B^{c}_{\frac{r}{\e_{n}}}(y_{n})\subset B^{c}_{R}(y_{n}),
$$
which implies that $u_{n}(x, 0)<a$ for any $x\in \Lambda^{c}_{\e_{n}}$ and $n\geq n_{0}$. This is impossible according to  \eqref{eeeAMPA}. 
Since $u_{\e}\in X_{\e}$ satisfies \eqref{inftyAMPA}, by the definition of $g$ it follows that $u_{\e}$ is a solution of \eqref{EP} for $\e \in (0, \e_{0})$. From Proposition \ref{PROPFF}-$(ii)$, we conclude that $u_{\e}(\cdot, 0)>0$ in $\R^{N}$.

In what follows, we study the behavior of the maximum points of solutions to problem \eqref{P}. 
Take $\e_{n}\rightarrow 0$ and let $(u_{n})\subset X_{\e_{n}}$ be a sequence of solutions to \eqref{MEP} as above. 
Consider the translated sequence $w_{n}(x, y):=u_{n}(x+y_{n}, y)$, where $(y_{n})$ is given by Lemma \ref{lem2.5AM}. 
Let us prove that there exists $\delta>0$ such that 
\begin{equation}\label{DELTA}
|w_{n}(\cdot, 0)|_{\infty}\geq \delta \quad\mbox{ for all } n\in \mathbb{N}.
\end{equation} 
Assume by contradiction  that $|w_{n}(\cdot, 0)|_{\infty}\ri 0$ as $n\ri \infty$. Using $(f_1)$, we can find $\nu\in \mathbb{N}$ such that
$$
\frac{f(|w_{n}(\cdot, 0)|_{\infty})}{|w_{n}(\cdot, 0)|_{\infty}}<\frac{V_{1}}{\kappa} \quad \mbox{ for all } n\geq \nu.
$$
From $\langle J'_{\e_{n}}(u_{n}), u_{n}\rangle=0$, $(g_2)$ and $(f_4)$, we can see that for all $n\geq \nu$
\begin{align*}
 \|w_{n}\|_{\x}^{2}-V_{1}|w_{n}(\cdot, 0)|_{2}^{2}&= \|u_{n}\|_{\x}^{2}-V_{1}|u_{n}(\cdot, 0)|_{2}^{2}\\
 &\leq\int_{\mathbb{R}^{N}} f(u_{n}(x, 0)) u_{n}(x, 0) \,dx \\
 &\leq \int_{\mathbb{R}^{N}} \frac{f(|w_{n}(\cdot, 0)|_{\infty})}{|w_{n}(\cdot, 0)|_{\infty}} w^{2}_{n}(x, 0) \,dx \\
 &\leq \frac{V_{1}}{\kappa} \int_{\mathbb{R}^{N}} w_{n}^{2}(x, 0) \, dx
\end{align*}
which combined with \eqref{m-ineq} yields 
$$
\left(1-\frac{V_{1}}{m^{2s}}\left(1+\frac{1}{\kappa}\right)  \right)\|w_{n}\|^{2}_{\x}\leq 0. 
$$
Since $\kappa>\frac{V_{1}}{m^{2s}-V_{1}}$, we get $\|w_{n}\|_{\x}=0$ for all $n\geq \nu$, which is a contradiction. 
Therefore, if $q_{n}$ is a global maximum point of $w_{n}(\cdot, 0)$, we deduce from Lemma \ref{moser} and \eqref{DELTA} that there exists $R_{0}>0$ such that $|q_{n}|<R_{0}$ for all $n\in \mathbb{N}$. Thus $x_{n}:=q_{n}+y_{n}$ is a global maximum point of $u_{n}(\cdot, 0)$, and $\e_{n}x_{n}\ri x_{0}\in M$. This fact combined with the continuity of $V$ yields 
$$
\lim_{n\rightarrow \infty} V(\e_{n} x_{n})=V(x_{0})=-V_{0}.
$$

Finally, we prove a decay estimate for $u_{n}(\cdot, 0)$. 
Using $(f_1)$, the definition of $g$ and \eqref{freddiAMPA}, we can find $R_{1}>2$ sufficiently large such that
\begin{align}\label{TERESA}
g(\e_{n} x+\e_{n}y_{n}, w_{n}(x,0))w_{n}(x, 0)\leq \delta w_{n}^{2}(x, 0) \quad \mbox{ for } |x|> R_{1},
\end{align}
where $\delta\in (0, m^{2s}-V_{1})$ is fixed.
Pick a smooth cut-off function $\phi$  defined in $\R^{N}$ such that $0\leq \phi\leq 1$, $\phi(x)=0$ for $|x|\geq 1$, and $\phi\not\equiv 0$. By Riesz representation theorem, there exists a unique function $\bar{w}\in \h$ such that
\begin{equation}\label{EqVD}
(-\Delta+m^{2})^{s}\bar{w}-(V_{1}+\delta) \bar{w}=\phi  \quad \mbox{ in } \R^{N}.
\end{equation}
Since $\bar{w}=\mathcal{B}_{2s,m}*\phi$, for some positive kernel $\mathcal{B}_{2s, m}$ whose expression is given below, we can see that $\bar{w}\geq 0$ in $\R^{N}$ and $\bar{w}\not\equiv 0$.
Denote by $\overline{W}$ the extension of $\bar{w}$, namely $\overline{W}(x, y):=(P_{s, m}(\cdot, y)*\bar{w})(x)$. Fix $x\in \R^{N}$. Then, from Young's inequality and \eqref{Nkernel}, we can see that
\begin{align*}
\|\overline{W}\|_{L^{2}(B_{1}(x)\times [0, 1], y^{1-2s})}&\leq  \|\overline{W}\|_{L^{2}(\R^{N}\times [0, 1], y^{1-2s})}=\|P_{s, m}(\cdot, y)*\bar{w}\|_{L^{2}(\R^{N}\times [0, 1], y^{1-2s})}  \\
&\leq \left(\int_{0}^{1}  |P_{s, m}(\cdot, y)*\bar{w}|_{2}^{2}  \,y^{1-2s}\, dy\right)^{\frac{1}{2}} \\
&\leq \left(\int_{0}^{1} |P_{s, m}(\cdot, y)|_{1}^{2} |\bar{w}|_{2}^{2} \, y^{1-2s} \, dy\right)^{\frac{1}{2}} \\
&\leq  |\bar{w}|_{2} \left(\int_{0}^{1} \vartheta^{2}(my) y^{1-2s} \, dy\right)^{\frac{1}{2}}\\
&\leq c_{s, m} |\bar{w}|_{2},
\end{align*}
for some constant $c_{s, m}>0$.
Therefore, by Proposition \ref{PROPFF}-$(i)$, we deduce that  $\bar{w}\in L^{\infty}(\R^{N})$, and thus, by interpolation, $\bar{w}\in L^{q}(\R^{N})$ for any $q\in [2, \infty]$.
Hence, $(-\Delta+m^{2})^{s}\bar{w}=(V_{1}+\delta) \bar{w}+\phi\in L^{\infty}(\R^{N})$, and
applying  Corollary \ref{SilvestreLinfty} (see also Theorem \ref{Besselembedding}-$(v)$ and Proposition \ref{PROPFF}-$(iii)$) we obtain that $\bar{w}$ is H\"older continuous in $\R^{N}$. 
Using Proposition \ref{PROPFF}-$(ii)$, we have that $\bar{w}>0$ in $\R^{N}$. 
Moreover, we can prove that there exist $c, C>0$ such that
\begin{align}\label{HZ1AMPA}
0<\bar{w}(x)\leq Ce^{-c|x|} \quad \mbox{ for all } x\in \R^{N}.
\end{align} 
Assume for the moment that \eqref{HZ1AMPA} holds and we postpone the proof of it after showing how \eqref{HZ1AMPA} yields the desired decay estimate for $u_{n}(\cdot, 0)$.
We know that
\begin{align}\label{HZ2AMPA}
(-\Delta+m^{2})^{s} \bar{w}-(V_{1}+\delta)\bar{w}= 0 \quad \mbox{ in } \R^{N}\setminus \overline{B_{R_{1}}}. 
\end{align}
On the other hand, by $(V_1)$ and \eqref{TERESA}, we can see that
\begin{align}\label{HZ3AMPA}
(-\Delta+m^{2})^{s} w_{n}(\cdot, 0)-(V_{1}+\delta) w_{n}(\cdot,0) \leq 0 \quad \mbox{ in } \R^{N}\setminus \overline{B_{R_{1}}}. 
\end{align}
Set $b:=\min_{x\in \overline{B_{R_{1}}}  } \bar{w}(x)>0$ and $z_{n}:=(\ell+1)\bar{w}-bw_{n}(\cdot, 0)$, where $\ell:=\sup_{n\in \mathbb{N}} |w_{n}(\cdot, 0)|_{\infty}$. 
We note that $z_{n}\geq 0$ in $\overline{B_{R_{1}}}$ and that
$$
(-\Delta+m^{2})^{s}z_{n}-(V_{1}+\delta)z_{n}\geq 0 \quad\mbox{ in } \R^{N}\setminus \overline{B_{R_{1}}}.
$$
Since $V_{1}+\delta<m^{2s}$, we can apply Theorem \ref{Comparison} with $\Omega=\R^{N}\setminus \overline{B_{R_{1}}}$ to deduce that $z_{n}\geq 0$ in $\R^{N}$.
In the light of (\ref{HZ1AMPA}), we obtain that there exist $c, C>0$ such that
\begin{align*}
0\leq w_{n}(x)\leq Ce^{-c|x|} \quad  \mbox{ for all } x\in \R^{N}, n\in \mathbb{N},
\end{align*}
which combined with $u_{n}(x, 0)=w_{n}(x-y_{n}, 0)$, $x_{n}=q_{n}+y_{n}$ and $|q_{n}|<R$ yields
\begin{align*}
u_{n}(x, 0)=w_{n}(x-y_{n},0)\leq C' e^{-c|x-x_{n}|} \quad \mbox{ for all } x\in \R^{N}, n\in \mathbb{N}.
\end{align*}
In what follows, we focus our attention on the estimate \eqref{HZ1AMPA}.
We recall that $\bar{w}=\mathcal{B}_{2s, m}*\phi$, where 
$$
\mathcal{B}_{2s, m}(x):=(2\pi)^{-\frac{N}{2}}\mathcal{F}^{-1}([(|k|^{2}+m^{2})^{s}-(V_{1}+\delta)]^{-1})(x).
$$ 
Since $\phi$ has compact support, the exponential decay of $\bar{w}$ at infinity follows if we show the exponential decay of $\mathcal{B}_{2s, m}(x)$ for big values of $|x|$. After that, due to the fact that $\bar{w}$ is continuous in $\R^{N}$, we can deduce the exponential decay of $\bar{w}$ in the whole of $\R^{N}$. Next we prove the exponential decay of $\mathcal{B}_{2s, m}(x)$ for $|x|$ large. Then,
\begin{align}\label{RY1}
\mathcal{B}_{2s, m}(x)&=\frac{1}{(2\pi)^{N}}  \int_{\R^{N}} e^{\imath k\cdot x} \frac{1}{[(|k|^{2}+m^{2})^{s}-(V_{1}+\delta)]}\, dk \nonumber\\
&=\frac{1}{(2\pi)^{N}}  \int_{\R^{N}} e^{\imath k\cdot x}\left(\int_{0}^{\infty} e^{-t[(|k|^{2}+m^{2})^{s}-(V_{1}+\delta)]}\, dt\right)\, dk \nonumber\\
&= \int_{0}^{\infty} e^{-\gamma t}  \left(  \frac{1}{(2\pi)^{N}}   \int_{\R^{N}}  e^{\imath k\cdot x} e^{-t[(|k|^{2}+m^{2})^{s}-m^{2s}]}\, dk\right) \, dt \nonumber\\
&=\int_{0}^{\infty} e^{-\gamma t} \, p_{s, m}(x, t)\, dt,
\end{align}
where 
$$
\gamma:=m^{2s}-(V_{1}+\delta)>0,
$$
and 
$$
p_{s, m}(x, t):=e^{m^{2s}t} \int_{0}^{\infty} \frac{1}{(4\pi z)^{\frac{N}{2}}} e^{-\frac{|x|^{2}}{4z}} e^{-m^{2}z} \vartheta_{s}(t, z)\, dz
$$
is the $2s$-stable relativistic density with parameter $m$ (see pag. 4 formula (7) in \cite{ryznar}, and pag. 4875 formula (2.12) and Lemma 2.2 in \cite{BMR}), and $\vartheta_{s}(t, z)$ is the density function of the $s$-stable process whose Laplace transform is $e^{-t\lambda^{s}}$. 
Using the scaling property $p_{s, m}(x, t)=m^N p_{s, 1}(mx, m^{2s} t)$ (see pag. 4876 formula (2.15) in \cite{BMR}) and Lemma $2.2$ in \cite{GR}, we can see that for some $C>0$ (depending only on $N, s, m$)
\begin{align}\label{GRestimate}
p_{s, m}(x, t)\leq C   \left(g_{m^{2s}t}\left(\frac{mx}{\sqrt{2}}\right)+t \nu^{1}\left(\frac{mx}{\sqrt{2}} \right) \right)  \quad \mbox{ for all } x\in \R^{N}, t>0,
\end{align}
where 
$$
g_{t}(x):=\frac{1}{(4\pi t)^{\frac{N}{2}}} e^{-\frac{|x|^{2}}{4t}},
$$
and $\nu^{\mathfrak{m}}$ is the L\'evy measure of relativistic process with parameter $\mathfrak{m}>0$ given by
$$
\nu^{\mathfrak{m}}(x):=\frac{2s2^{\frac{2s-N}{2}}}{\pi^{\frac{N}{2}} \Gamma(1-s)} \left( \frac{\mathfrak{m}}{|x|} \right)^{\frac{N+2s}{2}} K_{\frac{N+2s}{2}}(\mathfrak{m}|x|)
$$
(see pag. 4877 formula (2.17) in \cite{BMR}).
Therefore, \eqref{RY1} and \eqref{GRestimate} yield
\begin{align}\label{RY2}
\mathcal{B}_{2s, m}(x)&\leq C\int_{0}^{\infty} e^{-\gamma t} g_{m^{2s}t}\left(\frac{mx}{\sqrt{2}}\right)\, dt+C\int_{0}^{\infty} e^{-\gamma t} \, t \nu^{1} \left(\frac{mx}{\sqrt{2}}\right) \,dt  \nonumber\\
&=:I_{1}(x)+I_{2}(x).
\end{align}
We start with the estimate of $I_{1}(x)$ for $|x|\geq 2$. Observing that 
\begin{align*}
\gamma t+\frac{m^{2-2s}}{8t}|x|^{2}\geq \gamma t+\frac{m^{2-2s}}{2t} \quad \mbox{ for all } |x|\geq 2, t>0,
\end{align*}
and that $ab\leq \epsilon a^{2}+\frac{1}{4\epsilon} b^{2}$ for all $a, b\geq 0$ and $\epsilon>0$ gives
\begin{align*}
\gamma t+\frac{m^{2-2s}}{8t}|x|^{2}\geq \frac{m^{1-s}}{\sqrt{2}} |x|\sqrt{\gamma} \quad \mbox{ for all } x\in \R^{N}, t>0,
\end{align*}
we deduce that for all $|x|\geq 2$ and $t>0$
\begin{align*}
\gamma t+\frac{m^{2-2s}}{8t}|x|^{2}\geq \gamma \frac{t}{2}+\frac{m^{2-2s}}{4t}+\frac{m^{1-s}}{2\sqrt{2}} |x|\sqrt{\gamma}.
\end{align*}
Thus, using the definition of $g_{t}$, we can see that for all $|x|\geq 2$
\begin{align}\label{RY3}
I_{1}(x)&\leq C\int_{0}^{\infty} \frac{e^{-\gamma\frac{t}{2}}}{t^{\frac{N}{2}}} e^{-\frac{m^{2-2s}}{4t}} e^{-\frac{m^{1-s}}{2\sqrt{2}} |x|\sqrt{\gamma}}\, dt \nonumber \\
&\leq C e^{-c|x|}  \int_{0}^{\infty} \frac{e^{-\gamma\frac{t}{2}}}{t^{\frac{N}{2}}} e^{-\frac{m^{2-2s}}{4t}} \,dt\leq C_{1} e^{-C_{2}|x|} ,
\end{align}
where we used the fact that 
$$
 \int_{0}^{\infty} \frac{e^{-\alpha t}}{t^{\upsilon}} e^{-\frac{\beta}{t}} \,dt<\infty \quad \forall \alpha, \beta, \upsilon>0.
$$
Now we estimate $I_{2}(x)$ for large values of $|x|$.
Recalling formula \eqref{Watson2} concerning the asymptotic behavior of $K_{\nu}$ at infinity, we deduce that there exists $r_{0}>0$ such that 
$$
\frac{K_{\nu}(r)}{r^{\nu}}\leq C'\frac{e^{-r}}{r^{\nu+\frac{1}{2}}}   \quad \mbox{ for all } r\geq r_{0},
$$ 
and thus
\begin{align*}
 \frac{K_{\frac{N+2s}{2}}(\frac{m}{\sqrt{2}}|x|)}{\left(\frac{m}{\sqrt{2}}|x|\right)^{\frac{N+2s}{2}}}\leq \bar{C} \frac{e^{-\bar{c}|x|}}{|x|^{\frac{N+2s+1}{2}}} \quad \mbox{ for all } |x|\geq r'_{0}:=\frac{\sqrt{2}}{m}r_{0}. 
\end{align*}
Consequently, using the definition of $\nu^{1}$, for all $|x|\geq r'_{0}$ we get
\begin{align}\label{RY4}
I_{2}(x)\leq \bar{C}\frac{e^{-\bar{c} |x|}}{|x|^{\frac{N+2s+1}{2}}} \int_{0}^{\infty} t \,e^{-\gamma t}\, dt\leq C_{3}\frac{e^{-C_{4}|x|}}{|x|^{\frac{N+2s+1}{2}}}. 
\end{align} 
Gathering \eqref{RY1}, \eqref{RY2}, \eqref{RY3}, and \eqref{RY4}, we find that for any $|x|\geq \max\{r'_{0}, 2\}$
$$
\mathcal{B}_{2s, m}(x)\leq C_{1} e^{-C_{2}|x|}+C_{3}\frac{e^{-C_{4}|x|}}{|x|^{\frac{N+2s+1}{2}}}\leq C_{5}e^{-C_{6}|x|}.
$$
Then \eqref{HZ1AMPA} holds true and this ends the proof of Theorem \ref{thm1}.
\end{proof}
\begin{remark}
When $s=\frac{1}{2}$, $p_{\frac{1}{2},m}(x, t)$ can be calculated explicitly (see \cite{BMR, LL}) and is given by
$$
p_{\frac{1}{2},m}(x, t)=2\left(\frac{m}{2\pi}\right)^{\frac{N+1}{2}} t\, e^{m t} (|x|^{2}+t^{2})^{-\frac{N+1}{4}} K_{\frac{N+1}{2}}(m\sqrt{|x|^{2}+t^{2}}).
$$
\end{remark}
\begin{remark}
By the definitions of $\mathcal{B}_{2s, m}$ and $p_{s, m}$, it follows that  $\mathcal{B}_{2s, m}$ is radial, positive, decreasing in $|x|$, and smooth on $\R^{N}\setminus \{0\}$. 
\end{remark}

\section{A multiplicity result}
This section is devoted to the multiplicity of positive solutions to \eqref{P}. 
In this case, we define $\tilde{f}$ as in Section $3$ by replacing $V_{1}$ by $V_{0}$, and we choose $\kappa>\frac{2V_{0}}{m^{2s}-V_{0}}$.

\subsection{The Nehari manifold approach}
Let us consider
\begin{equation*}
X_{\e}^{+}:= \{u\in X_{\e} : |{\rm supp}(u^{+}(\cdot, 0)) \cap \La_{\e}|>0\}. 
\end{equation*}
Let $\mathbb{S}_{\e}$ be the unit sphere of $X_{\e}$ and we define $\mathbb{S}_{\e}^{+}:= \mathbb{S}_{\e}\cap X_{\e}$.
We observe that $X_{\e}^{+}$ is open in $X_{\e}$.
By the definition of $\mathbb{S}_{\e}^{+}$ and the fact that $X_{\e}^{+}$ is open in $X_{\e}$, it follows that $\mathbb{S}_{\e}^{+}$ is a non-complete $C^{1,1}$-manifold of codimension $1$, modeled on $X_{\e}$ and contained in the open 
$X_{\e}^{+}$; see \cite{SW}. Then, $X_{\e}=T_{u} \mathbb{S}_{\e}^{+} \oplus \mathbb{R} u$ for each $u\in \mathbb{S}_{\e}^{+}$, where
\begin{equation*}
T_{u} \mathbb{S}_{\e}^{+}:= \{v \in X_{\e} : \langle u, v\rangle_{\e}=0\}.
\end{equation*} 
Since $f$ is only continuous, the next results will be fundamental to overcome the non-differentiability of $\mathcal{N}_{\e}$ and the incompleteness of $\mathbb{S}_{\e}^{+}$.
\begin{lem}\label{lem2.3MAW}
Assume that $(V'_1)$-$(V'_2)$ and $(f_1)$-$(f_4)$ hold. Then:
\begin{compactenum}[$(i)$]
\item For each $u\in X_{\e}^{+}$, let $h:\mathbb{R}_{+}\rightarrow \mathbb{R}$ be defined by $h_{u}(t):= J_{\e}(tu)$. Then, there is a unique $t_{u}>0$ such that 
\begin{align*}
&h'_{u}(t)>0 \quad \mbox{ in } (0, t_{u}),\\
&h'_{u}(t)<0 \quad \mbox{ in } (t_{u}, \infty).
\end{align*}
\item There exists $\tau>0$ independent of $u$ such that $t_{u}\geq \tau$ for any $u\in \mathbb{S}_{\e}^{+}$. Moreover, for each compact set $\mathbb{K}\subset \mathbb{S}_{\e}^{+}$ there is a positive constant $C_{\mathbb{K}}$ such that $t_{u}\leq C_{\mathbb{K}}$ for any $u\in \mathbb{K}$.
\item The map $\hat{m}_{\e}: X_{\e}^{+}\rightarrow \mathcal{N}_{\e}$ given by $\hat{m}_{\e}(u):= t_{u}u$ is continuous and $m_{\e}:= \hat{m}_{\e}|_{\mathbb{S}_{\e}^{+}}$ is a homeomorphism between $\mathbb{S}_{\e}^{+}$ and $\mathcal{N}_{\e}$. Moreover, $m_{\e}^{-1}(u)=\frac{u}{\|u\|_{\e}}$.
\item If there is a sequence $(u_{n})\subset \mathbb{S}_{\e}^{+}$ such that ${\rm dist}(u_{n}, \partial \mathbb{S}_{\e}^{+})\rightarrow 0$, then $\|m_{\e}(u_{n})\|_{\e}\rightarrow \infty$ and $J_{\e}(m_{\e}(u_{n}))\rightarrow \infty$.
\end{compactenum}

\end{lem}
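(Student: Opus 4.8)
The plan is to follow the Szulkin--Weth scheme for the generalized Nehari manifold (as in \cite{SW,W}), the only real work being to accommodate the penalized nonlinearity $g$, whose quotient $\tau\mapsto g(x,\tau)/\tau$ is strictly increasing only for $x\in\Lambda_{\e}$. For part $(i)$, fix $u\in X_{\e}^{+}$ and write $h_{u}(t)=J_{\e}(tu)=\frac{t^{2}}{2}\|u\|_{\e}^{2}-\int_{\R^{N}}G_{\e}(x,tu(x,0))\,dx$, so $h_{u}\in C^{1}(\R_{+},\R)$ with $h_{u}'(t)=t\|u\|_{\e}^{2}-\int_{\R^{N}}g_{\e}(x,tu(x,0))u(x,0)\,dx$. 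From $(g_1)$ one gets $\int_{\R^{N}}g_{\e}(x,tu(x,0))u(x,0)\,dx=o(t)$ as $t\to0^{+}$, so $h_{u}>0$ and is increasing near $0$; from $(g_3)$-$(i)$ together with $(f_3)$ (which gives $F(s)\ge F(1)\,s^{\theta}$ for $s\ge1$) and $|\{u^{+}(\cdot,0)>0\}\cap\Lambda_{\e}|>0$, one gets $h_{u}(t)\le\frac{t^{2}}{2}\|u\|_{\e}^{2}-c\,t^{\theta}+C$ for $t$ large, hence $h_{u}(t)\to-\infty$ as $t\to\infty$. Thus $h_{u}$ has a positive global maximum at some interior $t_{u}>0$, a critical point of $h_{u}$, i.e. $t_{u}u\in\mathcal N_{\e}$. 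For uniqueness I set $\psi_{u}(t):=\langle J_{\e}'(tu),tu\rangle=t^{2}\bigl(\|u\|_{\e}^{2}-\int_{\{u(\cdot,0)>0\}}\frac{g_{\e}(x,tu(x,0))}{tu(x,0)}\,u^{2}(x,0)\,dx\bigr)$, so that $t_{u}u\in\mathcal N_{\e}\iff\psi_{u}(t_{u})=0\iff h_{u}'(t_{u})=0$. By the definition of $g$, the map $\tau\mapsto g(x,\tau)/\tau$ is nondecreasing on $(0,\infty)$ for every $x$ and strictly increasing for $x\in\Lambda_{\e}$ (on $\Lambda_{\e}^{c}$ it equals $f(\tau)/\tau$ on $(0,a)$ and the constant $V_{0}/\kappa$ on $[a,\infty)$, matching continuously at $a$ by the choice of $a$; on $\Lambda_{\e}$ use $(f_4)$). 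Since $u\in X_{\e}^{+}$, the integral in $\psi_{u}(t)/t^{2}$ is strictly increasing in $t$, hence $t\mapsto\psi_{u}(t)/t^{2}$ is strictly decreasing and vanishes at exactly one point; this forces $t_{u}$ to be unique and gives $h_{u}'>0$ on $(0,t_{u})$, $h_{u}'<0$ on $(t_{u},\infty)$.

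For part $(ii)$, if $u\in\mathbb S_{\e}^{+}$ then $\|u\|_{\e}=1$, so by \eqref{equivalent} and Theorem \ref{Sembedding} the traces $u(\cdot,0)$ are bounded in $L^{2}(\R^{N})$ and $L^{\2}(\R^{N})$ uniformly in $u\in\mathbb S_{\e}^{+}$. Choosing $\eta>0$ small in \eqref{growthg}, $h_{u}'(t)\ge t-\eta C_{1}t-C_{\eta}C_{2}t^{\2-1}\ge\frac t2-C_{\eta}C_{2}t^{\2-1}>0$ for $0<t<\tau:=(2C_{\eta}C_{2})^{-1/(\2-2)}$, whence $t_{u}\ge\tau$. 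For the bound on a compact $\mathbb K\subset\mathbb S_{\e}^{+}$, suppose $u_{n}\in\mathbb K$ with $t_{u_{n}}\to\infty$; up to a subsequence $u_{n}\to u$ in $X_{\e}$ with $u\in\mathbb S_{\e}^{+}$, and $u_{n}(\cdot,0)\to u(\cdot,0)$ in $L^{\theta}(\Lambda_{\e})$, so $\int_{\Lambda_{\e}}|u_{n}^{+}(x,0)|^{\theta}\,dx\to\int_{\Lambda_{\e}}|u^{+}(x,0)|^{\theta}\,dx>0$. Using $G_{\e}\ge0$ and, on $\Lambda_{\e}$, $(g_3)$-$(i)$ with $(f_3)$ as above,
$$
0\le J_{\e}(t_{u_{n}}u_{n})=\max_{t\ge0}J_{\e}(tu_{n})\le\frac{t_{u_{n}}^{2}}{2}-c\,t_{u_{n}}^{\theta}\int_{\Lambda_{\e}}|u_{n}^{+}(x,0)|^{\theta}\,dx+C\longrightarrow-\infty,
$$
a contradiction. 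Hence $t_{u}\le C_{\mathbb K}$ on $\mathbb K$.

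For part $(iii)$, from $h_{su}(t)=h_{u}(st)$ we get $t_{su}=t_{u}/s$, so $\hat m_{\e}(su)=\hat m_{\e}(u)$ for all $s>0$; thus $\hat m_{\e}=m_{\e}\circ(u\mapsto u/\|u\|_{\e})$ on $X_{\e}^{+}$ and it suffices to treat $m_{\e}$. If $v_{n}\to v$ in $\mathbb S_{\e}^{+}$, then by $(ii)$ the sequence $(t_{v_{n}})$ stays in a compact subinterval of $(0,\infty)$; along any subsequence with $t_{v_{n}}\to t_{0}>0$ we have $t_{v_{n}}v_{n}\to t_{0}v$ in $X_{\e}$, and passing to the limit in $\langle J_{\e}'(t_{v_{n}}v_{n}),t_{v_{n}}v_{n}\rangle=0$ (recall $J_{\e}\in C^{1}$) gives $t_{0}v\in\mathcal N_{\e}$, so $t_{0}=t_{v}$ by the uniqueness in $(i)$; as the limit is independent of the subsequence, $t_{v_{n}}\to t_{v}$ and $m_{\e}(v_{n})\to m_{\e}(v)$, so $m_{\e}$ (hence $\hat m_{\e}$) is continuous. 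Next, every $u\in\mathcal N_{\e}$ lies in $X_{\e}^{+}$: otherwise $g_{\e}(\cdot,u(\cdot,0))u(\cdot,0)$ is supported in $\Lambda_{\e}^{c}$ up to a null set, and $(g_3)$-$(ii)$, \eqref{m-ineq}, \eqref{equivalent} together with $\kappa>\frac{2V_{0}}{m^{2s}-V_{0}}$ give $0=\|u\|_{\e}^{2}-\int_{\R^{N}}g_{\e}(x,u)u\,dx\ge\bigl(1-\tfrac{V_{0}}{\kappa(m^{2s}-V_{0})}\bigr)\|u\|_{\x}^{2}>0$, contradicting \eqref{uNr}. Then for $u\in\mathcal N_{\e}$, $m_{\e}(u/\|u\|_{\e})=\hat m_{\e}(u)=u$ (the only dilation of $u$ lying on $\mathcal N_{\e}$ is $u$ itself), while for $v\in\mathbb S_{\e}^{+}$, $m_{\e}(v)/\|m_{\e}(v)\|_{\e}=v$; hence $m_{\e}$ is a homeomorphism with $m_{\e}^{-1}(u)=u/\|u\|_{\e}$.

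For part $(iv)$, given $(u_{n})\subset\mathbb S_{\e}^{+}$ with $\mathrm{dist}(u_{n},\partial\mathbb S_{\e}^{+})\to0$, choose $v_{n}\in\partial\mathbb S_{\e}^{+}$ with $\|u_{n}-v_{n}\|_{\e}\to0$; then $v_{n}(\cdot,0)\le0$ a.e.\ in $\Lambda_{\e}$, so $F(tv_{n}(\cdot,0))=0$ on $\Lambda_{\e}$ for every $t>0$. Fixing $t>0$ and splitting $\int_{\R^{N}}G_{\e}(x,tu_{n})\,dx=\int_{\Lambda_{\e}}F(tu_{n})\,dx+\int_{\Lambda_{\e}^{c}}G_{\e}(x,tu_{n})\,dx$, I bound $\int_{\Lambda_{\e}}F(tu_{n})\,dx=\int_{\Lambda_{\e}}\bigl(F(tu_{n})-F(tv_{n})\bigr)\,dx$ via the mean value theorem, \eqref{growthg} and Hölder's inequality (using the uniform $L^{2}$ and $L^{\2}$ bounds on $u_{n}(\cdot,0),v_{n}(\cdot,0)$) to get $\int_{\Lambda_{\e}}F(tu_{n})\,dx\le C(t)\|u_{n}-v_{n}\|_{\e}\to0$, while $(g_3)$-$(ii)$, \eqref{m-ineq} and \eqref{equivalent} give $\int_{\Lambda_{\e}^{c}}G_{\e}(x,tu_{n})\,dx\le\frac{V_{0}}{2\kappa}t^{2}|u_{n}(\cdot,0)|_{2}^{2}\le\frac{V_{0}}{2\kappa(m^{2s}-V_{0})}t^{2}$. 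Since $\kappa>\frac{2V_{0}}{m^{2s}-V_{0}}$, it follows that $\liminf_{n\to\infty}J_{\e}(tu_{n})\ge\frac{t^{2}}{2}\bigl(1-\frac{V_{0}}{\kappa(m^{2s}-V_{0})}\bigr)$ for every $t>0$, so $J_{\e}(m_{\e}(u_{n}))=\max_{t\ge0}J_{\e}(tu_{n})\to\infty$; finally $G_{\e}\ge0$ yields $\|m_{\e}(u_{n})\|_{\e}^{2}\ge2J_{\e}(m_{\e}(u_{n}))\to\infty$. The delicate points are exactly this last estimate — controlling $\int_{\Lambda_{\e}}F(tu_{n})\,dx$ by $\mathrm{dist}(u_{n},\partial\mathbb S_{\e}^{+})$ uniformly on the relevant range of $t$ — and the uniqueness in $(i)$, where the definition of $X_{\e}^{+}$ is precisely what makes $t\mapsto\psi_{u}(t)/t^{2}$ strictly decreasing even though $g(x,\cdot)/\cdot$ is only strictly monotone on $\Lambda_{\e}$.
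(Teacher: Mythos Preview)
Your proof is correct and follows essentially the same Szulkin--Weth scheme as the paper, with only cosmetic differences. In $(iii)$, the paper reaches the contradiction $\bigl(1-(1+\tfrac{1}{\kappa})\tfrac{V_{0}}{m^{2s}}\bigr)\|u\|_{\x}^{2}\le 0$ by expanding $\|u\|_{\e}^{2}$ directly, while you route through \eqref{equivalent}; both need only $\kappa>\tfrac{V_{0}}{m^{2s}-V_{0}}$. In $(iv)$, the paper controls $\int_{\Lambda_{\e}}F(tu_{n})\,dx$ via the pointwise bound $u_{n}^{+}\le|u_{n}-v|$ on $\Lambda_{\e}$ for $v\in\partial\mathbb S_{\e}^{+}$ (giving powers $t^{2}\,\mathrm{dist}^{2}$ and $t^{\2}\,\mathrm{dist}^{\2}$), whereas you use the mean value theorem to bound $F(tu_{n})-F(tv_{n})$; both give $o_{n}(1)$ for each fixed $t$ and the rest of the argument is identical.
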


\begin{proof}
$(i)$ Clearly, $h_{u}\in C^{1}(\mathbb{R}_{+}, \mathbb{R})$, and arguing as in the proof of Lemma \ref{lemma1}, it is easy to verify that $h_{u}(0)=0$, $h_{u}(t)>0$ for $t>0$ small enough and $h_{u}(t)<0$ for $t>0$ sufficiently  large. Therefore, $\max_{t\geq 0} h_{u}(t)$ is achieved at some $t_{u}>0$ verifying $h'_{u}(t_{u})=0$ and $t_{u}u\in \mathcal{N}_{\e}$. 
Now, we note that
\begin{align*}
h'_{u}(t)=0 \Longleftrightarrow \|u\|^{2}_{\e}=\int_{\R^{N}} \frac{g_{\e}(x, tu)}{t} u\, dx=:\zeta(t).
\end{align*} 
By the definition of $g$, $(g_4)$ and $u\in X^{+}_{\e}$, we see that the function $t\mapsto \zeta(t)$ is increasing in $(0, \infty)$. Hence, there exists a unique $t_{u}>0$ satisfying the desired properties.

\noindent
$(ii)$ Let $u\in \mathbb{S}_{\e}^{+}$. By $(i)$ there exists $t_{u}>0$ such that $h_{u}'(t_{u})=0$, that is
\begin{equation*}
t_{u}= \int_{\mathbb{R}^{N}} g_{\e}(x, t_{u}u) u \,dx. 
\end{equation*}
By \eqref{growthg} and Theorem \ref{Sembedding}, for all $\eta>0$ there exists $C_{\eta}>0$ such that
\begin{align*}
t_{u}\leq \eta t_{u}C_{1} + C_{\eta} C_{2} t^{\2-1}_{u}, 
\end{align*}
with $C_{1}, C_{2}>0$ independent of $\eta$.
Taking $\eta>0$ sufficiently small, we obtain that there exists $\tau>0$, independent of $u$, such that $t_{u}\geq \tau$. Now, let $\mathbb{K}\subset \mathbb{S}_{\e}^{+}$ be a compact set and we show that $t_{u}$ can be estimated from above by a constant depending on $\mathbb{K}$. Assume by contradiction that there exists a sequence $(u_{n})\subset \mathbb{K}$ such that $t_{n}:=t_{u_{n}}\rightarrow \infty$. Therefore, there exists $u\in \mathbb{K}$ such that $u_{n}\rightarrow u$ in $X_{\e}$. From $(iii)$ in Lemma \ref{lemma1}, we get 
\begin{equation}\label{cancMAW}
J_{\e}(t_{n}u_{n})\rightarrow -\infty. 
\end{equation}
Fix $v\in \mathcal{N}_{\e}$. Then, using $\langle J_{\e}'(v), v \rangle=0$, $(V'_1)$, $(g_{3})$ and \eqref{m-ineq}, we can infer
\begin{align}\label{cancanMAW}
J_{\e}(v)&= J_{\e}(v)- \frac{1}{\theta} \langle J_{\e}'(v), v \rangle \nonumber\\
&=\left(\frac{1}{2}-\frac{1}{\theta}\right) \|v\|_{\e}^{2}+\frac{1}{\theta} \int_{\mathbb{R}^{N}}  [g_{\e}(x, v(x, 0))v(x,0)-  \theta G_{\e}(x, v(x, 0))]\, dx \nonumber\\
&\geq \left(\frac{1}{2}-\frac{1}{\theta}\right)\|v\|_{\e}^{2} -\left(\frac{1}{2}-\frac{1}{\theta}\right) \frac{1}{\kappa}\int_{\Lambda^{c}_{\e}} V_{0} v^{2}(x, 0)\,dx \nonumber\\
&=\left(\frac{1}{2}-\frac{1}{\theta}\right) \|v\|^{2}_{\x}+ \left(\frac{1}{2}-\frac{1}{\theta}\right) \left[\int_{\R^{N}} (V_{\e}(x)+V_{0}) v^{2}(x, 0)\, dx-\left(1+\frac{1}{\kappa}\right) V_{0}\int_{\R^{N}} v^{2}(x, 0) \,dx  \right] \nonumber \\
& \geq \left(\frac{1}{2}-\frac{1}{\theta}\right)  \|v\|_{\x}^{2}-\left(\frac{1}{2}-\frac{1}{\theta}\right)\left(1+\frac{1}{\kappa}\right)V_{0}\int_{\R^{N}} v^{2}(x, 0) \,dx \nonumber\\
&=\left(\frac{1}{2}-\frac{1}{\theta}\right)  \|v\|_{\x}^{2}-\left(\frac{1}{2}-\frac{1}{\theta}\right)\left(1+\frac{1}{\kappa}\right)\frac{V_{0}}{m^{2s}}\int_{\R^{N}} m^{2s} v^{2}(x, 0) \,dx \nonumber\\
&\geq \left(\frac{1}{2}-\frac{1}{\theta}\right)\left(1-\left(1+\frac{1}{\kappa}\right)\frac{V_{0}}{m^{2s}}  \right)  \|v\|_{\x}^{2}>0
\end{align}
because $\kappa>\frac{V_{0}}{m^{2s}-V_{0}}$.
Taking into account that $(t_{u_{n}}u_{n})\subset \mathcal{N}_{\e}$ and $\|t_{u_{n}}u_{n}\|_{\e}=t_{u_{n}}\ri \infty$, from \eqref{cancanMAW} we deduce that \eqref{cancMAW} does not hold. 

\noindent
$(iii)$ Firstly, we note that $\hat{m}_{\e}$, $m_{\e}$ and $m_{\e}^{-1}$ are well-defined. Indeed, by $(i)$, for each $u\in X_{\e}^{+}$ there exists a unique $m_{\e}(u)\in \mathcal{N}_{\e}$. On the other hand, if $u\in \mathcal{N}_{\e}$ then $u\in X_{\e}^{+}$. Otherwise, if $u\notin X_{\e}^{+}$, we have
\begin{equation*}
|{\rm supp} (u^{+}(\cdot, 0)) \cap \Lambda_{\e}|=0, 
\end{equation*}
which together with $(g_3)$-$(ii)$ and \eqref{m-ineq} gives 
\begin{align*}
\|u\|_{\e}^{2}&= \int_{\mathbb{R}^{N}} g_{\e}(x, u(x, 0)) u(x, 0) \, dx \nonumber \\
&= \int_{\La_{\e}} g_{\e}(x, u(x, 0)) u(x, 0) \, dx + \int_{\La^{c}_{\e}} g_{\e}(x, u(x, 0))u(x, 0) \, dx \nonumber \\
&= \int_{\La^{c}_{\e}} g_{\e}(x, u^{+}(x, 0)) u^{+}(x, 0) \, dx \nonumber \\
&\leq \frac{V_{0}}{\kappa} \int_{\Lambda^{c}_{\e}} u^{2}(x, 0) \,dx.  
\end{align*}
Therefore,
\begin{align*}
\|u\|_{\x}^{2}&\leq -\int_{\R^{N}} (V_{\e}(x)+V_{0})u^{2}(x, 0) \,dx+\left(1+\frac{1}{\kappa}\right)V_{0}\int_{\R^{N}}u^{2}(x, 0) \,dx  \nonumber \\
&\leq \left(1+\frac{1}{\kappa}\right)\frac{V_{0}}{m^{2s}}\|u\|^{2}_{\x}, 
\end{align*}
and using $u\not\equiv 0$ and $\kappa>\frac{V_{0}}{m^{2s}-V_{0}}$, we find
\begin{align*}
0<\left(1-\left(1+\frac{1}{\kappa}\right)\frac{V_{0}}{m^{2s}}\right) \|u\|_{\x}^{2}\leq 0,
\end{align*}
that is a contradiction. 
Consequently, $m_{\e}^{-1}(u)= \frac{u}{\|u\|_{\e}}\in \mathbb{S}_{\e}^{+}$, $m_{\e}^{-1}$ is well-defined and continuous. \\
Let $u\in \mathbb{S}_{\e}^{+}$. Then,
\begin{align*}
m_{\e}^{-1}(m_{\e}(u))= m_{\e}^{-1}(t_{u}u)= \frac{t_{u}u}{\|t_{u}u\|_{\e}}= \frac{u}{\|u\|_{\e}}=u
\end{align*}
from which $m_{\e}$ is a bijection. Now, our aim is to prove that $\hat{m}_{\e}$ is a continuous function. 
Let $(u_{n})\subset X_{\e}^{+}$ and $u\in X_{\e}^{+}$ such that $u_{n}\rightarrow u$ in $X_{\e}^{+}$. 
Hence,
\begin{equation*}
\frac{u_{n}}{\|u_{n}\|_{\e}}\rightarrow \frac{u}{\|u\|_{\e}} \quad \mbox{ in } X_{\e}.
\end{equation*} 
Set $v_{n}:= \frac{u_{n}}{\|u_{n}\|_{\e}}$ and $t_{n}:=t_{v_{n}}$. By $(ii)$ there exists $t_{0}>0$ such that $t_{n}\rightarrow t_{0}$. Since $t_{n}v_{n}\in \mathcal{N}_{\e}$ and $\|v_{n}\|_{\e}=1$, we have 
\begin{equation*}
t^{2}_{n}= \int_{\mathbb{R}^{N}} g_{\e}(x, t_{n}v_{n}(x,0)) t_{n}v_{n}(x, 0) \, dx. 
\end{equation*}
Passing to the limit as $n\rightarrow \infty$, we obtain
\begin{equation*}
t^{2}_{0}= \int_{\mathbb{R}^{N}} g_{\e}(x, t_{0}v(x, 0)) t_{0} v(x, 0)\, dx, 
\end{equation*}
where $v:= \frac{u}{\|u\|_{\e}}$, and thus $t_{0}v\in \mathcal{N}_{\e}$. By $(i)$ we deduce that $t_{v}= t_{0}$, and this shows that
\begin{equation*}
\hat{m}_{\e}(u_{n})= \hat{m}_{\e}\left(\frac{u_{n}}{\|u_{n}\|_{\e}}\right)\rightarrow \hat{m}_{\e}\left(\frac{u}{\|u\|_{\e}}\right)=\hat{m}_{\e}(u) \quad \mbox{ in } X_{\e}.  
\end{equation*}
Therefore, $\hat{m}_{\e}$ and $m_{\e}$ are continuous functions. 

\noindent
$(iv)$ Let $(u_{n})\subset \mathbb{S}_{\e}^{+}$ be such that ${\rm dist}(u_{n}, \partial \mathbb{S}_{\e}^{+})\rightarrow 0$. 
Then, by Theorem \ref{Sembedding} and $(V'_{1})$-$(V'_{2})$, for each $q\in [2, 2^{*}_{s}]$ there exists $C_{q}>0$ such that
\begin{align*}
|u^{+}_{n}(\cdot, 0)|^{q}_{L^{q}(\La_{\e})} &\leq \inf_{v\in \partial \mathbb{S}_{\e}^{+}} |u_{n}(\cdot, 0)- v(\cdot, 0)|^{q}_{L^{q}(\La_{\e})}\\
&\leq C_{q} \inf_{v\in \partial \mathbb{S}_{\e}^{+}} \|u_{n}- v\|^{q}_{\e} \\
&\leq C_{q} \, {\rm dist}(u_{n}, \partial \mathbb{S}_{\e}^{+})^{q}, \quad \mbox{ for all } n\in \mathbb{N}.
\end{align*}
Thus, by $(g_{1})$, $(g_{2})$, and $(g_{3})$-$(ii)$, we can infer that, for all $t>0$,
\begin{align}\label{ter2MAW}
\int_{\mathbb{R}^{N}} G_{\e}(x, tu_{n}(x, 0))\, dx &= \int_{\La^{c}_{\e}} G_{\e}(x, tu_{n}(x, 0))\, dx + \int_{\La_{\e}} G_{\e}(x, tu_{n}(x, 0))\, dx  \nonumber\\
&\leq \frac{t^{2}}{\kappa} V_{0} \int_{\La^{c}_{\e}} u_{n}^{2}(x, 0) \,dx+ \int_{\La_{\e}} F(tu_{n}(x, 0))\, dx \nonumber\\
&\leq \frac{t^{2}}{\kappa} V_{0} \int_{\R^{N}} u_{n}^{2}(x, 0) \,dx + C_{1}t^{2} \int_{\La_{\e}} (u_{n}^{+}(x, 0))^{2} \,dx+ C_{2} t^{\2} \int_{\La_{\e}} (u^{+}_{n}(x, 0))^{\2} \,dx \nonumber\\
&\leq \frac{t^{2}}{\kappa} V_{0} \int_{\R^{N}} u_{n}^{2}(x, 0) \,dx+ C_{1}' t^{2} {\rm dist}(u_{n}, \partial \mathbb{S}_{\e}^{+})^{2} + C_{2}' t^{\2} {\rm dist}(u_{n}, \partial \mathbb{S}_{\e}^{+})^{\2}.
\end{align}
Bearing in mind the definition of $m_{\e}(u_{n})$, and using \eqref{m-ineq}, \eqref{equivalent}, \eqref{ter2MAW}, ${\rm dist}(u_{n}, \partial \mathbb{S}_{\e}^{+})\rightarrow 0$ and $\|u_{n}\|_{\e}=1$, we have
\begin{align*}
\liminf_{n\rightarrow \infty} J_{\e}(m_{\e}(u_{n}))&\geq \liminf_{n\rightarrow \infty} J_{\e}(tu_{n})\\
&\geq \liminf_{n\rightarrow \infty} \Bigl\{t^{2} \left[ \frac{1}{2}\|u_{n}\|_{\e}^{2}- \frac{V_{0}}{\kappa} \int_{\R^{N}} u_{n}^{2}(x, 0) \,dx \right]- C_{1}' t^{2} {\rm dist}(u_{n}, \partial \mathbb{S}_{\e}^{+})^{2} - C_{2}' t^{\2} {\rm dist}(u_{n}, \partial \mathbb{S}_{\e}^{+})^{\2} \Bigr\}\\
&=\liminf_{n\rightarrow \infty} t^{2} \left[ \frac{1}{2}\|u_{n}\|_{\e}^{2}-\frac{V_{0}}{\kappa}\int_{\R^{N}} u_{n}^{2}(x, 0) \,dx \right]\\
&\geq \liminf_{n\rightarrow \infty} t^{2} \left[ \frac{1}{2}\|u_{n}\|_{\e}^{2}-\frac{V_{0}}{\kappa m^{2s}} \|u_{n}\|^{2}_{\x} \right]\\
&\geq \liminf_{n\rightarrow \infty} t^{2}  \left(\frac{1}{2}-\frac{V_{0}}{\kappa(m^{2s}-V_{0})}\right)\|u_{n}\|_{\e}^{2}\\
&=t^{2} \left(\frac{1}{2}-\frac{V_{0}}{\kappa(m^{2s}-V_{0})} \right) \quad \mbox{ for all } t>0.
\end{align*}
Recalling that $\kappa>\frac{2V_{0}}{m^{2s}-V_{0}}$ and sending $t\ri \infty$, we get
\begin{equation*}
\lim_{n\rightarrow \infty} J_{\e}(m_{\e}(u_{n}))= \infty. 
\end{equation*}
In particular, by the definition of $J_{\e}$ and $(g_{3})$, we obtain
\begin{equation*}
\liminf_{n\ri \infty} \frac{1}{2}\|m_{\e}(u_{n})\|^{2}_{\e}\geq \liminf_{n\ri \infty} J_{\e}(m_{\e}(u_{n}))=\infty
\end{equation*}
which gives $\|m_{\e}(u_{n})\|_{\e}\rightarrow \infty$ as $n\rightarrow \infty$. The proof of the lemma is now completed.
\end{proof}

\noindent
Let us define the maps 
\begin{equation*}
\hat{\psi}_{\e}: X_{\e}^{+} \rightarrow \mathbb{R} \quad \mbox{ and } \quad \psi_{\e}: \mathbb{S}_{\e}^{+}\rightarrow \mathbb{R}, 
\end{equation*}
by $\hat{\psi}_{\e}(u):= J_{\e}(\hat{m}_{\e}(u))$ and $\psi_{\e}:=\hat{\psi}_{\e}|_{\mathbb{S}_{\e}^{+}}$. \\
From Lemma \ref{lem2.3MAW} and arguing as in the proofs of Proposition $9$ and Corollary $10$ in \cite{SW}, we may obtain the following result.
\begin{prop}\label{prop2.1MAW}
Assume that $(V'_{1})$-$(V'_{2})$ and $(f_{1})$-$(f_{4})$ hold. Then, 
\begin{compactenum}[$(a)$]
\item $\hat{\psi}_{\e} \in C^{1}(X_{\e}^{+}, \mathbb{R})$ and 
\begin{equation*}
\langle \hat{\psi}_{\e}'(u), v\rangle = \frac{\|\hat{m}_{\e}(u)\|_{\e}}{\|u\|_{\e}} \langle J_{\e}'(\hat{m}_{\e}(u)), v\rangle, 
\end{equation*}
for every $u\in X_{\e}^{+}$ and $v\in X_{\e}$. 
\item $\psi_{\e} \in C^{1}(\mathbb{S}_{\e}^{+}, \mathbb{R})$ and 
\begin{equation*}
\langle \psi_{\e}'(u), v \rangle = \|m_{\e}(u)\|_{\e} \langle J_{\e}'(m_{\e}(u)), v\rangle, 
\end{equation*}
for every $v\in T_{u}\mathbb{S}_{\e}^{+}$.
\item If $(u_{n})$ is a Palais-Smale sequence for $\psi_{\e}$, then $(m_{\e}(u_{n}))$ is a Palais-Smale sequence for $J_{\e}$. If $(u_{n})\subset \mathcal{N}_{\e}$ is a bounded Palais-Smale sequence for $J_{\e}$, then $(m_{\e}^{-1}(u_{n}))$ is a Palais-Smale sequence for  $\psi_{\e}$. 
\item $u$ is a critical point of $\psi_{\e}$ if and only if $m_{\e}(u)$ is a nontrivial critical point for $J_{\e}$. Moreover, the corresponding critical values coincide and 
\begin{equation*}
\inf_{u\in \mathbb{S}_{\e}^{+}} \psi_{\e}(u)= \inf_{u\in \mathcal{N}_{\e}} J_{\e}(u).  
\end{equation*}
\end{compactenum}
\end{prop}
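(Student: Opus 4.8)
The plan is to place $J_\e$, together with the manifold $\mathbb{S}_\e^+$, into the abstract framework of Szulkin and Weth, so that the four assertions follow by arguing exactly as in the proofs of Proposition~9 and Corollary~10 in \cite{SW}. All the structural ingredients that the Nehari-manifold machinery requires are already available: $J_\e\in C^1(X_\e,\R)$; $\mathbb{S}_\e^+$ is a $C^{1,1}$-submanifold of codimension one of the Hilbert space $X_\e$ with the orthogonal splitting $X_\e=T_u\mathbb{S}_\e^+\oplus\R u$ for each $u\in\mathbb{S}_\e^+$; and Lemma~\ref{lem2.3MAW} furnishes precisely the facts that drive the argument, namely the uniqueness of the maximizer $t_u$ of $h_u$ on each ray, the uniform bound $t_u\geq\tau$ on $\mathbb{S}_\e^+$ together with the local upper bound on compact sets, the continuity of $\hat m_\e$ and the homeomorphism property of $m_\e$ with $m_\e^{-1}(u)=u/\|u\|_\e$, and the coercivity-type behaviour $(iv)$ near $\partial\mathbb{S}_\e^+$.

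For $(a)$ the decisive point — and the main obstacle — is that $u\mapsto t_u$ is only continuous (since $g_\e$, hence the map $t\mapsto\zeta(t)$ in Lemma~\ref{lem2.3MAW}$(i)$, is merely continuous because $f$ is), so one cannot differentiate $\hat\psi_\e(u)=J_\e(t_u u)$ by a naive chain rule. Instead I would compute the directional derivative directly by a two-sided sandwich: for $u\in X_\e^+$, $v\in X_\e$ and $s$ small, the maximality of $t_u$ and of $t_{u+sv}$ on the respective rays gives
$$
J_\e\big(t_u(u+sv)\big)-J_\e(t_u u)\ \leq\ \hat\psi_\e(u+sv)-\hat\psi_\e(u)\ \leq\ J_\e\big(t_{u+sv}(u+sv)\big)-J_\e\big(t_{u+sv}u\big);
$$
applying the mean value theorem to each side, dividing by $s$ and letting $s\to 0^{\pm}$, while using $t_{u+sv}\to t_u$ and the continuity of $J_\e'$, one obtains that $\hat\psi_\e$ is Gâteaux differentiable with
$$
\langle\hat\psi_\e'(u),v\rangle=t_u\langle J_\e'(t_u u),v\rangle=\frac{\|\hat m_\e(u)\|_\e}{\|u\|_\e}\langle J_\e'(\hat m_\e(u)),v\rangle ,
$$
using $\|\hat m_\e(u)\|_\e=t_u\|u\|_\e$. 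This expression is continuous in $u$ by the continuity of $\hat m_\e$ and of $J_\e'$, so $\hat\psi_\e\in C^1(X_\e^+,\R)$. Part $(b)$ is then immediate: $\psi_\e$ is the restriction of the $C^1$-map $\hat\psi_\e$ to the $C^{1,1}$-manifold $\mathbb{S}_\e^+$, hence $\psi_\e\in C^1(\mathbb{S}_\e^+,\R)$, and for $v\in T_u\mathbb{S}_\e^+$ the formula of $(a)$ collapses to $\langle\psi_\e'(u),v\rangle=\|m_\e(u)\|_\e\langle J_\e'(m_\e(u)),v\rangle$ because $\|u\|_\e=1$ on $\mathbb{S}_\e^+$.

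Finally, $(c)$ and $(d)$ rest on the orthogonal splitting $X_\e=T_u\mathbb{S}_\e^+\oplus\R u$ together with the identity $\langle J_\e'(m_\e(u)),m_\e(u)\rangle=0$, which holds since $m_\e(u)\in\mathcal N_\e$. Because $J_\e'(m_\e(u))$ annihilates $\R u$, its $X_\e^*$-norm equals the norm of its restriction to $T_u\mathbb{S}_\e^+$, so by $(b)$ one has $\|J_\e'(m_\e(u))\|_{X_\e^*}=\|\psi_\e'(u)\|_{*}/\|m_\e(u)\|_\e\leq\|\psi_\e'(u)\|_{*}/\tau$; hence a Palais–Smale sequence for $\psi_\e$ is carried by $m_\e$ to a Palais–Smale sequence for $J_\e$, while for a \emph{bounded} Palais–Smale sequence $(u_n)\subset\mathcal N_\e$ the sequence $m_\e^{-1}(u_n)=u_n/\|u_n\|_\e$ is Palais–Smale for $\psi_\e$ since $\|u_n\|_\e$ stays bounded. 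The same identity yields $(d)$: $\psi_\e'(u)=0$ iff $J_\e'(m_\e(u))$ annihilates $T_u\mathbb{S}_\e^+$, hence (adding that it already annihilates $\R u$) iff $J_\e'(m_\e(u))=0$; the critical values agree because $\psi_\e(u)=J_\e(m_\e(u))$, and since $m_\e:\mathbb{S}_\e^+\to\mathcal N_\e$ is a bijection one gets $\inf_{\mathbb{S}_\e^+}\psi_\e=\inf_{\mathcal N_\e}J_\e$, which is the mountain pass level $c_\e$ by the characterization recalled in Section~3.
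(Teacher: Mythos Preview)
Your proposal is correct and follows exactly the approach indicated by the paper, which simply states that the result follows from Lemma~\ref{lem2.3MAW} by arguing as in the proofs of Proposition~9 and Corollary~10 in \cite{SW}. In fact you supply more detail than the paper does: the sandwich argument for $(a)$, the use of the orthogonal splitting together with $\langle J_\e'(m_\e(u)),u\rangle=0$ and the lower bound $\|m_\e(u)\|_\e\geq\tau$ for $(c)$ and $(d)$ are precisely the steps of the Szulkin--Weth proof, and they go through unchanged in the present setting.
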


\begin{remark}\label{rem3MAW}
As in \cite{SW}, we have the following variational characterization of the infimum of $J_{\e}$ over $\mathcal{N}_{\e}$:
\begin{align*}
c_{\e}&=\inf_{u\in \mathcal{N}_{\e}} J_{\e}(u)=\inf_{u\in X_{\e}^{+}} \max_{t>0} J_{\e}(tu)=\inf_{u\in \mathbb{S}_{\e}^{+}} \max_{t>0} J_{\e}(tu).
\end{align*}
\end{remark}

\begin{cor}\label{cor2.1MAW}
The functional $\psi_{\e}$ satisfies the $(PS)_{c}$ condition on $\mathbb{S}_{\e}^{+}$ at any level $c\in \R$. 
\end{cor}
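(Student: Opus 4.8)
The plan is to reduce the $(PS)_{c}$ condition for $\psi_{\e}$ on the (incomplete) manifold $\mathbb{S}_{\e}^{+}$ to the $(PS)_{c}$ condition for $J_{\e}$ on $X_{\e}$, which has already been established in Lemma \ref{lemma2}. The bridge is provided by the correspondence between Palais--Smale sequences in Proposition \ref{prop2.1MAW}-$(c)$ together with the homeomorphism and coercivity-at-the-boundary properties of $m_{\e}$ collected in Lemma \ref{lem2.3MAW}.

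Concretely, I would fix $c\in\R$ and let $(u_{n})\subset\mathbb{S}_{\e}^{+}$ be such that $\psi_{\e}(u_{n})\ri c$ and $\psi_{\e}'(u_{n})\ri 0$ in $(T_{u_{n}}\mathbb{S}_{\e}^{+})^{*}$. The first step is to show that $(u_{n})$ stays uniformly away from $\partial\mathbb{S}_{\e}^{+}$: if, along a subsequence, ${\rm dist}(u_{n},\partial\mathbb{S}_{\e}^{+})\ri 0$, then Lemma \ref{lem2.3MAW}-$(iv)$ would force $\psi_{\e}(u_{n})=J_{\e}(m_{\e}(u_{n}))\ri\infty$, contradicting $\psi_{\e}(u_{n})\ri c$; hence ${\rm dist}(u_{n},\partial\mathbb{S}_{\e}^{+})\geq\delta_{0}>0$ for all large $n$. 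Next, by Proposition \ref{prop2.1MAW}-$(c)$, the sequence $v_{n}:=m_{\e}(u_{n})$ is a Palais--Smale sequence for $J_{\e}$ at level $c$, so by Lemma \ref{lemma2} it admits a subsequence with $v_{n}\ri v$ strongly in $X_{\e}$. Since each $v_{n}\in\mathcal{N}_{\e}$, estimate \eqref{uNr} yields $\|v_{n}\|_{\e}\geq r$, whence $\|v\|_{\e}\geq r>0$ and $v\neq 0$; moreover $J_{\e}\in C^{1}(X_{\e},\R)$ and $J_{\e}'(v_{n})\ri 0$ give $J_{\e}'(v)=0$, so $v\in\mathcal{N}_{\e}$ and, as in the proof of Lemma \ref{lem2.3MAW}-$(iii)$, $v\in X_{\e}^{+}$. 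Finally, since $m_{\e}^{-1}\colon\mathcal{N}_{\e}\ri\mathbb{S}_{\e}^{+}$, $w\mapsto w/\|w\|_{\e}$, is continuous by Lemma \ref{lem2.3MAW}-$(iii)$, we obtain $u_{n}=m_{\e}^{-1}(v_{n})\ri m_{\e}^{-1}(v)\in\mathbb{S}_{\e}^{+}$, which is the convergent subsequence we need.

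The only genuinely delicate point in this argument is the first step, namely ruling out that the Palais--Schmale sequence drifts toward the boundary of the non-complete manifold $\mathbb{S}_{\e}^{+}$; this is exactly what Lemma \ref{lem2.3MAW}-$(iv)$ is designed for, and it depends in turn on the choice $\kappa>\frac{2V_{0}}{m^{2s}-V_{0}}$ made at the beginning of this section. Everything else is a routine assembly of Lemma \ref{lemma2}, Lemma \ref{lem2.3MAW} and Proposition \ref{prop2.1MAW}; no new estimate is required.
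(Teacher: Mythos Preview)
Your proof is correct and follows essentially the same route as the paper: transfer the $(PS)_{c}$ sequence from $\psi_{\e}$ to $J_{\e}$ via Proposition~\ref{prop2.1MAW}-$(c)$, invoke Lemma~\ref{lemma2} to extract a strongly convergent subsequence $m_{\e}(u_{n})\to v$, and pull back through the continuous map $m_{\e}^{-1}$. Your first step (ruling out drift to $\partial\mathbb{S}_{\e}^{+}$ via Lemma~\ref{lem2.3MAW}-$(iv)$) is a harmless extra precaution that the paper omits; once you know $m_{\e}(u_{n})\to v$ in $X_{\e}$ with $\|v\|_{\e}\geq r>0$ and $J_{\e}'(v)=0$, the limit automatically lies in $\mathcal{N}_{\e}$ and hence $m_{\e}^{-1}(v)\in\mathbb{S}_{\e}^{+}$, so the boundary issue is already handled a~posteriori.
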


\begin{proof}
Let $(u_{n})$ be a Palais-Smale sequence for $\psi_{\e}$ at the level $c$. Then
\begin{equation*}
\psi_{\e}(u_{n})\rightarrow c \quad \mbox{ and } \quad \|\psi_{\e}'(u_{n})\|_{*}\rightarrow 0,
\end{equation*}
as $n\ri \infty$, where $\|\cdot\|_{*}$ is the norm in the dual space $(T_{u_{n}} \mathbb{S}_{\e}^{+})^{*}$.
It follows from Proposition \ref{prop2.1MAW}-$(c)$ that $(m_{\e}(u_{n}))$ is a Palais-Smale sequence for $J_{\e}$ in $X_{\e}$ at the level $c$. Then, using Lemma \ref{lemma2}, 
there exists $u\in \mathbb{S}_{\e}^{+}$ such that, up to a subsequence, 
\begin{equation*}
m_{\e}(u_{n})\rightarrow m_{\e}(u) \quad \mbox{ in } X_{\e}. 
\end{equation*}
Applying Lemma \ref{lem2.3MAW}-$(iii)$ we can infer that $u_{n}\rightarrow u$ in $\mathbb{S}_{\e}^{+}$.
\end{proof}

It is clear that the previous results can be easily extended for the autonomous case.  More precisely, for $\mu>-m^{2s}$, 
we define
\begin{equation*}
Y_{\mu}^{+}:= \{u\in Y_{\mu} : |{\rm supp}(u^{+}(\cdot, 0))|>0\}. 
\end{equation*}
Let $\mathbb{S}_{\mu}$ be the unit sphere of $Y_{\mu}$ and we put $\mathbb{S}_{\mu}^{+}:= \mathbb{S}_{\e}\cap Y_{\mu}^{+}$.
We note that $\mathbb{S}_{\mu}^{+}$ is a non-complete $C^{1,1}$-manifold of codimension $1$, modeled on $Y_{\mu}$ and contained in the open $Y_{\mu}^{+}$. Then, $Y_{\mu}=T_{u} \mathbb{S}_{\mu}^{+} \oplus \mathbb{R} u$ for each $u\in \mathbb{S}_{\mu}^{+}$, where
\begin{equation*}
T_{u} \mathbb{S}_{\mu}^{+}:= \{v \in Y_{\mu} : \langle u, v\rangle_{Y_{\mu}}=0\}.
\end{equation*} 
Arguing as in the proof of Lemma \ref{lem2.3MAW} we have the following result.
\begin{lem}\label{lem2.3AMAW}
Assume that $(f_1)$-$(f_4)$ hold. Then, 
\begin{compactenum}[$(i)$]
\item For each $u\in Y_{\mu}^{+}$, let $h:\mathbb{R}_{+}\rightarrow \mathbb{R}$ be defined by $h_{u}(t):= L_{\mu}(tu)$. Then, there is a unique $t_{u}>0$ such that 
\begin{align*}
&h'_{u}(t)>0 \quad \mbox{ in } (0, t_{u}),\\
&h'_{u}(t)<0 \quad \mbox{ in } (t_{u}, \infty).
\end{align*}
\item There exists $\tau>0$ independent of $u$ such that $t_{u}\geq \tau$ for any $u\in \mathbb{S}_{\mu}^{+}$. Moreover, for each compact set $\mathbb{K}\subset \mathbb{S}_{\mu}^{+}$ there is a positive constant $C_{\mathbb{K}}$ such that $t_{u}\leq C_{\mathbb{K}}$ for any $u\in \mathbb{K}$.
\item The map $\hat{m}_{\mu}: Y_{\mu}^{+}\rightarrow \mathcal{M}_{\mu}$ given by $\hat{m}_{\mu}(u):= t_{u}u$ is continuous and $m_{\mu}:= \hat{m}_{\mu}|_{\mathbb{S}_{\mu}^{+}}$ is a homeomorphism between $\mathbb{S}_{\mu}^{+}$ and $\mathcal{M}_{\mu}$. Moreover, $m_{\mu}^{-1}(u)=\frac{u}{\|u\|_{Y_{\mu}}}$.
\item If there is a sequence $(u_{n})\subset \mathbb{S}_{\mu}^{+}$ such that ${\rm dist}(u_{n}, \partial \mathbb{S}_{\mu}^{+})\rightarrow 0$ then $\|m_{\mu}(u_{n})\|_{Y_{\mu}}\rightarrow \infty$ and $L_{\mu}(m_{\mu}(u_{n}))\rightarrow \infty$.
\end{compactenum}
\end{lem}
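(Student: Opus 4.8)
The plan is to transcribe, almost verbatim, the proof of Lemma \ref{lem2.3MAW}, replacing $J_{\e}$, $X_{\e}$, $\|\cdot\|_{\e}$, $g_{\e}$, $\mathcal{N}_{\e}$ by $L_{\mu}$, $\y$, $\|\cdot\|_{\y}$, $f$, $\mathcal{M}_{\mu}$, and using throughout that $\|\cdot\|_{\y}$ is equivalent to $\|\cdot\|_{\x}$ via the constants $A_{m,s}$, $B_{m,s}$ recalled before the statement together with \eqref{m-ineq}. Two features actually make the autonomous version shorter than Lemma \ref{lem2.3MAW}: by $(f_4)$ the map $t\mapsto f(t)/t$ is increasing on the whole of $(0,\infty)$, so no splitting of the integration domain is needed; and since $\partial\mathbb{S}_{\mu}^{+}=\{u\in\mathbb{S}_{\mu}:|{\rm supp}(u^{+}(\cdot,0))|=0\}$, the quantity ${\rm dist}(u,\partial\mathbb{S}_{\mu}^{+})$ controls $|u^{+}(\cdot,0)|_{L^{q}(\R^{N})}$ on the whole of $\R^{N}$, which makes the residual term $\frac{t^{2}}{\kappa}V_{0}\int_{\R^{N}}u^{2}$ of \eqref{ter2MAW} disappear.

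For $(i)$ I would set $h_{u}(t)=\frac{t^{2}}{2}\|u\|^{2}_{\y}-\int_{\R^{N}}F(tu(x, 0))\, dx$, note that only $u^{+}(\cdot,0)$ enters, and check via $(f_1)$-$(f_2)$ that $h_{u}(t)>0$ for small $t>0$ and via $(f_3)$ (which gives $F(s)\ge F(1)(s^{\theta}-1)$ for $s>0$) together with $u\in\y^{+}$ that $h_{u}(t)\to-\infty$ as $t\to\infty$; hence a maximum point $t_{u}>0$ with $h'_{u}(t_{u})=0$ exists. Rewriting $h'_{u}(t)=0\iff\|u\|^{2}_{\y}=\int_{\{u(\cdot,0)>0\}}\frac{f(tu(x,0))}{tu(x,0)}u^{2}(x, 0)\, dx=:\zeta(t)$, the monotonicity $(f_4)$ makes $\zeta$ strictly increasing, with $\zeta(0^{+})=0$ by $(f_1)$ and $\zeta(+\infty)=+\infty$ since $(f_3)$-$(f_4)$ force $f(s)/s\to\infty$ as $s\to\infty$; thus $t_{u}$ is unique and $h'_{u}$ changes sign exactly once. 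For $(ii)$, for $u\in\mathbb{S}_{\mu}^{+}$ I would combine $t_{u}=\int_{\R^{N}}f(t_{u}u(x,0))u(x, 0)\, dx$ with $|f(s)|\le\eta|s|+C_{\eta}|s|^{\2-1}$, Theorem \ref{Sembedding} and the norm equivalence to obtain $t_{u}\le\eta C_{1}t_{u}+C_{\eta}C_{2}t_{u}^{\2-1}$, whence $t_{u}\ge\tau$ for small $\eta$. For the upper bound on a compact $\mathbb{K}\subset\mathbb{S}_{\mu}^{+}$, I would argue by contradiction: if $t_{n}:=t_{u_{n}}\to\infty$ with $u_{n}\to u\in\mathbb{K}$ in $\y$, then $u_{n}(\cdot,0)\to u(\cdot,0)$ in $L^{\theta}(\R^{N})$ forces $L_{\mu}(t_{n}u_{n})\le\frac{t_{n}^{2}}{2}-F(1)t_{n}^{\theta}\int_{\R^{N}}|u_{n}^{+}(x,0)|^{\theta}\, dx+C\to-\infty$, whereas $(f_3)$ gives $L_{\mu}(v)=L_{\mu}(v)-\frac1\theta\langle L'_{\mu}(v),v\rangle\ge(\frac12-\frac1\theta)\|v\|^{2}_{\y}\ge0$ on $\mathcal{M}_{\mu}$, contradicting $t_{n}u_{n}\in\mathcal{M}_{\mu}$ (the analogue of \eqref{uNr} even shows this lower bound is a fixed positive constant).

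Part $(iii)$ would then be formal. If $u\in\mathcal{M}_{\mu}$ then $u\in\y^{+}$ (else $u(\cdot,0)\le0$ a.e. gives $\|u\|^{2}_{\y}=\langle L'_{\mu}(u),u\rangle=0$, impossible), and $h'_{u}(1)=\langle L'_{\mu}(u),u\rangle=0$ forces $t_{u}=1$ by the uniqueness in $(i)$, so $m_{\mu}^{-1}(u)=u/\|u\|_{\y}\in\mathbb{S}_{\mu}^{+}$ is well defined and continuous; continuity of $\hat{m}_{\mu}$ would follow by passing to the limit in $t_{n}^{2}=\int_{\R^{N}}f(t_{n}v_{n}(x,0))t_{n}v_{n}(x, 0)\, dx$ along $v_{n}=u_{n}/\|u_{n}\|_{\y}$, using $(ii)$, the strong convergence $v_{n}(\cdot,0)\to v(\cdot,0)$ in $L^{q}(\R^{N})$ for $q\in(2,\2)$, the growth of $f$, and the uniqueness in $(i)$, which also yields that $m_{\mu}$ is a homeomorphism. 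For $(iv)$, if ${\rm dist}(u_{n},\partial\mathbb{S}_{\mu}^{+})\to0$ I would use $|s^{+}-r^{+}|\le|s-r|$, the trace inequality \eqref{m-ineq} and the norm equivalence to get $|u_{n}^{+}(\cdot,0)|_{L^{q}(\R^{N})}\le C_{q}\,{\rm dist}(u_{n},\partial\mathbb{S}_{\mu}^{+})\to0$ for all $q\in[2,\2]$, hence by $(f_1)$-$(f_2)$ that $\int_{\R^{N}}F(tu_{n}(x,0))\, dx\le\eta t^{2}|u_{n}^{+}(\cdot,0)|_{2}^{2}+C_{\eta}t^{\2}|u_{n}^{+}(\cdot,0)|_{\2}^{\2}\to0$ for each fixed $t>0$; consequently $\liminf_{n}L_{\mu}(m_{\mu}(u_{n}))\ge\liminf_{n}L_{\mu}(tu_{n})\ge\frac{t^{2}}{2}$, and letting $t\to\infty$ gives $L_{\mu}(m_{\mu}(u_{n}))\to\infty$, while $\frac12\|m_{\mu}(u_{n})\|_{\y}^{2}\ge L_{\mu}(m_{\mu}(u_{n}))$ (since $F\ge0$) gives $\|m_{\mu}(u_{n})\|_{\y}\to\infty$.

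The genuinely delicate step, exactly as in Lemma \ref{lem2.3MAW} and in Proposition $9$ and Corollary $10$ of \cite{SW}, will be the upper bound in $(ii)$: it hinges on using the Ambrosetti--Rabinowitz condition $(f_3)$ to keep $L_{\mu}$ nonnegative on $\mathcal{M}_{\mu}$ while simultaneously $L_{\mu}(t_{n}u_{n})\to-\infty$ by superquadraticity; all other steps are a routine transcription of the cited arguments, with the only real bookkeeping being the systematic use of the equivalence between $\|\cdot\|_{\y}$ and $\|\cdot\|_{\x}$.
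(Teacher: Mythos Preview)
Your proposal is correct and follows essentially the same approach as the paper, which does not give a separate proof for this lemma but simply states ``Arguing as in the proof of Lemma \ref{lem2.3MAW} we have the following result.'' Your transcription of that argument with the substitutions $J_{\e}\mapsto L_{\mu}$, $g_{\e}\mapsto f$, $\mathcal{N}_{\e}\mapsto\mathcal{M}_{\mu}$ is accurate, and your observation that the autonomous setting is in fact simpler---since $(f_3)$ gives $L_{\mu}(v)\ge(\frac12-\frac1\theta)\|v\|^{2}_{\y}$ on $\mathcal{M}_{\mu}$ without the correction term of \eqref{cancanMAW}, and since $\partial\mathbb{S}_{\mu}^{+}$ controls $|u^{+}(\cdot,0)|_{L^{q}(\R^{N})}$ globally so no $\frac{V_{0}}{\kappa}$-term survives in $(iv)$---is a correct and welcome clarification.
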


Let us define the maps 
\begin{equation*}
\hat{\psi}_{\mu}: Y_{\mu}^{+} \rightarrow \mathbb{R} \quad \mbox{ and } \quad \psi_{\mu}: \mathbb{S}_{\mu}^{+}\rightarrow \mathbb{R}, 
\end{equation*}
by $\hat{\psi}_{\mu}(u):= L_{\mu}(\hat{m}_{\mu}(u))$ and $\psi_{\mu}:=\hat{\psi}_{\mu}|_{\mathbb{S}_{\mu}^{+}}$. \\
The next result is a direct consequence of Lemma \ref{lem2.3AMAW} and some arguments found in Proposition $9$ and Corollary $10$ in \cite{SW}. 
\begin{prop}\label{prop2.1AMAW}
Assume that  $(f_{1})$-$(f_{4})$ hold. Then, 
\begin{compactenum}[$(a)$]
\item $\hat{\psi}_{\mu} \in C^{1}(Y_{\mu}^{+}, \mathbb{R})$ and 
\begin{equation*}
\langle \hat{\psi}_{\mu}'(u), v\rangle = \frac{\|\hat{m}_{\mu}(u)\|_{Y_{\mu}}}{\|u\|_{Y_{\mu}}} \langle L_{\mu}'(\hat{m}_{\mu}(u)), v\rangle, 
\end{equation*}
for every $u\in Y_{\mu}^{+}$ and $v\in Y_{\mu}$. 
\item $\psi_{\mu} \in C^{1}(\mathbb{S}_{\mu}^{+}, \mathbb{R})$ and 
\begin{equation*}
\langle \psi_{\mu}'(u), v \rangle = \|m_{\mu}(u)\|_{Y_{\mu}} \langle L_{\mu}'(m_{\mu}(u)), v\rangle, 
\end{equation*}
for every $v\in T_{u}\mathbb{S}_{\mu}^{+}$.
\item If $(u_{n})$ is a Palais-Smale sequence for $\psi_{\mu}$, then $(m_{\mu}(u_{n}))$ is a Palais-Smale sequence for $L_{\mu}$. If $(u_{n})\subset \mathcal{M}_{\mu}$ is a bounded Palais-Smale sequence for $L_{\mu}$, then $(m_{\mu}^{-1}(u_{n}))$ is a Palais-Smale sequence for  $\psi_{\mu}$. 
\item $u$ is a critical point of $\psi_{\mu}$ if and only if $m_{\mu}(u)$ is a nontrivial critical point for $L_{\mu}$. Moreover, the corresponding critical values coincide and 
\begin{equation*}
\inf_{u\in \mathbb{S}_{\mu}^{+}} \psi_{\mu}(u)= \inf_{u\in \mathcal{M}_{\mu}} L_{\mu}(u).  
\end{equation*}
\end{compactenum}
\end{prop}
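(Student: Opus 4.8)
The plan is to transfer to the autonomous setting the abstract scheme of Szulkin and Weth \cite{SW}, with Lemma \ref{lem2.3AMAW} as the only nontrivial input, exactly as Proposition \ref{prop2.1MAW} is deduced from Lemma \ref{lem2.3MAW}. For $(a)$ I would first show that $\hat{\psi}_{\mu}$ is G\^ateaux differentiable on $Y_{\mu}^{+}$: fixing $u\in Y_{\mu}^{+}$, $v\in Y_{\mu}$ and writing $t_{\delta}:=t_{u+\delta v}$, $t_{0}:=t_{u}$ for $|\delta|$ small, the maximality property in Lemma \ref{lem2.3AMAW}-$(i)$ gives the squeeze
$$
L_{\mu}(t_{\delta}(u+\delta v))-L_{\mu}(t_{\delta}u)\leq \hat{\psi}_{\mu}(u+\delta v)-\hat{\psi}_{\mu}(u)\leq L_{\mu}(t_{0}(u+\delta v))-L_{\mu}(t_{0}u);
$$
applying the mean value theorem to the two extremes, dividing by $\delta$ and letting $\delta\to 0$ --- using $t_{\delta}\to t_{0}$ from the continuity of $\hat{m}_{\mu}$ (Lemma \ref{lem2.3AMAW}-$(iii)$) and the continuity of $L_{\mu}'$ --- both sides converge to $t_{0}\langle L_{\mu}'(t_{0}u),v\rangle$. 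This identifies $\langle\hat{\psi}_{\mu}'(u),v\rangle=t_{u}\langle L_{\mu}'(\hat{m}_{\mu}(u)),v\rangle$, which equals $\frac{\|\hat{m}_{\mu}(u)\|_{Y_{\mu}}}{\|u\|_{Y_{\mu}}}\langle L_{\mu}'(\hat{m}_{\mu}(u)),v\rangle$ since $t_{u}\|u\|_{Y_{\mu}}=\|\hat{m}_{\mu}(u)\|_{Y_{\mu}}$; as $\hat{m}_{\mu}$ is continuous and $L_{\mu}'\in C^{0}$, the right-hand side is continuous in $u$, so $\hat{\psi}_{\mu}\in C^{1}(Y_{\mu}^{+},\R)$.

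For $(b)$ I would simply restrict: $\mathbb{S}_{\mu}^{+}$ being a $C^{1,1}$-submanifold of the open set $Y_{\mu}^{+}$ and $\|u\|_{Y_{\mu}}=1$ there, $(a)$ yields $\psi_{\mu}\in C^{1}(\mathbb{S}_{\mu}^{+},\R)$ with $\langle\psi_{\mu}'(u),v\rangle=\|m_{\mu}(u)\|_{Y_{\mu}}\langle L_{\mu}'(m_{\mu}(u)),v\rangle$ for $v\in T_{u}\mathbb{S}_{\mu}^{+}$. For $(d)$ I would use the splitting $Y_{\mu}=T_{u}\mathbb{S}_{\mu}^{+}\oplus\R u$: since $m_{\mu}(u)\in\mathcal{M}_{\mu}$ implies $\langle L_{\mu}'(m_{\mu}(u)),u\rangle=t_{u}^{-1}\langle L_{\mu}'(m_{\mu}(u)),m_{\mu}(u)\rangle=0$, the vanishing of $\psi_{\mu}'(u)$ forces $L_{\mu}'(m_{\mu}(u))$ to annihilate all of $Y_{\mu}$, and the resulting critical point is nontrivial because $\|m_{\mu}(u)\|_{Y_{\mu}}=t_{u}\geq\tau>0$ by Lemma \ref{lem2.3AMAW}-$(ii)$; the reverse implication follows likewise from $(b)$, and $\psi_{\mu}=L_{\mu}\circ m_{\mu}$ together with the bijectivity of $m_{\mu}$ gives the coincidence of the critical values and $\inf_{\mathbb{S}_{\mu}^{+}}\psi_{\mu}=\inf_{\mathcal{M}_{\mu}}L_{\mu}$.

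For $(c)$, given a $(PS)_{c}$ sequence $(u_{n})\subset\mathbb{S}_{\mu}^{+}$ for $\psi_{\mu}$, I would set $w_{n}:=m_{\mu}(u_{n})\in\mathcal{M}_{\mu}$, note $L_{\mu}(w_{n})=\psi_{\mu}(u_{n})\to c$, and for $z\in Y_{\mu}$ decompose $z=v_{n}+s_{n}u_{n}$ with $v_{n}\in T_{u_{n}}\mathbb{S}_{\mu}^{+}$ and $\|v_{n}\|_{Y_{\mu}}\leq\|z\|_{Y_{\mu}}$, so that using $\langle L_{\mu}'(w_{n}),u_{n}\rangle=0$ and $(b)$,
$$
|\langle L_{\mu}'(w_{n}),z\rangle|=\frac{1}{\|w_{n}\|_{Y_{\mu}}}|\langle\psi_{\mu}'(u_{n}),v_{n}\rangle|\leq\frac{1}{\tau}\|\psi_{\mu}'(u_{n})\|_{*}\|z\|_{Y_{\mu}}\to 0,
$$
since $\|w_{n}\|_{Y_{\mu}}=t_{u_{n}}\geq\tau$; hence $\|L_{\mu}'(w_{n})\|_{Y_{\mu}^{*}}\to0$. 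Conversely, for a bounded $(PS)_{c}$ sequence $(u_{n})\subset\mathcal{M}_{\mu}$ for $L_{\mu}$, the element $v_{n}:=m_{\mu}^{-1}(u_{n})=u_{n}/\|u_{n}\|_{Y_{\mu}}$ satisfies $\psi_{\mu}(v_{n})=L_{\mu}(u_{n})\to c$ and $\|\psi_{\mu}'(v_{n})\|_{*}\leq\|u_{n}\|_{Y_{\mu}}\|L_{\mu}'(u_{n})\|_{Y_{\mu}^{*}}\to0$. I expect the only genuinely delicate step to be the $C^{1}$ regularity in $(a)$: because $f$ is merely continuous, $t_{u}$ is not obviously a $C^{1}$ function of $u$, so $u\mapsto L_{\mu}(t_{u}u)$ cannot be differentiated directly; the sandwich argument above is what bypasses this, while everything else reduces to bookkeeping whose quantitative ingredients --- the uniform bound $t_{u}\geq\tau$ and the continuity of $\hat{m}_{\mu}$ --- are precisely what Lemma \ref{lem2.3AMAW} provides.
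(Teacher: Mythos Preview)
Your proposal is correct and is precisely the Szulkin--Weth scheme that the paper invokes: the paper does not give a detailed proof, but simply states that the result is ``a direct consequence of Lemma \ref{lem2.3AMAW} and some arguments found in Proposition~$9$ and Corollary~$10$ in \cite{SW}'', and what you have written is an accurate unpacking of those arguments with Lemma \ref{lem2.3AMAW} supplying the needed continuity of $\hat{m}_{\mu}$ and the uniform lower bound $t_{u}\geq\tau$. Your observation that the sandwich argument in $(a)$ is the key device to circumvent the lack of differentiability of $u\mapsto t_{u}$ is also exactly the point of Proposition~$9$ in \cite{SW}.
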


\begin{remark}\label{rem3AMAW}
As in \cite{SW}, we have the following variational characterization of the infimum of $L_{\mu}$ over $\mathcal{M}_{\mu}$:
\begin{align*}
d_{\mu}&=\inf_{u\in \mathcal{M}_{\mu}} L_{\mu}(u)=\inf_{u\in Y_{\mu}^{+}} \max_{t>0} L_{\mu}(tu)=\inf_{u\in \mathbb{S}_{\mu}^{+}} \max_{t>0} L_{\mu}(tu).
\end{align*}
\end{remark}

The next lemma is a compactness result for the autonomous problem which will be used later.
\begin{lem}\label{lem3.3MAW}
Let $(u_{n})\subset \mathcal{M}_{\mu}$ be a sequence such that $L_{\mu}(u_{n})\rightarrow d_{\mu}$. Then  
$(u_{n})$ has a convergent subsequence in $Y_{\mu}$.
\end{lem}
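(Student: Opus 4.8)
The plan is to run the same Lions concentration-compactness dichotomy used in the proof of Theorem \ref{EGS}, but now starting from a minimizing sequence on the Nehari manifold rather than an abstract Palais--Smale sequence. First I would note that $(u_n)\subset \mathcal{M}_\mu$ with $L_\mu(u_n)\to d_\mu$ is bounded in $Y_\mu$: indeed, from $(f_3)$ and $u_n\in\mathcal{M}_\mu$ one gets $L_\mu(u_n)=L_\mu(u_n)-\frac1\theta\langle L'_\mu(u_n),u_n\rangle\geq(\frac12-\frac1\theta)\|u_n\|^2_{Y_\mu}$, exactly as in the boundedness computation already displayed before Theorem \ref{EGS}. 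Moreover $\|u_n\|_{Y_\mu}\geq\tau>0$ by \eqref{uNr}-type estimates (or Lemma \ref{lem2.3AMAW}-(ii)), so the sequence stays away from $0$. Applying Ekeland's variational principle on $\mathcal{M}_\mu$ (or, more cleanly, transporting to $\mathbb{S}^+_\mu$ via $m_\mu$ and using Proposition \ref{prop2.1AMAW}) we may assume in addition that $(u_n)$ is a Palais--Smale sequence for $L_\mu$ at level $d_\mu$. Up to a subsequence $u_n\rightharpoonup u$ in $Y_\mu$, and the usual density argument ($C^\infty_c(\overline{\R^{N+1}_+})$ dense in $\x$, growth of $f$ from $(f_1)$--$(f_2)$, local compactness of the trace embedding in Theorem \ref{Sembedding}) gives $\langle L'_\mu(u),\varphi\rangle=0$ for all $\varphi\in Y_\mu$.

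Next I would rule out vanishing. If $u=0$, then $u_n\rightharpoonup 0$ and since $d_\mu>0$ we have $u_n\not\to 0$ in $Y_\mu$, so Lemma \ref{Lions2} applies and yields a sequence $(z_n)\subset\R^N$ and constants $R,\beta>0$ with $\liminf_n\int_{B_R(z_n)}u_n^2(x,0)\,dx\geq\beta$. By the translation invariance of \eqref{AEP}, the shifted sequence $v_n(x,y):=u_n(x+z_n,y)$ is again a bounded Palais--Smale sequence for $L_\mu$ at level $d_\mu$, still lies on $\mathcal{M}_\mu$ up to a negligible modification, and now has a nontrivial weak limit $v\neq 0$ with $L'_\mu(v)=0$, i.e. $v\in\mathcal{M}_\mu$. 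Using the weak lower semicontinuity of $\|\cdot\|_{Y_\mu}$ together with $(f_4)$ (which guarantees that $L_\mu(t v)$ is maximized at $t=1$ on $\mathcal{M}_\mu$) and Fatou's lemma applied to $L_\mu(u_n)-\frac1\theta\langle L'_\mu(u_n),u_n\rangle=(\frac12-\frac1\theta)\|u_n\|^2_{Y_\mu}+\frac1\theta\int_{\R^N}(f(u_n)u_n-\theta F(u_n))\,dx$, one obtains $d_\mu\leq L_\mu(v)\leq\liminf_n L_\mu(v_n)=d_\mu$, forcing $L_\mu(v)=d_\mu$ and, crucially, $\|v_n\|_{Y_\mu}\to\|v\|_{Y_\mu}$ and $\int_{\R^N}f(v_n(\cdot,0))v_n(\cdot,0)\,dx\to\int_{\R^N}f(v(\cdot,0))v(\cdot,0)\,dx$. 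The convergence of norms plus weak convergence in the Hilbert space $Y_\mu$ then gives $v_n\to v$ strongly; undoing the translation, we would like to conclude strong convergence of $(u_n)$ itself.

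The remaining point — and the main obstacle — is that a priori the shifts $z_n$ need not be bounded, so strong convergence of $v_n=u_n(\cdot+z_n,\cdot)$ does not immediately give a convergent subsequence of $u_n$ in $Y_\mu$ as stated. To handle this I would argue as follows: once we know $d_\mu$ is attained and $v$ is a ground state, return to the original sequence $u_n\rightharpoonup u$. If $u\neq 0$, then as in Theorem \ref{EGS} one finds $t_1\in(0,1]$ with $t_1 u\in\mathcal{M}_\mu$, and by the lower semicontinuity / Fatou argument $d_\mu\leq L_\mu(t_1u)\leq\liminf_n L_\mu(u_n)=d_\mu$, whence $t_1=1$, $u\in\mathcal{M}_\mu$, $L_\mu(u)=d_\mu$, and the norm-convergence argument again yields $u_n\to u$ strongly in $Y_\mu$. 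So the genuinely problematic case is $u=0$ with unbounded shifts. One shows this cannot happen for a \emph{minimizing} sequence on $\mathcal{M}_\mu$: if $u\equiv 0$, the splitting estimate (Brezis--Lieb together with the fact that $v_n\to v$ strongly after translation) would exhibit $\liminf_n L_\mu(u_n)\geq L_\mu(v)=d_\mu$ with equality only if the "lost mass" $u_n-v(\cdot-z_n,\cdot)\to 0$ in $Y_\mu$, which combined with $u\equiv 0$ is only consistent if in fact $u_n$ itself converges (after the translation the limit is $v$, and since no second bubble can appear at level $d_\mu$ the energy is entirely captured). In other words, either $u\neq 0$ and we are done directly, or the unique concentration profile is $v$ and $u_n(\cdot+z_n,\cdot)\to v$; in the latter situation the statement of the lemma is to be read up to translation, consistent with how it is used later (via $m_\mu$ on $\mathbb{S}^+_\mu$ in Corollary-type arguments). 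I would therefore close the proof by invoking Lemma \ref{Lions2}, the translation invariance of \eqref{AEP}, the variational characterization in Remark \ref{rem3AMAW}, and the Hilbert-space structure of $Y_\mu$, exactly paralleling the proof of Theorem \ref{EGS}.
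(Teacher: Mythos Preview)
Your overall route---Ekeland to upgrade the minimizing sequence on $\mathcal{M}_\mu$ to a Palais--Smale sequence, then the Lions dichotomy and a Fatou argument for norm convergence---is exactly the paper's. The paper just compresses it: it passes to $v_n:=m_\mu^{-1}(u_n)\in\mathbb{S}_\mu^+$, extends $\psi_\mu$ by $+\infty$ on $\partial\mathbb{S}_\mu^+$ so that Ekeland can be run on the \emph{complete} metric space $\overline{\mathbb{S}}_\mu^+$ (continuity of the extension is exactly Lemma~\ref{lem2.3AMAW}-(iv)), and then dismisses the remainder in one line as ``Proposition~\ref{prop2.1AMAW}, Theorem~\ref{EGS}, and arguing as in Corollary~\ref{cor2.1MAW}''. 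Unpacking that line gives precisely what you wrote.

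You are also right that the translation obstacle is genuine. Taking $u_n(x,y):=w(x+ne_1,y)$ for any ground state $w$ of \eqref{AEP} yields a sequence in $\mathcal{M}_\mu$ with $L_\mu(u_n)\equiv d_\mu$ and no convergent subsequence, so the lemma is literally false as stated; the paper's referral to Corollary~\ref{cor2.1MAW} does not repair this, since the analogue of Lemma~\ref{lemma2} fails for $L_\mu$ by translation invariance. Your attempted rescue via a splitting/no-second-bubble argument is not a proof either: once the weak limit of $(u_n)$ is zero, strong convergence of $u_n(\cdot+z_n,\cdot)$ says nothing about strong convergence of $u_n$ itself. The honest fix is to observe that the only invocation of the lemma (inside Proposition~\ref{prop3.3}) is to the sequence $\tilde v_n=t_n v_n$, which already has a nontrivial weak limit $\tilde v\neq 0$ after the translation $\tilde z_n$ has been performed; under that extra hypothesis your Fatou/lower-semicontinuity computation gives $\|\tilde v_n\|_{Y_\mu}\to\|\tilde v\|_{Y_\mu}$ directly, and strong convergence follows from the Hilbert structure. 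So either read the lemma ``up to translation'', or add the hypothesis that the weak limit is nonzero---your argument then closes cleanly, and so does the paper's.
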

\begin{proof}
Since $(u_{n})\subset \mathcal{M}_{\mu}$ and $L_{\mu}(u_{n})\rightarrow d_{\mu}$, we can apply Lemma \ref{lem2.3AMAW}-$(iii)$, Proposition \ref{prop2.1AMAW}-$(d)$ and the definition of $d_{\mu}$ to infer that
$$
v_{n}:=m^{-1}_{\mu}(u_{n})=\frac{u_{n}}{\|u_{n}\|_{Y_{\mu}}}\in \mathbb{S}_{\mu}^{+}, \, \mbox{ for all } n\in \mathbb{N},
$$
and
$$
\psi_{\mu}(v_{n})=L_{\mu}(u_{n})\rightarrow d_{\mu}=\inf_{v\in \mathbb{S}_{\mu}^{+}}\psi_{\mu}(v).
$$
Let us introduce the following map $\mathcal{F}: \overline{\mathbb{S}}_{\mu}^{+}\rightarrow \mathbb{R}\cup \{\infty\}$ defined by setting
$$
\mathcal{F}(u):=
\begin{cases}
\psi_{\mu}(u)& \text{ if $u\in \mathbb{S}_{\mu}^{+}$}, \\
\infty   & \text{ if $u\in \partial \mathbb{S}_{\mu}^{+}$}.
\end{cases}
$$ 
We note that 
\begin{itemize}
\item $(\overline{\mathbb{S}}_{\mu}^{+}, \delta_{\mu})$, where $\delta_{\mu}(u, v):=\|u-v\|_{Y_{\mu}}$, is a complete metric space;
\item $\mathcal{F}\in C(\overline{\mathbb{S}}_{\mu}^{+}, \mathbb{R}\cup \{\infty\})$, by Lemma \ref{lem2.3AMAW}-$(iv)$;
\item $\mathcal{F}$ is bounded below, by Proposition \ref{prop2.1AMAW}-$(d)$.
\end{itemize}
Hence, by applying Ekeland's variational principle, we can find a Palais-Smale sequence $(\hat{v}_{n})\subset \mathbb{S}_{\mu}^{+}$ for $\psi_{\mu}$ at the level $d_{\mu}$ with $\|\hat{v}_{n}-v_{n}\|_{Y_{\mu}}=o_{n}(1)$.
Now the remainder of the proof follows from Proposition \ref{prop2.1AMAW}, Theorem \ref{EGS} and arguing as in the proof of Corollary \ref{cor2.1MAW}. 
\end{proof}

\subsection{Technical results}
In this subsection we make use of the Ljusternik-Schnirelman category theory to obtain multiple solutions to \eqref{P}. 
In particular, we relate the number of positive solutions of \eqref{MEP} to the topology of the set $M$.
For this reason, we take $\delta>0$ such that
$$
M_{\delta}=\{x\in \R^{N}: {\rm dist}(x, M)\leq \delta\}\subset \Lambda,
$$
and consider a smooth nonicreasing function $\eta:[0, \infty)\rightarrow \R$ such that $\eta(t)=1$ if $0\leq t\leq \frac{\delta}{2}$, $\eta(t)=0$ if $t\geq \delta$, $0\leq \eta\leq 1$ and $|\eta'(t)|\leq c$ for some $c>0$.\\
For any $z\in M$, we introduce 
$$
\Psi_{\e, z}(x, y):=\eta(|(\e x-z, y)|) w\left(\frac{\e x-z}{\e}, y\right),
$$
where $w\in Y_{V(0)}$ is a positive ground state solution to the autonomous problem \eqref{AEP} with $\mu=V(0)=-V_{0}$, whose existence is guaranteed by Theorem \ref{EGS}. Let $t_{\e}>0$ be the unique number such that 
$$
J_{\e}(t_{\e} \Psi_{\e, z}):=\max_{t\geq 0} J_{\e}(t \Psi_{\e, z}). 
$$
Finally, we consider $\Phi_{\e}: M\rightarrow \N_{\e}$ defined by setting
$$
\Phi_{\e}(z):= t_{\e} \Psi_{\e, z}.
$$
\begin{lem}\label{lem3.4}
The functional $\Phi_{\e}$ satisfies the following limit
\begin{equation*}
\lim_{\e\rightarrow 0} J_{\e}(\Phi_{\e}(z))=d_{V(0)}, \quad \mbox{ uniformly in } z\in M.
\end{equation*}
\end{lem}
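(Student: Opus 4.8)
The plan is to show that $\Phi_{\e}(z)=t_{\e}\Psi_{\e,z}$ is a recentering of the ground state $w$ to the point $z/\e$, and that as $\e\to 0$ the cut-off perturbation and the discrepancy between $V_{\e}$ and $V(0)=-V_{0}$ become negligible, uniformly in $z\in M$. First I would perform the change of variables $x\mapsto \frac{\e x-z}{\e}=x-z/\e$ in all the integrals defining $\|\Psi_{\e,z}\|^2_{\e}$ and $\int_{\R^N}G_{\e}(x,\Psi_{\e,z}(x,0))\,dx$. Writing $\hat w_{\e}(x,y):=\eta(|(\e x,y)|)\,w(x,y)$ (so $\Psi_{\e,z}(x+z/\e,y)=\hat w_{\e}(x,y)$), one checks via \eqref{m-ineq}, \eqref{weightedE}, dominated convergence and the fact that $w\in X^{s}(\R^{N+1}_{+})$ with $w\in L^{\infty}$ (Theorem \ref{EGS}, Lemma \ref{moser}) that $\hat w_{\e}\to w$ in $\x$ as $\e\to 0$; crucially, since $\eta$ and $w$ do not depend on $z$, this convergence is uniform in $z$. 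Moreover $\supp(\Psi_{\e,z}(\cdot,0))\subset\{x:|\e x-z|\le\delta\}$, which is contained in $\Lambda_{\e}$ because $M_{\delta}\subset\Lambda$ and $z\in M$; hence on the support of $\Psi_{\e,z}$ one has $g_{\e}(x,t)=f(t)$ and $G_{\e}(x,t)=F(t)$, so the penalization plays no role here.

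Next I would control $t_{\e}$. Since $\Psi_{\e,z}\in X_{\e}^{+}$, Lemma \ref{lem2.3MAW}-$(i)$ gives a unique $t_{\e}>0$ with $t_{\e}\Psi_{\e,z}\in\mathcal N_{\e}$, characterized by
$$
t_{\e}^2\|\Psi_{\e,z}\|^2_{\e}=\int_{\R^N}f(t_{\e}\Psi_{\e,z}(x,0))\,t_{\e}\Psi_{\e,z}(x,0)\,dx .
$$
Using the change of variables, this reads $\|\hat w_{\e}\|^2_{\e,z}=\int_{\R^N}\frac{f(t_{\e}\hat w_{\e}(x,0))}{t_{\e}\hat w_{\e}(x,0)}\hat w_{\e}^2(x,0)\,dx$, where $\|\cdot\|_{\e,z}$ is the translated norm with potential $V(\e x+z)$. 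Because $\hat w_{\e}\to w$ strongly in $\x$ (uniformly in $z$), $V(\e x+z)\to V(z)=-V_{0}$ boundedly on the (fixed, compact-in-$x$) support, and $w\not\equiv 0$, a standard argument using $(f_1)$, $(f_3)$, $(f_4)$ (monotonicity of $t\mapsto f(t)/t$) forces $t_{\e}$ to stay in a fixed compact subset of $(0,\infty)$ and in fact $t_{\e}\to 1$ as $\e\to 0$, uniformly in $z\in M$; the uniformity follows because all the limiting quantities are the same for every $z$ (namely those of $w\in\mathcal M_{-V_{0}}$). One should argue by contradiction: if not, pick $z_{\e}\in M$ and a subsequence along which $t_{\e}$ leaves the compact set, extract (using compactness of $M$) $z_{\e}\to z_{0}\in M$, and derive a contradiction with $w\in\mathcal M_{V(z_{0})}=\mathcal M_{-V_{0}}$.

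Finally I would combine these facts. Write, as in Lemma \ref{lem2.3AM},
$$
J_{\e}(t_{\e}\Psi_{\e,z})=L_{V(0)}(t_{\e}\hat w_{\e})+\frac{t_{\e}^2}{2}\int_{\R^N}\bigl(V(\e x+z)-V(0)\bigr)\hat w_{\e}^2(x,0)\,dx .
$$
The second term tends to $0$ uniformly in $z$ by dominated convergence ($V$ continuous, $|V(\e x+z)|$ bounded on the support, $\hat w_{\e}(\cdot,0)\to w(\cdot,0)$ in $L^2$). For the first term, $t_{\e}\to 1$ and $\hat w_{\e}\to w$ in $\x$ (both uniformly in $z$), so $L_{V(0)}(t_{\e}\hat w_{\e})\to L_{V(0)}(w)=d_{V(0)}$ by continuity of $L_{V(0)}$ on $Y_{V(0)}$. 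Hence $J_{\e}(\Phi_{\e}(z))\to d_{V(0)}$ uniformly in $z\in M$, which is the claim. The main obstacle is bookkeeping the uniformity in $z$: one must be careful that every estimate (the $\x$-convergence of $\hat w_{\e}$, the boundedness and convergence of $t_{\e}$, the vanishing of the potential-difference term) is genuinely independent of $z$; this is ultimately guaranteed by the translation invariance of $\R^{N+1}_{+}$ together with compactness of $M$ and the fact that $V\equiv -V_{0}$ on $M$, so a contradiction-plus-subsequence argument handles the places where uniformity is not immediate.
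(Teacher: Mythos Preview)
Your proposal is correct and follows essentially the same approach as the paper: change of variables to recenter at $z/\e$, observe that the support lies in $\Lambda_{\e}$ so $g_{\e}=f$, show $\hat w_{\e}\to w$ in $\x$, prove $t_{\e}\to 1$ via the Nehari identity and $(f_1)$--$(f_4)$, and conclude by splitting $J_{\e}$ into $L_{V(0)}$ plus a potential-difference term. The only presentational difference is that the paper opens immediately with the contradiction-plus-subsequence framework (assume $|J_{\e_n}(\Phi_{\e_n}(z_n))-d_{V(0)}|\ge\delta_0$ along some $\e_n\to0$, $z_n\in M$) and then runs the whole argument along that fixed sequence, whereas you establish the estimates directly and invoke the subsequence argument only where uniformity in $z$ is needed; the two are equivalent.
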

\begin{proof}
Assume by contradiction that there exist $\delta_{0}>0$, $(z_{n})\subset M$ and $\e_{n}\rightarrow 0$ such that 
\begin{equation}\label{puac}
|J_{\e_{n}}(\Phi_{\e_{n}}(z_{n}))-d_{V(0)}|\geq \delta_{0}.
\end{equation}
Applying the dominated convergence theorem, we know that 
\begin{align}\begin{split}\label{nio3}
&\| \Psi_{\e_{n}, z_{n}} \|^{2}_{\e_{n}}\rightarrow \|w\|^{2}_{Y_{V(0)}}\in (0, \infty),
\end{split}\end{align}
and
\begin{align}\label{nio3F}
\int_{\R^{N}} F(\Psi_{\e_{n}, z_{n}}(x, 0))\, dx\ri \int_{\R^{N}} F(w(x, 0))\, dx.
\end{align}
Note that, for all $n\in \mathbb{N}$ and for all $x'\in B_{\frac{\delta}{\e_{n}}}$, we have $\e_{n}x'\in B_{\delta}$ and thus $\e_{n}x'+z_{n}\in B_{\delta}(z_{n})\subset M_{\delta}\subset \Lambda$.
By using the change of variable $x':=\frac{\e_{n}x-z_{n}}{\e_{n}}$, the definition of $\eta$ and the fact that $G=F$ in $\Lambda\times \R$, we can write
\begin{align}\label{ARKA1}
J_{\e_{n}}(\Phi_{\e_{n}}(z_{n}))&=\frac{t^{2}_{\e_{n}}}{2}\left(\iint_{\R^{N+1}_{+}} y^{1-2s} (|\nabla \Psi_{\e_{n}, z_{n}}|^{2}+m^{2}  \Psi^{2}_{\e_{n}, z_{n}})\, dxdy+\int_{\R^{N}} V_{\e_{n}}(x) \Psi^{2}_{\e_{n}, z_{n}}(x, 0)\, dx\right) \nonumber\\
&\quad-\int_{\R^{N}} G(\Psi_{\e_{n}, z_{n}}(x, 0))\, dx \nonumber\\
&=\frac{t^{2}_{\e_{n}}}{2}\left(\iint_{\R^{N+1}_{+}} y^{1-2s} (|\nabla (\eta(|(\e_{n}x',y)|)w(x', y))|^{2}+m^{2} (\eta(|(\e_{n}x',y)|)w(x', y))^{2}\, dx'dy \right)  \nonumber\\
&\quad+\frac{t^{2}_{\e_{n}}}{2} \int_{\R^{N}} V(\e_{n}x'+z_{n}) (\eta(|(\e_{n}x',0)|)w(x', 0))^{2}\, dx'-\int_{\R^{N}} F(t_{\e_{n}}\eta(|(\e_{n}x',0)|)w(x', 0))\, dx'. 
\end{align}
On the other hand, since $\langle J'_{\e_{n}}(\Phi_{\e_{n}}(z_{n})),\Phi_{\e_{n}}(z_{n})\rangle=0$ and $g=f$ in $\Lambda\times \R$, we have that
\begin{align}\label{ARKA}
&t^{2}_{\e_{n}}\left(\iint_{\R^{N+1}_{+}} y^{1-2s} (|\nabla (\eta(|(\e_{n}x',y)|)w(x', y))|^{2}+m^{2} (\eta(|(\e_{n}x',y)|)w(x', y))^{2}\, dx'dy \right) \nonumber\\
&+t^{2}_{\e_{n}}\int_{\R^{N}} V(\e_{n}x'+z_{n}) (\eta(|(\e_{n}x',0)|)w(x', 0))^{2}\, dx' \nonumber\\
&=\int_{\R^{N}} f(t_{\e_{n}}\eta(|(\e_{n}x',0)|)w(x', 0)) t_{\e_{n}}\eta(|(\e_{n}x',0)|)w(x', 0)\, dx'.
\end{align}
Let us show that $t_{\e_{n}}\ri 1$ as $n\ri \infty$.
If by contradiction $t_{\e_{n}}\ri \infty$, observing that $\eta(|x|)=1$ for $x\in B_{\frac{\delta}{2}}$ and that $B_{\frac{\delta}{2}}\subset B_{\frac{\delta}{2\e_{n}}}$ for all $n$ large enough, it follows from $(f_4)$ that
\begin{align}\label{nioo}
\|\Psi_{\e_{n}, z_{n}}\|_{\e_{n}}^{2} &\geq \int_{B_{\frac{\delta}{2}}} \frac{f(t_{\e_{n}}w(x', 0))}{t_{\e_{n}}w(x', 0)} w^{2}(x', 0)\, dx' \nonumber \\
&\geq  \frac{f(t_{\e_{n}}w(\hat{x}, 0))}{t_{\e_{n}}w(\hat{x}, 0)} \int_{B_{\frac{\delta}{2}}} w^{2}(x', 0)\,dx', 
\end{align}
where $w(\hat{x}, 0):=\min_{x\in \overline{B_{\frac{\delta}{2}}}} w(x, 0)>0$.
Combining  \eqref{nio3} with \eqref{nioo} and using $(f_3)$, we obtain a contradiction.

Hence, $(t_{\e_{n}})$ is bounded in $\R$ and, up to subsequence, we may assume that $t_{\e_{n}}\rightarrow t_{0}$ for some $t_{0}\geq 0$.  
In particular, $t_{0}>0$. In fact, if $t_{0}=0$, we see that \eqref{uNr} and \eqref{ARKA} imply that
$$
r\leq \int_{\R^{N}} f(t_{\e_{n}}\eta(|(\e_{n}x',0)|)w(x', 0)) t_{\e_{n}}\eta(|(\e_{n}x',0)|)w(x', 0)\, dx'.
$$
Using $(f_1)$, $(f_2)$, \eqref{nio3} and the above inequality, we obtain an absurd. Therefore, $t_{0}>0$.
By \eqref{nio3} and \eqref{nio3F}, we can pass to the limit in \eqref{ARKA} as $n\ri \infty$ to see that
\begin{align*}
\|w\|^{2}_{V(0)}=\int_{\R^{N}} \frac{f(t_{0} w(x, 0))}{t_{0}w(x, 0)} w^{2}(x, 0) \, dx.
\end{align*}
Bearing in mind that $w\in \M_{V(0)}$ and using $(f_4)$, we infer that $t_{0}=1$.
Letting $n\rightarrow \infty$ in \eqref{ARKA1} and using $t_{\e_{n}}\rightarrow 1$, we conclude that
$$
\lim_{n\rightarrow \infty} J_{\e_{n}}(\Phi_{\e_{n}}(z_{n}))=L_{V(0)}(w)=d_{V(0)},
$$
which is in contrast with \eqref{puac}.
\end{proof}

\noindent
Let us fix $\rho=\rho(\delta)>0$ satisfying $M_{\delta}\subset B_{\rho}$, and we consider $\varUpsilon: \R^{N}\rightarrow \R^{N}$ given by
\begin{equation*}
\varUpsilon(x):=
\left\{
\begin{array}{ll}
x &\mbox{ if } |x|<\rho, \\
\frac{\rho x}{|x|} &\mbox{ if } |x|\geq \rho.
\end{array}
\right.
\end{equation*}
Then we define the barycenter map $\beta_{\e}: \N_{\e}\rightarrow \R^{N}$ as follows
\begin{align*}
\beta_{\e}(u):=\frac{\displaystyle{\int_{\R^{N}} \varUpsilon(\e x)u^{2}(x, 0) \,dx}}{\displaystyle{\int_{\R^{N}} u^{2}(x, 0) \,dx}}.
\end{align*}

\noindent
\begin{lem}\label{lem3.5N}
The functional $\Phi_{\e}$ satisfies the following limit
\begin{equation}\label{3.3}
\lim_{\e \rightarrow 0} \, \beta_{\e}(\Phi_{\e}(z))=z, \quad \mbox{ uniformly in } z\in M.
\end{equation}
\end{lem}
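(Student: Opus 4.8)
The plan is to proceed by contradiction, following the scheme of Lemma~\ref{lem3.4}. Assuming \eqref{3.3} fails, there are $\delta_{0}>0$, a sequence $(z_{n})\subset M$ and $\e_{n}\ri 0$ with
$$
|\beta_{\e_{n}}(\Phi_{\e_{n}}(z_{n}))-z_{n}|\geq \delta_{0} \quad \mbox{ for all } n\in \mathbb{N}.
$$
The first observation is that $\beta_{\e}$ is homogeneous of degree $0$ in its argument, being a quotient of two quadratic forms, so the mountain pass factor $t_{\e_{n}}$ in $\Phi_{\e_{n}}(z_{n})=t_{\e_{n}}\Psi_{\e_{n},z_{n}}$ drops out and $\beta_{\e_{n}}(\Phi_{\e_{n}}(z_{n}))=\beta_{\e_{n}}(\Psi_{\e_{n},z_{n}})$. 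Performing the same translation $x'=x-z_{n}/\e_{n}$ used in the proof of Lemma~\ref{lem3.4}, and recalling the definitions of $\Psi_{\e,z}$ and $\eta$, one rewrites
$$
\beta_{\e_{n}}(\Phi_{\e_{n}}(z_{n}))-z_{n}=\frac{\displaystyle\int_{\R^{N}} [\varUpsilon(\e_{n}x'+z_{n})-z_{n}]\, \eta^{2}(\e_{n}|x'|)\, w^{2}(x',0)\, dx'}{\displaystyle\int_{\R^{N}} \eta^{2}(\e_{n}|x'|)\, w^{2}(x',0)\, dx'}.
$$

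Next I would pass to the limit in this identity. Since $M\subset M_{\delta}$ and $M_{\delta}$ is a compact subset of the open ball $B_{\rho}$, up to a subsequence $z_{n}\ri z_{0}$ with $|z_{0}|<\rho$, hence $\varUpsilon(z_{0})=z_{0}$. For each fixed $x'$ we have $\e_{n}x'+z_{n}\ri z_{0}$, so by continuity of $\varUpsilon$ the integrand of the numerator tends to $0$ pointwise, while $\eta^{2}(\e_{n}|x'|)\ri 1$; moreover the numerator's integrand is dominated by $2\rho\, w^{2}(x',0)\in L^{1}(\R^{N})$, because $w(\cdot,0)\in \h\subset L^{2}(\R^{N})$ and $\varUpsilon$ takes values in $\overline{B_{\rho}}$. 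By dominated convergence the numerator tends to $0$ and the denominator tends to $|w(\cdot,0)|_{2}^{2}\in(0,\infty)$. Therefore $\beta_{\e_{n}}(\Phi_{\e_{n}}(z_{n}))-z_{n}\ri 0$, contradicting the displayed inequality; since the extracted sequence was arbitrary, this also yields the claimed uniformity in $z\in M$.

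There is no serious obstacle in this argument: the only points requiring a little care are the cancellation of $t_{\e}$ (so that the unknown mountain pass multiplier plays no role), the legitimacy of the dominated convergence step — which rests only on $w(\cdot,0)\in L^{2}(\R^{N})$ together with the uniform boundedness of $\varUpsilon$ and $\eta$ — and the remark that the limit point $z_{0}$ of any sequence in $M$ lies in the open ball $B_{\rho}$, so that $\varUpsilon$ reduces to the identity at $z_{0}$. The change of variables $x\mapsto x-z_{n}/\e_{n}$ is exactly the one already exploited in the proof of Lemma~\ref{lem3.4}.
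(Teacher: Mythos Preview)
Your proof is correct and follows essentially the same route as the paper's: contradiction setup, the change of variables $x'=x-z_{n}/\e_{n}$ producing the same quotient representation, and dominated convergence using $(z_{n})\subset M\subset B_{\rho}$ together with $|\varUpsilon|\leq\rho$. You supply a bit more detail than the paper (the homogeneity remark that kills $t_{\e_{n}}$, the explicit dominating function $2\rho\,w^{2}(\cdot,0)$, and the subsequence extraction for $z_{n}$), but the argument is the same.
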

\begin{proof}
Suppose by contradiction that there exist $\delta_{0}>0$, $(z_{n})\subset M$ and $\e_{n}\rightarrow 0$ such that
\begin{equation}\label{4.4}
|\beta_{\e_{n}}(\Phi_{\e_{n}}(z_{n}))-z_{n}|\geq \delta_{0}.
\end{equation}
Using the definitions of $\Phi_{\e_{n}}(z_{n})$, $\beta_{\e_{n}}$, $\eta$ and the change of variable $x'= \frac{\e_{n} x-z_{n}}{\e_{n}}$, we see that
$$
\beta_{\e_{n}}(\Phi_{\e_{n}}(z_{n}))=z_{n}+\frac{\int_{\mathbb{R}^{N}}[\varUpsilon(\e_{n}x'+z_{n})-z_{n}] (\eta(|(\e_{n}x', 0)|) \omega(x', 0))^{2} \, dx'}{\int_{\mathbb{R}^{N}} (\eta(|(\e_{n}x', 0)|) \omega(x', 0))^{2}\, dx'}.
$$
Taking into account $(z_{n})\subset M\subset B_{\rho}$ and the dominated convergence theorem, we infer that
$$
|\beta_{\e_{n}}(\Phi_{\e_{n}}(z_{n}))-z_{n}|=o_{n}(1)
$$
which contradicts (\ref{4.4}).
\end{proof}

\noindent
The next compactness result will play a fundamental role to prove that the solutions of \eqref{MEP} are also solution to \eqref{EP}.
\begin{prop}\label{prop3.3}
Let $\e_{n}\rightarrow 0$ and $(u_{n})\subset \mathcal{N}_{\e_{n}}$ be such that $J_{\e_{n}}(u_{n})\rightarrow d_{V(0)}$. Then there exists $(\tilde{z}_{n})\subset \R^{N}$ such that $v_{n}(x, y):=u_{n}(x+\tilde{z}_{n}, y)$ has a convergent subsequence in $Y_{V(0)}$. Moreover, up to a subsequence, $z_{n}:=\e_{n} \tilde{z}_{n}\rightarrow z_{0}$ for some $z_{0}\in M$.
\end{prop}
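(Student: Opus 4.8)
The plan is to adapt the concentration-compactness argument of Lemma~\ref{lem2.5AM}: produce translations $\tilde z_n$ preventing the mass of $u_n$ from escaping, extract a weak limit, and then upgrade to strong convergence while locating the translations in $M$. The key difference from Lemma~\ref{lem2.5AM} is that the $u_n$ are only elements of $\N_{\e_n}$, not critical points, so the limit cannot be identified by passing to the limit in an equation; instead we compare with the autonomous functional $L_{-V_0}$ and use the compactness of minimizing sequences on $\M_{-V_0}$ (Lemma~\ref{lem3.3MAW}). Recall that $V(0)=-V_0$ (since $0\in M$), so $Y_{-V_0}=\y$ with $\mu=V(0)$ and $d_{-V_0}=d_{V(0)}$, and that in the multiplicity setting the penalization is built with $V_0$ in place of $V_1$ and $\kappa>\frac{2V_0}{m^{2s}-V_0}$.

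First I would establish the basic facts. Since $\langle J'_{\e_n}(u_n),u_n\rangle=0$, the boundedness computation in the proof of Lemma~\ref{lemma2} (with $V_0$ replacing $V_1$), together with $(g_3)$, \eqref{m-ineq} and \eqref{equivalent}, gives $c\|u_n\|^2_{\e_n}\le J_{\e_n}(u_n)-\frac1\theta\langle J'_{\e_n}(u_n),u_n\rangle=J_{\e_n}(u_n)$ for some $c>0$, so $(u_n)$ — and any translate of it — is bounded in $\x$; arguing as for \eqref{uNr} one also gets $\|u_n\|_{\e_n}\ge r>0$ uniformly. If $\sup_{z\in\R^N}\int_{B_R(z)}u_n^2(x,0)\,dx\to0$ for some $R>0$, then Lemma~\ref{Lions} forces $u_n(\cdot,0)\to0$ in $L^q(\R^N)$ for all $q\in(2,\2)$, whence $(g_1)$, $(g_2)$, $(f_2)$ and $u_n\in\N_{\e_n}$ give $\|u_n\|^2_{\e_n}=\int_{\R^N}g_{\e_n}(x,u_n(x,0))u_n(x,0)\,dx\to0$, contradicting $\|u_n\|_{\e_n}\ge r$. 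Hence there exist $r,\beta>0$ and $(\tilde z_n)\subset\R^N$ with $\liminf_n\int_{B_r(\tilde z_n)}u_n^2(x,0)\,dx\ge\beta$. Putting $v_n(x,y):=u_n(x+\tilde z_n,y)$, up to a subsequence $v_n\rightharpoonup v$ in $\x$, and Theorem~\ref{Sembedding} yields $\int_{B_r}v^2(x,0)\,dx\ge\beta$, so $v\not\equiv0$; moreover $u_n\in\N_{\e_n}\subset X^+_{\e_n}$ (as in the proof of Lemma~\ref{lem2.3MAW}-$(iii)$), so $v_n\in Y^+_{-V_0}$.

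The core of the argument — and the part I expect to require the most care — is to show that $v_n\to v$ strongly in $\y$. Let $s_n>0$ be the unique number (Lemma~\ref{lem2.3AMAW}-$(i)$) with $s_nv_n\in\M_{-V_0}$. Since $u_n\in\N_{\e_n}$, by Lemma~\ref{lem2.3MAW}-$(i)$ we have $J_{\e_n}(u_n)=\max_{t\ge0}J_{\e_n}(tu_n)$; changing variables $x\mapsto x+\tilde z_n$ in $J_{\e_n}(s_nu_n)$, bounding $V(\e_n x+\e_n\tilde z_n)\ge-V_0$ (by $(V'_1)$), $G\le F$ (by $(g_2)$), and using $s_nv_n\in\M_{-V_0}$, one obtains
$$
J_{\e_n}(u_n)\ \ge\ J_{\e_n}(s_nu_n)\ \ge\ L_{-V_0}(s_nv_n)\ \ge\ d_{-V_0}=d_{V(0)}.
$$
Since $J_{\e_n}(u_n)\to d_{V(0)}$, all the terms above converge to $d_{V(0)}$, so $(s_nv_n)\subset\M_{-V_0}$ is a minimizing sequence for $L_{-V_0}$ and Lemma~\ref{lem3.3MAW} gives, along a subsequence, $s_nv_n\to\tilde v$ in $Y_{-V_0}$ with $\tilde v\in\M_{-V_0}$. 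As $\|v_n\|_{Y_{-V_0}}=\|u_n\|_{Y_{-V_0}}\le\|u_n\|_{\e_n}$ is bounded, $s_n\ge\tau/\|v_n\|_{Y_{-V_0}}>0$ by Lemma~\ref{lem2.3AMAW}-$(ii)$, and $s_n\to\infty$ would force $v_n\to0$ in $Y_{-V_0}$ (impossible since $v\not\equiv0$), we get $s_n\to s_0\in(0,\infty)$, hence $v_n\to v=\tilde v/s_0$ in $\y$. Finally, writing $J_{\e_n}(s_nu_n)-L_{-V_0}(s_nv_n)$ as the sum of the two nonnegative quantities $\frac{s_n^2}{2}\int_{\R^N}(V(\e_n x+\e_n\tilde z_n)+V_0)v_n^2(x,0)\,dx$ and $\int_{\R^N}(F(s_nv_n(x,0))-G(\e_n x+\e_n\tilde z_n,s_nv_n(x,0)))\,dx$, both of which must tend to $0$, we deduce in particular $\int_{\R^N}(V(\e_n x+\e_n\tilde z_n)+V_0)v_n^2(x,0)\,dx\to0$. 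The delicate points here are keeping the rescalings $s_n$ under control and correctly handling these two defect integrals.

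It remains to locate $z_n:=\e_n\tilde z_n$. First, $(\e_n\tilde z_n)$ is bounded: if along a subsequence ${\rm dist}(\e_n\tilde z_n,\overline\Lambda)\ge\delta>0$, then for any fixed $R_0>0$ and $n$ large one has $\e_n x+\e_n\tilde z_n\notin\Lambda$ for $|x|\le R_0$, so there $g(\e_n x+\e_n\tilde z_n,t)=\tilde f(t)$ and $\tilde f(t)t\le\frac{V_0}{\kappa}t^2$ (by $(g_3)$-$(ii)$); rewriting $\langle J'_{\e_n}(u_n),u_n\rangle=0$ in the $v_n$ variable, bounding $V(\e_n x+\e_n\tilde z_n)\ge-V_0$, splitting the nonlinear term over $B_{R_0}$ and $B_{R_0}^c$, letting $n\to\infty$ (using the strong convergence just obtained, so $\int_{B_{R_0}^c}f(v_n(\cdot,0))v_n(\cdot,0)\to\int_{B_{R_0}^c}f(v(\cdot,0))v(\cdot,0)$), and then $R_0\to\infty$, yields $\|v\|^2_{\x}-V_0|v(\cdot,0)|_2^2\le\frac{V_0}{\kappa}|v(\cdot,0)|_2^2$, which with \eqref{m-ineq} and $\kappa>\frac{V_0}{m^{2s}-V_0}$ forces $v\equiv0$, a contradiction. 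Hence ${\rm dist}(\e_n\tilde z_n,\overline\Lambda)\to0$, so $(\e_n\tilde z_n)$ is bounded and $z_n\to z_0\in\overline\Lambda$ along a subsequence. Finally, for any compact $K\subset\R^N$, $V(\e_n x+\e_n\tilde z_n)\to V(z_0)$ uniformly on $K$ and $v_n(\cdot,0)\to v(\cdot,0)$ in $L^2(K)$, so the last limit of the previous step gives $(V(z_0)+V_0)\int_K v^2(x,0)\,dx=0$; choosing $K$ with $\int_K v^2(x,0)\,dx>0$ shows $V(z_0)=-V_0$, and then $(V'_2)$ (namely $-V_0<\min_{\partial\Lambda}V$) forces $z_0\notin\partial\Lambda$, hence $z_0\in\Lambda$ and therefore $z_0\in M$, which completes the proof.
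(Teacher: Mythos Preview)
Your proof is correct and follows essentially the same route as the paper: project onto the autonomous Nehari manifold $\M_{-V_0}$ via $s_n$, squeeze $d_{V(0)}\le L_{-V_0}(s_nv_n)\le J_{\e_n}(s_nu_n)\le J_{\e_n}(u_n)\to d_{V(0)}$, invoke Lemma~\ref{lem3.3MAW} for strong convergence of $(s_nv_n)$, and then control $s_n\to s_0>0$ to get $v_n\to v$ in $Y_{-V_0}$.

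The one place where you diverge from the paper is the identification $V(z_0)=-V_0$. The paper argues by contradiction: if $V(z_0)>-V_0$ then, using Fatou's lemma and $\tilde v\in\M_{-V_0}$, one obtains $d_{V(0)}=L_{-V_0}(\tilde v)<L_{V(z_0)}(\tilde v)\le\liminf_n J_{\e_n}(t_nu_n)\le d_{V(0)}$. Your route instead reads off, from the squeeze, that the nonnegative defect $\frac{s_n^2}{2}\int_{\R^N}(V(\e_n x+z_n)+V_0)v_n^2(x,0)\,dx\to0$, and then localizes on compacts (using uniform convergence of $V(\e_n\cdot+z_n)\to V(z_0)$ and $v_n(\cdot,0)\to v(\cdot,0)$ in $L^2_{\rm loc}$) to force $(V(z_0)+V_0)\int_K v^2(x,0)\,dx=0$. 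This is a legitimate and arguably cleaner alternative, since it avoids introducing the auxiliary functional $L_{V(z_0)}$ and the Fatou step; the paper's approach, on the other hand, is a bit more robust in situations where one only has weak (rather than strong) convergence of $(v_n)$ at that stage. Both arguments are standard, and your boundedness-of-$(z_n)$ step (via ${\rm dist}(z_n,\overline\Lambda)\to0$, fixed $R_0$, then $R_0\to\infty$) is equivalent to the paper's version (which takes $R$ with $\Lambda\subset B_R$ and splits over $B_{R/\e_n}$).
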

\begin{proof}
Since $\langle J'_{\e_{n}}(u_{n}), u_{n} \rangle=0$ and $J_{\e_{n}}(u_{n})\rightarrow d_{V(0)}$, it is easy to see that $\|u_{n}\|_{\e_{n}}\leq C$ for all $n\in \mathbb{N}$. 
Let us observe that $\|u_{n}\|_{\e_{n}}\nrightarrow 0$ since $d_{V(0)}>0$. 
Therefore, arguing as in the proof of Lemma \ref{Lions2}, 
we can find a sequence $(\tilde{z}_{n})\subset \mathbb{R}^{N}$ and constants $R, \beta>0$ such that
\begin{equation*}
\liminf_{n\rightarrow \infty}\int_{B_{R}(\tilde{z}_{n})} u^{2}_{n}(x, 0) \,dx\geq \beta.
\end{equation*}
Set $v_{n}(x, y):=u_{n}(x+ \tilde{z}_{n}, y)$. Then, $(v_{n})$ is bounded in $Y_{V(0)}$, and we may assume that 
\begin{equation*}
v_{n}\rightharpoonup v  \quad \mbox{ in } Y_{V(0)},  
\end{equation*}
for some $v\not\equiv 0$.
Let $t_{n}>0$ be such that $\tilde{v}_{n}:=t_{n}v_{n} \in \mathcal{M}_{V(0)}$, and set $z_{n}:=\e_{n}\tilde{z}_{n}$.  
Then, using $(g_2)$ and $u_{n}\in \mathcal{N}_{\e_{n}}$, we see that
\begin{align*}
d_{V(0)}\leq L_{V(0)}(\tilde{v}_{n})\leq J_{\e_{n}}(t_{n}u_{n}) \leq J_{\e_{n}}(u_{n})= d_{V(0)}+ o_{n}(1),
\end{align*}
which gives 
\begin{align}\label{3.21CCA}
L_{V(0)}(\tilde{v}_{n})\rightarrow d_{V(0)} \,\mbox{ and } \,(\tilde{v}_{n})\subset \mathcal{M}_{V(0)}. 
\end{align}
In particular, \eqref{3.21CCA} yields that $(\tilde{v}_{n})$ is bounded in $Y_{V(0)}$, so we may assume that $\tilde{v}_{n}\rightharpoonup \tilde{v}$ in $Y_{V(0)}$. 
Since $v_{n}\nrightarrow 0$ in $Y_{V(0)}$ and $(\tilde{v}_{n})$ is bounded in $Y_{V(0)}$, we deduce that 
$(t_{n})$ is bounded in $\R$ and, up to a subsequence, we may assume that $t_{n}\rightarrow t_{0}\geq 0$. If $t_{0}=0$, from the boundedness of $(v_{n})$, we get $\|\tilde{v}_{n}\|_{Y_{V(0)}}= t_{n}\|v_{n}\|_{Y_{V(0)}} \rightarrow 0$, which yields $L_{V(0)}(\tilde{v}_{n})\rightarrow 0$ in contrast with the fact $d_{V(0)}>0$. Then, $t_{0}>0$. From the uniqueness of the weak limit we have $\tilde{v}=t_{0} v$ and $\tilde{v}\not\equiv 0$. Using Lemma \ref{lem3.3MAW} we deduce that 
\begin{align}\label{3.22CCA}
\tilde{v}_{n}\rightarrow \tilde{v} \quad \mbox{ in } Y_{V(0)},
\end{align}
which implies that $v_{n}\rightarrow v$ in $Y_{V(0)}$ and
\begin{equation*}
L_{V(0)}(\tilde{v})=d_{V(0)} \, \mbox{ and } \, \langle L'_{V(0)}(\tilde{v}), \tilde{v}\rangle=0.
\end{equation*}
Now, we show that $(z_{n})$ admits a subsequence, still denoted by $(z_{n})$, such that $z_{n}\rightarrow z_{0}$ for some $z_{0}\in M$. 
Assume by contradiction that $(z_{n})$ is not bounded in $\R^{N}$. Then there exists a subsequence, still denoted by $(z_{n})$, such that $|z_{n}|\rightarrow \infty$ as $n\ri \infty$. 
Take $R>0$ such that $\Lambda \subset B_{R}$, and assume that $|z_{n}|>2R$ for $n$ large. Thus, for any $x\in B_{R/\e_{n}}$, we get $|\e_{n} x+z_{n}|\geq |z_{n}|-|\e_{n} x|>R$ for all $n$ large enough.
Hence, by using the definition of $g$, we have
\begin{align*}
\|v_{n}\|_{V(0)}^{2}&\leq \int_{\R^{N}} g(\e_{n}x+z_{n}, v_{n}(x, 0))v_{n}(x, 0) \, dx \\
&\leq\int_{B_{R/\e_{n}}}  \tilde{f}(v_{n}(x, 0)) v_{n}(x, 0) \,dx+\int_{\mathbb{R}^{N}\setminus B_{R/\e_{n}}} f(v_{n}(x, 0)) v_{n}(x, 0) \, dx.
\end{align*}
Since $v_{n}\rightarrow v$ in $Y_{V(0)}$ as $n\ri \infty$, it follows from the dominated convergence theorem  that 
\begin{align*}
\int_{\mathbb{R}^{N}\setminus B_{R/\e_{n}}} f(v_{n}(x, 0)) v_{n}(x, 0) \, dx=o_{n}(1).
\end{align*}
Therefore,
\begin{align*}
\|v_{n}\|^{2}_{V(0)}\leq \frac{V_{0}}{\kappa} \int_{B_{R/\e_{n}}}  v^{2}_{n}(x, 0) \,dx+o_{n}(1),
\end{align*}
which combined with \eqref{m-ineq} and $V(0)=-V_{0}$ yields
$$
\left(1-\frac{V_{0}}{m^{2s}}\left(1+\frac{1}{\kappa}\right)\right)\|v_{n}\|^{2}_{\x}\leq o_{n}(1)
$$
and this gives a contradiction due to $v_{n}\rightarrow v$ in $\x$, $v\not\equiv 0$ and $\kappa>\frac{V_{0}}{m^{2s}-V_{0}}$.
Thus, $(z_{n})$ is bounded in $\R^{N}$ and, up to a subsequence, we may assume that $z_{n}\rightarrow z_{0}$. 
If $z_{0}\notin \overline{\Lambda}$, then there exists $r>0$ such that $z_{n}\in B_{r/2}(z_{0})\subset \overline{\Lambda}^{c}$ for any $n$ large enough. Reasoning as before, we reach $v_{n}\ri 0$ in $\x$, that is a contradiction. 
Hence, $z_{0}\in \overline{\Lambda}$.
Now, we show that $V(z_{0})=V(0)$. Assume by contradiction that $V(z_{0})>V(0)$.
Taking into account \eqref{3.22CCA}, $V(z_{0})>V(0)=-V_{0}$, Fatou's lemma and the invariance of $\mathbb{R}^{N}$ by translations, we have
\begin{align*}
d_{V(0)}=L_{V(0)}(\tilde{v})&<L_{V(z_{0})}(\tilde{v}) \\
&=\frac{1}{2}\Bigl(\iint_{\R^{N+1}_{+}} y^{1-2s}(|\nabla\tilde{v}|^{2}+m^{2}\tilde{v}^{2})\, dx dy +\frac{1}{2}\int_{\R^{N}}(V(z_{0})+V_{0}) \tilde{v}^{2}_{n}(x, 0)\,dx-\frac{V_{0}}{2} \int_{\R^{N}} \tilde{v}^{2}_{n}(x, 0)\,dx \Bigr) \\
&\quad-\int_{\mathbb{R}^{N}} F(\tilde{v}_{n}(x, 0))\,dx\\
&\leq \liminf_{n\rightarrow \infty} \Biggl[ \frac{1}{2}\left(\iint_{\R^{N+1}_{+}} y^{1-2s}(|\nabla\tilde{v}_{n}|^{2}+m^{2}\tilde{v}_{n}^{2})\, dx dy+\frac{1}{2}\int_{\R^{N}}V(\e_{n} x+z_{n}) \tilde{v}^{2}_{n}(x, 0)\,dx \right)\\
&\quad- \int_{\mathbb{R}^{N}} F(\tilde{v}_{n}(x, 0))\,dx \Biggr] \nonumber\\
&=\liminf_{n\rightarrow \infty} \Biggl[ \frac{t_{n}^{2}}{2}\left(\iint_{\R^{N+1}_{+}} y^{1-2s}(|\nabla u_{n}|^{2}+m^{2}u_{n}^{2})\, dx dy+\frac{t_{n}^{2}}{2}\int_{\R^{N}}V(\e_{n} x) u^{2}_{n}(x, 0)\,dx \right)\\
&\quad- \int_{\mathbb{R}^{N}} F(t_{n}u_{n}(x, 0))\,dx \Biggr] \nonumber\\
&\leq \liminf_{n\rightarrow \infty} J_{\e_{n}}(t_{n}u_{n}) \leq \liminf_{n\rightarrow \infty} J_{\e_{n}} (u_{n})=d_{V(0)}
\end{align*}
which gives a contradiction. Therefore, in view of $(V'_2)$, we can conclude that $z_{0}\in M$.
\end{proof}

\noindent
Now, we consider the following subset of $\N_{\e}$:
$$
\widetilde{\N}_{\e}:=\left \{u\in \N_{\e}: J_{\e}(u)\leq d_{V(0)}+h(\e)\right\},
$$
where $h(\e):=\sup_{z\in M}|J_{\e}(\Phi_{\e}(z))-d_{V(0)}|\rightarrow 0$ as $\e \rightarrow 0$ as a consequence of Lemma \ref{lem3.4}. By the definition of $h(\e)$, we know that, for all $z\in M$ and $\e>0$, $\Phi_{\e}(z)\in \widetilde{\N}_{\e}$ and $\widetilde{\N}_{\e}\neq \emptyset$. 
We present below an interesting relation between $\widetilde{\mathcal{N}}_{\e}$ and the barycenter map $\beta_{\e}$.
\begin{lem}\label{lem3.5}
For any $\delta>0$, there holds that
$$
\lim_{\e \rightarrow 0} \sup_{u\in \widetilde{\mathcal{N}}_{\e}} {\rm dist}(\beta_{\e}(u), M_{\delta})=0.
$$
\end{lem}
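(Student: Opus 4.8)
The plan is to argue by contradiction, using the strong compactness supplied by Proposition~\ref{prop3.3} and then passing to the limit in the barycenter quotient.

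Suppose the statement fails. Then there are $\delta_{0}>0$, a sequence $\e_{n}\ri 0$ and functions $u_{n}\in \widetilde{\mathcal{N}}_{\e_{n}}$ with ${\rm dist}(\beta_{\e_{n}}(u_{n}), M_{\delta})\geq \delta_{0}$ for all $n$. First I would reduce to the hypotheses of Proposition~\ref{prop3.3}: since $u_{n}\in \mathcal{N}_{\e_{n}}$ we have $J_{\e_{n}}(u_{n})\geq c_{\e_{n}}$, while $u_{n}\in \widetilde{\mathcal{N}}_{\e_{n}}$ gives $J_{\e_{n}}(u_{n})\leq d_{V(0)}+h(\e_{n})$ with $h(\e_{n})\ri 0$; combined with $c_{\e_{n}}\ri d_{V(0)}$ (Remark~\ref{REMARKHEZOU}, valid under $(V'_{1})$-$(V'_{2})$) this forces $J_{\e_{n}}(u_{n})\ri d_{V(0)}$. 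Hence Proposition~\ref{prop3.3} applies and yields $(\tilde z_{n})\subset\R^{N}$ such that, up to a subsequence, $v_{n}(x, y):=u_{n}(x+\tilde z_{n}, y)\ri v$ in $Y_{V(0)}$ for some $v\neq 0$, and $z_{n}:=\e_{n}\tilde z_{n}\ri z_{0}\in M$.

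Next I would rewrite the barycenter by the change of variable $x\mapsto x+\tilde z_{n}$ in both integrals:
$$
\beta_{\e_{n}}(u_{n})=\frac{\displaystyle\int_{\R^{N}}\varUpsilon(\e_{n}x+z_{n})\,v_{n}^{2}(x, 0)\, dx}{\displaystyle\int_{\R^{N}}v_{n}^{2}(x, 0)\, dx}.
$$
Since the trace operator is continuous from $Y_{V(0)}=\x$ into $\h\hookrightarrow L^{2}(\R^{N})$, the convergence $v_{n}\ri v$ in $Y_{V(0)}$ implies $v_{n}(\cdot, 0)\ri v(\cdot, 0)$ in $L^{2}(\R^{N})$; in particular the denominators converge to $\int_{\R^{N}}v^{2}(\cdot, 0)\in(0,\infty)$ and the masses $v_{n}^{2}(\cdot, 0)$ are tight. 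Fixing $S>0$, the continuity of $\varUpsilon$ and $\e_{n}x+z_{n}\ri z_{0}$ uniformly for $|x|\leq S$ give $\int_{B_{S}}\varUpsilon(\e_{n}x+z_{n})v_{n}^{2}(x, 0)\, dx\ri \varUpsilon(z_{0})\int_{B_{S}}v^{2}(x, 0)\, dx$, while $|\int_{B_{S}^{c}}\varUpsilon(\e_{n}x+z_{n})v_{n}^{2}(x, 0)\, dx|\leq \rho\int_{B_{S}^{c}}v_{n}^{2}(x, 0)\, dx$ is small uniformly in $n$ once $S$ is large. Letting $S\ri\infty$ we obtain $\beta_{\e_{n}}(u_{n})\ri \varUpsilon(z_{0})$, and since $z_{0}\in M\subset M_{\delta}\subset B_{\rho}$ we have $\varUpsilon(z_{0})=z_{0}$. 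Thus $\beta_{\e_{n}}(u_{n})\ri z_{0}\in M_{\delta}$, contradicting ${\rm dist}(\beta_{\e_{n}}(u_{n}), M_{\delta})\geq\delta_{0}$, and the lemma follows.

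The only delicate point is ruling out escape of mass of $u_{n}^{2}(\cdot, 0)$ to infinity when taking the limit of the quotient; this is precisely what the strong $Y_{V(0)}$-convergence from Proposition~\ref{prop3.3} (which itself rests on the concentration--compactness Lemmas~\ref{Lions} and~\ref{Lions2} and the autonomous compactness Lemma~\ref{lem3.3MAW}) is designed to guarantee. Everything else is bookkeeping: establishing $J_{\e_{n}}(u_{n})\ri d_{V(0)}$ so that Proposition~\ref{prop3.3} can be invoked, and using that $\varUpsilon$ is the identity on $B_{\rho}\supset M_{\delta}$.
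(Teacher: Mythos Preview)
Your proof is correct and follows essentially the same approach as the paper: both establish $J_{\e_{n}}(u_{n})\ri d_{V(0)}$, invoke Proposition~\ref{prop3.3} to obtain strong convergence of the translated sequence in $Y_{V(0)}$ with $\e_{n}\tilde z_{n}\ri z_{0}\in M$, change variables in the barycenter quotient, and use the boundedness of $\varUpsilon$ together with the $L^{2}$-convergence of the traces to pass to the limit. The only cosmetic differences are that the paper argues directly (writing $\beta_{\e_{n}}(u_{n})=z_{n}+o_{n}(1)$ after subtracting $z_{n}$ inside the numerator) rather than by contradiction, and that it obtains the lower bound $d_{V(0)}\leq c_{\e_{n}}$ from the pointwise inequality $L_{V(0)}(tu_{n})\leq J_{\e_{n}}(tu_{n})$ rather than citing Remark~\ref{REMARKHEZOU}; your more explicit tightness splitting into $B_{S}$ and $B_{S}^{c}$ is equivalent to the paper's one-line dominated-convergence step.
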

\begin{proof}
Let $\e_{n}\rightarrow 0$ as $n\rightarrow \infty$. By definition, there exists $(u_{n})\subset \widetilde{\mathcal{N}}_{\e_{n}}$ such that
$$
\sup_{u\in \widetilde{\mathcal{N}}_{\e_{n}}} \inf_{z\in M_{\delta}}|\beta_{\e_{n}}(u)-z|=\inf_{z\in M_{\delta}}|\beta_{\e_{n}}(u_{n})-z|+o_{n}(1).
$$
Therefore, it suffices to prove that there exists $(z_{n})\subset M_{\delta}$ such that
\begin{equation}\label{3.13DCDS}
\lim_{n\rightarrow \infty} |\beta_{\e_{n}}(u_{n})-z_{n}|=0.
\end{equation}
Observing that $L_{V(0)}(t u_{n})\leq J_{\e_{n}}(t u_{n})$ for all $t\geq 0$ and $(u_{n})\subset  \widetilde{\mathcal{N}}_{\e_{n}}\subset  \mathcal{N}_{\e_{n}}$, we deduce that
$$
d_{V(0)}\leq c_{\e_{n}}\leq  J_{\e_{n}}(u_{n})\leq d_{V(0)}+h(\e_{n}),
$$
which implies that $J_{\e_{n}}(u_{n})\rightarrow d_{V(0)}$. Using Proposition \ref{prop3.3}, there exists $(\tilde{z}_{n})\subset \mathbb{R}^{N}$ such that $z_{n}=\e_{n}\tilde{z}_{n}\in M_{\delta}$ for $n$ sufficiently large. Thus
$$
\beta_{\e_{n}}(u_{n})=z_{n}+\frac{\int_{\mathbb{R}^{N}}[\varUpsilon(\e_{n}x+z_{n})-z_{n}] v^{2}_{n}(x, 0) \, dx}{\int_{\mathbb{R}^{N}} v^{2}_{n}(x, 0) \, dx}=z_{n}+o_{n}(1)
$$
since $v_{n}(x, y):=u_{n}(\cdot+\tilde{z}_{n}, y)$ strongly converges in $Y_{V(0)}$  and $\e_{n}x+z_{n}\ri z_{0}\in M_{\delta}$.  
Therefore, (\ref{3.13DCDS}) holds true.
\end{proof}

We end this section by proving a multiplicity result for \eqref{MEP}. 
Since $\mathbb{S}^{+}_{\e}$ is not a completed metric space, we make use of the abstract category result in \cite{SW}; see also \cite{Aampa, FJ}.
\begin{thm}\label{multiple}
For any $\delta>0$ such that $M_{\delta}\subset \Lambda$, there exists $\tilde{\e}_{\delta}>0$ such that, for any $\e\in (0, \tilde{\e}_{\delta})$, problem \eqref{MEP} has at least $cat_{M_{\delta}}(M)$ positive solutions.
\end{thm}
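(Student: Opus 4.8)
The strategy is the by-now classical Benci--Cerami argument, adapted to the extended functional $J_\e$ on the incomplete manifold $\mathbb{S}_\e^+$ using the Szulkin--Weth machinery developed above. The key players are the maps $\Phi_\e\colon M\to\mathcal N_\e$ and $\beta_\e\colon\mathcal N_\e\to\R^N$, together with $\widetilde{\mathcal N}_\e$, and the abstract Ljusternik--Schnirelman estimate: if $\mathcal F\colon\overline{\mathbb S}_\e^+\to\R\cup\{\infty\}$ is the functional built from $\psi_\e$ (equal to $\psi_\e$ on $\mathbb S_\e^+$ and $+\infty$ on the boundary), then the number of critical points of $\psi_\e$ in a sublevel set is bounded below by the category of that sublevel set in $\mathbb S_\e^+$. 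By Corollary~\ref{cor2.1MAW}, $\psi_\e$ satisfies the $(PS)_c$ condition for every $c$, which is exactly the compactness needed to run the deformation argument underlying the category inequality.

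\textbf{Step 1: a category inequality for a sublevel set of $\psi_\e$.} Consider the sublevel set
$$
\psi_\e^{d_{V(0)}+h(\e)}:=\{u\in\mathbb S_\e^+:\psi_\e(u)\le d_{V(0)}+h(\e)\},
$$
which, via the homeomorphism $m_\e$ of Lemma~\ref{lem2.3MAW}-$(iii)$, corresponds to $\widetilde{\mathcal N}_\e$. I would first show that $\Phi_\e$ (composed with $m_\e^{-1}$) lands in this sublevel set for $\e$ small: this is immediate from Lemma~\ref{lem3.4}, since $J_\e(\Phi_\e(z))\le d_{V(0)}+h(\e)$ for all $z\in M$. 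Then define $\gamma_\e:=\beta_\e\circ\Phi_\e\colon M\to\R^N$; by Lemma~\ref{lem3.5N}, $\gamma_\e$ is homotopic to the inclusion $M\hookrightarrow M_\delta$ for $\e$ small (the straight-line homotopy $(t,z)\mapsto(1-t)\gamma_\e(z)+tz$ stays in $M_\delta$ because $|\gamma_\e(z)-z|\to0$ uniformly). Meanwhile Lemma~\ref{lem3.5} gives, for $\e$ small, a map $\beta_\e\colon\widetilde{\mathcal N}_\e\to M_\delta$ (after composing with a retraction, or simply noting $\mathrm{dist}(\beta_\e(u),M_\delta)\to0$ uniformly so that $\beta_\e$ can be pushed into $M_\delta$). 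The composition $M\xrightarrow{\Phi_\e}\widetilde{\mathcal N}_\e\xrightarrow{\beta_\e}M_\delta$ being homotopic to the inclusion, a standard category argument (Benci--Cerami lemma: if $\mathrm{id}_M$ factors through a space $X$ up to homotopy, then $cat_X(X)\ge cat_{M_\delta}(M)$) yields
$$
cat_{\widetilde{\mathcal N}_\e}(\widetilde{\mathcal N}_\e)\ge cat_{M_\delta}(M).
$$

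\textbf{Step 2: extracting critical points.} Pull everything back to $\mathbb S_\e^+$ through $m_\e^{-1}$: the set $\widetilde{\mathcal N}_\e$ corresponds to the $\psi_\e$-sublevel set above, which therefore also has category at least $cat_{M_\delta}(M)$ in $\mathbb S_\e^+$. Since $\psi_\e$ is bounded below on this sublevel set, $C^1$ on the (incomplete) manifold $\mathbb S_\e^+$, and satisfies $(PS)_c$ at all levels $c\le d_{V(0)}+h(\e)$ by Corollary~\ref{cor2.1MAW}, I invoke the Ljusternik--Schnirelman theorem in the form of \cite{SW} (Corollary 28 there) to conclude that $\psi_\e$ has at least $cat_{M_\delta}(M)$ critical points in this sublevel set. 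Here one must be careful about the incompleteness: the point of working with $\mathcal F$ and Lemma~\ref{lem2.3MAW}-$(iv)$ (which says $\psi_\e\to\infty$ as one approaches $\partial\mathbb S_\e^+$) is precisely that sublevel sets stay uniformly away from the boundary, so the relevant deformation lemma goes through. By Proposition~\ref{prop2.1MAW}-$(d)$, each critical point $u$ of $\psi_\e$ gives a nontrivial critical point $m_\e(u)$ of $J_\e$, i.e.\ a weak solution of the modified problem \eqref{MEP}, with $J_\e(m_\e(u))=\psi_\e(u)\le d_{V(0)}+h(\e)$. As in Lemma~\ref{AM2.8}, testing with the negative part shows these solutions are $\ge0$, and then Proposition~\ref{PROPFF}-$(ii)$ (weak Harnack) gives positivity; thus \eqref{MEP} has at least $cat_{M_\delta}(M)$ positive solutions for all $\e\in(0,\tilde\e_\delta)$.

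\textbf{The main obstacle.} The analytic heart is not the category count (which is formal once the three maps are in place) but ensuring the compactness hypothesis is genuinely available at the right energy level, and that the homotopy $M\to\widetilde{\mathcal N}_\e\to M_\delta$ is well defined uniformly in $\e$. Concretely, I expect the delicate point to be Step~1's claim that $\beta_\e$ maps $\widetilde{\mathcal N}_\e$ into (a set retracting onto) $M_\delta$: this rests on Lemma~\ref{lem3.5}, whose proof in turn relies on the nontrivial Proposition~\ref{prop3.3} — a concentration-compactness statement for sequences $(u_n)\subset\mathcal N_{\e_n}$ with $J_{\e_n}(u_n)\to d_{V(0)}$ — and this is where the lack of scaling invariance of $(-\Delta+m^2)^s$ makes the estimates genuinely harder than in the fractional-Laplacian case. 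All of those ingredients are, however, already established in the excerpt, so the proof of Theorem~\ref{multiple} itself reduces to assembling them via the abstract category theorem of \cite{SW}, exactly as in \cite{Aampa, FJ}.
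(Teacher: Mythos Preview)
Your proposal is correct and follows essentially the same route as the paper: you assemble Lemmas \ref{lem3.4}, \ref{lem2.3MAW}-$(iii)$, \ref{lem3.5N}, \ref{lem3.5}, Corollary \ref{cor2.1MAW} and Proposition \ref{prop2.1MAW}-$(d)$ into the Benci--Cerami diagram $M\to \widetilde{\mathcal N}_\e\to M_\delta$ and then invoke the abstract category result from \cite{SW}. The only cosmetic differences are that the paper phrases the category estimate as $cat_{\alpha_\e(M)}\alpha_\e(M)\ge cat_{M_\delta}(M)$ (with $\alpha_\e=m_\e^{-1}\circ\Phi_\e$) and cites Theorem~27 of \cite{SW} rather than Corollary~28, and it leaves the positivity verification implicit; your explicit treatment of the latter via $\langle J_\e'(u),u^-\rangle=0$ and Proposition~\ref{PROPFF}-$(ii)$ is a welcome addition.
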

\begin{proof}
For any $\e>0$, we consider the map $\alpha_{\e} : M \rightarrow \mathbb{S}_{\e}^{+}$ defined as $\alpha_{\e}(z):= m_{\e}^{-1}(\Phi_{\e}(z))$. 
Using Lemma \ref{lem3.4}, we see that
\begin{equation}\label{FJSMAW}
\lim_{\e \rightarrow 0} \psi_{\e}(\alpha_{\e}(z)) = \lim_{\e \rightarrow 0} J_{\e}(\Phi_{\e}(z))= d_{V(0)}, \quad \mbox{ uniformly in } z\in M. 
\end{equation}  
Set
$$
\widetilde{\mathcal{S}}^{+}_{\e}:=\{ w\in \mathbb{S}_{\e}^{+} : \psi_{\e}(w) \leq d_{V(0)} + h(\e)\}, 
$$
where $h(\e):=\sup_{z\in M}|\psi_{\e}(\alpha_{\e}(z))-d_{V(0)}|$.
It follows from \eqref{FJSMAW} that $h(\e)\rightarrow 0$ as $\e\rightarrow 0$. Then, $\widetilde{\mathcal{S}}^{+}_{\e}\neq \emptyset$ for all $\e>0$.
From the above considerations and in the light of Lemma \ref{lem3.4}, Lemma \ref{lem2.3MAW}-$(iii)$, Lemma \ref{lem3.5N} and Lemma \ref{lem3.5}, we can find $\bar{\e}= \bar{\e}_{\delta}>0$ such that the diagram of continuous applications bellow is well-defined for $\e \in (0, \bar{\e})$
\begin{equation*}
M\stackrel{\Phi_{\e}}{\rightarrow} \Phi_{\e}(M) \stackrel{m_{\e}^{-1}}{\rightarrow} \alpha_{\e}(M)\stackrel{m_{\e}}{\rightarrow} \Phi_{\e}(M) \stackrel{\beta_{\e}}{\rightarrow} M_{\delta}.
\end{equation*}    
Thanks to Lemma \ref{lem3.5N}, and decreasing $\bar{\e}$ if necessary, we see that $\beta_{\e}(\Phi_{\e}(z))= z+ \theta(\e, z)$ for all $z\in M$, for some function $\theta(\e, z)$ satisfying $|\theta(\e, z)|<\frac{\delta}{2}$ uniformly in $z\in M$ and for all $\e \in (0, \bar{\e})$. Define $H(t, z):= z+ (1-t)\theta(\e, z)$ for $(t, z)\in [0, 1]\times M$. Then $H: [0,1]\times M\rightarrow M_{\delta}$ is continuous. Clearly,  $H(0, z)=\beta_{\e}(\Phi_{\e}(z))$ and $H(1,z)=z$ for all $z\in M$. Consequently, $H(t, z)$ is a homotopy between $\beta_{\e} \circ \Phi_{\e} = (\beta_{\e} \circ m_{\e}) \circ \alpha_{\e}$ and the inclusion map $id: M \rightarrow M_{\delta}$. This fact yields 
\begin{equation}\label{catMAW}
cat_{\alpha_{\e}(M)} \alpha_{\e}(M)\geq cat_{M_{\delta}}(M).
\end{equation}
Applying Corollary \ref{cor2.1MAW}, Lemma \ref{lem2.3AM} (see also Remark \ref{REMARKHEZOU}), and Theorem $27$ in \cite{SW} with $c= c_{\e}\leq d_{V(0)}+h(\e) =d$ and $K= \alpha_{\e}(M)$, we obtain that $\psi_{\e}$ has at least $cat_{\alpha_{\e}(M)} \alpha_{\e}(M)$ critical points on $\widetilde{\mathcal{S}}^{+}_{\e}$.
Taking into account Proposition \ref{prop2.1MAW}-$(d)$ and \eqref{catMAW}, we infer that $J_{\e}$ admits at least $cat_{M_{\delta}}(M)$ critical points in $\widetilde{\mathcal{N}}_{\e}$.    
\end{proof}

\noindent
We also have the following fundamental result.
\begin{lem}\label{moser2} 
Let $\e_{n}\rightarrow 0$ and $(u_{n})\subset \widetilde{\mathcal{N}}_{\e_{n}}$ be a sequence of solutions to \eqref{MEP}. 
Then, $v_{n}(x, y):=u_{n}(x+\tilde{z}_{n}, y)$ satisfies $v_{n}(\cdot, 0)\in L^{\infty}(\R^{N})$ and there exists $C>0$ such that 
\begin{equation}\label{UBu}
|v_{n}(\cdot, 0)|_{\infty}\leq C  \quad\mbox{ for all } n\in \mathbb{N},
\end{equation}
where $(\tilde{z}_{n})$ is given by Proposition \ref{prop3.3}.
Moreover,
\begin{align}\label{vanishing}
\lim_{|x|\rightarrow \infty} v_{n}(x, 0)=0  \quad\mbox{ uniformly in } n\in \mathbb{N}.
\end{align}
\end{lem}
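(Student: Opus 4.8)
The plan is to observe that, after the translation provided by Proposition \ref{prop3.3}, the sequence $(v_n)$ solves a problem of exactly the same form as \eqref{traslato}, so that the Moser iteration of Lemma \ref{moser} and the local boundedness argument of Lemma \ref{lem2.6AM} apply almost word for word, the only changes being the use of $(V'_1)$ in place of $(V_1)$ and of the choice $\kappa>\frac{2V_0}{m^{2s}-V_0}$ made in this section. First I would fix the set-up. Since $(u_n)\subset\widetilde{\mathcal{N}}_{\e_n}\subset\N_{\e_n}$ and $h(\e)\ri 0$ as $\e\ri 0$, the chain $d_{V(0)}\leq c_{\e_n}\leq J_{\e_n}(u_n)\leq d_{V(0)}+h(\e_n)$ (obtained exactly as in the proof of Lemma \ref{lem3.5}, using $L_{V(0)}(tu)\leq J_{\e_n}(tu)$ and Remark \ref{REMARKHEZOU}) gives $J_{\e_n}(u_n)\ri d_{V(0)}$. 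Then Proposition \ref{prop3.3} produces $(\tilde z_n)\subset\R^N$ such that, up to a subsequence, $v_n(x,y):=u_n(x+\tilde z_n,y)\ri v$ strongly in $Y_{V(0)}=\x$ with $v\not\equiv 0$, and $z_n:=\e_n\tilde z_n\ri z_0$ for some $z_0\in M$. Since each $u_n$ is a weak solution of \eqref{MEP}, by translation invariance $v_n$ is a weak solution of
\begin{align*}
\left\{
\begin{array}{ll}
-\dive(y^{1-2s}\nabla v_n)+m^2y^{1-2s}v_n=0 &\mbox{ in }\R^{N+1}_+,\\
\frac{\partial v_n}{\partial\nu^{1-2s}}=-V(\e_n x+z_n)v_n(\cdot,0)+g(\e_n x+z_n,v_n(\cdot,0)) &\mbox{ on }\R^N,
\end{array}
\right.
\end{align*}
which is structurally identical to \eqref{traslato}, with $\e_n y_n$ replaced by $z_n$.

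For the $L^\infty$-bound \eqref{UBu} I would argue as follows. The uniform bound $\|u_n\|_{\e_n}\leq C$ (from the proof of Proposition \ref{prop3.3}), the equivalence of $\|\cdot\|_{\e_n}$ with $\|\cdot\|_{\x}$, and the invariance of $\|\cdot\|_{\x}$ under translations in $x$ give $\|v_n\|_{\x}\leq C'$ for all $n$, while \eqref{traceineq} gives $v_n(\cdot,0)\ri v(\cdot,0)$ in $\h$. With these in hand the Moser iteration in the proof of Lemma \ref{moser} applies word for word to $(v_n)$ — using $(V'_1)$ to bound $-V(\e_n x+z_n)\leq V_0$ in the estimate \eqref{S1JMP}, and \eqref{growthg} for the contribution of $g$ — and yields $v_n(\cdot,0)\in L^q(\R^N)$ for every $q\in[2,\infty]$ with $|v_n(\cdot,0)|_q\leq C_q$ uniformly in $n$ (in particular \eqref{UBu}), together with $v_n\in L^\infty(\R^{N+1}_+)$ and a uniform bound there.

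For the decay \eqref{vanishing} I would then repeat the proof of Lemma \ref{lem2.6AM}. By the above and Proposition \ref{PROPFF}-$(iii)$ each $v_n$ is continuous on $\overline{\R^{N+1}_+}$; \eqref{weightedE} and $v_n\ri v$ in $\x$ give $v_n\ri v$ in $L^{2\gamma}(\R^{N+1}_+,y^{1-2s})$, and the uniform $L^q$-bounds together with $v_n(\cdot,0)\ri v(\cdot,0)$ in $\h$ and interpolation give $v_n(\cdot,0)\ri v(\cdot,0)$ in $L^q(\R^N)$ for all $q\in[2,\infty)$. Using $(V'_1)$ and \eqref{growthg}, $v_n$ is a weak subsolution of
\begin{align*}
\left\{
\begin{array}{ll}
-\dive(y^{1-2s}\nabla v_n)+m^2y^{1-2s}v_n=0 &\mbox{ in }Q_1(\bar x,0):=B_1(\bar x)\times(0,1),\\
\frac{\partial v_n}{\partial\nu^{1-2s}}=(V_0+\eta)v_n(\cdot,0)+C_\eta v_n^{\2-1}(\cdot,0) &\mbox{ on }B_1(\bar x),
\end{array}
\right.
\end{align*}
with $\eta\in(0,m^{2s}-V_0)$ fixed, so Proposition \ref{PROPFF}-$(i)$ with $q>\frac{N}{2s}$ yields
$$
0\leq\sup_{Q_1(\bar x,0)}v_n\leq C\left(\|v_n\|_{L^{2\gamma}(Q_1(\bar x,0),y^{1-2s})}+|v_n^{\2-1}(\cdot,0)|_{L^q(B_1(\bar x))}\right),
$$
with $C$ independent of $n$ and $\bar x$; since $q(\2-1)\in(2,\infty)$, letting $|\bar x|\ri\infty$ and using the strong convergences above gives \eqref{vanishing}.

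I do not expect a genuine obstacle, since the statement is essentially a transcription of Lemmas \ref{moser} and \ref{lem2.6AM} applied to the translated sequence. The one point that needs a little care is the uniformity in $n$ of the constants produced by the Moser iteration — which holds because they depend only on $N,m,s,p$ and on the uniform bounds $\|v_n\|_{\x}\leq C'$ and $|v_n(\cdot,0)|_q\leq C_q$ (and, for the quantity controlling the level set of the bad coefficient, on the uniform $L^2$- and $L^{\2}$-bounds of $v_n(\cdot,0)$), exactly as in the proof of Lemma \ref{moser}. A second, minor, point is that the strong convergence in Proposition \ref{prop3.3} is only along a subsequence, so \eqref{vanishing} and the identification of $v$ are understood along that subsequence, which is all that is needed afterwards.
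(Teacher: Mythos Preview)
Your proposal is correct and follows exactly the paper's own approach: the paper's proof simply observes that $J_{\e_n}(u_n)\to d_{V(0)}$ (as at the start of Proposition \ref{prop3.3}), invokes Proposition \ref{prop3.3} to get the translated sequence converging strongly in $Y_{V(0)}$, and then refers back to the proofs of Lemmas \ref{moser} and \ref{lem2.6AM}. You have spelled out these steps in more detail than the paper does, with the appropriate replacement of $V_1$ by $V_0$, but the strategy is identical.
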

\begin{proof}
Since $J_{\e_{n}}(u_{n})\leq d_{V(0)}+h(\e_{n})$ with $h(\e_{n})\rightarrow 0$ as $n\rightarrow \infty$, we can argue as at the beginning of the proof of Proposition \ref{prop3.3} to deduce that $J_{\e_{n}}(u_{n})\rightarrow d_{V(0)}$. Thus we may invoke  Proposition \ref{prop3.3} to obtain a sequence $(\tilde{z}_{n})\subset \R^{N}$ such that $\e_{n}\tilde{z}_{n}\rightarrow z_{0}\in M$ and $v_{n}(x, y):=u_{n}(x+\tilde{z}_{n}, y)$ strongly converges in $Y_{V(0)}$. At this point we can proceed as in the proofs of Lemmas \ref{moser} and \ref{lem2.6AM} to obtain the assertions.
\end{proof}

\noindent
We conclude this section by giving the proof of Theorem \ref{thm2}.
\begin{proof}[Proof of Theorem \ref{thm2}]
Let $\delta>0$ be such that $M_{\delta}\subset \Lambda$. Firstly, we claim that there exists  $\hat{\e}_{\delta}>0$ such that for any $\e\in (0, \hat{\e}_{\delta})$ and any solution $u\in \widetilde{\mathcal{N}}_{\e}$ of \eqref{MEP}, it holds
\begin{equation}\label{Ua}
|u(\cdot, 0)|_{L^{\infty}(\Lambda^{c}_{\e})}<a.
\end{equation}
We argue by contradiction and assume that for some subsequence $\e_{n}\rightarrow 0$ we can obtain $u_{n}=u_{\e_{n}}\in \widetilde{\mathcal{N}}_{\e_{n}}$ such that $J'_{\e_{n}}(u_{n})=0$ and
\begin{equation}\label{AbsAFF}
|u_{n}(\cdot, 0)|_{L^{\infty}(\Lambda^{c}_{\e_{n}})}\geq a.
\end{equation}
Since $J_{\e_{n}}(u_{n})\leq d_{V(0)}+h(\e_{n})$, we can argue as in the first part of Proposition \ref{prop3.3} to deduce that $J_{\e_{n}}(u_{n})\rightarrow d_{V(0)}$.
In view of Proposition \ref{prop3.3}, there exists $(\tilde{z}_{n})\subset \R^{N}$ such that $\e_{n}\tilde{z}_{n}\rightarrow z_{0}$ for some $z_{0} \in M$ and $v_{n}(x, y):=u_{n}(x+\tilde{z}_{n}, y)$ strongly converges in $Y_{V(0)}$. Then we can proceed as in the proof of Theorem \ref{thm1} to get a contradiction.
Let $\tilde{\e}_{\delta}>0$ be given by Theorem \ref{multiple} and we set $\e_{\delta}:=\min\{\tilde{\e}_{\delta}, \hat{\e}_{\delta} \}$. Fix $\e\in (0, \e_{\delta})$. Applying Theorem \ref{multiple}, we obtain at least $cat_{M_{\delta}}(M)$ positive solutions to \eqref{MEP}.
If $u\in X_{\e}$ is one of these solutions, then $u\in \widetilde{\mathcal{N}}_{\e}$, and in view of \eqref{Ua} and the definition of $g$, we  infer that $u$ is also a solution to \eqref{EP}. Then, $u(\cdot, 0)$ is a solution to (\ref{P}), and we deduce that \eqref{P} has at least $cat_{M_{\delta}}(M)$ positive solutions. To study the behavior of the maximum points of the solutions, we argue as in the proof of Theorem \ref{thm1}.
\end{proof}

\section{Appendix A: Bessel potentials}

In this appendix we collect some useful results concerning Bessel potentials. For more details we refer to \cite{ArS, Calderon, Grafakos, stein}. 
\begin{defn}
Let $\alpha>0$. The Bessel potential of order $\alpha$ of $u\in \mathcal{S}(\R^{N})$ is defined as
\begin{align*}
\mathcal{J}_{\alpha}u(x):=(1-\Delta)^{-\frac{\alpha}{2}}u(x)=(\mathcal{G}_{\alpha}*u)(x)=\int_{\R^{N}} \mathcal{G}_{\alpha}(x-y)u(y)\, dy, 
\end{align*}
where $\mathcal{G}_{\alpha}$ given through the Fourier transform
$$
\mathcal{F}\mathcal{G}_{\alpha}(k):=(2\pi)^{-\frac{N}{2}} (1+|k|^{2})^{-\frac{\alpha}{2}}
$$
is the so-called Bessel kernel.
\end{defn}
\begin{remark}
If $\alpha\in \mathbb{R}$ (or $\alpha\in \mathbb{C}$), then we may define the Bessel potential of a temperate distribution $u\in \mathcal{S}'(\R^{N})$  (see \cite{Calderon}) by setting 
$$
\mathcal{F}(\mathcal{J}_{\alpha}u)(k):=(1+|k|^{2})^{-\frac{\alpha}{2}} \mathcal{F}u(k).
$$
\end{remark}
\noindent
It is possible to prove (see \cite{ArS}) that 
$$
\mathcal{G}_{\alpha}(x)=\frac{1}{2^{\frac{N+\alpha-2}{2}}\pi^{\frac{N}{2}}\Gamma(\frac{\alpha}{2})} K_{\frac{N-\alpha}{2}}(|x|) |x|^{\frac{\alpha-N}{2}}.
$$
Thus, $\mathcal{G}_{\alpha}(x)$ is a positive, decreasing function of $|x|$, analytic except at $x=0$, and for $x\in \R^{N}\setminus \{0\}$, $\mathcal{G}_{\alpha}(x)$ is an entire function of $\alpha$. Moreover, from \eqref{Watson1} and \eqref{Watson2}, we have
\begin{equation*}
\mathcal{G}_{\alpha}(x)\sim \left\{
\begin{array}{ll}
\displaystyle{\frac{\Gamma(\frac{N-\alpha}{2})}{2^{\alpha} \pi^{\frac{N}{2}} \Gamma(\frac{\alpha}{2})} |x|^{\alpha-N}}, &\mbox{ if } 0<\alpha<N, \\
\\
\displaystyle{ \frac{1}{2^{N-1} \pi^{\frac{N}{2}} \Gamma(\frac{N}{2})}\log\left(\frac{1}{|x|}\right)}, &\mbox{ if } \alpha=N, \\
\\
\displaystyle{\frac{\Gamma(\frac{\alpha-N}{2})}{2^{N}\pi^{\frac{N}{2}} \Gamma(\frac{\alpha}{2})}}, &\mbox{ if } \alpha>N, 
\end{array}
\right. 
\end{equation*}
as $|x|\rightarrow 0$, and
\begin{align*}
\mathcal{G}_{\alpha}(x)\sim \frac{1}{2^{\frac{N+\alpha-1}{2}} \pi^{\frac{N-1}{2}} \Gamma(\frac{\alpha}{2})} |x|^{\frac{\alpha-N-1}{2}} e^{-|x|} 
\end{align*}
as $|x|\ri \infty$.
Hence, $\mathcal{G}_{\alpha}\in L^{1}(\R^{N})$ for all $\alpha>0$, and 
$$
\int_{\R^{N}} \mathcal{G}_{\alpha}(x) \, dx=(2\pi)^{\frac{N}{2}}\mathcal{F}\mathcal{G}_{\alpha}(0)=1.
$$ 
By the definition of $\mathcal{G}_{\alpha}$, it is evident that the following composition formula holds:
$$
\mathcal{G}_{\alpha+\beta}=\mathcal{G}_{\alpha}*\mathcal{G}_{\beta}, \quad \alpha, \beta>0.
$$
We also mention the integral formula (see \cite{stein})
$$
\mathcal{G}_{\alpha}(x)=\frac{1}{(4\pi)^{\frac{N}{2}}\Gamma\left(\frac{\alpha}{2}\right)} \int_{0}^{\infty} e^{-\frac{|x|^{2}}{4\delta}} e^{-\delta} \delta^{\frac{\alpha-N}{2}} \frac{d\delta}{\delta}.
$$
One the most interesting facts concerning Bessel potentials is they can be employed to define the Bessel potential spaces; see \cite{Adams, ArS, Calderon, Grafakos, stein}. 
For $p\in [1, \infty]$ and $\alpha\in \R$, we define the Banach space
\begin{align*}
\mathscr{L}^{p}_{\alpha}:=\mathcal{J}_{\alpha}(L^{p}(\R^{N}))=\{u: u=\mathcal{G}_{\alpha}*f, \, f\in L^{p}(\R^{N})\}
\end{align*}
endowed with the norm
$$
\|u\|_{\mathscr{L}^{p}_{\alpha}}:=|f|_{p} \quad \mbox{ with } u:=\mathcal{G}_{\alpha}*f.
$$
Thus $\mathscr{L}^{p}_{\alpha}$ is a subspace of $L^{p}(\R^{N})$ for all $\alpha\geq 0$. 
We summarize some of its properties.
\begin{thm}\cite{Adams, Calderon, stein}\label{Besselembedding}
\begin{compactenum}[$(i)$]
\item If $\alpha\geq 0$ and $1\leq p< \infty$, then $\mathcal{D}(\R^{N})$ is dense in $\mathscr{L}^{p}_{\alpha}$.
\item If $1<p<\infty$ and $p'$ its conjugate exponent, then the dual of $\mathscr{L}^{p}_{\alpha}$ is isometrically isomorphic to $\mathscr{L}^{p'}_{-\alpha}$.
\item If $\beta< \alpha$, then $\mathscr{L}^{p}_{\alpha}$ is continuously embedded in $\mathscr{L}^{p}_{\beta}$.
\item If $\beta\leq \alpha$ and if either $1<p\leq q\leq \frac{Np}{N-(\alpha-\beta) p}<\infty$ or $p=1$ and $1\leq q<\frac{N}{N-\alpha+\beta}$, then $\mathscr{L}^{p}_{\alpha}$ is continuously embedded in $\mathscr{L}^{q}_{\beta}$.
\item If $0< \mu\leq \alpha-\frac{N}{p}<1$, then $\mathscr{L}^{p}_{\alpha}$ is continuously embedded in $C^{0, \mu}(\R^{N})$.
\item $\mathscr{L}^{p}_{k}=W^{k,p}(\R^{N})$ for all $k\in \mathbb{N}$ and $1<p<\infty$, $\mathscr{L}^{2}_{\alpha}=W^{\alpha, 2}(\R^{N})$ for any $\alpha$.
\item If $1<p<\infty$ and $\e>0$, then for every $\alpha$ we have the following continuous embeddings: 
$$
\mathscr{L}^{p}_{\alpha+\e}\subset W^{\alpha, p}(\R^{N})\subset \mathscr{L}^{p}_{\alpha-\e}.
$$
\end{compactenum}
\end{thm}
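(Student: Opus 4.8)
The plan is to deduce every item from two structural facts already recorded in the appendix: first, that by construction $f\mapsto \mathcal{G}_{\alpha}*f$ is a linear isometry of $L^{p}(\R^{N})$ onto $\mathscr{L}^{p}_{\alpha}$; and second, that the Bessel kernels obey the composition identity $\mathcal{G}_{\alpha+\beta}=\mathcal{G}_{\alpha}*\mathcal{G}_{\beta}$ and the pointwise asymptotics displayed above, which in particular yield $\mathcal{G}_{\gamma}\in L^{1}(\R^{N})$ with $\int_{\R^{N}}\mathcal{G}_{\gamma}\,dx=1$ for all $\gamma>0$. All of $(i)$--$(vii)$ are classical, and in the paper I would state the result and refer to \cite{Adams, Calderon, stein}; what follows is the line of argument I would indicate.

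For $(iii)$ I would write, for $u=\mathcal{G}_{\alpha}*f\in\mathscr{L}^{p}_{\alpha}$, $u=\mathcal{G}_{\beta}*(\mathcal{G}_{\alpha-\beta}*f)$ and apply Young's inequality together with $|\mathcal{G}_{\alpha-\beta}|_{1}=1$ to get $\|u\|_{\mathscr{L}^{p}_{\beta}}\le|\mathcal{G}_{\alpha-\beta}*f|_{p}\le|f|_{p}=\|u\|_{\mathscr{L}^{p}_{\alpha}}$. The same factorization handles $(iv)$: the statement reduces to an estimate $|\mathcal{G}_{\gamma}*f|_{q}\le C|f|_{p}$ with $\gamma=\alpha-\beta$, which follows from Young's inequality once one checks that $\mathcal{G}_{\gamma}\in L^{r}(\R^{N})$ for $1+\tfrac1q=\tfrac1r+\tfrac1p$; this is exactly where the kernel asymptotics enter, the origin singularity $|x|^{\gamma-N}$ being $r$-integrable precisely under the stated numerology and the exponential tail being harmless, while in the endpoint $p=1$ and the borderline Hardy--Littlewood--Sobolev range one replaces Young's inequality by the corresponding weak-type bound. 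For $(v)$, writing $u=\mathcal{G}_{\alpha}*f$ and estimating $|u(x)-u(x')|\le\int|\mathcal{G}_{\alpha}(x-z)-\mathcal{G}_{\alpha}(x'-z)|\,|f(z)|\,dz$ by Hölder's inequality, one uses the borderline integrability of $\nabla\mathcal{G}_{\alpha}$ (controlled by the $|x|^{\alpha-N-1}$ behaviour near the origin) valid when $\alpha-N/p<1$ to produce the Hölder seminorm of exponent $\mu=\alpha-N/p$; for $(vii)$ one combines $(iii)$ with the identification in $(vi)$.

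Items $(i)$, $(ii)$, $(vi)$ are softer. For $(i)$ density follows since $\mathcal{D}(\R^{N})$ is dense in $L^{p}(\R^{N})$, hence the isometry carries it onto a dense subset of $\mathscr{L}^{p}_{\alpha}$, and one then regularizes $\mathcal{G}_{\alpha}*\varphi$ by mollification and cut-off to land inside $\mathcal{D}(\R^{N})$. For $(ii)$ one transports the classical duality $(L^{p})^{*}\cong L^{p'}$ through the isometry and, via Parseval, identifies the induced pairing on $\mathscr{L}^{p}_{\alpha}$ with convolution against $\mathcal{G}_{-\alpha}$, i.e.\ with $\mathscr{L}^{p'}_{-\alpha}$. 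For $(vi)$, the case $p=2$ is Plancherel applied to the Fourier symbol $(1+|k|^{2})^{\alpha/2}$ of $(1-\Delta)^{\alpha/2}$, and for $1<p<\infty$, $k\in\mathbb{N}$, one compares $(1+|k|^{2})^{k/2}$ with the symbols of the derivatives $\partial^{\sigma}$, $|\sigma|\le k$, and checks that the resulting quotients are $L^{p}$ Fourier multipliers by the Mikhlin--Hörmander theorem.

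The hard part will not be conceptual but a matter of bookkeeping: in $(iv)$--$(v)$ one must carefully track which exponent ranges place $\mathcal{G}_{\gamma}$ in the Lebesgue space demanded by Young's inequality, treating the $p=1$ endpoint and the Hardy--Littlewood--Sobolev borderline separately, and in $(vi)$--$(vii)$ the Mikhlin multiplier bounds must be arranged so as to yield the two-sided embeddings. Since these verifications are entirely standard, the proof in the paper reduces to the citations above.
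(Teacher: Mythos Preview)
Your proposal is correct and aligns with the paper's treatment: the theorem is stated without proof and attributed to \cite{Adams, Calderon, stein}, exactly as you anticipate. Your additional sketch of the arguments (via the composition identity, Young's inequality, kernel asymptotics, and Mikhlin--H\"ormander) is accurate and goes beyond what the paper provides, but is not needed here.
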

In order to accomplish some useful regularity results for equations driven by $(-\Delta+m^{2})^{s}$, with $m>0$ and $s\in (0, 1)$, we introduce the H\"older-Zygmund (or Lipschitz) spaces $\Lambda_{\alpha}$; see \cite{Calderon, Grafakos, stein}. 
If $\alpha>0$ and $\alpha\notin \mathbb{N}$, then we set $\Lambda_{\alpha}:=C^{[\alpha], \alpha-[\alpha]}(\R^{N})$. If $\alpha=k\in \mathbb{N}$, then we set $\Lambda_{\alpha}:=\Lambda^{*}_{k}$ where 
$$
\Lambda^{*}_{1}:=\left\{u\in L^{\infty}(\R^{N})\cap C(\R^{N}): \sup_{x, h\in \R^{N}, |h|>0} \frac{|u(x+h)+u(x-h)-2u(x)|}{|h|}<\infty\right\} \quad\mbox{ if } k=1,
$$
and
$$
\Lambda^{*}_{k}:=\left\{u\in C^{k-1}(\R^{N}): D^{\gamma}u\in \Lambda^{*}_{1} \,\, \mbox{ for all } |\gamma|\leq k-1\right\} \quad\mbox{ if } k\in \mathbb{N}, \, k\geq 2.
$$
The definition of $\Lambda_{\alpha}$ spaces can be also extended for $\alpha\leq 0$; see \cite{HormanderL, Taibleson}.
We recall the following useful result (for a proof we refer to Proposition 8.6.6. in \cite{HormanderL} and Theorem $6$ in \cite{Taibleson}; see also \cite{Calderon, stein}).
\begin{thm}\cite{Calderon, HormanderL, stein, Taibleson}\label{regBesseloperators}
Let $\alpha, \beta\in \R$. Then $\mathcal{J}_{2\alpha}$ maps $\Lambda_{\beta}$ isomorphically onto $\Lambda_{\beta+2\alpha}$.
\end{thm}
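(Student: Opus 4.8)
The statement is classical, and the plan is to deduce it from two ingredients: the semigroup (composition) property of the Bessel potentials, which reduces the isomorphism to a one-sided mapping property, and a uniform Littlewood--Paley estimate for the smooth Fourier multiplier $(1+|k|^{2})^{-\alpha}$. First I would record that $\mathcal{J}_{2\alpha}$ is well defined on $\mathcal{S}'(\R^{N})$ for every $\alpha\in\R$ via $\mathcal{F}(\mathcal{J}_{2\alpha}u)=(1+|\cdot|^{2})^{-\alpha}\mathcal{F}u$, that $\mathcal{J}_{0}=\mathrm{Id}$, and that $\mathcal{J}_{2\alpha}\mathcal{J}_{2\alpha'}=\mathcal{J}_{2(\alpha+\alpha')}$ on $\mathcal{S}'(\R^{N})$ since the symbols multiply. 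In particular $\mathcal{J}_{-2\alpha}$ is a two-sided inverse of $\mathcal{J}_{2\alpha}$ on tempered distributions. Hence it suffices to prove the single boundedness statement
\[
\mathcal{J}_{2\alpha}:\Lambda_{\beta}\longrightarrow\Lambda_{\beta+2\alpha}\quad\text{for all }\alpha,\beta\in\R,
\]
because applying it with $(\alpha,\beta)$ and with $(-\alpha,\beta+2\alpha)$ gives boundedness of both $\mathcal{J}_{2\alpha}$ and its inverse, while the two compositions equal the identity by the semigroup law.

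The core is therefore this boundedness. The cleanest route goes through the dyadic characterization of the Hölder--Zygmund scale. Fix a Littlewood--Paley partition of unity $1=\widehat{\varphi_{0}}+\sum_{j\ge1}\widehat{\varphi_{j}}$ with $\varphi_{j}$ frequency-supported in $\{|k|\sim 2^{j}\}$ for $j\ge1$; then for every $\gamma\in\R$ one has $u\in\Lambda_{\gamma}$ iff $\|u\|_{\Lambda_{\gamma}}^{\prime}:=\sup_{j\ge0}2^{j\gamma}\|\varphi_{j}*u\|_{L^{\infty}(\R^{N})}<\infty$, with $\|\cdot\|_{\Lambda_{\gamma}}^{\prime}$ equivalent to the norm of $\Lambda_{\gamma}$. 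This is precisely where the difference/Zygmund definitions recalled in the excerpt are matched to the frequency-side description, and it is also what makes sense of $\Lambda_{\gamma}$ for $\gamma\le0$. Writing $m(k):=(1+|k|^{2})^{-\alpha}$, I would then show that for $j\ge1$ the localized multiplier $m\,\widehat{\widetilde\varphi_{j}}$ (with $\widetilde\varphi_{j}$ a fattened $\varphi_{j}$, so $\varphi_{j}*u=\varphi_{j}*\widetilde\varphi_{j}*u$) has inverse Fourier transform of $L^{1}$ norm $\lesssim 2^{-2\alpha j}$ uniformly in $j$: indeed $m$ is smooth with Mihlin--Hörmander bounds $|\partial^{\nu}m(k)|\lesssim_{\nu}(1+|k|)^{-2\alpha-|\nu|}$, so after the rescaling $k\mapsto 2^{j}k$ the function $m\,\widehat{\widetilde\varphi_{j}}$ becomes supported in a fixed annulus and bounded in $C^{N+1}$ by $2^{-2\alpha j}$, and Bernstein's inequality gives the $L^{1}$ bound. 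The block $j=0$ is trivial since $m\,\widehat{\varphi_{0}}$ is a fixed Schwartz function. Consequently $\|\varphi_{j}*\mathcal{J}_{2\alpha}u\|_{\infty}\lesssim 2^{-2\alpha j}\|\widetilde\varphi_{j}*u\|_{\infty}$, hence $2^{j(\beta+2\alpha)}\|\varphi_{j}*\mathcal{J}_{2\alpha}u\|_{\infty}\lesssim 2^{j\beta}\|\widetilde\varphi_{j}*u\|_{\infty}\le C\|u\|_{\Lambda_{\beta}}$, i.e. $\mathcal{J}_{2\alpha}u\in\Lambda_{\beta+2\alpha}$ with the required estimate.

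The main obstacle, and the only genuinely delicate point, is the equivalence between the classical definitions of $\Lambda_{\beta}$ used in the excerpt (Hölder spaces for $\beta\notin\mathbb N$, the second-difference Zygmund conditions for $\beta\in\mathbb N$, and the duality/difference extension for $\beta\le0$) and the dyadic norm $\|\cdot\|_{\Lambda_{\beta}}^{\prime}$; once that bridge is in place---it is exactly Taibleson's characterization, which is why the result is attributed to \cite{Taibleson} and \cite{HormanderL}---everything else is the routine uniform multiplier estimate above. As a fallback I would keep the more classical but more computational argument: for $\alpha>0$, write $\mathcal{J}_{2\alpha}$ as convolution with $\mathcal{G}_{2\alpha}$, which by the asymptotics for $\mathcal{G}_{2\alpha}$ recorded in the excerpt is integrable together with sufficiently many derivatives, and convolution with such a kernel gains $2\alpha$ orders of smoothness on difference-quotient norms; for $\alpha<0$ factor $\mathcal{J}_{2\alpha}=\mathcal{J}_{2\alpha+2\lceil-\alpha\rceil}\circ(I-\Delta)^{\lceil-\alpha\rceil}$ and reduce to the previous case plus the trivial mapping property of $(I-\Delta)^{k}$. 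I would present the Littlewood--Paley proof as the main line and only indicate this alternative.
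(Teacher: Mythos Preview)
Your proof is correct and follows the standard Littlewood--Paley route; in fact the paper does not give any proof of this theorem at all, it simply cites Proposition~8.6.6 in \cite{HormanderL} and Theorem~6 in \cite{Taibleson} (with a side reference to \cite{Calderon, stein}) and moves on. Your sketch is essentially the argument one finds in those references, so there is nothing to compare.
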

As a consequence of Theorem \ref{regBesseloperators} and the definition of H\"older-Zygmund spaces, we easily deduce the following regularity result.
\begin{cor}\label{COROLLARIO}
Let $m>0$, $s\in (0, 1)$ and $\alpha\in (0, 1)$. Assume that $f\in C^{0, \alpha}(\R^{N})$ and that $u\in L^{\infty}(\R^{N})$ is a solution to $(-\Delta+m^{2})^{s}u=f$ in $\R^{N}$. 
\begin{itemize}
\item If $\alpha+2s<1$, then $u\in C^{0, \alpha+2s}(\R^{N})$. 
\item If $1<\alpha+2s<2$, then $u\in C^{1, \alpha+2s-1}(\R^{N})$. 
\item If $2<\alpha+2s<3$, then $u\in C^{2, \alpha+2s-2}(\R^{N})$. 
\item If $\alpha+2s=k\in \{1, 2\}$, then $u\in \Lambda_{k}^{*}$. 
\end{itemize}
\end{cor}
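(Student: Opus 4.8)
The plan is to realize the inverse operator $(-\Delta+m^{2})^{-s}$ as a rescaled Bessel potential and then to apply Theorem \ref{regBesseloperators}. First I would note that, since $\alpha\in(0,1)$, we have $f\in C^{0,\alpha}(\R^{N})=\Lambda_{\alpha}$, and that both $u\in L^{\infty}(\R^{N})$ and $f$, having at most polynomial growth, are tempered distributions; hence the equation $(-\Delta+m^{2})^{s}u=f$ can be read on the Fourier side as $(|k|^{2}+m^{2})^{s}\mathcal{F}u=\mathcal{F}f$. Because $m>0$, the symbol $(|k|^{2}+m^{2})^{s}$ is smooth, strictly positive and slowly increasing together with all its derivatives, and its reciprocal $(|k|^{2}+m^{2})^{-s}$ (which is bounded by $m^{-2s}$) has the same property; consequently $(-\Delta+m^{2})^{s}$ is a bijection of $\mathcal{S}'(\R^{N})$ onto itself with inverse the Fourier multiplier $(-\Delta+m^{2})^{-s}$ of symbol $(|k|^{2}+m^{2})^{-s}$, and therefore $u=(-\Delta+m^{2})^{-s}f$.

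Next I would reduce to the standard Bessel potential $\mathcal{J}_{2s}$, whose symbol is $(1+|k|^{2})^{-s}$. Writing $D_{\lambda}v(x):=v(\lambda x)$, the identity $(|k|^{2}+m^{2})^{-s}=m^{-2s}(1+|k/m|^{2})^{-s}$ together with the fact that multiplication by $\phi(k/m)$ on the Fourier side amounts to conjugating the operator with symbol $\phi$ by $D_{m}$ and $D_{1/m}$ gives
\[
(-\Delta+m^{2})^{-s}=m^{-2s}\,D_{m}\circ\mathcal{J}_{2s}\circ D_{1/m}.
\]
An elementary computation with difference quotients (and, at the integer orders, with second differences) shows that each $D_{\lambda}$ is an isomorphism of $\Lambda_{\beta}$ onto itself for every $\beta\in\R$. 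Since Theorem \ref{regBesseloperators}, applied with its parameter $\alpha$ taken equal to $s$, gives that $\mathcal{J}_{2s}$ maps $\Lambda_{\beta}$ isomorphically onto $\Lambda_{\beta+2s}$, we conclude that $(-\Delta+m^{2})^{-s}$ maps $\Lambda_{\beta}$ isomorphically onto $\Lambda_{\beta+2s}$; in particular
\[
u=(-\Delta+m^{2})^{-s}f\in\Lambda_{\alpha+2s}.
\]

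It then remains only to translate $u\in\Lambda_{\alpha+2s}$ into the four cases of the statement, which is pure bookkeeping with the definition of the H\"older--Zygmund spaces recalled before Theorem \ref{regBesseloperators}: when $\alpha+2s\notin\mathbb{N}$ one has $\Lambda_{\alpha+2s}=C^{[\alpha+2s],\,\alpha+2s-[\alpha+2s]}(\R^{N})$, which is $C^{0,\alpha+2s}$ if $0<\alpha+2s<1$, $C^{1,\alpha+2s-1}$ if $1<\alpha+2s<2$, and $C^{2,\alpha+2s-2}$ if $2<\alpha+2s<3$ (note that $\alpha<1$ and $s<1$ force $\alpha+2s<3$, so no further case occurs); and when $\alpha+2s=k\in\{1,2\}$ one has $\Lambda_{k}=\Lambda^{*}_{k}$ by definition. \textbf{The only genuinely delicate point} is the identification $u=(-\Delta+m^{2})^{-s}f$ together with the verification that the $m$-dependent multiplier $(-\Delta+m^{2})^{-s}$ enjoys the same $\Lambda_{\beta}\to\Lambda_{\beta+2s}$ mapping property as $\mathcal{J}_{2s}$; the scaling identity above is what makes this transparent, and everything else is routine. (Alternatively, one may write $(|k|^{2}+m^{2})^{-s}=(1+|k|^{2})^{-s}\bigl((1+|k|^{2})/(m^{2}+|k|^{2})\bigr)^{s}$ and observe that the second factor is a symbol of order $0$, hence defines an operator bounded on every $\Lambda_{\beta}$, which leads to the same conclusion.)
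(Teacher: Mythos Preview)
Your proof is correct and follows essentially the same approach as the paper: the paper simply states that the corollary is ``a consequence of Theorem \ref{regBesseloperators} and the definition of H\"older--Zygmund spaces'' without further detail, and you have supplied exactly those details, including the scaling identity $(-\Delta+m^{2})^{-s}=m^{-2s}D_{m}\circ\mathcal{J}_{2s}\circ D_{1/m}$ that reduces the case $m>0$ to the standard Bessel potential $\mathcal{J}_{2s}=(1-\Delta)^{-s}$ treated by Theorem \ref{regBesseloperators}. The paper leaves this reduction implicit, whereas you make it explicit; otherwise the arguments coincide.
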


We also have a regularity result for solutions $u\in L^{\infty}(\R^{N})$ to $(-\Delta+m^{2})^{s}u=f$ in $\R^{N}$ with $f\in L^{\infty}(\R^{N})$.
Indeed, arguing as in the proof of Theorem $4$ at page $149$ in \cite{stein} (see also Theorem $5$ in \cite{Taibleson}) and using formula $(59)$ with $l=2$ and $\beta=2s$ in this theorem, we can use the characterization of $\Lambda_{\alpha}$ in terms of the Poisson integral given in \cite{stein, Taibleson} to infer that the solutions $u$ of the previous equation belong to $\Lambda_{2s}$. Then, in view of the definition of $\Lambda_{\alpha}$ given above, we deduce the following result.
\begin{cor}\label{SilvestreLinfty}
Let $m>0$ and $s\in (0, 1)$. Assume that $f\in L^{\infty}(\R^{N})$ and that $u\in L^{\infty}(\R^{N})$ is a solution to $(-\Delta+m^{2})^{s}u=f$ in $\R^{N}$. 
\begin{itemize}
\item If $2s< 1$, then $u\in C^{0, 2s}(\R^{N})$.
\item If $2s=1$, then $u\in \Lambda_{1}^{*}$.
\item If $2s>1$, then $u\in C^{1, 2s-1}(\R^{N})$.
\end{itemize}
\end{cor}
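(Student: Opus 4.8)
The final statement is Corollary \ref{SilvestreLinfty}, which asserts regularity for solutions of $(-\Delta+m^2)^s u = f$ with $f, u \in L^\infty(\mathbb{R}^N)$: we get $u \in C^{0,2s}$ if $2s<1$, $u \in \Lambda_1^*$ if $2s=1$, and $u \in C^{1,2s-1}$ if $2s>1$.

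\textbf{Proof plan.} The plan is to reduce the operator $(-\Delta+m^2)^s$ to the Bessel potential operator $\mathcal{J}_{2s}$ and then invoke the mapping properties of $\mathcal{J}_{2s}$ between H\"older--Zygmund spaces. First I would note that $(-\Delta+m^2)^s = m^{2s}(1-m^{-2}\Delta)^s$, so up to harmless constants the symbol $(|k|^2+m^2)^s$ differs from the Bessel symbol $(1+|k|^2)^s$ only by a smooth, positive, elliptic factor of order $0$; more precisely, the multiplier $(|k|^2+m^2)^s(1+|k|^2)^{-s}$ and its reciprocal are smooth bounded functions all of whose derivatives decay appropriately, hence they are Fourier multipliers preserving every $\Lambda_\alpha$ (this is exactly the kind of statement underpinning Theorem \ref{regBesseloperators}). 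Therefore solving $(-\Delta+m^2)^s u = f$ is equivalent, modulo such order-zero isomorphisms of $\Lambda_\beta$ scales, to solving $(1-\Delta)^s v = \tilde f$, i.e. $v = \mathcal{J}_{2s}\tilde f$, where $\tilde f$ sits in the same H\"older--Zygmund class as $f$.

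\textbf{Key steps in order.} (1) Record that $f \in L^\infty(\mathbb{R}^N)$ means $f \in \Lambda_0$ in the Zygmund scale (since $\Lambda_0 = L^\infty \cap$ appropriate condition, or one simply uses that $L^\infty \subset \Lambda_0$ with the distributional/Poisson-integral characterization as in \cite{stein, Taibleson}). (2) Apply the construction indicated in the excerpt: argue as in the proof of Theorem $4$, page $149$, of \cite{stein} (see also Theorem $5$ in \cite{Taibleson}), using formula $(59)$ there with $l=2$ and $\beta=2s$, together with the characterization of $\Lambda_\alpha$ via the Poisson integral, to conclude that $u = \mathcal{J}_{2s}\tilde f \in \Lambda_{2s}$. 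By Theorem \ref{regBesseloperators} this is precisely the statement that $\mathcal{J}_{2s}$ maps $\Lambda_0$ isomorphically onto $\Lambda_{2s}$. (3) Finally, translate the Zygmund space $\Lambda_{2s}$ into classical spaces using the definition of $\Lambda_\alpha$ given just before the corollary: if $2s \notin \mathbb{N}$ and $2s<1$ then $\Lambda_{2s} = C^{0,2s}(\mathbb{R}^N)$; if $2s=1$ then $\Lambda_1 = \Lambda_1^*$ by definition; if $1<2s<2$ then $\Lambda_{2s} = C^{1,2s-1}(\mathbb{R}^N)$. This exhausts the three cases and completes the proof.

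\textbf{Main obstacle.} The delicate point is Step (2): the passage from the $L^\infty$ datum to the $\Lambda_{2s}$ conclusion is not a formal consequence of Theorem \ref{regBesseloperators} alone, because one must first legitimately place $f \in L^\infty$ into the Zygmund scale and then verify that the auxiliary order-zero multiplier relating $(-\Delta+m^2)^s$ to $(1-\Delta)^s$ genuinely preserves $\Lambda_0$ (it is a standard but nontrivial Mihlin-type fact for non-smooth endpoint spaces). The cleanest route, and the one I would follow, is the direct one sketched in the excerpt, namely to bypass the multiplier comparison and instead estimate $u$ by the Poisson-integral characterization of $\Lambda_{2s}$ exactly as in the cited references, treating $(-\Delta+m^2)^s$ on the same footing as $(1-\Delta)^s$ since both have smooth elliptic symbols of order $2s$ bounded below by $m^{2s}>0$. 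Once $u \in \Lambda_{2s}$ is established, the remaining work is purely the dictionary between $\Lambda_\alpha$ and $C^{k,\alpha}$ spaces, which is immediate from the definitions recalled above.
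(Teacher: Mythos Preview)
Your proposal is correct and, in its final recommended route, coincides with the paper's own argument: the paper likewise bypasses any explicit multiplier comparison and argues directly via the Poisson-integral characterization of $\Lambda_\alpha$ as in Stein (Theorem~4, p.~149, formula~(59) with $l=2$, $\beta=2s$) and Taibleson, obtaining $u\in\Lambda_{2s}$ and then reading off the three cases from the definition of $\Lambda_\alpha$. Your preliminary discussion of the order-zero multiplier $(|k|^2+m^2)^s(1+|k|^2)^{-s}$ is a legitimate alternative framing, but as you correctly note, making it rigorous at the $L^\infty$ endpoint requires care (identifying $L^\infty$ with a suitable $\Lambda_0$), which is exactly why both you and the paper fall back on the direct Stein--Taibleson argument.
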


\begin{remark}
When $m=0$, the conclusions in Corollary \ref{COROLLARIO} and Corollary \ref{SilvestreLinfty} can be found in \cite{Stinga}.
\end{remark}

\section{Appendix B: An integral representation formula for $(-\Delta+m^{2})^{s}$}

Bearing in mind the asymptotic estimates \eqref{Watson1} and \eqref{Watson2} for $K_{\nu}$, we are able to gain an integral representation formula for $(-\Delta+m^{2})^{s}$, with $m>0$ and $s\in (0, 1)$, in the spirit of Lemma $3.2$ in \cite{DPV}.
\begin{thm}\label{INTEGRALE}
Let $m>0$ and $s\in (0, 1)$. Then, for all $u\in \mathcal{S}(\R^{N})$,
$$
(-\Delta+m^{2})^{s}u(x)=m^{2s}u(x)+\frac{C(N,s)}{2} m^{\frac{N+2s}{2}} \int_{\R^{N}} \frac{2u(x)-u(x+y)-u(x-y)}{|y|^{\frac{N+2s}{2}}}K_{\frac{N+2s}{2}}(m|y|)\, dy.
$$
\end{thm}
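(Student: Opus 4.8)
The plan is to start from the Fourier-analytic definition of $(-\Delta+m^{2})^{s}$ and reinterpret the symbol via the subordination/Balakrishnan-type identity, exactly as in the classical derivation of the singular integral formula for $(-\Delta)^{s}$. First I would recall that for $u\in\mathcal{S}(\R^{N})$ one has $\mathcal{F}((-\Delta+m^{2})^{s}u-m^{2s}u)(k)=[(|k|^{2}+m^{2})^{s}-m^{2s}]\mathcal{F}u(k)$, so it suffices to show that the operator with Fourier symbol $(|k|^{2}+m^{2})^{s}-m^{2s}$ coincides with the stated singular integral. For this I would use the representation
\begin{align*}
(|k|^{2}+m^{2})^{s}-m^{2s}=\frac{1}{\Gamma(-s)}\int_{0}^{\infty}\bigl(e^{-t(|k|^{2}+m^{2})}-e^{-tm^{2}}\bigr)\,\frac{dt}{t^{1+s}},
\end{align*}
valid because $\lambda\mapsto\lambda^{s}$ admits such a Balakrishnan formula for $s\in(0,1)$, with the understanding that $1/\Gamma(-s)<0$. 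Writing $e^{-t|k|^{2}}\mathcal{F}u(k)$ as the Fourier transform of the heat semigroup $(e^{t\Delta}u)(x)=(4\pi t)^{-N/2}\int_{\R^{N}}e^{-|x-z|^{2}/(4t)}u(z)\,dz$ and undoing the Fourier transform, I obtain the pointwise representation
\begin{align*}
\bigl((-\Delta+m^{2})^{s}u-m^{2s}u\bigr)(x)=\frac{1}{\Gamma(-s)}\int_{0}^{\infty}e^{-tm^{2}}\bigl((e^{t\Delta}u)(x)-u(x)\bigr)\,\frac{dt}{t^{1+s}}.
\end{align*}

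Next I would symmetrize: using $\int_{\R^{N}}(4\pi t)^{-N/2}e^{-|y|^{2}/(4t)}\,dy=1$, replace $(e^{t\Delta}u)(x)-u(x)$ by $\tfrac12\int_{\R^{N}}(4\pi t)^{-N/2}e^{-|y|^{2}/(4t)}\bigl(u(x+y)+u(x-y)-2u(x)\bigr)\,dy$, which is legitimate since $u\in\mathcal{S}(\R^{N})$ guarantees the double integral is absolutely convergent (near $t=0$ one uses the second-order Taylor estimate $|u(x+y)+u(x-y)-2u(x)|\le C\min\{|y|^{2},1\}$, and near $t=\infty$ the factor $e^{-tm^{2}}$ provides decay; this is where $m>0$ is essential and makes the argument cleaner than the $m=0$ case). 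Then I would apply Fubini's theorem to exchange the $t$- and $y$-integrals, arriving at
\begin{align*}
\bigl((-\Delta+m^{2})^{s}u-m^{2s}u\bigr)(x)=-\frac{1}{2|\Gamma(-s)|}\int_{\R^{N}}\bigl(u(x+y)+u(x-y)-2u(x)\bigr)\left(\int_{0}^{\infty}\frac{e^{-tm^{2}}e^{-|y|^{2}/(4t)}}{(4\pi t)^{N/2}}\,\frac{dt}{t^{1+s}}\right)dy.
\end{align*}

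The remaining step is to evaluate the inner $t$-integral and identify it with $K_{\frac{N+2s}{2}}$. Substituting $t=|y|\tau/(2m)$ (or directly invoking the standard integral $\int_{0}^{\infty}\tau^{-\nu-1}e^{-a\tau-b/\tau}\,d\tau=2(b/a)^{-\nu/2}K_{\nu}(2\sqrt{ab})$ with $\nu=\frac{N+2s}{2}$, $a=m^{2}$, $b=|y|^{2}/4$) gives
\begin{align*}
\int_{0}^{\infty}\frac{e^{-tm^{2}}e^{-|y|^{2}/(4t)}}{(4\pi t)^{N/2}}\,\frac{dt}{t^{1+s}}=\frac{2}{(4\pi)^{N/2}}\left(\frac{m}{|y|/2}\right)^{\frac{N+2s}{2}}\frac{1}{|y|^{N+2s}}\cdot|y|^{N+2s}\,\frac{|y|^{-\frac{N+2s}{2}}\cdot\text{(const)}}{\;}K_{\frac{N+2s}{2}}(m|y|),
\end{align*}
so that the kernel is a positive constant times $m^{\frac{N+2s}{2}}|y|^{-\frac{N+2s}{2}}K_{\frac{N+2s}{2}}(m|y|)$. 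Matching the resulting constant $\tfrac{1}{2|\Gamma(-s)|}\cdot\tfrac{2}{(4\pi)^{N/2}}\,2^{\frac{N+2s}{2}}$ against $\tfrac12 C(N,s)$ with $C(N,s)=2^{-\frac{N+2s}{2}+1}\pi^{-N/2}2^{2s}\tfrac{s(1-s)}{\Gamma(2-s)}$, and using $\Gamma(2-s)=(1-s)(-s)\Gamma(-s)$ hence $|\Gamma(-s)|^{-1}=\tfrac{s(1-s)}{\Gamma(2-s)}$, confirms the identity. I expect the main obstacle to be the careful justification of the Fubini exchange and the symmetrization near $t=0$ (controlling the $|y|^{2}$-type singularity against $t^{-1-s}$ uniformly), together with bookkeeping the normalizing constants so that the final factor is exactly $C(N,s)/2$; the Bessel-function integral evaluation itself is a classical lookup. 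An alternative, perhaps slicker route would be to quote Lemma 3.2 of \cite{DPV} for the $m=0$ symbol and instead compute directly the Fourier transform of $|y|^{-\frac{N+2s}{2}}K_{\frac{N+2s}{2}}(m|y|)$ — which is known to be (a constant times) $(|k|^{2}+m^{2})^{s}$ up to the $m^{2s}u$ correction coming from the principal-value regularization — but the subordination argument above is the most self-contained and mirrors the structure already used in \cite{FF} for \eqref{FFdef}.
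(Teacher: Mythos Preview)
Your approach is correct but substantially different from the paper's. The paper does not rederive the singular-integral representation from scratch: it takes the principal-value formula \eqref{FFdef} (quoted from \cite{FF, LL}) as its starting point, performs the change of variables $z=y-x$, then $\tilde z=-z$, averages the two resulting expressions to obtain the second-difference integrand, and finally observes---via a second-order Taylor expansion together with \eqref{Watson1} near $0$ and \eqref{Watson2} at infinity---that the symmetrized integrand is absolutely integrable, so the $P.V.$ may be dropped. This is a half-page symmetrization trick, exactly parallel to Lemma~3.2 in \cite{DPV}.

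Your subordination/heat-semigroup route is more work but more self-contained: it recovers both \eqref{FFdef} and its symmetrized form in one stroke, and it makes the origin of the Bessel kernel transparent through the identity $\int_{0}^{\infty}t^{\nu-1}e^{-at-b/t}\,dt=2(b/a)^{\nu/2}K_{\nu}(2\sqrt{ab})$. The constant bookkeeping you sketch does close (using $\Gamma(2-s)=-s(1-s)\Gamma(-s)$ and $K_{-\nu}=K_{\nu}$), though the displayed evaluation of the $t$-integral in your write-up is garbled and should be cleaned up. The trade-off is clear: the paper's argument is shorter because it outsources the hard identity to \cite{FF, LL}; yours would be appropriate if \eqref{FFdef} were not already available. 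Ironically, the ``alternative, slicker route'' you mention at the end---symmetrizing a known $P.V.$ formula---is precisely what the paper does.
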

\begin{proof}
Choosing the substitution $z=y-x$ in \eqref{FFdef}, we obtain
\begin{align}\label{3.4DPV}
(-\Delta+m^{2})^{s}u(x)&=m^{2s}u(x)+C(N,s) m^{\frac{N+2s}{2}} P. V. \int_{\R^{N}} \frac{u(x)-u(y)}{|x-y|^{\frac{N+2s}{2}}}K_{\frac{N+2s}{2}}(m|x-y|)\, dy \nonumber\\
&=m^{2s}u(x)+C(N,s) m^{\frac{N+2s}{2}} P. V. \int_{\R^{N}} \frac{u(x)-u(x+z)}{|z|^{\frac{N+2s}{2}}}K_{\frac{N+2s}{2}}(m|z|)\, dz.
\end{align}
By substituting $\tilde{z}=-z$ in the last term in \eqref{3.4DPV}, we get 
\begin{align}\label{3.5DPV}
P. V. \int_{\R^{N}} \frac{u(x+z)-u(x)}{|z|^{\frac{N+2s}{2}}}K_{\frac{N+2s}{2}}(m|z|)\, dz=P. V. \int_{\R^{N}} \frac{u(x-\tilde{z})-u(x)}{|\tilde{z}|^{\frac{N+2s}{2}}}K_{\frac{N+2s}{2}}(m|\tilde{z}|)\, d\tilde{z},
\end{align}
and so, after relabeling $\tilde{z}$ as $z$, 
\begin{align}\label{3.6DPV}
&2P. V. \int_{\R^{N}} \frac{u(x+z)-u(x)}{|z|^{\frac{N+2s}{2}}}K_{\frac{N+2s}{2}}(m|z|)\, dz \nonumber\\
&=P. V. \int_{\R^{N}} \frac{u(x+z)-u(x)}{|z|^{\frac{N+2s}{2}}}K_{\frac{N+2s}{2}}(m|z|)\, dz \nonumber\\
&\quad+P. V. \int_{\R^{N}} \frac{u(x-z)-u(x)}{|z|^{\frac{N+2s}{2}}}K_{\frac{N+2s}{2}}(m|z|)\, dz \nonumber\\
&=P. V. \int_{\R^{N}} \frac{u(x+z)+u(x-z)-2u(x)}{|z|^{\frac{N+2s}{2}}}K_{\frac{N+2s}{2}}(m|z|)\, dz.
\end{align}
Hence, if we rename $z$ as $y$ in \eqref{3.4DPV} and  \eqref{3.6DPV}, we can write $(-\Delta+m^{2})^{s}$  as
\begin{align}\label{FINALE}
(-\Delta+m^{2})^{s}u(x)=m^{2s}u(x)+\frac{C(N,s)}{2} m^{\frac{N+2s}{2}} P. V. \int_{\R^{N}} \frac{2u(x)-u(x+y)-u(x-y)}{|y|^{\frac{N+2s}{2}}}K_{\frac{N+2s}{2}}(m|y|)\, dy.
\end{align}
Now, by using a second order Taylor expansion, we see that
\begin{align*}
\left|\frac{2u(x)-u(x+y)-u(x-y)}{|y|^{\frac{N+2s}{2}}}K_{\frac{N+2s}{2}}(m|y|) \right|\leq \frac{|D^{2}u|_{\infty}}{|y|^{\frac{N+2s-4}{2}}} K_{\frac{N+2s}{2}}(m|y|). 
\end{align*}
From \eqref{Watson1} we deduce that 
$$
\frac{|D^{2}u|_{\infty}}{|y|^{\frac{N+2s-4}{2}}} K_{\frac{N+2s}{2}}(m|y|)\sim \frac{C}{|y|^{N+2s-2}} \quad\mbox{ as } |y|\ri 0
$$
which implies the integrability  near $0$. On the other hand, using \eqref{Watson2}, we get
$$
\left|\frac{2u(x)-u(x+y)-u(x-y)}{|y|^{\frac{N+2s}{2}}}K_{\frac{N+2s}{2}}(m|y|) \right|\leq \frac{C|u|_{\infty}}{|y|^{\frac{N+2s}{2}}} K_{\frac{N+2s}{2}}(m|y|)\sim \frac{C}{|y|^{\frac{N+2s+1}{2}}}e^{-m|y|} \quad\mbox{ as } |y|\ri \infty
$$
which gives the integrability near $\infty$.
Therefore, we can remove the $P. V.$ in \eqref{FINALE}.
\end{proof}

\section*{Acknowledgements}
The author would like to thank the anonymous referee for her/his careful reading of the manuscript and valuable suggestions.

\end{document}